\documentclass[11pt,reqno]{amsart}

\usepackage{amsmath}
\usepackage{amsthm}
\usepackage{amssymb}
\usepackage{enumerate}
\usepackage{amscd}
\usepackage{color}

\usepackage[latin1]{inputenc}


\title[]{Rigidity theorems for the area widths of Riemannian manifolds}

\author{Lucas Ambrozio, Fernando C. Marques and Andr\'e Neves}

\address{IMPA - Instituto Nacional de Matematica Pura e Aplicada, Rio de Janeiro, RJ, Brasil, 22460-320.}
\email{\texttt{l.ambrozio@impa.br}}

\address{Princeton University, Princeton, NJ, USA, 08544.}
\email{\texttt{coda@math.princeton.edu}}

\address{University of Chicago, Chicago, IL, USA, 60637.}
\email{\texttt{aneves@uchicago.edu}}

\thanks{The first author is supported  by CNPq (309908/2021-3 - Bolsa PQ) and by FAPERJ  (grant SEI-260003/000534/2023 - BOLSA E-26/200.175/2023 and grant SEI-260003/001527/2023 - APQ1 E-26/210.319/2023) . The second author is partly supported by NSF-DMS-2105557 and a Simons Investigator Grant. The third author is partly  supported by NSF-DMS-2005468 and a Simons Investigator Grant.}

\date{}


\newtheorem{thm}{Theorem}[section]
\newtheorem*{thmA}{Theorem A}
\newtheorem*{thmB}{Theorem B}
\newtheorem*{thmC}{Theorem C}
\newtheorem{lem}[thm]{Lemma}

\newtheorem{prop}[thm]{Proposition}
\theoremstyle{remark}
\newtheorem{rmk}{Remark}[section]



\def\RP{\mathbb{RP}}

\def\XXint#1#2#3{{\setbox0=\hbox{$#1{#2#3}{\int}$}
     \vcenter{\hbox{$#2#3$}}\kern-.5\wd0}}


\begin{document}


\begin{abstract}
The volume spectrum of a compact Riemannian manifold is a sequence of critical values for the area functional, defined in analogy with the Laplace spectrum by Gromov. In this paper we prove that the canonical metric on the two-dimensional projective plane is determined modulo isometries by its volume spectrum.  

We also prove  that the surface Zoll metrics on the three-dimensional sphere are characterized by the equality of the spherical area widths. These widths  generalize to the surface case the Lusternik-Schnirelmann lengths of closed geodesics. We prove a new sharp area systolic inequality for metrics on the three-dimensional projective space.
\end{abstract}

\maketitle



\section{Introduction}

We investigate inverse problems on determining or characterizing a Riemannian manifold by critical values of the area functional that are defined variationally. 
The settings are that of the volume functional on the space of modulo two closed hypersurfaces in a compact Riemannian manifold,  or that of the area functional on the space of smoothly embedded two-spheres in a three-dimensional Riemannian sphere.

In \cite{gromov-nonlinear}, Gromov introduced the notion of  volume spectrum of a compact Riemannian manifold. This is a sequence of numbers $\{\omega_k\}_{k\in \mathbb{N}}$ defined as a nonlinear analogue of the spectrum $\{\lambda_k\}_{k\in \mathbb{N}}$ of the Laplacian. This was part of a general framework proposed in \cite{gromov-nonlinear} of spectra of functionals defined on spaces that have a projective structure.

The $k$-th eigenvalue $\lambda_k$ is defined variationally by considering the Dirichlet energy on the projectivization of the space of scalar functions. The $k$-th eigenvalue of a compact Riemannian manifold $(M^{n+1},g)$  has a min-max definition:
$$
\lambda_k=\inf_{V\in \mathcal{V}_k} \sup_{f\in V, \int_M f^2=1}\int_M|\nabla f|^2,
$$
where $\mathcal{V}_k$ is the set of $k$-dimensional vector spaces of $W^{1,2}$ functions.

The $k$-width is defined similarly by
$$
\omega_k=\inf_{\mathcal{S}\in \mathcal{P}_k} \sup_{\Sigma\in \mathcal{S}}{\rm area}(\Sigma),
$$
where $\mathcal{P}_k$ denotes the set of $k$-sweepouts of $M$ by hypersurfaces, and $\Sigma\mapsto {\rm area}(\Sigma)$ is the area (or $n$-dimensional volume) functional. 

The eigenvalue $\lambda_k$ is the energy of a Laplace eigenfunction, whereas  the Almgren-Pitts min-max theory implies that the $k$-width $\omega_k$ is the area of a closed minimal hypersurface if $(n+1)\geq 3$, and the length of a stationary geodesic network if $(n+1)=2$.

The Weyl law states that the eigenvalues $\{\lambda_k\}$ satisfy the asymptotic formula:
$$\lim_{k \rightarrow \infty} \lambda_k k^{-\frac{2}{n+1}}  = c(n){\rm vol}(M)^{-\frac{2}{n+1}},$$
where $c(n)= 4 \pi^2 {\rm vol}(B)^{-\frac{2}{n+1}}$ and $B$ is the unit ball in $\mathbb{R}^{n+1}$. Hence the Laplace spectrum determines the dimension and the volume of the manifold. The inverse problem of determining geometric properties of a space from its  Laplace spectrum is a classic question (\cite{kac-hear}, \cite{berger-gauduchon-mazet}).

In \cite{liokumovich-marques-neves}, Liokumovich and the second and third authors proved that the volume spectrum obeys a Weyl law conjectured in \cite{gromov-maps}:
$$
\lim_{k\rightarrow \infty} \omega_k k^{-\frac{1}{n+1}}=a(n) {\rm vol}(M)^\frac{n}{n+1},
$$
where $a(n)$ is a positive dimensional constant. This generalized the previous estimate $c^{-1}k^\frac{1}{n+1}\leq \omega_k \leq c\,k^\frac{1}{n+1}$, with $c$ depending on the metric, due to Gromov \cite{gromov-maps} and Guth \cite{guth}.  

Thus, as before, the sequence $\{\omega_k\}$ determines the dimension and the volume of a Riemannian manifold. We would like to investigate to what extent its volume spectrum determines the geometry.

We prove that the two-dimensional constant curvature one projective plane  $(\mathbb{RP}^2,\overline{g})$ is determined by its volume spectrum within the class of compact Riemannian manifolds.  

\begin{thmA}\label{rigidity.projective.plane}
Let $(M^{n+1},g)$ be a compact Riemannian manifold. If 
$$\omega_k(M,g)=\omega_k(\mathbb{RP}^2,\overline{g})$$
 for any $k
\geq 1$, then $(M,g)$ is isometric to $(\mathbb{RP}^2,\overline{g})$.
\end{thmA}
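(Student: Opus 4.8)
\emph{Strategy.} The plan is first to use the Weyl law to pin down the dimension and the area of $(M,g)$, and then to squeeze rigidity out of the lowest widths. Applying the volume spectrum Weyl law to $(M,g)$ and to $(\RP^2,\overline{g})$: since the two width sequences coincide, they are asymptotic, so comparing the exponents of $k$ forces $\tfrac{1}{n+1}=\tfrac12$, i.e. $n+1=2$, so that $M$ is a closed surface; comparing the constants then gives ${\rm area}(M,g)={\rm area}(\RP^2,\overline{g})=2\pi$.

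Second, I would compute the relevant widths of $(\RP^2,\overline{g})$. The projective lines (images of the great circles of $S^2$) are geodesics of length $\pi$, and they form a family parametrised by $\RP^2$ itself; a pencil of projective lines through a point sweeps out $\RP^2$ once, so the family detects the generator defining sweepouts and hence its square, i.e. it is a $2$-sweepout whose slices all have length $\pi$. With the matching lower bound $\omega_1\ge$ (least length of a nontrivial stationary geodesic network) $=\pi$, this gives $\omega_1(\RP^2,\overline{g})=\omega_2(\RP^2,\overline{g})=\pi$. So the hypothesis provides, on a closed surface of area $2\pi$, the equality $\omega_1(M,g)=\omega_2(M,g)=\pi$ (together with the further coincidences $\omega_k(M,g)=\omega_k(\RP^2,\overline{g})$, available if needed).

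Third, the crux: run the Almgren--Pitts/Marques--Neves min-max at the resonant level $\omega_1(M,g)=\omega_2(M,g)$. The coincidence of two consecutive widths produces a positive-dimensional family of stationary geodesic networks of length $\pi$ on $(M,g)$; being at the lowest width, these should be simple closed geodesics, which along a continuous family are pairwise disjoint and hence foliate an open set $U\subseteq M$. Using ${\rm area}(M,g)=2\pi$ (with the subsequent widths to rule out a thin collapse) one aims to conclude $U=M$, so that $g$ is a Blaschke (Zoll) metric, all of whose geodesics close up with length $\pi$. The only closed surfaces admitting such a metric are $S^2$ and $\RP^2$, and a Zoll metric on $S^2$ with common geodesic length $\pi$ has area $\pi\ne 2\pi$ (Weinstein's volume formula); hence $M\cong\RP^2$. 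Finally, by Green's resolution of the Blaschke conjecture for $\RP^2$, a Blaschke metric on $\RP^2$ is isometric to $(\RP^2,\overline{g})$, completing the proof.

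The main obstacle is the step inside the min-max argument: deducing from the equality of finitely many consecutive widths, plus the exact area $2\pi$, that \emph{every} geodesic of $(M,g)$ closes up with length $\pi$. The resonance $\omega_1=\omega_2$ by itself only guarantees many min-max geodesics at that level, not a genuine Zoll condition; upgrading this requires controlling the regularity and mutual disjointness of these geodesics and proving that the family they span actually foliates $M$ rather than merely covering part of it -- this is where essentially all of the work lies. An alternative I would also try is to establish directly a sharp \emph{min-max Pu inequality} ${\omega_1(M^2,g)}^2\le\tfrac{\pi}{2}\,{\rm area}(M,g)$ for all closed surfaces other than $S^2$, rigid only for the round $\RP^2$, and to dispose of the remaining case $M\cong S^2$ by means of the higher widths.
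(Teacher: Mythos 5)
The first step (Weyl law determining dimension $2$ and area $2\pi$) is correct and agrees with the paper. However, there is a significant error in your computation of the widths of $(\mathbb{RP}^2,\overline{g})$: they equal $2\pi$, not $\pi$, and the paper proves $\omega_1=\omega_2=\omega_3=2\pi$ (Proposition~\ref{widths.projective.plane}). The issue is the cycle space convention. In this theory $\mathcal{Z}_1(\mathbb{RP}^2,\mathbb{Z}_2)$ is the space of mod~two flat \emph{boundaries} $T=\partial U$. A single projective line generates $H_1(\mathbb{RP}^2,\mathbb{Z}_2)$, so it is \emph{not} a boundary and does not lie in $\mathcal{Z}_1$. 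The competitors with small mass are sums of two lines, hence the upper bound $2\pi$ (the paper uses $\Phi([v],[w])=[v]^\perp+[w]^\perp$ on $\mathbb{RP}^2\times\mathbb{RP}^2$). Your pencil of lines through a point is parametrized by $S^1$, so it could at most be a $1$-sweepout, and even that fails because its slices are not boundaries; nothing forces it to detect $\overline{\lambda}^2$. The lower bound $\omega_1\geq 2\pi$ is also subtle: a single geodesic is a stationary integral $1$-varifold of mass $\pi$, but because it cannot be approximated in the ${\bf F}$-metric by boundaries, the Pitts combinatorial argument excludes it as a min-max limit (this is exactly what the paper's proof of Proposition~\ref{widths.projective.plane} does).

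The error propagates through the rest of your argument. The Zoll $S^2$ case you want to rule out would have geodesic length $2\pi$, for which Weinstein's theorem gives area $\geq 4\pi>2\pi$, a genuine contradiction; with your length $\pi$ you would only get area $\geq \pi$, which does \emph{not} contradict area $2\pi$. Similarly, your proposed sharp bound ${\omega_1}^2\leq\tfrac{\pi}{2}\,{\rm area}$ is false for the round $\mathbb{RP}^2$ once the correct value $\omega_1=2\pi$ is used ($4\pi^2\not\leq\pi^2$); Pu's inequality is for the systole, not $\omega_1$, and in fact ${\rm sys}(\mathbb{RP}^2,\overline{g})=\pi=\tfrac{1}{2}\omega_1$.

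On the ``crux'': you correctly identify this as where all the work is, but the heuristic ``a continuous family of min-max geodesics are pairwise disjoint and foliate $M$'' is too optimistic, both because the $1$-widths of $\mathbb{RP}^2$ are realized by configurations of mass $2\pi$ that need not be a single embedded geodesic, and because geodesic networks (figure-eight geodesics, sums of geodesics intersecting at a point, geodesics with multiplicity) are genuinely admissible critical objects in two dimensions. The paper's Theorem~\ref{rigidity.projective.plane2} performs a careful case analysis of the critical set of an optimal $2$-sweepout (Propositions~\ref{no.disjoint.geodesic}--\ref{tangent.cone}) and concludes that either $M$ is a Zoll $S^2$ with geodesic length $2\pi$ (ruled out by Weinstein), or $M\cong\mathbb{RP}^2$ with ${\rm sys}=\pi$ --- \emph{not} that $g$ is Zoll on $\mathbb{RP}^2$. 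The paper then closes with the rigidity case of Pu's isosystolic inequality, which is a much more elementary tool than the Green/Blaschke uniqueness you invoke. If you do want to establish that $g$ is Zoll on $\mathbb{RP}^2$, you would need a separate argument (and the paper explicitly does not claim this; see the remark after Theorem~\ref{rigidity.projective.plane2}).
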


A flat metric on the circle $S^1$ is determined by the dimension and the length, and hence by the volume spectrum. These metrics and the constant curvature metrics on $\mathbb{RP}^2$ are the only examples known with this property up to the results of this paper.

In \cite{chodosh-mantoulidis-widths}, Chodosh and Mantoulidis described the volume spectrum of the constant curvature one  two-sphere $(S^2,\overline{g})$ and hence proved that $a(1)=\sqrt{\pi}$ (the constant $a(n)$ 
is not known for  $n\geq 2$). This follows from the formula $$\omega_k(S^2,\overline{g})=2\pi\lfloor\sqrt{k}\rfloor,$$ 
for  $k\geq 1$. The main tool  to prove this is a min-max theory developed in \cite{chodosh-mantoulidis-widths} 
 which implies that, 
for each $k\geq 1$, the $k$-width of a Riemannian surface is the sum of the lengths of immersed closed geodesics.

In  \cite{marx-kuo}, Marx-Kuo used this to  prove that a Zoll metric $g$ on $S^2$ that is in the connected component of the round metric $\overline{g}$ in the space of Zoll metrics has the same volume spectrum as $(S^2,\overline{g})$. A Zoll metric is a Riemannian metric whose geodesics are all closed, embedded and have the same length, and there are plenty of them on $S^2$ (\cite{guillemin}, \cite{besse}). Hence the surface $(S^2,\overline{g})$ is flexible for the $k$-widths isospectral problem.

	In contrast with these results, both the two-sphere and the real projective plane with their constant curvature one metrics are rigid for the Laplace eigenvalues isospectral problem (\cite{berger-gauduchon-mazet}, Corollary III.E.IV.16).

	The class of Zoll metrics on $S^2$  can be characterized by a different length spectrum. The Lusternik and Schnirelmann min-max values of a Riemannian two-sphere are three positive numbers $\ell_1\leq \ell_2\leq \ell_3$, which are obtained by doing min-max for the length functional on the space of embedded closed curves in $S^2$. The  Lusternik-Schnirelmann theory (\cite{lusternik-schnirelmann}), by using Grayson's results on curve shortening flow (\cite{grayson}), shows that each $l_i$ is the length of an embedded closed geodesic. 
	
	In \cite{mazzucchelli-suhr}, Mazzucchelli and Suhr proved that the Zoll metrics on $S^2$ are characterized by the coincidence of the Lusternik-Schnirelmann min-max values.

	The min-max theory for the area functional in the Simon-Smith variant of the Almgren-Pitts theory finds closed minimal surfaces with genus bounds in closed three-dimensional Riemannian manifolds. If $g$ is a Riemannian metric on the three-sphere $S^3$, the spherical area widths are defined by
$$
\sigma_k(S^3,g)=\inf_{\mathcal{S} \in \mathcal{P}_k^{'}} \sup_{\Sigma \in \mathcal{S}} {\rm area}(\Sigma),
$$
where $1\leq k\leq 4$ and $\mathcal{P}_k'$ is the set of $k$-sweepouts by embedded two-spheres that are continuous in the smooth topology. Notice that $\mathcal{P}_k'\subset \mathcal{P}_k$, and hence $\omega_k(S^3,g)\leq \sigma_k(S^3,g)$ for $1\leq k\leq 4$.
The theory implies that $\sigma_k$ is the area of a closed minimal surface whose components are embedded two-spheres.

In \cite{ambrozio-marques-neves}, the authors found metrics on the sphere $S^{n+1}$ with continuous $(n+1)$-parameter families of closed minimal hypersurfaces, for   $n\geq 2$.  These metrics contain a Zoll  family $\{\Sigma_\sigma\}_{\sigma\in \mathbb{RP}^{n+1}}$ of $n$-dimensional minimal  spheres, i.e.  such that for any $x\in S^{n+1}$ and any $n$-dimensional space $\pi\subset T_xS^{n+1}$ there exists a unique $\sigma\in \mathbb{RP}^{n+1}$ such that $T_x\Sigma_\sigma=\pi$.  The examples found in \cite{ambrozio-marques-neves} include smooth perturbations of the canonical metric. 

In this paper  a Riemannian metric on $S^3$ with a Zoll family of minimal two-spheres will be called a  surface Zoll metric. We prove that this class of metrics can be characterized by the spherical area widths.

\begin{thmB}\label{zoll.characterization}
Let $g$ be a smooth Riemannian metric on $S^3$. Then 
$$
\sigma_1(S^3,g)=\sigma_2(S^3,g)=\sigma_3(S^3,g)=\sigma_4(S^3,g)
$$
if and only if $g$ is a surface Zoll metric.
\end{thmB}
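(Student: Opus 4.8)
The theorem is the two-sphere analogue of the characterization of Zoll metrics on $S^2$ by the equality of their three Lusternik--Schnirelmann widths: in this setting closed geodesics on a surface are replaced by embedded minimal $2$--spheres in $S^3$, and Lusternik--Schnirelmann theory by the Simon--Smith min-max theory. The plan is to prove the two implications separately; throughout, $A$ denotes the common value of $\sigma_1,\dots,\sigma_4$, and $\mathcal{Z}_2(S^3;\mathbb{Z}_2)$ the space of modulo-two $2$--cycles, with $\overline\lambda$ the generator of $H^1(\mathcal{Z}_2(S^3;\mathbb{Z}_2);\mathbb{Z}_2)\cong\mathbb{Z}_2$.

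Assume first that $g$ is a surface Zoll metric, with Zoll family $\{\Sigma_\sigma\}_{\sigma\in\RP^3}$. I would begin by noting that all $\Sigma_\sigma$ have the same area $A$: along any path $\sigma(t)$ the first variation of area vanishes because each $\Sigma_{\sigma(t)}$ is minimal, so $\sigma\mapsto\mathrm{area}(\Sigma_\sigma)$ is locally constant, hence constant on the connected space $\RP^3$. To bound $\sigma_4\le A$ I would first check that the Zoll family is a $3$--sweepout by embedded $2$--spheres of area $A$: restricting $\sigma$ to a projective line $\RP^1\subset\RP^3$ gives, by the covering property, a genuine nontrivial sweepout of $S^3$, so the family detects $\overline\lambda$ and $\overline\lambda^3\neq 0$ on $\RP^3$; one then promotes this to a $4$--sweepout by adjoining one parameter which sweeps each of the two balls bounded by a fixed $\Sigma_{\sigma_0}$ from $\Sigma_{\sigma_0}$ down to a point, arranging that the intermediate surfaces have area at most $A$. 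For the matching inequality $\sigma_1\ge A$, by the Simon--Smith theory $\sigma_1$ is the area of a minimal surface whose components are embedded $2$--spheres, and an integral-geometric comparison with the Zoll family---a Crofton-type inequality using that every tangent plane of $S^3$ is realized by a unique $\Sigma_\sigma$, together with the fact that a min-max surface must spread over $S^3$---shows each such component has area at least $A$; alternatively, this estimate can be extracted from the analysis of the examples in \cite{ambrozio-marques-neves}. Combining, $A\le\sigma_1\le\sigma_2\le\sigma_3\le\sigma_4\le A$.

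For the converse, suppose $\sigma_1=\sigma_2=\sigma_3=\sigma_4=A$. The plan is to run the Simon--Smith min-max for $\sigma_1,\dots,\sigma_4$ on the space of embedded $2$--spheres and to use the three consecutive equalities, via a Lusternik--Schnirelmann-type deformation/cup-length argument in the spirit of Mazzucchelli--Suhr, to force the ``critical set at level $A$'' to be topologically essential: it must carry the cup power $\overline\lambda^3$. Using the genus and multiplicity control of Simon--Smith, the index bounds for the $\sigma_k$--min-max surfaces, and the compactness of minimal surfaces of bounded area and index, I would extract from this critical set a smooth $3$--dimensional family $\mathcal{F}$ of embedded minimal $2$--spheres of area $A$, still carrying $\overline\lambda^3$ (in the round case, $\mathcal{F}$ is the family of great $2$--spheres). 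The essentialness of $\mathcal{F}$ is then used to show that the tangent-plane map $\{(\Sigma,x):\Sigma\in\mathcal{F},\ x\in\Sigma\}\to \mathrm{Gr}_2(TS^3)$ is surjective onto the connected $5$--dimensional Grassmann bundle of tangent $2$--planes of $S^3$; unique continuation for the Jacobi equation (a kernel element of its differential is a Jacobi field vanishing to second order at a point) shows this map is a local diffeomorphism, hence a covering map, and a degree count shows it realizes the uniqueness in the definition of a Zoll family. Reparametrizing $\mathcal{F}$, one concludes that $g$ is a surface Zoll metric.

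The hard part is this converse implication. In contrast with Mazzucchelli--Suhr, who can deformation-retract sublevel sets onto the sets of closed geodesics using Grayson's curve shortening flow, there is no such flow for embedded minimal $2$--spheres, and the Simon--Smith min-max must be controlled without assuming bumpiness: genus degeneration, multiplicity of the limiting varifold, and the a priori possibility that the critical set at level $A$ is merely a non-compact collection of minimal spheres rather than a smooth family all have to be handled before the covering and uniqueness arguments can be carried out---so the main obstacle is implementing the Lusternik--Schnirelmann mechanism and the extraction of the family $\mathcal{F}$ in this infinite-dimensional, singular setting. On the direction from surface Zoll to equality, the two delicate points are the integral-geometric lower bound $\sigma_1\ge A$ and keeping the auxiliary sweepouts of the complementary balls below area $A$.
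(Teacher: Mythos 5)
Your two implications align with the paper's decomposition into Theorems \ref{zoll.characterization.1} and \ref{zoll.characterization.2}, and for the converse the skeleton you propose---extract a three-parameter family of index-one minimal spheres and show the tangent-plane map $\mathcal{P}Z'\to Gr_2(S^3)$ is a local diffeomorphism, hence a covering, hence a homeomorphism by simple connectivity---is exactly the paper's endgame in Section 6. But both directions diverge in important ways from what actually carries the proof.

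For the forward direction, the paper does not build an explicit $4$-sweepout of controlled area, nor does it use an integral-geometric lower bound. It runs Simon--Smith min-max directly: the theory with index estimates produces, for each $k=1,\dots,4$, a disjoint family of embedded minimal $2$-spheres with integer multiplicities realizing $\sigma_k$; G\'alvez--Mira's uniqueness theorem then forces each such sphere to belong to the Zoll family (so it has area $w$); Proposition \ref{zoll.properties} shows any two Zoll spheres intersect (so the disjoint family has a single component, of index one); and Wang--Zhou forces multiplicity one, giving $\sigma_k=w$ at a stroke. Your proposed upper-bound construction has a topological gap: attaching a single path that shrinks a fixed $\Sigma_{\sigma_0}$ to a point onto $\RP^3$ yields a wedge with vanishing $H^4$, not a $4$-sweepout---one would need to attach a genuine $4$-cell along the $S^3\to\RP^3$ double cover, shrinking every $\Sigma_\sigma$ simultaneously, and then show the intermediate areas stay below $A$ (nontrivial without a flow). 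Your Crofton-type lower bound is only gestured at, and the naive intersection count does not apply because $S\cap\Sigma_\sigma$ is a curve rather than a finite set; the paper sidesteps all of this with G\'alvez--Mira.

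For the converse, you correctly identify that the main obstacle is extracting a smooth three-parameter family from the critical set without a parabolic flow to deformation-retract sublevel sets. That is precisely where the paper does most of its work: Propositions \ref{no.disjoint}--\ref{limit.connected.multiplicity.one} show each critical varifold is a single multiplicity-one embedded minimal sphere of Morse index exactly one and nullity between one and three, with no disjoint minimal surface; Propositions \ref{ruling.out.1.0} and \ref{ruling.out.2} (via a Mayer--Vietoris argument on a neighborhood of the image in $\mathcal{S}^{k,\alpha}$) let one deform an optimal $3$-sweepout away from ``degenerate'' minimal spheres that are not interior to a local three-parameter family $\mathcal{G}$; and Proposition \ref{approximation.final} pushes the sweepout entirely into $\mathcal{G}$, which then must be a closed $3$-manifold carrying $\overline\lambda^3$. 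Your sketch passes from ``the critical set is essential'' directly to ``there is a smooth family $\mathcal{F}$,'' and that passage is where essentially all the technical content of the theorem lives.
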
 

If $g$ is a Riemannian metric on the three-dimensional projective space $\mathbb{RP}^3$, one can consider smooth 
sweepouts by embedded projective planes. In this case, we define the projective area widths 
$$
\sigma_k(\mathbb{RP}^3,g)= \inf_{\mathcal{S} \in \mathcal{P}_k^{'}} \sup_{\Sigma \in \mathcal{S}} {\rm area}(\Sigma),
$$
where $0\leq k\leq 3$ and $\mathcal{P}_k'$ is the set of $k$-sweepouts by embedded projective planes  that are continuous in the smooth topology. 

The systole of a Riemannian manifold $(X,h)$, ${\rm sys}(X,h)$, is defined as the least length of a non-contractible loop in $X$.

We also prove an optimal area systolic inequality for metrics on $\mathbb{RP}^3$:
\begin{thmC}\label{systolic.inequality} Let $g$ be a smooth Riemannian metric on $\RP^3$. Then 
$$  {\rm sys}^2(\RP^3,g)\leq \frac{\pi}{2}\,\sigma_2(\RP^3,g),$$
with equality if and only if $g$ has constant sectional curvature.
\end{thmC}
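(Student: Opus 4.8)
The plan is to combine the min-max existence theory with Pu's systolic inequality for the projective plane. By the min-max theory in the variant adapted to sweepouts by embedded projective planes, $\sigma_2(\RP^3,g)$ is the area, counted with multiplicities, of a smooth closed embedded minimal surface $\Sigma$ in $(\RP^3,g)$ all of whose connected components are two-spheres or projective planes. Since $\pi_2(\RP^3)=0$, every embedded two-sphere in $\RP^3$ is null-homologous mod two. On the other hand, a closed surface in $S^3$ separates (as $H_2(S^3;\mathbb{Z}/2)=0$), hence is two-sided, hence orientable; so $\RP^2$ does not embed in $S^3$, which forces the preimage of any embedded $\RP^2\subset\RP^3$ under $S^3\to\RP^3$ to be a connected embedded $S^2$. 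Thus every embedded $\RP^2$ in $\RP^3$ is $\pi_1$-injective and non-separating, so it represents the non-zero class in $H_2(\RP^3;\mathbb{Z}/2)$. As $[\Sigma]$ is this non-zero class, $\Sigma$ must contain a projective-plane component $\Sigma_0$, and $\mathrm{area}(\Sigma_0,g|_{\Sigma_0})\leq \sigma_2(\RP^3,g)$.

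Next I would apply Pu's inequality $\mathrm{sys}^2(\RP^2,h)\leq \tfrac{\pi}{2}\,\mathrm{area}(\RP^2,h)$, with equality if and only if $h$ has constant curvature, to the induced metric $g|_{\Sigma_0}$. Since $\Sigma_0\hookrightarrow\RP^3$ is $\pi_1$-injective, a shortest non-contractible loop of $(\Sigma_0,g|_{\Sigma_0})$ is non-contractible in $\RP^3$, so $\mathrm{sys}(\RP^3,g)\leq\mathrm{sys}(\Sigma_0,g|_{\Sigma_0})$. Chaining the estimates,
$$\mathrm{sys}^2(\RP^3,g)\leq \mathrm{sys}^2(\Sigma_0,g|_{\Sigma_0})\leq \tfrac{\pi}{2}\,\mathrm{area}(\Sigma_0,g|_{\Sigma_0})\leq \tfrac{\pi}{2}\,\sigma_2(\RP^3,g),$$
which is the asserted inequality.

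For rigidity, if equality holds then all three inequalities above are equalities. Equality in Pu's inequality forces $(\Sigma_0,g|_{\Sigma_0})$ to be a round projective plane of some curvature $\kappa>0$, so $\mathrm{area}(\Sigma_0,g|_{\Sigma_0})=2\pi/\kappa$ and $\mathrm{sys}(\Sigma_0,g|_{\Sigma_0})=\pi/\sqrt{\kappa}$; moreover $\mathrm{area}(\Sigma_0,g|_{\Sigma_0})=\sigma_2(\RP^3,g)$, so $\Sigma$ has no other components and $\Sigma_0$ has multiplicity one, and $\mathrm{sys}(\RP^3,g)=\pi/\sqrt{\kappa}$. Every closed geodesic $\gamma$ of the round $(\Sigma_0,g|_{\Sigma_0})$ is non-contractible in $\Sigma_0$, hence in $\RP^3$, and has length $\pi/\sqrt{\kappa}=\mathrm{sys}(\RP^3,g)$; being a globally length-minimizing non-contractible loop of $(\RP^3,g)$, it is a closed geodesic of $(\RP^3,g)$. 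Comparing the decompositions of $\nabla_{\gamma'}\gamma'$ in $\RP^3$ and in $\Sigma_0$ gives $\mathrm{II}_{\Sigma_0}(\gamma',\gamma')=0$ along every such $\gamma$; since through each point of $\Sigma_0$ there is such a geodesic in every tangent direction, polarization yields $\mathrm{II}_{\Sigma_0}\equiv 0$, so $\Sigma_0$ is totally geodesic, and the Gauss equation gives $\sec_g(T_x\Sigma_0)=\kappa$ for every $x\in\Sigma_0$.

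The main obstacle is to upgrade this to $\sec_g\equiv\kappa$. Here the key observation is that the preceding two paragraphs apply verbatim to \emph{any} embedded minimal projective plane $\Sigma'\subset(\RP^3,g)$ with $\mathrm{area}(\Sigma',g|_{\Sigma'})\leq\sigma_2(\RP^3,g)$: under the equality hypothesis such a $\Sigma'$ is forced to be totally geodesic, round of curvature $\kappa$, and of area exactly $\sigma_2(\RP^3,g)$. Since $\sigma_0(\RP^3,g)\leq\sigma_1(\RP^3,g)\leq\sigma_2(\RP^3,g)$ and the widths $\sigma_0,\sigma_1$ are likewise attained by embedded minimal projective planes of area at most $\sigma_2(\RP^3,g)$, we get $\sigma_0(\RP^3,g)=\sigma_1(\RP^3,g)=\sigma_2(\RP^3,g)=2\pi/\kappa$, all realized by totally geodesic round projective planes. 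I would then argue, by a Lusternik--Schnirelmann-type analysis of the min-max construction in the spirit of the rigidity part of Theorem B and of \cite{ambrozio-marques-neves}, that this coincidence of widths forces a Zoll-type continuous family of totally geodesic round projective planes of curvature $\kappa$ — in particular totally geodesic surfaces through every point in enough distinct tangent directions — after which a direct computation with the Gauss equation (or Cartan's characterization of constant-curvature spaces by the abundance of totally geodesic hypersurfaces) yields $\sec_g\equiv\kappa$. Conversely, when $g$ has constant curvature $\kappa$ one has $\mathrm{sys}(\RP^3,g)=\pi/\sqrt{\kappa}$, while the totally geodesic projective planes form a Zoll family and give sweepouts of constant area $2\pi/\kappa$, so $\sigma_2(\RP^3,g)\leq 2\pi/\kappa$; the inequality already proved then gives $\sigma_2(\RP^3,g)=2\pi/\kappa$ and hence equality. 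Turning the passage from finitely many totally geodesic round projective planes to the full Zoll family into a rigorous argument is the delicate point.
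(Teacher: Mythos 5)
Your proof of the inequality is correct in spirit but takes an unnecessarily long detour. The paper's argument for the inequality is essentially a one-liner: since a constant map is a $0$-sweepout, $\sigma_0(\RP^3,g)=\inf_{\Sigma\in\mathcal{S}}\mathbf{M}(\Sigma)$, and Pu's inequality applied to each embedded $\RP^2\subset\RP^3$ (as in \cite{bben}) gives $\mathrm{area}(\Sigma)\geq \tfrac{2}{\pi}\mathrm{sys}^2(\Sigma,g_\Sigma)\geq \tfrac{2}{\pi}\mathrm{sys}^2(\RP^3,g)$. Then $\sigma_2\geq\sigma_0\geq\tfrac{2}{\pi}\mathrm{sys}^2(\RP^3,g)$. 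You instead run a min-max existence theorem for sweepouts by projective planes to produce a minimal surface realizing $\sigma_2$, discuss the possible topology of its components, and argue a projective-plane component survives. None of that machinery is established in the paper for the projective-plane variant (and it is not needed), so this step of your proposal rests on a result you would have to prove separately.

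For the rigidity, your outline is consistent with the strategy but has a genuine gap, which you yourself flag: ``Turning the passage from finitely many totally geodesic round projective planes to the full Zoll family into a rigorous argument is the delicate point.'' The paper fills this gap concretely and you do not. The paper observes first that equality forces $\sigma_2=\sigma_0$, so the minimal projective planes of area $\sigma_0$ are in fact area-minimizing in their isotopy class, hence stable, hence form a compact family $\mathcal{V}$ (Sharp plus Allard). Crucially, Proposition \ref{plane.constant} then shows that for \emph{every} $p\in\RP^3$ and every $v\in T_p\RP^3$ there is some area-minimizing projective plane $\Sigma$ with $|\Sigma|\in\mathcal{V}$ and $v\in T_p\Sigma$; this is extracted from the fact that $\{\Phi_i\}$ is a two-sweepout (so some slice hits two prescribed small balls), a minimizing-sequence compactness argument in the sense of \cite{msy}, and a limiting argument along $p_2=\exp_p(tv)\to p$. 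Because equality in Pu forces each such $\Sigma$ to be round of curvature $\kappa$ and its geodesics to be systoles of $(\RP^3,g)$ (hence ambient geodesics), every tangent direction in $\RP^3$ is tangent to a closed ambient geodesic of length $\pi/\sqrt{\kappa}$, so $(\RP^3,g)$ is Zoll, and Besse's theorem (Appendix D) gives constant sectional curvature. Your appeal to the Gauss equation or Cartan is a plausible alternative endgame once you have enough totally geodesic projective planes, but the heart of the matter is producing a projective plane through every tangent direction, and the Lusternik--Schnirelmann analogy alone does not yield it; you need the explicit two-point hitting argument and the compactness of the stable minimizers that make $\mathcal{V}$ a good family to extract limits from.
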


The inequality of Theorem C follows since $ \text{sys}^2(\RP^3,g)\leq \frac{\pi}{2}\,\sigma_0(\RP^3,g)$. Theorem C
is sharp also in the sense that there are Riemannian metrics $g$ on $\mathbb{RP}^3$ with  ${\rm sys}^2(\RP^3,g)=\frac{\pi}{2}\,\sigma_1(\RP^3,g)$ that do not have constant sectional curvature.

\subsection{Main ideas and plan of the paper} The Weyl law for the volume spectrum implies that a compact Riemannian manifold with the same volume spectrum of $(\mathbb{RP}^2,\overline{g})$ is two-dimensional and has area $2\pi$. Since the first three widths of $(\mathbb{RP}^2,\overline{g})$ are equal to $2\pi$ (Proposition \ref{widths.projective.plane}), the strategy is to analyze the implications of the coincidence of $\{\omega_i\}_{1\leq i\leq 3}$ (Theorem \ref{rigidity.projective.plane2}).

	The heuristics of Lusternik-Schnirelmann theory suggests that the equality $\omega_1=\omega_2=\omega_3$ on a surface implies the existence of a two-sweepout by critical points of the length functional. Notice that the foliation $\{\gamma_s\}_{s\in S^1}$ of flat square two-tori and Klein bottles by simple closed two-sided geodesics generates such a sweepout, consisting of cycles $T_{\{s,t\}}=[\gamma_{s}]+[\gamma_{t}]$, for any unordered pair of points $s,t\in S^1$. A key step of our analysis is to rule out a case like this, by showing the existence of an optimal two-sweepout whose critical set accumulates only on stationary varifolds that are not  of the form $\gamma_s+\gamma_t=\partial U$ with $area(U)=\pi$ and $\gamma_s$, $\gamma_t$ elements of some foliation as above if it exists. See Proposition \ref{modification.prop}, and how this property is crucially used in Proposition \ref{no.disjoint.geodesic}.
	
	Then we show that the critical set of this two-sweepout contains only stationary integral varifolds that are supported on either a simple closed geodesic with multiplicity one, or on a figure-eight closed geodesic with multiplicity one, or on the sum of two simple closed geodesics with multiplicity one that intersect at a single point, or on a simple closed geodesic with multiplicity two (Propositions \ref{no.disjoint.geodesic}, \ref{connected.support.geodesic}, \ref{multiplicity.edges}, \ref{vertex} and \ref{tangent.cone}). While the second case is not possible, we argue that the first case only happens on a Zoll two-sphere whose geodesics have length $2\pi$, and the last two cases only happen on a projective plane whose systole is $\pi$. By a theorem of Weinstein \cite{weinstein}, the Zoll case is not compatible with area $2\pi$. Then the surface is a projective plane and  the equality case in Pu's systolic inequality \cite{pu} implies rigidity.

	The analysis of the equality of the spherical area widths of $S^3$ is similar and somehow simpler, because the critical set in this case consists of stationary integral varifolds that are supported on the disjoint union of embedded two-spheres with Morse index less than or equal to one, and because there is a bound on the maximum dimension of a family of embedded minimal two-spheres with index one in $S^3$ (\cite{cheng}). After showing that there are optimal three-sweepouts arising from the equality of the four spherical area widths that avoid minimal two-spheres  that are not part of a three-manifold of embedded minimal two-spheres (Proposition \ref{ruling.out.2}), we construct a three-sweepout that is a Zoll family of minimal two-spheres.
	
	The proof of Theorem C is by arguing   that  the equality case implies that  $\sigma_0(\mathbb{RP}^3,g)=\sigma_2(\mathbb{RP}^3,g)$. It follows by using  Lusternik-Schnirelmann  that there is a two-sweepout
	of $(\mathbb{RP}^3,g)$ by totally geodesic, constant curvature, embedded projective planes.  This implies that the metric $g$ is Zoll and hence has constant sectional curvature by Besse \cite{besse}.
	
	The plan of the paper is the following. In Section 2, we discuss some preliminaries. In  Section 3, we describe the first  widths of the canonical metric of $\mathbb{RP}^2$ and of  Zoll metrics on the sphere $S^2$. In Section 4, we prove Theorem A on the rigidity of the canonical projective plane for the volume spectrum. In Section 5, we prove the if part of Theorem B on the characterization of surface Zoll metrics by the spherical area widths. In Section 6, we finish the proof of Theorem B.  In Section 7, we prove Theorem C on the area systolic inequality for metrics on the three-dimensional projective space.

\section{Preliminaries}

Let $(M^{n+1},g)$ be a compact Riemannian manifold.
Let $\mathcal{Z}_n(M^{n+1},\mathbb{Z}_2)$ be the space of modulo two flat $n$-dimensional chains $T=\partial U$, where $U$ is a modulo two flat $(n+1)$-dimensional  chain  with support in $M$ ($U\in {\bf I}_{n+1}(M^{n+1},\mathbb{Z}_2)$). We denote by ${\bf I}_k(M,\mathbb{Z}_2)$ the space of $k$-dimensional modulo two flat chains with support in $M$. 
The  spaces $\mathcal{Z}_n(M^{n+1},\mathbb{Z}_2)$,  ${\bf I}_k(M,\mathbb{Z}_2)$ are topologized by the flat metric $\mathcal{F}$.

We denote by $\mathcal{V}_n(M)$ 
the closure of the space of $n$-dimensional rectifiable varifolds with support in $M$ in the varifold topology. The support of $V\in \mathcal{V}_n(M)$ is denoted by ${\rm spt}(V)$.  The varifold induced by a flat chain $T$ is denoted by $|T|$, and its support by ${\rm spt}(T)$. 
We denote by $\mathcal{Z}_n(M^{n+1},{\bf F}, \mathbb{Z}_2)$ the  space of modulo two boundaries with the ${\bf F}$-metric:
${\bf F}(S,T)=\mathcal{F}(S,T)+{\bf F}(|S|,|T|)$.  The inclusion
$\mathcal{Z}_n(M^{n+1},{\bf F}, \mathbb{Z}_2)\rightarrow \mathcal{Z}_n(M^{n+1},\mathbb{Z}_2)$ is a continuous map.  The mass of a chain $T$ is denoted by ${\bf M}(T)$, and the mass of a varifold $V$ is denoted by $||V||(M)$. We refer the reader to Section 2 of \cite{marques-neves-lower-bound} for a discussion of these definitions.

The space $\mathcal{Z}_n(M^{n+1},\mathbb{Z}_2)$ is weakly homotopically equivalent to $\mathbb{RP}^\infty$. Therefore the cohomology with $\mathbb{Z}_2$ coefficients is
$$
H^k(\mathcal{Z}_n(M^{n+1},\mathbb{Z}_2), \mathbb{Z}_2)=\mathbb{Z}_2=\{0, \overline{\lambda}^k\},
$$
where $\overline{\lambda}$ is the generator of $H^1(\mathcal{Z}_n(M^{n+1},\mathbb{Z}_2), \mathbb{Z}_2)=\mathbb{Z}_2$ and  $\overline{\lambda}^k=\overline{\lambda} \smile \cdots \smile \overline{\lambda}$ is the $k$-th cup power of $\overline{\lambda}$.

A $k$-sweepout is a continuous map $\Phi:X \rightarrow \mathcal{Z}_n(M^{n+1},{\bf F},\mathbb{Z}_2)$, where  $X$ is a finite-dimensional compact simplicial complex that depends on $\Phi$, such that
$$
\Phi^*(\overline{\lambda}^k)\neq 0\in H^k(X,\mathbb{Z}_2).
$$
We denote $\Phi \in \mathcal{P}_k$. 

For any integer $k\geq 1$, the $k$-width of $(M^{n+1},g)$ is the min-max number
$$
\omega_k(M,g)= \inf_{\Phi\in \mathcal{P}_k} \sup_{x\in {\rm dmn}(\Phi)} {\bf M}(\Phi(x)),
$$
where  ${\rm dmn}(\Phi)$ is the domain of $\Phi$.  The sequence $\{\omega_k(M,g)\}_k$ is called the volume spectrum of $(M,g)$.  The volume spectrum satisfies a Weyl law (\cite{liokumovich-marques-neves}):
\begin{equation}\label{weyl-law}
\lim_{k\rightarrow \infty} \omega_k(M,g) k^{-\frac{1}{n+1}}=a(n) {\rm vol}(M,g)^\frac{n}{n+1},
\end{equation}
where $a(n)$ is a positive dimensional constant. In \cite{chodosh-mantoulidis-widths}, it was proven that $a(1)=\sqrt{\pi}$.  The constant $a(n)$ for  $n\geq 2$
is not known. 

If  $\{\Phi_i\}_i \subset  \mathcal{P}_k$ is a sequence of $k$-sweepouts  such that 
$$
\sup_{x \in {\rm dmn}(\Phi_i)} {\bf M}(\Phi_i(x)) \rightarrow \omega_k(M,g),
$$
we say that the sequence $\{\Phi_i\}_i$ is optimal for $\omega_k$.
The image set of $\{\Phi_i\}$ is the set ${\bf \Lambda}(\{\Phi_i\})$ of varifolds $V\in \mathcal{V}_n(M)$ such that there are sequences $\{j\}\subset \{i\}$ and $\{x_j\in {\rm dmn}(\Phi_{j})\}$ with
$$
{\bf F}(|\Phi_{j}(x_j)|,V)\rightarrow 0.
$$
The critical set of $\{\Phi_i\}$ is the set ${\bf C}(\{\Phi_i\})$ of varifolds $V\in {\bf \Lambda}(\{\Phi_i\})$ such that
$||V||(M)=\omega_k(M,g)$.
If $(n+1)=2$, the Almgren-Pitts min-max theory implies that there is a stationary geodesic network with integer multiplicities $V\in {\bf C}(\{\Phi_i\})$.  If $(n+1)\geq 3$, the same min-max theory produces  $V\in {\bf C}(\{\Phi_i\})$ the varifold of  a  closed minimal hypersurface,  smoothly embedded outside a set of codimension seven, with integer multiplicities. 

The Simon-Smith variant of min-max theory is based on  sweepouts of smooth embedded surfaces (\cite{smith}, \cite{colding-delellis}, \cite{haslhofer-ketover}). Let $\mathcal{S}$ be the space  of smooth embedded two-spheres in $S^3$, say $\mathcal{\tilde{S}}$, with a point $\ast$ added. The open sets of $\mathcal{\tilde{S}}$ in the smooth topology together with the sets
$$
\{\ast\} \cup \{\Sigma \in \mathcal{\tilde{S}}:  {\rm area}(\Sigma,can)<\delta\},
$$
for $\delta>0$, form a basis for a topology $\mathcal{T}$ on $\mathcal{S}$. This is a modification of the space used in \cite{haslhofer-ketover}. 

 The inclusion $i:\mathcal{S}\rightarrow \mathcal{Z}_2(S^3,{\bf F},\mathbb{Z}_2)$, which sends $\ast$ to the trivial cycle, is a continuous map. 
A continuous map $\Phi:X\rightarrow \mathcal{S}$ is called  a smooth $k$-sweepout (by spheres) if the composition $i\circ \Phi$ is a $k$-sweepout. We denote $\Phi \in \mathcal{P}_k'$. Notice that the map
$$
\Psi_{\mathcal{S}}([a_0:a_1:a_2:a_3:a_4])= \{x\in S^3:a_0+a_1x_1+a_2x_2+a_3x_3+a_4x_4=0\}
$$
defines a smooth 4-sweepout $\Psi_{\mathcal{S}}:\mathbb{RP}^4 \rightarrow \mathcal{S}$.

The spherical area  widths of $(S^3,g)$ are the min-max numbers
$$
\sigma_k(S^3,g)=\inf_{\Phi\in \mathcal{P}_k'} \sup_{x\in {\rm dmn}(\Phi)} {\bf M}(\Phi(x)),
$$
for $1\leq k\leq 4$. Notice that $\mathcal{P}_k'\subset \mathcal{P}_k$ (if we identify $\Phi$ with $i\circ \Phi$), hence $\omega_k(S^3,g)\leq \sigma_k(S^3,g)$ for $1\leq k\leq 4$.
Also $\mathcal{P}_k'\subset \mathcal{P}_l'$  for $l\leq k$. 

Let $\overline{g}$ denote the canonical constant curvature one metric on $S^3$. Then 
$\sigma_1(S^3,\overline{g})=\sigma_2(S^3,\overline{g})=\sigma_3(S^3,\overline{g})=\sigma_4(S^3,\overline{g})=4\pi$.

Hatcher proved that $\tilde{\mathcal{S}}$ deformation retracts onto the space of equators (see appendix of \cite{hatcher}). The space of equators is homeomorphic to $\mathbb{RP}^3$.

\begin{prop}
There are no smooth $k$-sweepouts of $S^3$ by spheres for $k\geq 5$.
\end{prop}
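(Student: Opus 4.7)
The plan is to prove that $H^k(\mathcal{S};\mathbb{Z}_2) = 0$ for every $k \geq 5$. Granting this, any continuous $\Phi: X \to \mathcal{S}$ satisfies
\[
(i\circ\Phi)^*(\overline{\lambda}^k) \;=\; \Phi^*\bigl((i^*\overline{\lambda})^k\bigr) \;=\; 0,
\]
because the cup power $(i^*\overline{\lambda})^k$ already vanishes in the trivial group $H^k(\mathcal{S};\mathbb{Z}_2)$. Hence $i\circ\Phi$ cannot be a $k$-sweepout of $S^3$, so the proposition follows.

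To obtain the vanishing, I would identify the weak homotopy type of $\mathcal{S}$ with $\mathbb{RP}^4$, extending Hatcher's theorem $\tilde{\mathcal{S}} \simeq \mathbb{RP}^3$. The topology $\mathcal{T}$ is designed so that the neighborhood $\{*\} \cup \tilde{\mathcal{S}}_{<\epsilon}$ of $*$ behaves as a cone with apex $*$ over the space $\tilde{\mathcal{S}}_{<\epsilon}$ of small-area spheres; up to weak equivalence this realizes $\mathcal{S}$ as the mapping cone of the inclusion $\tilde{\mathcal{S}}_{<\epsilon} \hookrightarrow \tilde{\mathcal{S}}$. The space $\tilde{\mathcal{S}}_{<\epsilon}$ deformation retracts onto the small round spheres, parametrized by their centers in $S^3$, hence is weakly equivalent to $S^3$. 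Tracking Hatcher's retraction on this family, a small sphere centered at $p \in S^3$ is taken to the equator with poles $\pm p$, so the induced map $S^3 \to \tilde{\mathcal{S}} \simeq \mathbb{RP}^3$ is the canonical double cover. Attaching a $4$-cell to $\mathbb{RP}^3$ along the double cover produces $\mathbb{RP}^4$, giving
\[
\mathcal{S} \;\simeq\; \mathbb{RP}^3 \cup_{S^3 \to \mathbb{RP}^3} D^4 \;=\; \mathbb{RP}^4,
\]
and $H^k(\mathbb{RP}^4;\mathbb{Z}_2) = 0$ for $k \geq 5$ as required. As a consistency check, the $4$-sweepout $\Psi_{\mathcal{S}}:\mathbb{RP}^4\to\mathcal{S}$ from the preceding paragraph realizes this weak equivalence.

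The hard part is the retraction of $\tilde{\mathcal{S}}_{<\epsilon}$ onto the small round spheres. A smoothly embedded $2$-sphere of small area in $S^3$ need not be close to a round sphere in the smooth topology (for instance, a long thin tube can have small area and large diameter), so the deformation cannot be produced by direct projection. A viable approach is to use Alexander's theorem to identify each small-area sphere with the smaller of the two $3$-balls it bounds (uniquely determined for sufficiently small area by the isoperimetric inequality in $S^3$) and then to shrink this ball continuously toward its center of mass. The deformation must be compatible with $\mathcal{T}$, in which every sequence of spheres with vanishing area converges to $*$ regardless of their spatial location; verifying this compatibility, and the companion claim that $\{*\}\cup\tilde{\mathcal{S}}_{<\epsilon}$ is weakly contractible (so that the neighborhood of $*$ really behaves as a cone), constitutes the bulk of the technical work.
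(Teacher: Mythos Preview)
Your strategy---compute the full weak homotopy type of $\mathcal{S}$ and read off the vanishing of $H^k$ for $k\geq 5$---is a genuinely different route from the paper's, and if completed would yield more: it would identify $\mathcal{S}\simeq\mathbb{RP}^4$ rather than merely kill $\overline{\lambda}^5$. But as you yourself flag, the hard part is exactly the weak contractibility of $\{*\}\cup\tilde{\mathcal{S}}_{<\epsilon}$ and the retraction of $\tilde{\mathcal{S}}_{<\epsilon}$ onto an $S^3$ of round spheres. The tube example you mention is a real obstruction, and the proposed fix via Alexander balls and centers of mass is a sketch, not a proof (continuity in the smooth topology as the area degenerates is delicate, and the center of mass of a long thin ball need not sit near the tube). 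So as written, the argument has a genuine gap at precisely the point you identify.

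The paper sidesteps all of this with a much cheaper Lusternik--Schnirelmann style cover argument, never touching the homotopy type of $\mathcal{S}$ near $*$. Given $\Phi:X\to\mathcal{S}$, set
\[
Z_1=\{x\in X:\operatorname{area}(\Phi(x),\overline{g})<\delta\},\qquad Z_2=\Phi^{-1}(\tilde{\mathcal{S}}),
\]
with $0<\delta<\omega_1(S^3,\overline{g})$. On $Z_1$ the pullback $(i\circ\Phi)^*(\overline{\lambda})$ vanishes, because every loop in $Z_1$ maps to cycles of mass below $\omega_1$ and hence cannot be a $1$-sweepout. On $Z_2$ the map factors through $\tilde{\mathcal{S}}$, which by Hatcher retracts onto $\mathbb{RP}^3$, so $(i\circ\Phi)^*(\overline{\lambda}^4)$ vanishes there. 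Since $X=Z_1\cup Z_2$, the relative cup product gives $(i\circ\Phi)^*(\overline{\lambda}^5)=(i\circ\Phi)^*(\overline{\lambda})\smile(i\circ\Phi)^*(\overline{\lambda}^4)=0$ on $X$. The only external inputs are Hatcher's retraction (which you also invoke) and the positivity of $\omega_1$; no analysis of small-area spheres or of the neighborhood of $*$ is required.
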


\begin{proof}
Let $\Phi:X \rightarrow \mathcal{S}$ be a continuous map.  If $0<\delta<\omega_1(S^3,\overline{g})$,  the restriction of $(i \circ\Phi)^*(\overline{\lambda})$ to the open set
$$
Z_1=\{x \in X: {\rm area}(\Phi(x),can)<\delta\}
$$
vanishes in cohomology. The deformation retraction of $\tilde{\mathcal{S}}$ onto an $\mathbb{RP}^3$ implies that the restriction of $(i \circ\Phi)^*(\overline{\lambda}^4)$ to the open set
$$
Z_2=\{x\in X: \Phi(x)\in \tilde{\mathcal{S}}\}
$$
also vanishes in cohomology. Therefore, by properties of the cup product and since $X=Z_1\cup Z_2$,
$$
(i \circ\Phi)^*(\overline{\lambda}^5)_{|X}=0\in H^5(X,\mathbb{Z}_2).
$$
This proves the proposition.
\end{proof}

The Simon-Smith theory gives    genus bounds (\cite{smith}, \cite{delellis-pellandini}, \cite{ketover}) which imply that for each $1\leq k\leq 4$, there exist a disjoint family $\{\tilde{\Sigma}_1^{(k)}, \dots, \tilde{\Sigma}_{q_k}^{(k)}\}$ of smooth, embedded, minimal two-spheres for $(S^3,g)$ and $\{m_1^{(k)}, \dots, m_{q_k}^{(k)}\}\subset \mathbb{N}$ such that
$$
\sigma_k(S^3,g)=\sum_{i=1}^{q_k}m^{(k)}_{i}{\rm area}(\tilde{\Sigma}^{(k)}_{i}).
$$
The Morse index estimates of \cite{marques-neves-index}  also give
$$
\sum_{i=1}^{q_k} {\rm index}(\tilde{\Sigma}_i^{(k)})\leq k.
$$
These results follow from the corresponding existence theorem, with upper index bounds, for min-max invariants over homotopy classes.     This uses Sharp's compactness theorem (\cite{sharp}) and the fact that   $\sigma_k$ is the limit of  min-max invariants of a sequence of homotopy classes with $k$-dimensional domains. By  Wang-Zhou \cite{wang-zhou},    if $\tilde{\Sigma}^{(k)}_{i}$ is unstable then $m^{(k)}_{i}=1$.
\medskip

We now state and prove some results which will be used in the paper.
 The next proposition follows by the maximum principle after taking limits.

\begin{prop}\label{intersects.must.be.contained.geodesic}
Let $(M^2,g)$ be a compact Riemannian surface and $\gamma\subset M$ be a  smooth, closed, embedded geodesic. For each $\lambda, \eta>0$,  there exists $\tilde{\eta}>0$ such that every stationary integral one-varifold $V$ with ${\bf M}(V)\leq \lambda$, and with ${\rm spt}(V)$ connected, disjoint from $\gamma$,  intersecting the tubular neighborhood $B_{\tilde \eta}(\gamma)$, must satisfy ${\rm spt}(V) \subset B_\eta(\gamma)$.
\end{prop}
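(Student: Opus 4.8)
The plan is to prove the statement by contradiction and compactness. Suppose the assertion fails: then there is a compact surface $(M^2,g)$, a smooth closed embedded geodesic $\gamma$, and constants $\lambda,\eta>0$ for which no $\tilde\eta$ works. This yields a sequence $\tilde\eta_i\to 0$ and stationary integral one-varifolds $V_i$ with ${\bf M}(V_i)\le\lambda$, with ${\rm spt}(V_i)$ connected, disjoint from $\gamma$, meeting $B_{\tilde\eta_i}(\gamma)$, but with ${\rm spt}(V_i)\not\subset B_\eta(\gamma)$. I would first apply the compactness theorem for stationary integral varifolds of bounded mass: after passing to a subsequence, $V_i\to V_\infty$ in the varifold topology, where $V_\infty$ is a stationary integral one-varifold with ${\bf M}(V_\infty)\le\lambda$ (a stationary geodesic network, by the structure theory in dimension one). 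Since each $V_i$ meets $B_{\tilde\eta_i}(\gamma)$ and $\tilde\eta_i\to 0$, there are points $p_i\in{\rm spt}(V_i)\cap B_{\tilde\eta_i}(\gamma)$, and up to a further subsequence $p_i\to p\in\gamma$; by upper semicontinuity of the support (points of positive density pass to the limit via the monotonicity formula), $p\in{\rm spt}(V_\infty)$, so ${\rm spt}(V_\infty)\cap\gamma\neq\emptyset$.

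The second step is the maximum principle. Near the point $p\in\gamma\cap{\rm spt}(V_\infty)$, $\gamma$ is a smooth embedded geodesic, hence a (minimal, i.e. stationary) hypersurface of the surface $M^2$. Since $V_\infty$ is a stationary integral varifold touching $\gamma$ at $p$, the maximum principle for stationary varifolds — in the strong form valid when one of the two objects is a smooth minimal hypersurface, e.g. the Solomon–White or Simon maximum principle, or in this $1$-dimensional setting simply the fact that a stationary geodesic network tangent to or touching a geodesic from one side must contain it locally — forces ${\rm spt}(V_\infty)$ to contain an arc of $\gamma$ through $p$. A standard continuation argument then shows $\gamma\subset{\rm spt}(V_\infty)$: the set of points of $\gamma$ lying in ${\rm spt}(V_\infty)$ is closed, and the maximum principle shows it is open in $\gamma$, so it is all of the connected curve $\gamma$.

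The third step extracts the contradiction. I claim that in fact ${\rm spt}(V_\infty)=\gamma$. Indeed, I would use that each $V_i$ has connected support disjoint from $\gamma$: pass to the ``lifted'' varifolds $W_i$ in the universal cover or, more simply, observe that ${\rm spt}(V_i)$ lies entirely in $M\setminus\gamma$, so its limit ${\rm spt}(V_\infty)$ lies in $\overline{M\setminus\gamma}$ and, combined with the connectedness and the fact that any stationary network contained in a closed region and touching the boundary geodesic $\gamma$ must either be contained in $\gamma$ or cross it, I get that ${\rm spt}(V_\infty)=\gamma$ with some integer multiplicity. But then, since ${\rm spt}(V_i)\not\subset B_\eta(\gamma)$, each ${\rm spt}(V_i)$ contains a point $q_i$ at distance $\ge\eta$ from $\gamma$, and ${\rm spt}(V_i)$ being connected and meeting $B_{\tilde\eta_i}(\gamma)$ forces a definite amount of mass in the annular region $B_\eta(\gamma)\setminus B_{\tilde\eta_i}(\gamma)$ (by the monotonicity formula, a connected stationary varifold joining two concentric spheres of radii $r$ and $R$ carries mass at least $c(R-r)$). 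In the limit this mass cannot escape to $\gamma$, so ${\rm spt}(V_\infty)$ must contain a point outside every neighborhood shrinking appropriately, contradicting ${\rm spt}(V_\infty)=\gamma$. The main obstacle I expect is making the combination of (a) the maximum principle and (b) the ``connected support disjoint from $\gamma$'' hypothesis airtight — specifically, ruling out that $V_\infty$ is $\gamma$ plus some extra network attached to it, which requires the density/multiplicity bookkeeping of the lower-semicontinuity of mass together with the monotonicity-formula lower bound to conclude that all the limiting mass concentrates on $\gamma$ and none survives away from it, which is precisely what gives the contradiction with ${\rm spt}(V_i)\not\subset B_\eta(\gamma)$.
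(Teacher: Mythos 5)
Your first two steps (extracting a convergent subsequence $V_i\to V_\infty$ and using a touching maximum principle to get $\gamma\subset\mathrm{spt}(V_\infty)$) are sound and broadly in the spirit of the paper's proof. The gap is in your third step, and it is genuine. You try to prove $\mathrm{spt}(V_\infty)=\gamma$ and then derive a contradiction from the monotonicity estimate; but that equality is both unproved and false. Your justification (``any stationary network contained in a closed region and touching the boundary geodesic $\gamma$ must either be contained in $\gamma$ or cross it'') is not correct: a stationary varifold supported in $\overline{\Omega}$ can touch $\gamma$ (and hence contain $\gamma$ by the maximum principle) while also containing other edges in $\Omega$ not touching $\gamma$ at all. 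And in fact your own monotonicity argument at the end shows $\mathrm{spt}(V_\infty)$ contains a point at distance $\geq\eta$ from $\gamma$, so $\mathrm{spt}(V_\infty)\neq\gamma$. The thing you actually need is the much more local statement that $V_\infty$ coincides with $k|\gamma|$ as a varifold in some small tubular neighborhood $B_{2\eta'}(\gamma)$ --- i.e. there is no extra mass or edges near $\gamma$ --- and for that you need $V_\infty$ (or the piece of it near $\gamma$) to be supported on one side of $\gamma$ so that the strong form of White's maximum principle (Theorem~4 of the cited paper, which gives $V\llcorner U=\Theta\cdot|\gamma|$) applies.

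This is precisely where the paper's localization comes in, and it is the step you are missing. The paper does not take the global limit. It takes $V_i'$ to be the connected component of $\mathrm{spt}(V_i)\cap B_\eta(\gamma)$ through $p_i$. Because $\mathrm{spt}(V_i)$ is disjoint from $\gamma$, this component lies in one of the two halves of $B_\eta(\gamma)\setminus\gamma$; in particular $V_i'$ lies on one side of $\gamma$, so the local limit $V$ is supported in a closed half-tube and White's Theorem~4 gives $V\llcorner B_{2\eta'}(\gamma)=k|\gamma|$. Your global approach cannot do this when $\gamma$ is non-separating: then $M\setminus\gamma$ is connected, $\overline{M\setminus\gamma}=M$, and the global $V_\infty$ is not a priori supported on one side of $\gamma$, so the strong maximum principle does not apply and $V_\infty$ near $\gamma$ could contain edges crossing $\gamma$. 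Once the paper has $V\llcorner B_{2\eta'}(\gamma)=k|\gamma|$, the conclusion follows cleanly: the supports $\mathrm{spt}(V_i')$ eventually avoid the annular shell $B_{\eta'}(\gamma)\setminus B_{\eta'/2}(\gamma)$, so by connectedness $\mathrm{spt}(V_i')\subset B_{\eta'/2}(\gamma)$; this makes $\mathrm{spt}(V_i')$ a full connected component of the connected set $\mathrm{spt}(V_i)$, forcing $\mathrm{spt}(V_i)=\mathrm{spt}(V_i')\subset B_\eta(\gamma)$, a contradiction. You should restructure step~3 along these lines (and also handle the one-sided $\gamma$ case explicitly by passing to a double cover, as the paper does).
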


\begin{proof}
Suppose, by contradiction, that the proposition is not true. Then there is  a sequence $\{V_i\}_i$ of stationary integral one-varifolds with ${\bf M}(V_i)\leq \lambda$,  with ${\rm spt}(V_i)$ connected, disjoint from $\gamma$, not contained in $B_\eta(\gamma)$, such that there are points $p_i\in {\rm spt}(V_i)$ converging to a point $p\in \gamma$. By Allard-Almgren \cite{allard-almgren}, for each $i$ the varifold $V_i$ is induced by a stationary geodesic network with integer multiplicities. 

We suppose that $\gamma$ is two-sided. Let $V_i'$ be the varifold of the connected component of ${\rm spt}(V_i) \cap B_{\eta}(\gamma)$ passing by $p_i$ with the same multiplicities as $V_i$.  By compactness of stationary integral varifolds with mass bounds, $V_i'$ converges in varifold sense in compact sets of  $B_{\eta}(\gamma)$ to a stationary integral one-varifold $V$ in $B_{\eta}(\gamma)$. Notice that $p\in {\rm spt}(V)$. By the maximum principle of White (Theorem 4 of \cite{white-maximum-principle}), there exists $0<\eta'<\eta/2$ such that $V \llcorner B_{2\eta'}(\gamma)=k \cdot |\gamma|$ for some $k\in \mathbb{N}$. This implies that for sufficiently large $i$, the support of $V_i'$ does not
intersect $B_{\eta'}(\gamma)\setminus B_{\eta'/2}(\gamma)$. Since ${\rm spt}(V_i)$ is connected, it follows that for sufficiently large $i$ the 
support of $V_i$ is contained in $B_{\eta}(\gamma)$. This is a contradiction. If the geodesic $\gamma$ is one-sided, we can obtain  a contradiction similarly by lifting to a double cover. This finishes the proof of the proposition.
\end{proof}

Let $(M^{n+1},g)$ be a compact Riemannian manifold.  If $\Sigma\subset (M,g)$ is a closed, embedded, minimal hypersurface,  denote by $L_\Sigma$ its Jacobi operator.
 
\begin{prop}\label{approximate.minimal}
Let $\Sigma^n$ be a smooth, closed, two-sided, embedded minimal hypersurface in $(M^{n+1},g)$, and let  $k\geq 2$ be an integer and $\alpha\in (0,1)$. Suppose that ${\rm dim}({\rm Ker}\, L_\Sigma)=j$. Then there exists a neighborhood $\mathcal{W}\subset {\rm Ker} \, L_\Sigma$ of the origin, and a smooth embedding $\varphi:\mathcal{W}\rightarrow C^{k,\alpha}(\Sigma)$ with $\varphi(0)=0$, $D\varphi(0)=Id$, such that every closed, minimal hypersurface $\Sigma'$ that is sufficiently close in the varifold topology  to $\Sigma$ must be the graph over $\Sigma$ of the function $\varphi(z)$ for some $z\in \mathcal{W}$.

\end{prop}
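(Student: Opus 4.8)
The plan is to run a Lyapunov--Schmidt reduction in Hölder spaces. Since $\Sigma$ is two-sided, the normal exponential map identifies a tubular neighbourhood of $\Sigma$ in $M$ with $\Sigma\times(-\varepsilon_0,\varepsilon_0)$, so that for $u\in C^{k,\alpha}(\Sigma)$ with $\|u\|_{C^{k,\alpha}}$ small the normal graph ${\rm graph}(u)$ is a smooth embedded hypersurface. Let $H(u)\in C^{k-2,\alpha}(\Sigma)$ be the mean curvature of ${\rm graph}(u)$ computed with respect to a fixed unit normal (here $k\geq 2$ and $\alpha\in(0,1)$ guarantee $C^{k-2,\alpha}$ is a genuine Hölder space). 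Then $H$ is a smooth map from a neighbourhood $\mathcal{U}$ of $0$ in $C^{k,\alpha}(\Sigma)$ to $C^{k-2,\alpha}(\Sigma)$ with $H(0)=0$ because $\Sigma$ is minimal, and $DH(0)=L_\Sigma$. By Allard's regularity theorem together with Schauder estimates, a closed minimal hypersurface sufficiently close to $\Sigma$ in the varifold topology is the normal graph ${\rm graph}(u)$ of some $u\in C^{k,\alpha}(\Sigma)$ with $\|u\|_{C^{k,\alpha}}$ as small as we wish --- the proximity to the multiplicity-one varifold $|\Sigma|$ forces density $1$, hence the graphical conclusion --- and such $u$ solves $H(u)=0$.

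Next I would carry out the reduction. By elliptic theory $L_\Sigma\colon C^{k,\alpha}(\Sigma)\to C^{k-2,\alpha}(\Sigma)$ is a formally self-adjoint Fredholm operator of index zero; write $K={\rm Ker}\, L_\Sigma$, which lies in $C^\infty(\Sigma)$ and has dimension $j$, and let $\Pi\colon C^{k-2,\alpha}(\Sigma)\to K$ be the $L^2$-orthogonal projection, so that ${\rm Im}\, L_\Sigma={\rm Ker}\,\Pi=K^\perp\cap C^{k-2,\alpha}(\Sigma)$. Decompose $u=z+w$ with $z\in K$ and $w\in K^\perp\cap C^{k,\alpha}(\Sigma)$. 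The equation $H(u)=0$ is equivalent to the pair
\begin{equation*}
(\mathrm{Id}-\Pi)\,H(z+w)=0, \qquad \Pi\,H(z+w)=0 .
\end{equation*}
For the first ("auxiliary") equation, the partial derivative in $w$ at the origin is the isomorphism $L_\Sigma|_{K^\perp}\colon K^\perp\cap C^{k,\alpha}(\Sigma)\to K^\perp\cap C^{k-2,\alpha}(\Sigma)$, so by the implicit function theorem there are a neighbourhood $\mathcal{W}$ of $0$ in $K$ and a smooth map $\omega\colon\mathcal{W}\to K^\perp\cap C^{k,\alpha}(\Sigma)$ with $\omega(0)=0$, $D\omega(0)=0$, uniquely solving $(\mathrm{Id}-\Pi)H(z+\omega(z))=0$ among small $w$. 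Setting $\varphi(z)=z+\omega(z)$ gives $\varphi(0)=0$ and $D\varphi(0)=\mathrm{Id}$ (the inclusion $K\hookrightarrow C^{k,\alpha}(\Sigma)$), and after shrinking $\mathcal{W}$ the map $\varphi$ is a smooth embedding.

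Finally I would check the uniqueness claim. Given a closed minimal hypersurface $\Sigma'$ with ${\bf F}(|\Sigma'|,|\Sigma|)$ small, the first paragraph yields $u$ with $\|u\|_{C^{k,\alpha}}$ small and $H(u)=0$; writing $u=z+w$, both $z$ and $w$ are small, $z\in\mathcal{W}$, and $w$ is in the range where the implicit function theorem applies, so the uniqueness clause forces $w=\omega(z)$, i.e. $\Sigma'={\rm graph}(\varphi(z))$. The step I expect to demand the most care is precisely the passage from varifold (or ${\bf F}$-) proximity to a \emph{uniform} $C^{k,\alpha}$ graphical bound: one invokes Allard's theorem to obtain a $C^{1,\beta}$ graph of small norm --- using that closeness to the multiplicity-one varifold $|\Sigma|$ pins the density to $1$ --- and then bootstraps via Schauder estimates for the minimal surface equation, making the smallness of the $C^{k,\alpha}$ norm quantitative in terms of the initial varifold distance so that one lands inside the domain of the implicit function theorem.
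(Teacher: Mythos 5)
Your proposal is correct and matches the substance of the paper's argument. The paper's proof is a one-line citation to Theorem~1.3 of White together with Allard's regularity theorem; your Lyapunov--Schmidt reduction in $C^{k,\alpha}$ is precisely the mechanism underlying White's theorem, so you have, in effect, re-derived the cited result rather than invoking it. Both routes rest on the same two ingredients: Allard plus Schauder to promote varifold proximity (which fixes the density to one) to a graph of small $C^{k,\alpha}$ norm, and the implicit function theorem on the $L^2$-orthogonal complement of $\mathrm{Ker}\,L_\Sigma$ to solve the auxiliary equation and produce $\varphi(z)=z+\omega(z)$ with $\varphi(0)=0$, $D\varphi(0)=\mathrm{Id}$. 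You also correctly note that the bifurcation equation need not be solved, since the proposition only asserts that nearby minimal hypersurfaces lie in the image of $\varphi$, not that every $\varphi(z)$ is minimal.
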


\begin{proof}
This proposition follows  Theorem 1.3 of White \cite{white2}, since Allard implies that any
closed, minimal hypersurface $\Sigma'$ that is sufficiently close in the varifold topology  to $\Sigma$ is close to $\Sigma$
in the smooth topology.
\end{proof}

We are going to use the following proposition which is similar to Proposition \ref{intersects.must.be.contained.geodesic}.
\begin{prop}\label{intersects.must.be.contained}
Let $(M^{n+1},g)$ be a compact Riemannian manifold and $\Sigma^n\subset M$ be a smooth, closed, embedded minimal hypersurface. For each $\lambda, \eta>0$,  there is $0<\tilde{\eta}<\eta$ such that any connected, closed, smooth, embedded minimal hypersurface $\tilde{\Sigma}$, disjoint from $\Sigma$, with ${\rm area}(\tilde{\Sigma})+{\rm index}(\tilde{\Sigma})\leq \lambda$,  that intersects the tubular neighborhood $B_{\tilde \eta}(\Sigma)$ must be contained in $B_\eta(\Sigma)$.
\end{prop}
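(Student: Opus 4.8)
The plan is to argue by contradiction, exactly mirroring the proof of Proposition \ref{intersects.must.be.contained.geodesic} but using the higher-dimensional tools (the compactness of minimal hypersurfaces with area and index bounds, White's maximum principle, and the structure result of Proposition \ref{approximate.minimal}) in place of the one-dimensional stationary-network facts. Suppose the statement fails for some fixed $\lambda,\eta>0$. Then there is a sequence $\{\tilde\Sigma_i\}_i$ of connected, closed, smooth, embedded minimal hypersurfaces, each disjoint from $\Sigma$, with ${\rm area}(\tilde\Sigma_i)+{\rm index}(\tilde\Sigma_i)\leq \lambda$, each meeting $B_{1/i}(\Sigma)$ but not contained in $B_\eta(\Sigma)$. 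Pick $p_i\in \tilde\Sigma_i$ with ${\rm dist}(p_i,\Sigma)\to 0$, so after passing to a subsequence $p_i\to p\in \Sigma$.

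First I would extract a limit. By Sharp's compactness theorem (\cite{sharp}), applied to the portions of $\tilde\Sigma_i$ inside the fixed tubular neighborhood $B_\eta(\Sigma)$ — or, more cleanly, by passing to varifold limits of the connected components of ${\rm spt}(\tilde\Sigma_i)\cap B_\eta(\Sigma)$ through $p_i$ and invoking Allard-type regularity away from $\Sigma$ — one obtains a minimal hypersurface (as an integral varifold with bounded area and index) $V$ in $B_\eta(\Sigma)$ with $p\in {\rm spt}(V)$, and with ${\rm spt}(V)$ a smooth minimal hypersurface in the interior of $B_\eta(\Sigma)$ away from $\Sigma$. Since each $\tilde\Sigma_i$ is disjoint from the minimal hypersurface $\Sigma$, the limit $V$ lies on one (closed) side of $\Sigma$ and touches $\Sigma$ at $p$.

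Next I would invoke the maximum principle. Since ${\rm spt}(V)$ is a stationary integral varifold touching the smooth minimal hypersurface $\Sigma$ from one side at $p$, White's maximum principle (Theorem 4 of \cite{white-maximum-principle}) gives a radius $0<\eta'<\eta/2$ such that $V\llcorner B_{2\eta'}(\Sigma)=k\cdot|\Sigma|$ for some $k\in\mathbb{N}$; equivalently, near $\Sigma$ the limit is just a multiple of $\Sigma$ itself and has no other sheets. Consequently, for all large $i$, the component of ${\rm spt}(\tilde\Sigma_i)\cap B_\eta(\Sigma)$ through $p_i$ converges to $k|\Sigma|$ in $B_{2\eta'}(\Sigma)$ and hence its support stays out of the annular region $B_{\eta'}(\Sigma)\setminus B_{\eta'/2}(\Sigma)$ for $i$ large. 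Since $\tilde\Sigma_i$ is connected, either all of $\tilde\Sigma_i$ lies inside $B_{\eta'/2}(\Sigma)\subset B_\eta(\Sigma)$ — contradicting our choice — or $\tilde\Sigma_i$ cannot reach $p_i$ from outside, again a contradiction. This finishes the argument.

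The main obstacle, and the one point that requires care beyond the one-dimensional model proof, is the compactness step: the hypersurfaces $\tilde\Sigma_i$ live only inside the fixed neighborhood $B_\eta(\Sigma)$, so one must be careful to apply Sharp's compactness (or Schoen-type curvature estimates for stable pieces together with the index bound $\lambda$) to free-boundary-free interior pieces and control the multiplicity of the varifold limit along $\Sigma$; this is exactly where the index hypothesis ${\rm index}(\tilde\Sigma)\leq\lambda$ is used, so that the limit is a genuine minimal varifold with bounded density to which White's maximum principle applies. A minor additional point, as in Proposition \ref{intersects.must.be.contained.geodesic}, is the case where $\Sigma$ is one-sided, which is handled by passing to the orientable double cover of a tubular neighborhood of $\Sigma$ and running the same argument there.
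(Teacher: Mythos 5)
Your argument is correct and follows the paper's proof in all essentials: contradiction, Sharp's compactness applied to the connected piece of $\tilde\Sigma_i$ through $p_i$ inside a fixed tubular neighborhood, the one-sided maximum principle at $p\in\Sigma$ to identify the limit with (a multiple of) $\Sigma$ near $\Sigma$, and then graphical convergence plus connectedness to force $\tilde\Sigma_i\subset B_\eta(\Sigma)$ for large $i$, with the one-sided case handled by a double cover. The only cosmetic differences are that the paper works in $B_{2\eta}(\Sigma)$ and phrases the maximum-principle conclusion as ``$\overline{\Sigma}\supset\Sigma'$'' rather than via an annulus-avoidance step, and your mention of Proposition~\ref{approximate.minimal} is not actually used.
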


\begin{proof}
Suppose, by contradiction, that the result is  not true. Then there exists a sequence $\{\tilde{\Sigma}_i\}_i$ of connected, closed, smooth, embedded minimal hypersurfaces, disjoint from $\Sigma$, not contained in $B_\eta(\Sigma)$, with ${\rm area}(\tilde{\Sigma}_i)+{\rm index}(\tilde{\Sigma}_i)\leq \lambda$ and points $p_i\in \tilde{\Sigma}_i$ converging to a point $p\in \Sigma$. 

Let $\tilde{\Sigma}_i'$ be the connected component of $\tilde{\Sigma}_i\cap B_{2\eta}(\Sigma)$ passing by $p_i$.  By Sharp's compactness theorem (\cite{sharp}), because of the area and index bounds, $\tilde{\Sigma}_i'$ converges in varifold sense (and locally graphically outside finitely many points) in compact sets of  $B_{2\eta}(\Sigma)$ to a  smooth, embedded, minimal hypersurface $\overline{\Sigma}$ with integer multiplicities. Notice that $p\in \overline{\Sigma}$. If the connected component $\Sigma'$ of $\Sigma$ passing by $p$  is two-sided, we have that 
$\overline{\Sigma}$ is on one side of it  since
the  $\tilde{\Sigma}_i'$ are connected. By the maximum principle, $\overline{\Sigma}$ contains  $\Sigma'$. The graphical  convergence and connectedness of $\tilde{\Sigma}_i'$ imply that for sufficiently large $i$ we have $\tilde{\Sigma}_i'\subset B_\eta(\Sigma).$ Then $\tilde{\Sigma}_i'=\tilde{\Sigma}_i$ which is a contradiction because $\tilde{\Sigma}_i$ is not contained in
$B_\eta(\Sigma)$.  If $\Sigma'$ is one-sided, we can obtain  a contradiction similarly by lifting to the corresponding double cover.
\end{proof}

The next proposition also follows by the maximum principle.
 \begin{prop}\label{maximum.principle}
 Let $\Phi:S^1\rightarrow \mathcal{Z}_n(M^{n+1},{\bf F},\mathbb{Z}_2)$ be a one-sweepout so that $\Phi(\sigma)$ is a smooth, connected, embedded, closed minimal hypersurface  for $g$
 with multiplicity one for every $\sigma \in S^1$.  Assume there is $V$ an $n$-dimensional nontrivial stationary varifold in $(M,g)$ with ${\rm support}(V)$ disjoint from $\Phi(\sigma)$ for some $\sigma$. Then there exists $\sigma_0\in S^1$ such that ${\rm support}(V) \supset \Phi(\sigma_0)$. 
 \end{prop}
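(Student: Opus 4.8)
The plan is to argue by contradiction and exploit the fact that the one-sweepout $\Phi$ ``sweeps across'' the support of $V$. Suppose that for every $\sigma_0 \in S^1$ the hypersurface $\Phi(\sigma_0)$ is \emph{not} contained in $\mathrm{support}(V)$. By hypothesis there is at least one $\sigma_* \in S^1$ with $\mathrm{support}(V) \cap \Phi(\sigma_*) = \emptyset$. Consider the set $A = \{\sigma \in S^1 : \mathrm{support}(V) \cap \Phi(\sigma) = \emptyset\}$. Since $\Phi$ is continuous into $\mathcal{Z}_n(M,{\bf F},\mathbb{Z}_2)$ and each $\Phi(\sigma)$ is a smooth embedded minimal hypersurface with multiplicity one, the ${\bf F}$-continuity upgrades (by Allard regularity, as in the proof of Proposition \ref{approximate.minimal}) to continuity in the smooth topology on a neighborhood of any given $\sigma$; hence $A$ is open. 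I would then show $A \neq S^1$: indeed $\Phi$ being a one-sweepout means it is topologically nontrivial, so $\Phi$ cannot be homotopic to a constant; in particular the cycles $\Phi(\sigma)$ must attain both ``sides'' of a fixed hypersurface, and a standard intersection/degree argument shows that $\mathrm{support}(V)$ cannot remain in one complementary component as $\sigma$ traverses $S^1$. So $S^1 \setminus A$ is nonempty.

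Now pick a boundary point $\sigma_0 \in \partial A$, so that $\Phi(\sigma_0)$ touches $\mathrm{support}(V)$ (i.e. $\mathrm{support}(V) \cap \Phi(\sigma_0) \neq \emptyset$) but there are $\sigma$ arbitrarily close to $\sigma_0$ with $\mathrm{support}(V) \cap \Phi(\sigma) = \emptyset$. Let $p \in \mathrm{support}(V) \cap \Phi(\sigma_0)$. Near $p$, the hypersurface $\Phi(\sigma_0)$ is smooth, minimal, embedded, and two-sided locally, and for $\sigma$ near $\sigma_0$ in $A$ the hypersurface $\Phi(\sigma)$ lies locally (near $p$) entirely on one side of $\Phi(\sigma_0)$ and disjoint from $\mathrm{support}(V)$. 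Passing to the limit $\sigma \to \sigma_0$ along $A$, the smooth convergence forces $\mathrm{support}(V)$, near $p$, to lie on the closed side of $\Phi(\sigma_0)$ opposite to where $\Phi(\sigma)$ sits — that is, $V$ is a stationary varifold lying locally on one side of the smooth minimal hypersurface $\Phi(\sigma_0)$ and touching it at $p$. By White's strong maximum principle for stationary varifolds (Theorem 4 of \cite{white-maximum-principle}), a neighborhood of $p$ in $\Phi(\sigma_0)$ is contained in $\mathrm{support}(V)$.

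Finally I would propagate this local containment to global containment of the connected hypersurface $\Phi(\sigma_0)$. Let $F = \{q \in \Phi(\sigma_0) : q \in \mathrm{support}(V)\}$; this is closed in $\Phi(\sigma_0)$ and nonempty (it contains $p$). The maximum principle argument above, applied at any point $q \in F$ which is a limit of points where $\Phi(\sigma_0)$ is touched by $\mathrm{support}(V)$ from one side, shows $F$ is also open: more precisely, at any $q\in F$ the varifold $V$ must still lie on one side of $\Phi(\sigma_0)$ locally (since this sidedness is inherited along the connected component from $p$), and White's theorem again gives a neighborhood of $q$ inside $\mathrm{support}(V)$. Here one uses that $\Phi(\sigma_0)$ is connected and that $\mathrm{support}(V)$ cannot ``cross'' $\Phi(\sigma_0)$ — if at some point $V$ appeared on the other side, continuity in $\sigma$ would already have produced an intersection before reaching $\sigma_0$, contradicting $\sigma_0\in\partial A$ from the side $A$. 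Hence $F = \Phi(\sigma_0)$, i.e. $\mathrm{support}(V) \supset \Phi(\sigma_0)$, which is the desired conclusion.

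The main obstacle I anticipate is the global propagation step, specifically verifying carefully the sidedness/non-crossing claim used to show $F$ is open: one must rule out the possibility that $\mathrm{support}(V)$ meets $\Phi(\sigma_0)$ tangentially at isolated points while crossing it elsewhere. This is handled by combining the one-sidedness coming from approaching through $A$ with the connectedness of $\Phi(\sigma_0)$ and multiplicity one, but it requires a clean statement of how the ${\bf F}$-convergence of cycles interacts with the smooth convergence of their (minimal) supports. The one-sided case for $\Phi(\sigma_0)$ is dealt with, as in Propositions \ref{intersects.must.be.contained.geodesic} and \ref{intersects.must.be.contained}, by lifting to the orientation double cover.
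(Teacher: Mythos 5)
Your strategy---consider the set $A=\{\sigma\in S^1:\mathrm{support}(V)\cap\Phi(\sigma)=\emptyset\}$, pick a boundary point $\sigma_0\in\partial A$, and apply White's maximum principle at a touching point $p$---is genuinely different from the paper's, and it has a gap at the crucial step. The paper instead exploits the sweepout structure itself: a one-sweepout comes with a continuous path $t\mapsto U(t)\in {\bf I}_{n+1}(M,\mathbb{Z}_2)$ of open sets with $\partial U(t)=\Phi(e^{it})$ and $U(2\pi)=M\setminus U(0)$. Because $\mathrm{support}(V)$ is disjoint from $\Phi(\sigma)=\partial U(t)$, the restriction $V'=V\llcorner U(t)$ is a nontrivial stationary varifold \emph{globally} supported on one side. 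One then looks at the first parameter $t_0$ at which $\mathrm{support}(V')$ ceases to be contained in $U$; this gives $\mathrm{support}(V')\subset\overline{U(t_0)}$ with $\mathrm{support}(V')\cap\Phi(\sigma_0)\neq\emptyset$, and White's maximum principle applies directly to the stationary varifold $V'$ and the connected minimal hypersurface $\Phi(\sigma_0)=\partial U(t_0)$.

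The gap in your argument is the unjustified claim that ``for $\sigma$ near $\sigma_0$ in $A$ the hypersurface $\Phi(\sigma)$ lies locally (near $p$) entirely on one side of $\Phi(\sigma_0)$,'' followed by the inference that $\mathrm{support}(V)$ near $p$ lies on the opposite side. Neither step is valid in general. Writing $\Phi(\sigma)$ near $p$ as a normal graph $f_\sigma$ over $\Phi(\sigma_0)$, the function $f_\sigma$ is approximately a Jacobi field and can change sign, so $\Phi(\sigma)$ may cross $\Phi(\sigma_0)$ arbitrarily close to $p$. Moreover, even if $f_\sigma$ had a sign on $B_\delta(p)$, disjointness of $\mathrm{support}(V)$ from $\Phi(\sigma)$ only constrains the connected piece of $\mathrm{support}(V)\cap B_\delta(p)$ through $p$; there could be other sheets of $\mathrm{support}(V)$ in $B_\delta(p)$ lying beyond $\Phi(\sigma)$ on the far side, and the restriction of $V$ to the ``good'' side would then fail to be stationary, so White's theorem cannot be applied to it. What is missing is precisely the restriction to a one-sided stationary piece $V'=V\llcorner U(t)$, which is where the paper uses the chains $U(t)$ coming from the definition of one-sweepout; without that, the maximum principle hypothesis of sidedness is not available. (Incidentally, with the correct one-sided setup, your propagation step 5 becomes unnecessary: White's theorem for a connected minimal $\Phi(\sigma_0)$ directly gives the global containment.)
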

 
 \begin{proof}
 The condition of being a one-sweepout implies there is $t\in [0,2\pi]\mapsto U(t)\in {\bf I}_{n+1}(M^{n+1},\mathbb{Z}_2)$ continuous in the flat topology, $U(t)$ open sets, with $\partial U(t)=\Phi(e^{it})$ and $U(2\pi)=M - U(0)$.  Let  $\sigma=e^{it}$. We can suppose {$V'=V \llcorner U(t)$} is a nontrivial stationary varifold.

Let $p\in {\rm support}(V')$. For any open set $O\subset M$ containing $p$, there exists $\tilde{\sigma}\in S^1$ such that $\Phi(\tilde\sigma)$ intersects $O$ (\cite{guth}). {The map $\sigma\mapsto \Phi(\sigma)$ is necessarily  continuous in the Hausdorff distance and so   there exists $\tilde\sigma\in S^1$ such that $\Phi(\tilde\sigma)\cap {\rm support}(V')\neq \emptyset$. Combining with the fact that $ {\rm support}(V')\subset U(t)$, a standard continuity argument gives the existence of $\sigma_0=e^{it_0}$ so that ${\rm support}(V') \cap \overline{U(t_0)} \neq \emptyset$ and ${\rm support}(V') \subset \overline{U(t_0)}$.
The maximum principle for stationary varifolds of \cite{white-maximum-principle} implies that ${\rm support}(V') \supset \Phi(\sigma_0)=\partial U(t_0)$.}
 \end{proof}
 
\begin{lem}\label{convergence.supports}
 Let $\mathcal{K}$ be a compact set of stationary integral $n$-varifolds in $M$ and $K\subset M$ be a compact set such that
${\rm spt}(W)\subset K$ for every $W\in \mathcal{K}$. For each $\eta>0$ there exists $\zeta>0$ such that if a stationary integral $n$-varifold $V$ satisfies ${\bf F}(V,\mathcal{K})\leq \zeta$, then ${\rm spt}(V)\subset B_\eta(K)$.
\end{lem}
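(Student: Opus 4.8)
The plan is to argue by contradiction using the compactness of $\mathcal{K}$ together with White's maximum principle, exactly in the spirit of Proposition \ref{intersects.must.be.contained.geodesic}. Suppose the conclusion fails for some $\eta>0$. Then there is a sequence $\{V_i\}$ of stationary integral $n$-varifolds with $\mathbf{F}(V_i,\mathcal{K})\to 0$, yet a sequence of points $p_i\in\mathrm{spt}(V_i)$ with $p_i\notin B_\eta(K)$. Since $\mathbf{F}(V_i,\mathcal{K})\to 0$, after passing to a subsequence we may pick $W_i\in\mathcal{K}$ with $\mathbf{F}(V_i,W_i)\to 0$, and by compactness of $\mathcal{K}$ in the $\mathbf{F}$-metric we may assume $W_i\to W\in\mathcal{K}$; hence $V_i\to W$ in the $\mathbf{F}$-metric (so in particular in the varifold topology), and $\|V_i\|(M)\to\|W\|(M)$, so the masses $\|V_i\|(M)$ are uniformly bounded, say by $\lambda$.

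Next I would localize away from $K$. Fix $p\in M\setminus B_{\eta/2}(K)$ and a small geodesic ball $B_r(p)$ disjoint from $\overline{B_{\eta/4}(K)}$; I claim $\mathrm{spt}(V_i)\cap B_{r/2}(p)=\emptyset$ for $i$ large. Indeed, if not, passing to a further subsequence there are points $q_i\in\mathrm{spt}(V_i)\cap B_{r/2}(p)$, and by the monotonicity formula for stationary varifolds (with the uniform mass bound $\lambda$) the densities at $q_i$ are bounded below by a constant depending only on $r$ and the geometry of $(M,g)$, so $\|V_i\|(B_{r/2}(p))\geq c(r)>0$; but $V_i\to W$ in the varifold topology forces $\|W\|(\overline{B_{r/2}(p)})\geq c(r)>0$, contradicting $\mathrm{spt}(W)\subset K$ since $\overline{B_{r/2}(p)}\cap K=\emptyset$. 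A compactness argument on the compact set $M\setminus B_\eta(K)$ (covering it by finitely many such balls) then shows $\mathrm{spt}(V_i)\cap\bigl(M\setminus B_\eta(K)\bigr)=\emptyset$ for all sufficiently large $i$, contradicting the existence of the points $p_i$.

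The only delicate point — and the one I would be most careful about — is the quantitative lower density bound that converts ``$p_i\in\mathrm{spt}(V_i)$'' into a definite amount of mass in a fixed ball: this is precisely the content of the monotonicity formula for stationary integral varifolds in a Riemannian manifold (the ratio $r^{-n}\|V\|(B_r)$ is almost monotone, with multiplicative error controlled by the injectivity radius and curvature bounds on $M$), which gives $\Theta^n(\|V\|,x)\geq 1$ at every point $x\in\mathrm{spt}(V)$ for an integral stationary varifold $V$. With that in hand the rest is a standard varifold-convergence-versus-support argument and no maximum principle is actually needed. Alternatively, and even more simply, one can observe that the statement ``$\mathrm{spt}(V_i)\subset B_\eta(K)$'' is equivalent to ``$\|V_i\|(M\setminus B_\eta(K))=0$,'' and the same monotonicity lower bound shows that if this mass were positive it would be bounded below by a fixed constant, which $\mathbf{F}$-convergence to varifolds supported in $K$ rules out.
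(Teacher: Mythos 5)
Your proposal is correct and takes essentially the same approach as the paper: argue by contradiction, extract a varifold limit $W\in\mathcal{K}$, and use the lower density bound from the monotonicity formula for stationary integral varifolds to conclude that supports converge (in Hausdorff distance) to $\mathrm{spt}(W)\subset K$, contradicting the existence of the points $p_i$. The paper simply states this in one line ("the monotonicity formula implies $\mathrm{spt}(V_j)\to\mathrm{spt}(W)$ in Hausdorff distance"), whereas you spell out the standard density argument in detail.
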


\begin{proof}
Suppose, by contradiction, that the lemma is false. Then there exist $\eta>0$ and a sequence $\{V_j\}$ of stationary integral varifolds converging to some $W\in \mathcal{K}$ with ${\rm spt}(V_j)\not\subset B_\eta(K)$ for every $j$. The monotonicity 
formula implies that ${\rm spt}(V_j)$ converges to ${\rm spt}(W)$ in the Hausdorff distance, which is a contradiction
since ${\rm support}(W)\subset K$.
\end{proof}

\begin{lem}\label{close.implies.homotopic}
 There exists $\delta>0$ such that if $\Phi\in \mathcal{P}_k$, then any
$\Phi':\text{dmn}(\Phi)\rightarrow \mathcal{Z}_n(M^{n+1},{\bf F},\mathbb{Z}_2)$ with
$
\mathcal{F}(\Phi'(x),\Phi(x))<\delta
$
for every $x\in \text{dmn}(\Phi)$ 
also satisfies $\Phi'\in \mathcal{P}_k$.

\end{lem}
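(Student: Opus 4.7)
The plan is to derive the lemma by producing, for $\delta$ sufficiently small depending only on $(M,g)$, a continuous homotopy between $\Phi$ and $\Phi'$ in $\mathcal{Z}_n(M^{n+1},{\bf F},\mathbb{Z}_2)$. Granted such a homotopy, the naturality of the cohomology pullback together with the continuity of the inclusion $\mathcal{Z}_n(M^{n+1},{\bf F},\mathbb{Z}_2)\rightarrow \mathcal{Z}_n(M^{n+1},\mathbb{Z}_2)$ yields
$$(\Phi')^*(\overline{\lambda}^k)=\Phi^*(\overline{\lambda}^k)\neq 0,$$
so that $\Phi'\in \mathcal{P}_k$ by definition. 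Thus the entire content of the lemma is the construction of the homotopy under a uniform flat closeness hypothesis.

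To construct the homotopy, I would appeal to the uniform local contractibility of $\mathcal{Z}_n(M^{n+1},{\bf F},\mathbb{Z}_2)$ near the diagonal. Concretely, the goal is to produce $\delta>0$ together with a continuous map
$$H:\bigl\{(T,T')\in \mathcal{Z}_n(M^{n+1},{\bf F},\mathbb{Z}_2)^2:\mathcal{F}(T,T')<\delta\bigr\}\times [0,1]\rightarrow \mathcal{Z}_n(M^{n+1},{\bf F},\mathbb{Z}_2)$$
satisfying $H((T,T'),0)=T$, $H((T,T'),1)=T'$, and $H((T,T),t)=T$ for all $t$. Once $H$ is available, the composition $(x,t)\mapsto H((\Phi(x),\Phi'(x)),t)$ is the sought homotopy. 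To build $H$, I would first invoke the isoperimetric inequality of Federer--Fleming to assign to every pair $(T,T')$ with $\mathcal{F}(T,T')<\delta<{\rm vol}(M,g)/2$ a unique small $(n+1)$-chain $Q(T,T')\in {\bf I}_{n+1}(M,\mathbb{Z}_2)$ with $\partial Q(T,T')=T+T'$, ${\bf M}(Q(T,T'))\leq C\,\mathcal{F}(T,T')$, and $Q(T,T)=0$; the uniqueness in a sufficiently small flat ball makes this assignment continuous in $(T,T')$. I would then interpolate between $0$ and $Q(T,T')$ by a continuous one-parameter nested family of $(n+1)$-chains $Q_t(T,T')$, for example by mass-slicing against a generic Morse function on $M$, and set $H((T,T'),t)=T+\partial Q_t(T,T')$; the mod-two arithmetic gives $H((T,T'),1)=T+(T+T')=T'$ as required.

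The main technical obstacle is ensuring that the slicing $(T,T',t)\mapsto Q_t(T,T')$ is jointly continuous in the ${\bf F}$-topology, rather than merely in the flat topology, since the ${\bf F}$-metric demands control on the varifold mass in addition to the flat distance. This is a delicate point that is nonetheless standard in min-max theory and is handled, following Almgren's thesis and the discretization--interpolation techniques used by Marques and Neves, by approximating both cycles by polyhedral chains on a sufficiently fine cubical decomposition of $M$ and then constructing the interpolation cell by cell via a cone construction whose mass is controlled by the cell diameter. Once this uniform interpolation is in place, the homotopy $H$ is continuous in the ${\bf F}$-topology and the lemma follows by functoriality of the pullback.
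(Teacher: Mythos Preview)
Your approach differs substantially from the paper's, and in a way that creates an unnecessary technical burden. The paper does not build a global homotopy between $\Phi$ and $\Phi'$ at all. Instead it exploits two observations: (i) since $\overline{\lambda}^k$ is a cup power of $\overline{\lambda}\in H^1(\mathcal{Z}_n(M,\mathbb{Z}_2),\mathbb{Z}_2)$, it suffices to show $(\Phi')^*(\overline{\lambda})=\Phi^*(\overline{\lambda})$ in $H^1(X,\mathbb{Z}_2)$, and this can be checked loop by loop; (ii) for any loop $\gamma:S^1\to X$, the difference $c(t)=\Phi'(\gamma(t))+\Phi(\gamma(t))$ is a loop in $\mathcal{Z}_n(M,\mathbb{Z}_2)$ with $\mathcal{F}(c(t),0)<\delta$, and for $\delta$ small such loops are nullhomotopic in the flat topology. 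Hence $\Phi'\circ\gamma$ and $\Phi\circ\gamma$ are flat-homotopic and detect the same value of $\overline{\lambda}$. No ${\bf F}$-continuity is ever invoked, because $\overline{\lambda}$ lives in the cohomology of the flat-topology space and the inclusion $\mathcal{Z}_n(M,{\bf F},\mathbb{Z}_2)\to\mathcal{Z}_n(M,\mathbb{Z}_2)$ is continuous.

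Your proposal, by contrast, aims for a full ${\bf F}$-continuous homotopy, and this is where the gap lies. The hypothesis gives only $\mathcal{F}(\Phi(x),\Phi'(x))<\delta$ with no control on ${\bf F}(|\Phi(x)|,|\Phi'(x)|)$; the isoperimetric filling $Q(T,T')$ has small mass, but $\partial Q_t(T,T')$ need not, and neither slicing against a Morse function nor cell-by-cell coning yields a map that is jointly continuous in the \emph{varifold} topology without substantial further argument. The discretization--interpolation machinery you invoke produces ${\bf F}$-continuous maps with mass bounds from discrete data; it does not directly furnish parametrized ${\bf F}$-continuous homotopies between arbitrary ${\bf F}$-continuous maps that are merely flat-close. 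The fix is simply to drop the ${\bf F}$-continuity requirement: a flat-continuous homotopy between $i\circ\Phi$ and $i\circ\Phi'$ already gives $(\Phi')^*(\overline{\lambda}^k)=\Phi^*(\overline{\lambda}^k)$. But once you work in the flat topology, the paper's loop-by-loop argument using the abelian group structure of $\mathcal{Z}_n$ is both shorter and avoids the slicing construction altogether.
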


\begin{proof}
There is $\delta>0$ such that any continuous map $c:S^1\rightarrow \mathcal{Z}_n(M^{n+1},\mathbb{Z}_2)$ with $\mathcal{F}(c(t),0)<\delta$ for every $t\in S^1$ is homotopically trivial in the flat topology. For a continuous map $\gamma:S^1 \rightarrow X$, we can write $\Phi'(\gamma(t))=\Phi(\gamma(t))+c(t)$ with $\mathcal{F}(c(t),0)<\delta$ for every $t\in S^1$. Since $c$ is homotopically trivial, $\Phi'\circ \gamma$ is homotopic in the flat topology to $\Phi \circ \gamma$.  Hence $\Phi'\circ \gamma$ is a one-sweepout if and only if $\Phi \circ \gamma$ is a sweepout. This implies $(\Phi')^*(\overline{\lambda})$ and $(\Phi)^*(\overline{\lambda})$ agree when
evaluated at $\gamma$. Hence $(\Phi')^*(\overline{\lambda})=(\Phi)^*(\overline{\lambda})$ and the lemma is proved.
\end{proof}

For $\zeta>0$, we define a $\zeta$-chain to be a sequence of $n$-varifolds $V_1,\dots,V_l$ with ${\bf F}(V_i,V_{i+1})\leq \zeta$ for each $1\leq i\leq l-1$.

\section{The widths of the projective plane and of Zoll two-spheres}

Let $\overline{g}$ be the canonical metric of constant curvature one on $\mathbb{RP}^2$. We denote by $\pi:S^2 \rightarrow \mathbb{RP}^2$ the two-cover projection, where $S^2$ is the unit two-sphere.

\begin{prop} \label{widths.projective.plane} The metric $\overline{g}$ on $\mathbb{RP}^2$ satisfies:
$$
\omega_1(\mathbb{RP}^2, \overline{g})=\omega_2(\mathbb{RP}^2, \overline{g})=\omega_3(\mathbb{RP}^2, \overline{g})=2\pi.
$$
\end{prop}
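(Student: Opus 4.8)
The plan is to prove that $\omega_1(\RP^2,\overline g)\ge 2\pi$ and that $\omega_3(\RP^2,\overline g)\le 2\pi$; since $\omega_1\le\omega_2\le\omega_3$ always holds, this forces all three widths to equal $2\pi$.

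For the lower bound, I would first observe that every immersed closed geodesic of $(\RP^2,\overline g)$ is an iterate of a simple closed geodesic of length $\pi$: lifting to $S^2$ it is a subarc of a great circle which, by the covering relation, closes up in $\RP^2$ only after total length in $\pi\,\mathbb{Z}_{>0}$. By the Chodosh--Mantoulidis description of the volume spectrum of a surface quoted in the introduction, $\omega_1(\RP^2,\overline g)$ is a sum of lengths of immersed closed geodesics, hence $\omega_1(\RP^2,\overline g)\in\pi\,\mathbb{Z}_{>0}$; so it suffices to prove $\omega_1(\RP^2,\overline g)>\pi$. Given a $1$-sweepout $\Phi$, restrict it to a circle on which $\Phi^*(\overline\lambda)$ is nonzero; the resulting homotopically nontrivial loop in $\mathcal{Z}_1(\RP^2,\mathbb{Z}_2)$ lifts, under the double cover $\mathbf{I}_2(\RP^2,\mathbb{Z}_2)\to\mathcal{Z}_1(\RP^2,\mathbb{Z}_2)$, to a path $t\mapsto U(t)$ whose boundaries $\partial U(t)$ are the cycles of the loop and with $U(1)=\RP^2-U(0)$. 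Then $t\mapsto\mathrm{area}(U(t))$ is continuous and $\mathrm{area}(U(1))=2\pi-\mathrm{area}(U(0))$, so $\mathrm{area}(U(t_0))=\pi$ for some $t_0$, giving $\sup\mathbf{M}(\Phi)\ge\mathbf{M}(\partial U(t_0))\ge I(\pi)$, where $I(\pi)$ is the least perimeter of a region of area $\pi$ in $(\RP^2,\overline g)$. Lifting an area $\pi$ region to an antipodally symmetric region of area $2\pi$ in the round $S^2$ doubles its perimeter, and by the classical isoperimetric inequality on $S^2$ together with its rigidity statement (equality only for geodesic disks, i.e.\ hemispheres, which are not antipodally symmetric) and the existence of an antipodally symmetric perimeter minimizer, any such region has perimeter strictly greater than $2\pi$; hence $I(\pi)>\pi$. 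Thus $\omega_1(\RP^2,\overline g)>\pi$, and with the quantization $\omega_1(\RP^2,\overline g)\ge 2\pi$.

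For the upper bound, consider the map $\Psi\colon\RP^5\to\mathcal{Z}_1(\RP^2,\mathbb{Z}_2)$, $[q]\mapsto\partial\{[x]\in\RP^2:q(x)<0\}$, where $q$ ranges over the projectivized space of nonzero quadratic forms on $\mathbb{R}^3$. A direct computation shows that every conic in $(\RP^2,\overline g)$ has length at most $2\pi$ (the extreme case being a union of two simple closed geodesics), so $\mathbf{M}(\Psi([q]))\le 2\pi$ for all $[q]$, and the family has no concentration of mass; by the standard device of replacing a flat-continuous family without concentration of mass by a nearby $\mathbf{F}$-continuous one, $\Psi$ yields admissible $k$-sweepouts whose supremum of the mass does not exceed $2\pi$. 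It is a $3$-sweepout: a path $s\mapsto[A_s]$ of nonzero symmetric matrices from a negative definite $A_0$ to $-A_0$ has $\{q_{A_0}<0\}=\RP^2$ and $\{q_{-A_0}<0\}=\emptyset$, so its image in $\mathcal{Z}_1(\RP^2,\mathbb{Z}_2)$ is a homotopically nontrivial loop; since this path represents the generator of $H_1(\RP^5,\mathbb{Z}_2)$, the class $\Psi^*(\overline\lambda)$ is the generator of $H^1(\RP^5,\mathbb{Z}_2)$, whose cube is nonzero, so $\Psi^*(\overline\lambda^3)=\Psi^*(\overline\lambda)^3\neq0$. Therefore $\omega_3(\RP^2,\overline g)\le 2\pi$, and together with the lower bound and monotonicity the proposition follows.

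The step I expect to be the main obstacle is obtaining the value $2\pi$, rather than merely $\pi$, in the lower bound: the naive comparison with the orientation double cover only yields $\omega_k(\RP^2,\overline g)\ge\tfrac12\,\omega_k(S^2,\overline g)=\pi$ for $k\le 3$. What closes this gap is the interplay of the integrality $\omega_1(\RP^2,\overline g)\in\pi\,\mathbb{Z}_{>0}$, coming from the geodesic description of the volume spectrum of a surface, with the strict inequality $I(\pi)>\pi$, coming from the rigidity of the spherical isoperimetric inequality pulled up to the double cover. (An alternative for this step would be to classify the stationary geodesic networks of $(\RP^2,\overline g)$ and to note that the only one of mass $\pi$ is a one-sided geodesic, whose associated varifold cannot be a flat limit of modulo two boundaries because of the homology of its M\"obius tubular neighborhood; but the isoperimetric argument is shorter.)
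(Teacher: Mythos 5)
Your proof is correct, and its lower bound argument is genuinely different from the paper's. For the upper bound the two approaches are close in spirit: the paper uses the $4$-parameter family $([v],[w])\mapsto [v]^\perp+[w]^\perp$ of pairs of projective lines (parametrized by $\mathbb{RP}^2\times\mathbb{RP}^2$), whereas you use the $5$-parameter family of all conics (parametrized by $\mathbb{RP}^5$); both contain the key degenerate pairs-of-lines achieving length $2\pi$, and the cup-power computation is the same. The paper's product parametrization is slightly cheaper to handle, but your family works equally well and in fact gives a $5$-sweepout. The real divergence is in the lower bound. The paper runs a direct Almgren--Pitts analysis: it produces an almost-minimizing stationary geodesic network $V$ achieving $\omega_1$, lifts it to $S^2$, invokes the monotonicity formula to dispose of any network with a density-$\geq 2$ point (which already has mass $\geq 4\pi$ upstairs), and then in the remaining case (a single line of length $\pi$ with multiplicity one) observes that a line is homologically nontrivial, so cycles cannot flat-converge to it, and runs Pitts' combinatorial deformation argument to build a competitor $1$-sweepout with strictly smaller max mass. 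Your proof instead combines two clean inputs: the Chodosh--Mantoulidis structure theorem (quoted in the introduction), which quantizes $\omega_1$ into $\pi\,\mathbb{Z}_{>0}$, and a spherical isoperimetric rigidity argument -- lifting an area-$\pi$ region to an antipodally invariant area-$2\pi$ region in $S^2$ and noting that the optimal hemisphere is not antipodally symmetric, so that $I(\pi)>\pi$ once a minimizer is known to exist. This is considerably shorter and avoids the Pitts machinery, at the cost of taking the Chodosh--Mantoulidis theorem as a black box (which the paper chooses not to do inside this proposition, even though it cites it elsewhere). Your closing remark even identifies the paper's central topological observation (a line in $\mathbb{RP}^2$ is not a mod-two boundary) as an alternative route.
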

\begin{proof}
For $v\in \mathbb{R}^3$, $v\neq 0$, let $[v]$ denote the one-dimensional vector space generated by $v$ and let
$$
[v]^\perp=\{[w]\in \mathbb{RP}^2: \langle w,v\rangle=0\}.
$$
Let $\Phi:\mathbb{RP}^2\times\mathbb{RP}^2 \rightarrow \mathcal{Z}_1(\mathbb{RP}^2,\mathbb{Z}_2)$ be the map defined by
$$
\Phi([v],[w])=[v]^\perp + [w]^\perp.
$$
The map is well-defined since $[v]^\perp + [w]^\perp$ is a modulo two boundary for any $[v],[w]\in \mathbb{RP}^2$. It is continuous 
in the flat topology and has no concentration of mass (e.g.  Section 2 of \cite{liokumovich-marques-neves}). 
The restrictions of $\Phi$ to
$$
\{([(\cos\theta) e_1+(\sin \theta) e_2],[e_1])\in \mathbb{RP}^2\times\mathbb{RP}^2: \theta\in \mathbb{R}\}
$$
and
$$
\{([e_1],[(\cos\theta) e_1+(\sin \theta) e_2])\in \mathbb{RP}^2\times\mathbb{RP}^2: \theta\in \mathbb{R}\}
$$
are  one-sweepouts of $\mathbb{RP}^2$. Here $\{e_1,e_2,e_3\}$ denote the canonical basis of $\mathbb{R}^3$. 

Therefore $\Phi^*(\overline{\lambda})=\alpha +\beta$, where $\alpha$ is the generator 
of $H^1(\mathbb{RP}^2 \times \{y\},\mathbb{Z}_2)$ and $\beta$ is the generator of $H^1(\{x\}\times \mathbb{RP}^2,\mathbb{Z}_2)$.
Since $(\alpha+\beta)^3=\alpha^2 \beta +\alpha  \beta^2\neq 0$, the map $\Phi$ is a three-sweepout of $\mathbb{RP}^2$.
By Corollary 2.13 of 
\cite{liokumovich-marques-neves}, the same $k$-widths can be defined  using maps which are continuous in the flat topology and that  have no concentration of mass. Hence
$$
\omega_3(\mathbb{RP}^2,\overline{g})\leq \sup_{(x,y)\in \mathbb{RP}^2 \times \mathbb{RP}^2}{\bf M}(\Phi(x,y))= 2\pi.
$$

The Almgren-Pitts min-max theory implies that there is a stationary integral varifold  $V$ in $(\mathbb{RP}^2,\overline{g})$, which is a stationary geodesic network by \cite{allard-almgren}, such that
$\omega_1(\mathbb{RP}^2,\overline{g})=||V||(\mathbb{RP}^2)$.  By Aiex \cite{aiex}, we can also suppose that the density of $V$ is an integer at the vertices in ${\rm spt}(V)$. Hence $\tilde{V}=\pi^{-1}(V)$ is a stationary integral varifold in $S^2$ with integer densities at the vertices and such that
$||\tilde{V}||(S^2)=2\,||V||(\mathbb{RP}^2)$. The cone $C$ such that $C\cap S^2=\tilde{V}$ is a stationary integral varifold.  By the monotonicity formula, if there is $p \in {\rm spt}(\tilde{V})$ with density greater than or equal to $2$, then
$||\tilde{V}||(S^2)\geq 4\pi$. In this case $||V||(\mathbb{RP}^2)\geq 2\pi$, and then
$2\pi\leq \omega_1(\mathbb{RP}^2,\overline{g})\leq  \omega_2(\mathbb{RP}^2,\overline{g})\leq  \omega_3(\mathbb{RP}^2,\overline{g})\leq 2\pi$ which proves the proposition.

If $\tilde{V}$ is a density one stationary integral varifold, then it is the varifold induced by a closed geodesic of $S^2$. Hence  $V$ is the density one
varifold with support a closed geodesic of $(\mathbb{RP}^2,\overline{g})$. 
Let $\Phi_i:X_i\rightarrow \mathcal{Z}_1(\mathbb{RP}^2,\mathbb{Z}_2)$  be a sequence of one-sweepouts such that
$$
\sup_{x\in X_i}{\bf M}(\Phi_i(x))\rightarrow \omega_1(\mathbb{RP}^2,\overline{g}).
$$
Since there is a loop $\sigma_i\subset X_i$ such that $(\Phi_i)^*(\overline{\lambda})\cdot \sigma_i=1$,  we can suppose that $X_i=S^1$. By the pull-tight argument, we can suppose that any varifold $V \in {\bf C}(\{\Phi_i\})$ is stationary.  It follows by Pitts
\cite{pitts}  that there is  $V\in {\bf C}(\{\Phi_i\})$ which is $\mathbb{Z}_2$ almost minimizing in annuli (\cite{marques-neves-lower-bound}, Section 4). By Section 3.13 of Pitts \cite{pitts}, $V$ is a stationary integral varifold. The discussion of the previous paragraph implies that we can
suppose that any varifold $V\in {\bf C}(\{\Phi_i\})$ which is $\mathbb{Z}_2$ almost minimizing in annuli must be the density one varifold induced
by a closed geodesic of $(\mathbb{RP}^2,\overline{g})$. 

Let $\gamma$ be a closed geodesic of $(\mathbb{RP}^2,\overline{g})$, and let $|\gamma|$ be the corresponding density one varifold.
Let $\{T_i\in \mathcal{Z}_1(\mathbb{RP}^2,\mathbb{Z}_2)\}_i$ be a sequence of modulo two flat boundaries such that $|T_i|\rightarrow |\gamma|$ in the varifold topology.  By the Constancy Theorem of modulo two flat cycles, $T_i\rightarrow 0$ in the flat topology (because the flat cycle $\gamma$ is not a boundary). We can apply Proposition 4.10 of \cite{marques-neves-lower-bound} to $V=|\gamma|$ and $\Sigma=0$ even if the ambient dimension is two, since $\gamma$ has smooth support. Hence one can choose annuli for $V=|\gamma|$ as in Part 1 of the proof of Theorem 4.10 of \cite{pitts}, so that the Part 2 of that proof is satisfied. Since Parts 1 and 2 of the proof of Theorem 4.10 of \cite{pitts} can also be
satisfied for those varifolds $V\in {\bf C}(\{\Phi_i\})$ which are not $\mathbb{Z}_2$ almost minimizing in annuli, the combinatorial argument of Pitts
can be implemented. This argument was formulated for discrete sequences of maps which are fine in the mass topology. To adapt it to our context we need the interpolation machinery (e.g. Section 3 of \cite{marques-neves-lower-bound}, Section 2 of \cite{liokumovich-marques-neves}). It produces a sequence of one-sweepouts with masses bounded by $\omega_1-\eta$ for some $\eta>0$. This is a contradiction and hence the proposition is proved.  
\end{proof}

\begin{prop}\label{prop.zollsurface_w1=w3}
	Let $g$ be a Zoll metric on the sphere $S^2$ whose geodesics have length $\ell$. Then
	\begin{equation*}
		\omega_1(S^2,g)=\omega_2(S^2,g)=\omega_3(S^2,g)=\ell.
	\end{equation*}
\end{prop}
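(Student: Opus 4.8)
Since $\omega_1\le\omega_2\le\omega_3$ always holds, the plan is to prove the two inequalities $\omega_1(S^2,g)\ge \ell$ and $\omega_3(S^2,g)\le \ell$.

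\emph{The lower bound.} By the min--max theory for Riemannian surfaces of \cite{chodosh-mantoulidis-widths}, the width $\omega_1(S^2,g)$ is a finite sum $\sum_i n_i\,{\rm length}(\gamma_i)$ with $n_i\in\mathbb{N}$ and each $\gamma_i$ an immersed closed geodesic of $(S^2,g)$. Since $g$ is Zoll, every closed geodesic is embedded of length $\ell$, so $\omega_1(S^2,g)\in\{\ell,2\ell,3\ell,\dots\}$; in particular $\omega_1(S^2,g)\ge\ell$.

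\emph{The upper bound.} I would construct a $3$-sweepout $\Phi\colon\RP^3\to\mathcal{Z}_1(S^2,{\bf F},\mathbb{Z}_2)$ with $\sup_x{\bf M}(\Phi(x))\le\ell$ (it is just as good to achieve $\le\ell+\varepsilon$ for every $\varepsilon>0$). Let $\widetilde{\mathcal{G}}\cong S^2$ be the manifold of oriented closed geodesics of $(S^2,g)$ (cf.\ \cite{besse}); each $\vec\sigma\in\widetilde{\mathcal{G}}$ bounds a ``left'' disc $D_{\vec\sigma}$ with $D_{-\vec\sigma}=\overline{S^2\setminus D_{\vec\sigma}}$. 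The map $\vec\sigma\mapsto|\gamma_{\vec\sigma}|$ factors through $\mathcal{G}=\widetilde{\mathcal{G}}/\{\pm\}\cong\RP^2$, and along a generator of $\pi_1(\RP^2)$ — a path of oriented geodesics from $\vec\sigma_0$ to $-\vec\sigma_0$ — it becomes a loop whose associated family of $2$-chains $D_{\vec\sigma_t}$ satisfies $D_{\vec\sigma_1}=S^2\setminus D_{\vec\sigma_0}$, hence a \emph{non}trivial loop in $\mathcal{Z}_1(S^2,\mathbb{Z}_2)$. Thus $\vec\sigma\mapsto|\gamma_{\vec\sigma}|$ is already a $2$-sweepout of mass $\le\ell$, so $\omega_2(S^2,g)\le\ell$. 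To upgrade it to a $3$-sweepout without increasing the mass, I would fix, continuously in $\vec\sigma$, a sweepout $\{c^t_{\vec\sigma}\}_{0\le t\le 1}$ of $D_{\vec\sigma}$ by closed curves with $c^0_{\vec\sigma}=\gamma_{\vec\sigma}$, $c^1_{\vec\sigma}$ the trivial cycle, and ${\rm length}(c^t_{\vec\sigma})\le\ell$ for all $t$. Concatenating, for each $\vec\sigma$, the family that shrinks $\gamma_{\vec\sigma}$ into $D_{\vec\sigma}$ with the one that shrinks $\gamma_{\vec\sigma}=\gamma_{-\vec\sigma}$ into $D_{-\vec\sigma}$ gives a path of cycles indexed by $s\in[-1,1]$ with the geodesic at $s=0$; reversing $\vec\sigma$ reverses this path, and all cycles at $s=\pm1$ are trivial, so the parameter space is $(\widetilde{\mathcal{G}}\times[-1,1])/\bigl((\vec\sigma,s)\sim(-\vec\sigma,-s)\bigr)$ with the two ends collapsed — that is, the unreduced suspension $\Sigma S^2\cong S^3$ modulo the induced free involution, namely $\RP^3$. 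The resulting $\Phi$ is continuous with no concentration of mass, satisfies $\sup{\bf M}\circ\Phi\le\ell$, and restricts on $\{s=0\}=\RP^2\subset\RP^3$ to $\vec\sigma\mapsto|\gamma_{\vec\sigma}|$. Since $\RP^2\hookrightarrow\RP^3$ induces an isomorphism on $H^1(\,\cdot\,,\mathbb{Z}_2)$, the class $\Phi^*(\overline{\lambda})$ is the generator of $H^1(\RP^3,\mathbb{Z}_2)$, hence $\Phi^*(\overline{\lambda}^3)=\Phi^*(\overline{\lambda})^3\ne0$ in $H^3(\RP^3,\mathbb{Z}_2)=\mathbb{Z}_2$. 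Thus $\Phi\in\mathcal{P}_3$ and $\omega_3(S^2,g)\le\ell$.

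\emph{The main obstacle.} The delicate point is producing the disc sweepouts $\{c^t_{\vec\sigma}\}$ of length $\le\ell$ continuously in $\vec\sigma$. When $g$ has nonnegative curvature one can take the equidistant fronts $c^t_{\vec\sigma}=\{x\in D_{\vec\sigma}\colon{\rm dist}(x,\gamma_{\vec\sigma})=t\,r_{\vec\sigma}\}$, with $r_{\vec\sigma}=\max_{D_{\vec\sigma}}{\rm dist}(\cdot,\gamma_{\vec\sigma})$, whose lengths are non-increasing because $\tfrac{d}{dt}{\rm length}(c^t_{\vec\sigma})=-r_{\vec\sigma}\int_{\{{\rm dist}<t\,r_{\vec\sigma}\}}K\le0$ by Gauss--Bonnet (they degenerate to the finite set of farthest points as $t\to 1$). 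For a general Zoll metric I would instead show that the ``disc width'' $W(D_{\vec\sigma})=\inf\max_t{\rm length}(c^t)$, over sweepouts of $D_{\vec\sigma}$ from $\gamma_{\vec\sigma}$ to a point, equals $\ell$: it is $\ge\ell$ since $\gamma_{\vec\sigma}$ lies in every admissible family, and if it were $>\ell$ the surface min--max of \cite{chodosh-mantoulidis-widths} would produce inside $\overline{D_{\vec\sigma}}$ a nonzero sum of closed geodesics of total length $W(D_{\vec\sigma})>\ell$, hence a closed geodesic $\gamma'\subset\overline{D_{\vec\sigma}}$ appearing with positive multiplicity; if $\gamma'\ne\gamma_{\vec\sigma}$ then, by uniqueness of geodesics, $\gamma'$ cannot touch $\partial D_{\vec\sigma}$ and so is disjoint from $\gamma_{\vec\sigma}$ — which a Zoll two-sphere does not admit — and the case $\gamma'=\gamma_{\vec\sigma}$ with multiplicity $\ge 2$ is ruled out by the maximum principle in the spirit of Propositions \ref{intersects.must.be.contained.geodesic} and \ref{maximum.principle}. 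Running a curve-shortening process (cf.\ \cite{grayson}) on a continuously chosen initial family of sweepouts of the discs $D_{\vec\sigma}$ then yields families with $\max_t{\rm length}(c^t_{\vec\sigma})\le\ell+\varepsilon$ depending continuously on $\vec\sigma$; this continuous dependence, together with the maximum-principle input (no disjoint closed geodesics on a Zoll $S^2$), is the technical heart of the argument, and letting $\varepsilon\to 0$ finishes the proof.
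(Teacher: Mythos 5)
Your overall strategy coincides with the paper's: lower bound via the Chodosh--Mantoulidis structure theorem for the $p$-widths of surfaces together with the fact that every closed geodesic of a Zoll metric has length $\ell$; upper bound via a $3$-sweepout parametrized by $\RP^3$ built from the space of oriented geodesics crossed with an interval, filling in each disc bounded by a geodesic with a family of shrinking cycles, and using the observation that a Zoll two-sphere admits no disjoint geodesics to guarantee that the curve-shortening flow contracts to a point rather than to a second geodesic. These are exactly the structural ingredients of the paper's proof.

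Where your proposal is incomplete is precisely the step you flag as the ``main obstacle'': producing, continuously in $\vec\sigma$, a sweepout of $D_{\vec\sigma}$ with $\max_t{\rm length}(c^t_{\vec\sigma})\le\ell$. The paper closes this gap with a small but crucial trick you do not have: every geodesic of a Zoll metric is unstable (its nullity is $\ge 2$ because of the variation through nearby geodesics, and the first eigenvalue of a one-dimensional periodic Jacobi operator is simple), so one can push $\gamma_{\vec\sigma}$ inward along $s\,\phi_{\vec\sigma}N_{\vec\sigma}$, where $\phi_{\vec\sigma}>0$ is the first eigenfunction of the Jacobi operator, for a uniform time $s\in[0,\tilde s]$. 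This strictly decreases length and produces a strictly convex curve $\gamma_{\tilde s}$, to which Grayson's theorem applies directly and stays length-decreasing; the dependence on $\vec\sigma$ is smooth by elliptic theory and compactness of the geodesic space. Your alternative routes do not quite substitute for this: the equidistant-front argument requires $K\ge 0$ and degenerates at focal points, and the ``disc width'' argument cites \cite{chodosh-mantoulidis-widths}, which is a theorem for closed surfaces, not for a min-max problem in a disc with a prescribed boundary geodesic; moreover, even granting $W(D_{\vec\sigma})=\ell$ pointwise, you give no mechanism by which the near-optimal disc sweepouts can be chosen continuously in $\vec\sigma$, nor an initial perturbation of the static geodesic to start the curve-shortening flow. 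That uniform, canonical initial perturbation is precisely what the first-eigenfunction push supplies, and without something equivalent to it the upper bound argument does not close.
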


\begin{proof}
	By (\cite{besse}, 2.10), the space $\tilde{X}$ of closed oriented geodesics of $(S^2,g)$ is diffeomorphic to $S^2$.
	Any  geodesic $\gamma$ of $(S^2,g)$ separates $S^2$ into two components with boundary $\gamma$ by Jordan's theorem. Let $X^+$ be the set  of open subsets $\Omega\subset S^2$ such that $\partial \Omega$ is some geodesic of $(S^2,g)$. Since $S^2$ has an orientation, we can identify $X^+$ with $\tilde{X}$ by the oriented boundary map $\Omega \mapsto \partial \Omega$. The map $i:X^+\rightarrow X^+$ that sends $\Omega$ to $S^2 \setminus \overline{\Omega}$ is an involution, and hence the space $X$ of closed unoriented geodesics of $(S^2,g)$ is diffeomorphic to $\mathbb{RP}^2$.
	
	Any geodesic $\gamma$ of $(S^2,g)$ is unstable, because $\gamma$ is  two-sided of  nullity  at least two, due to the variations by geodesics within $X$. Given $\Omega\in X_+$ with $\partial \Omega=\gamma$, let $N_\Omega$ be the unit normal vector field of $\gamma$ that points towards $S^2\setminus \overline{\Omega}$, and let $\phi_\Omega$ denote the  positive first eigenfunction of the Jacobi operator of $\gamma$ that has $L^2$-norm equal to one. There exists $0<\tilde{s}<1$, which does not depend on $\Omega$, and a foliation $\{\gamma_s\}$, $s\in [0,\tilde{s}]$, of a neighborhood of $\gamma$ in $S^2\setminus \Omega$ by embedded curves with curvature vector pointing away from $\gamma$, produced by applying the normal exponential map of $\gamma$ to $s\phi_\Omega N_\Omega$. Let $\Omega_s$ be the component of $S^2\setminus \gamma_s$ that contains $\Omega$. Since $X$ is compact, we can define a map $\Psi: X_+\times [0,\tilde{s}] \rightarrow \mathcal{Z}_1(S^2,{\bf F},\mathbb{Z}_2)$ by $\Psi(\Omega,s)=\partial \Omega_{s}$. 
	
	The construction will require  further properties of geodesics of Zoll metrics on $S^2$. For instance, any two geodesics of $(S^2,g)$ intersect. Suppose by contradiction that this is not the case. Then there are disjoint geodesics $\gamma_0$ and $\gamma_1$ on $(S^2,g)$. Let $\Omega_0$ be the open component of $S^2\setminus \gamma_0$ that does not contain $\gamma_1$. Fix $p\in \gamma_0$. The continuous loop $t\in [0,1]\mapsto \partial \Omega_t\in X$, that switches between $\Omega_0$ with $\partial \Omega_0=\gamma_0$ and $\Omega_1=S^2\setminus \overline{\Omega}_0$, by moving along geodesics that contain $p$, produces a one-sweepout of $S^2$. In particular, some $\partial\Omega_t$ intersects $\gamma_1$, for some $0<t<1$. At the first time $t$ of contact, the intersection between the geodesics $\partial \Omega_{t}$ and $\gamma_1$ is tangential. By uniqueness of geodesics, $\gamma_1=\partial \Omega_{t}$. But this is a contradiction, since $\gamma_1\cap \gamma_0=\emptyset$, while $\partial \Omega_{t}$ contains $p$, by construction.
	
	By \cite{chodosh-mantoulidis-widths} (also \cite{calabi-cao}), there are closed geodesics $\gamma_1,\dots,\gamma_k$ such that   $\omega_1(S^2,g)=\sum_{j=1}^kl(\gamma_j)$. Since $(S^2,g)$ is Zoll, $l(\gamma_j)=l$ for any $j$ and hence $l\leq \omega_1(S^2,g)$.  Hence the proposition is proved if we can show that $\omega_3(S^2,g)\leq l$. 
	
	By Grayson's theorem \cite{grayson}, the curve shortening flow starting at each $\partial \Omega_{\tilde{s}}$ is such that either it flows to a closed embedded geodesic in $S^2\setminus \overline{\Omega}_{\tilde{s}}$, or the flow ends in finite time at a point. The first possibility does not happen because no geodesic of $(S^2,g)$ is disjoint from $\gamma=\partial \Omega$. Thus, for every $\Omega\in X_+$ there exists $T_{max}<\infty$ and a smooth curve shortening flow $\{\partial \Omega_s\}$, $s\in [\tilde{s},T_{max}]$, starting at $\partial \Omega_{\tilde{s}}$, ending at a round point, such that $\Omega_s\subset \Omega_t$ for $s<t$ and such that ${\bf M}(\partial \Omega_s)$ is strictly decreasing in $s$. 
The estimates of curve shortening flow can be used to extend the map $\Psi$ to a continuous map	
$\Psi: X_+\times [0,1] \rightarrow \mathcal{Z}_1(S^2,{\bf F}, \mathbb{Z}_2)$ such that for any $\Omega\in X^+$,   $\{\Psi(\Omega,s)\}_{s\in [\tilde{s},1]}$ is a reparametrization of the curve shortening flow $\{\partial \Omega_s\}_{s\in [\tilde{s},T_{max}(\Omega)]}$. Then $\Psi(\Omega,1)=0$ for any $\Omega\in X^+$. The space $X^+\times [0,1]$ if we identify the points of $X\times\{1\}$ is homeomorphic to the unit three-dimensional ball $B^3$. Since
$\Psi(\Omega,0)=\Psi(i(\Omega),0)=\partial \Omega$ for any $\Omega\in X^+$, the map $\Psi$ induces a continuous
map $\tilde{\Psi}:\mathbb{RP}^3 \rightarrow \mathcal{Z}_1(S^2,{\bf F}, \mathbb{Z}_2)$ with ${\bf M}(\tilde{\Psi}(z))\leq l$ for any $z\in \mathbb{RP}^3$.

The map $\tilde{\Psi}$ sends a nontrivial loop in $X \approx \mathbb{RP}^2$ into a one-sweepout of $S^2$,  since its lift to $X^+$ switches the components of $S^2\setminus \gamma$ for some geodesic $\gamma$.  Therefore $\tilde{\Psi}^*(\overline{\lambda})$ is the generator $\lambda$ of $H^1(\mathbb{RP}^3,\mathbb{Z}_2)$. Since $\lambda^3\neq 0$, the map $\tilde{\Psi}$ is a three-sweepout of $(S^2,g)$. Hence $\omega_3(S^2,g)\leq l$ as we wanted to prove.
\end{proof}

\section{Rigidity for the volume spectrum}

In this section we will prove Theorem A. 
We start by proving a metric characterization of compact Riemannian surfaces such that the widths $\omega_1,\omega_2,\omega_3$ are equal.

\begin{thm}\label{rigidity.projective.plane2}
Suppose that $(M^2,g)$ is a compact Riemannian surface such that
$$
\omega_1(M,g)=\omega_2(M,g)=\omega_3(M,g)=2\pi.
$$
Then either $(M,g)$ is isometric to a Zoll sphere $(S^2,g)$ whose geodesics have length $2\pi$, or $M$ is diffeomorphic to $\mathbb{RP}^2$ and  
${\rm sys}(\mathbb{RP}^2,g)=\pi$.  
\end{thm}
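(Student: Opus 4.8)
The plan is to exploit the equality $\omega_1=\omega_2=\omega_3=2\pi$ through a Lusternik--Schnirelmann-type argument for the length functional on one-cycles in $(M^2,g)$, following the heuristic described in the introduction. First I would fix an optimal sequence of three-sweepouts $\{\Phi_i\}\subset\mathcal{P}_3$ with $\sup_x\mathbf{M}(\Phi_i(x))\to 2\pi$ and study its critical set $\mathbf{C}(\{\Phi_i\})$, which by Almgren--Pitts (and Allard--Almgren, Aiex) consists of stationary integral one-varifolds of mass $2\pi$, i.e. stationary geodesic networks with integer multiplicities. The point of the equality is that it forces enough ``room'' in the sweepout that the critical set cannot be concentrated on a single nondegenerate geodesic: the standard Lusternik--Schnirelmann deformation-plus-cup-product argument shows that if $\omega_1=\omega_2=\omega_3$ then the critical set must have topological size at least two in the relevant sense, and in particular it must accumulate on a genuinely one-parameter family of stationary varifolds — in fact a one-sweepout of $M$ by stationary geodesic networks of mass $2\pi$. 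This is where Propositions \ref{modification.prop} and \ref{no.disjoint.geodesic} (referenced in the introduction) enter, to first replace $\{\Phi_i\}$ by an optimal three-sweepout whose critical set avoids the ``flat torus / Klein bottle'' configurations $\gamma_s+\gamma_t=\partial U$ with $\mathrm{area}(U)=\pi$.

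Next I would classify the stationary integral varifolds $V$ that can occur in this critical set. Using the maximum principle propositions (Propositions \ref{intersects.must.be.contained.geodesic}, \ref{maximum.principle}), the fact that distinct limit geodesics in the sweepout either coincide or intersect, and a mass bound of $2\pi$, one narrows $\mathrm{spt}(V)$ to one of four possibilities: (i) a simple closed geodesic with multiplicity one (so of length $2\pi$), (ii) a figure-eight geodesic with multiplicity one, (iii) a union of two simple closed geodesics meeting at a single point, each with multiplicity one, or (iv) a simple closed geodesic of multiplicity two. Here the balancing/stationarity condition at vertices (handling the tangent cone, Proposition \ref{tangent.cone}) rules out more complicated networks, and a separate argument (Proposition on figure-eights) rules out case (ii) entirely. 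In cases (i), the one-parameter family of geodesics of length $2\pi$ foliating $M$ forces $(M,g)$ to be a Zoll sphere with geodesics of length $2\pi$ (by the usual argument that a Zoll family of simple closed geodesics on a surface makes it $S^2$ or $\mathbb{RP}^2$, and the multiplicity-one simple closed case with the sweepout structure gives $S^2$). In cases (iii)–(iv), the support lifts, in a double cover or directly, to a single geodesic loop of length $\pi$ based at a point, which is non-contractible; combined with the area constraint $\mathrm{area}(M,g)=2\pi$ coming from the Weyl law, this identifies $M$ with $\mathbb{RP}^3$... — rather, $M$ diffeomorphic to $\mathbb{RP}^2$ — and shows that the shortest non-contractible loop has length exactly $\pi$, i.e. $\mathrm{sys}(\mathbb{RP}^2,g)=\pi$.

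The main obstacle I anticipate is the step establishing that the equality $\omega_1=\omega_2=\omega_3$ really produces a one-parameter family (a one-sweepout) of stationary networks of mass $2\pi$ in the critical set that is robust enough to survive the modification in Proposition \ref{modification.prop}; this is the genuinely delicate Lusternik--Schnirelmann bookkeeping, because the min-max critical varifolds need not vary continuously and one must use the combinatorial/interpolation machinery (as in the proof of Proposition \ref{widths.projective.plane} via \cite{pitts}, \cite{marques-neves-lower-bound}, \cite{liokumovich-marques-neves}) to convert the topological statement into a statement about a continuous family. The classification of the critical varifolds and the dichotomy Zoll-$S^2$ vs.\ $\mathbb{RP}^2$ are then comparatively mechanical, relying on the maximum principle lemmas already stated and elementary facts about geodesic networks on surfaces; I would invoke Weinstein's theorem only afterward (outside this theorem) to discard the Zoll case when combined with $\mathrm{area}=2\pi$, so within this theorem itself both alternatives remain and the statement is exactly as phrased.
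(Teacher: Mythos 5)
Your plan tracks the paper's overall architecture fairly closely: start with an optimal sequence of three-sweepouts, apply the modification of Proposition \ref{modification.prop} to avoid the torus/Klein-bottle configurations, extract from the cup-product structure a smaller sweepout whose critical set lies in the stationary geodesic networks of mass $2\pi$, then classify those networks into cases (i)--(iv), rule out the figure-eight, and conclude Zoll $S^2$ in case (i) or $\mathbb{RP}^2$ with systole $\pi$ in cases (iii)--(iv). So the shape of the argument matches.

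The genuine gap is in the central reduction step. You say the Lusternik--Schnirelmann argument ``must accumulate on a genuinely one-parameter family of stationary varifolds --- in fact a one-sweepout of $M$ by stationary geodesic networks of mass $2\pi$.'' That is the wrong object, and the weaker version does not suffice. What the paper actually extracts is a \emph{two}-sweepout $\{\Phi_i\}$ with $({\Phi_i})^*(\overline{\lambda}^2)\neq 0$ whose image is asymptotically close to $\mathcal{V}$: the contradiction via Pitts' almost-minimizing argument shows $(\Psi_i|_{Y_i})^*(\overline{\lambda})=0$ where the sweepout strays from $\mathcal{V}$, and the cup-product identity then forces $(\Psi_i|_{Z_i})^*(\overline{\lambda}^2)\neq 0$. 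The two-sweepout property is then used in an essential way: in case (i), it gives, for every pair of points $p,q\in M$, a critical network through both (via the ``bisect two prescribed disks'' argument), which is exactly what produces a geodesic through $p$ in every direction and hence the Zoll structure. In cases (iii)--(iv), the two-sweepout property is used again (e.g.\ to show $l(\gamma)=l(\gamma')$, and in the contradiction arguments involving $\gamma_{\tilde{t}}$ and the sets $\mathcal{V}'$). A one-sweepout by networks of mass $2\pi$ is simply not strong enough to run those arguments, so if you actually followed your plan you would get stuck at exactly the points where the Zoll property and the systole identification are established. A secondary, much smaller, issue: you appeal to ``the area constraint $\mathrm{area}(M,g)=2\pi$ coming from the Weyl law'' inside this theorem's proof, but the Weyl-law area normalization is not among the hypotheses of Theorem \ref{rigidity.projective.plane2} and is not needed for either alternative; you correctly defer Weinstein's theorem to the proof of Theorem~A, so you should also drop the area constraint from the argument at this stage.
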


\begin{proof}
Let $(M^2,g)$ be a compact Riemannian surface which satisfies the hypotheses of Theorem \ref{rigidity.projective.plane2}. 
Let $\{\Psi_i\}_{i\in \mathbb{N}}\subset \mathcal{P}_3$ be such that
$$
\sup_{x\in X_i}{\bf M}(\Psi_i(x))\rightarrow \omega_3(M^2,g)=2\pi,
$$  
where $X_i={\rm dmn}(\Psi_i)$. As in the Proposition 2.2 of \cite{irie-marques-neves}, $X_i$ can be supposed to be a three-dimensional simplicial complex.

	Assume there exists a foliation $\{\gamma_t\}_{t\in [-s,s]}$ of $M$ by two-sided, smooth, embedded, closed geodesics of length $\pi$, with $\gamma_{-s}=\gamma_s$. This case can only happen if $M$ is diffeomorphic to a torus or to a Klein bottle. Let $\mathcal{R}$ be the set of varifolds of the form $|\gamma_{t_1}|+|\gamma_{t_2}|$, for $t_1,t_2\in [-s,s]$. The set $\mathcal{R}$ is compact in the varifold topology.  We also define $\mathcal{T}$ to be the set of modulo two one-cycles of the form $\gamma_{t_1}+\gamma_{t_2}$, where $t_1,t_2\in [-s,s]$. 
 The set $\mathcal{T}$ is compact in the flat topology.

\begin{prop}\label{modification.prop}
Let $T'\in \mathcal{T}$ be such that it divides $M$ into two regions of equal area. For any $\eta>0$, the sequence $\{\Psi_i\}$ can be modified  into a sequence of three-sweepouts $\{\Psi_i'\}$ that is optimal for $\omega_3$, and 
\begin{itemize}
 \item[(i)] there is a sequence $\delta_i\rightarrow 0$ such that for each $x\in {\rm dmn}(\Psi_i')$, either ${\bf F}(\Psi_i'(x), \Psi_i ({\rm dmn}(\Psi_i))\leq \delta_i$ or $\mathcal{F}(\Psi_i'(x),0)<2\eta$,
\item[(ii)]  there is  $\alpha>0$ so that
$$
{\bf F}(\Psi_i'(x),T')\geq \alpha
$$
for any $x\in {\rm dmn}(\Psi_i')$.
\end{itemize}
\end{prop}

\begin{proof}

We are going to use the following  mass-minimization property of nontrivial cycles in the set $\mathcal{T}$:
\begin{prop}\label{length.minimizing}
Let $T=\gamma_{t_1}+\gamma_{t_2}\in \mathcal{Z}_1(M,\mathbb{Z}_2)$, where $\gamma_{t_1}\neq \gamma_{t_2}$. There is $0<\delta=\delta(T)$   such that if $S\in \mathcal{Z}_1(M,\mathbb{Z}_2)$ satisfies $\mathcal{F}(S,T)\leq\delta$, then either ${\bf M}(S)>{\bf M}(T)=2\pi$ or $S\in \mathcal{T}$.
\end{prop}

\begin{proof}

Suppose, by contradiction, that the proposition is not true. Then there is a sequence $\{S_i \in  \mathcal{Z}_1(M,\mathbb{Z}_2)\}_i$ such that $S_i\rightarrow T$ in the flat topology, ${\bf M}(S_i)\leq 2\pi$ and $S_i\notin \mathcal{T}$ for each $i$.  Let $U'$ be a  neighborhood of 
$\gamma_{t_1}$, and $V'$ be a neighborhood of $\gamma_{t_2}$, such that for some open sets $U,V$ one has $\overline{U'}\subset U$, $\overline{V'}\subset V$ and $\overline{U}\cap \overline{V}= \emptyset$.  By lower semicontinuity of mass, and using that ${\bf M}(S_i)\leq 2\pi$, it follows that  ${\bf M}(S_i\llcorner U')\rightarrow \pi$ and ${\bf M}(S_i\llcorner V')\rightarrow \pi$. Hence ${\bf M}(S_i\llcorner (U'\cup V')^c)\rightarrow 0$. 

Let
$$
m_i=\inf \{{\bf M}(T'): {\rm spt}(T'-S_i)\subset (U'\cup V')^c, T'\in \mathcal{Z}_1(M,\mathbb{Z}_2)\}.
$$
By lower semicontinuity of mass there is $S_i'\in \mathcal{Z}_1(M,\mathbb{Z}_2)$, with ${\rm spt}(S_i'-S_i)\subset (U'\cup V')^c$
and ${\bf M}(S_i')=m_i$. Notice that $S_i'\llcorner (U'\cup V')=S_i\llcorner (U'\cup V')$, hence ${\bf M}(S_i'\llcorner U')\rightarrow \pi$ and ${\bf M}(S_i'\llcorner V')\rightarrow \pi$.  But  ${\bf M}(S_i')\leq {\bf M}(S_i) \leq 2\pi$.  Therefore ${\bf M}(S_i'\llcorner (U'\cup V')^c)\rightarrow 0$. Since 
$S_i'$ is mass-minimizing in $(\overline{U'}\cup\overline{V'})^c$, the monotonicity formula for geodesics implies 
${\rm spt}(S_i')\subset U \cup V$ for sufficiently large $i$. We can decompose $S_i'=S_{i,U}'+S_{i,V}'$, with $\partial S_{i,U}'=\partial S_{i,V}'=0$,  ${\rm spt}(S_{i,U}')\subset U$, ${\rm spt}(S_{i,V}')\subset V$,  $S_{i,U}'\rightarrow \gamma_{t_1}$, $S_{i,V}'\rightarrow \gamma_{t_2}$. 

We can suppose that $U$ is a foliated open set. In particular, $\partial U$ is a union of two closed geodesics.
By minimizing the length inside $U$ in the homology class of $\gamma_{t_1}$, and then applying the maximum principle, it follows that the closed geodesics of the foliation of $U$ are the only length-minimizing cycles modulo two.  The analogous statement is true for $V$ and $\gamma_{t_2}$.
Then ${\bf M}(S_{i,U}')\geq \pi$ and ${\bf M}(S_{i,V}')\geq \pi$.  Since ${\bf M}(S_i')\leq 2\pi$, it follows that ${\bf M}(S_{i,U}')= \pi$ and ${\bf M}(S_{i,V}')= \pi$. Therefore $S_{i,U}'$ and $S_{i,V}'$ are closed geodesics of the foliation.  Since $S_i\llcorner (U'\cup V')=S_i'\llcorner (U'\cup V')$ and ${\bf M}(S_i)\leq 2\pi$, it follows that $S_i=S_i'$ for sufficiently large $i$.  This is a contradiction as $S_i\notin\mathcal{T}$. This finishes the proof of the proposition.
\end{proof}

	Given $\eta>0$, we denote by $\mathcal{T}_\eta\subset \mathcal{Z}_1(M,\mathbb{Z}_2)$ the set of cycles  $T=\gamma_{t_1}+\gamma_{t_2}$ such that: if $U$ is a two-chain with $\partial U=T$, then ${\bf M}(U)\geq \eta$ and ${\bf M}(M-U)\geq \eta$. 
	
	There is $\delta'(\eta)>0$ such that if $S\in \mathcal{T}$ satisfies $\mathcal{F}(S,\mathcal{T}_\eta)\leq \delta'(\eta)$, then $S\in \mathcal{T}_{\eta/2}$.

\begin{prop}\label{length.minimizing.eta}
Let $\eta>0$. There is $0<\delta=\delta(\eta)<\delta'(\eta)$ such that if $S\in \mathcal{Z}_1(M,\mathbb{Z}_2)$ satisfies $\mathcal{F}(S,\mathcal{T}_\eta)\leq \delta$, then either ${\bf M}(S)>2\pi$ or $S\in \mathcal{T}_{\eta/2}$.
\end{prop}

\begin{proof}
Suppose that the proposition is false.  Since $\mathcal{T}_\eta$ is compact in the flat topology,  we can find a sequence $\{S_i \in  \mathcal{Z}_1(M,\mathbb{Z}_2)\}_i$ such that $S_i\rightarrow T\in \mathcal{T}_\eta$ in the flat topology, ${\bf M}(S_i)\leq 2\pi$ and $S_i\notin \mathcal{T}_{\eta/2}$ for each $i$. By applying Proposition \ref{length.minimizing} to $T$, we get that $S_i\in \mathcal{T}$ for sufficiently large $i$. 
Since $\mathcal{F}(S_i,T)<\delta'(\eta)$ for sufficiently large $i$, it follows that $S_i\in \mathcal{T}_{\eta/2}$ for such $i$
which is a contradiction. This proves the proposition.
\end{proof}

\begin{prop}\label{close.Fmetric}
Let $\eta>0$, $0<\delta<\delta(\eta)$, and $\beta>0$.  There is $0<\theta=\theta(\eta,\delta,\beta)$ such that if $S\in \mathcal{Z}_1(M,\mathbb{Z}_2)$ satisfies $\mathcal{F}(S,\mathcal{T}_\eta)\leq \delta$ and ${\bf M}(S)\leq 2\pi+\theta$, then ${\bf F}(S, \mathcal{T}_{\eta/2})< \beta.$
\end{prop}

\begin{proof}
Suppose the proposition is false, by contradiction. Then there is a sequence $\{S_i \in  \mathcal{Z}_1(M,\mathbb{Z}_2)\}_i$ such that $\mathcal{F}(S_i,\mathcal{T}_\eta)\leq \delta$, ${\bf M}(S_i)\leq 2\pi+\theta_i$, $\theta_i\rightarrow 0$,  and 
${\bf F}(S_i, \mathcal{T}_{\eta/2})\geq  \beta$
for each $i$.   By compactness of the space of flat chains with mass bounds, there is a sequence $\{S_j\}_j\subset \{S_i\}_i$ and $S\in \mathcal{Z}_1(M,\mathbb{Z}_2)$ such that $S_j\rightarrow S$ in the flat topology. Then $\mathcal{F}(S,\mathcal{T}_\eta)\leq \delta$, and ${\bf M}(S)\leq 2\pi$ by lower semicontinuity of mass.  Since $\delta<\delta(\eta)$, it follows by Proposition \ref{length.minimizing.eta} that $S\in \mathcal{T}_{\eta/2}$.
Then $\mathcal{F}(S_j,S)\rightarrow 0$ and ${\bf M}(S_j)\rightarrow {\bf M}(S)$, which implies ${\bf F}(S_j,S)\rightarrow 0$ (e.g. Section 2.1 of \cite{pitts}). This is a contradiction as ${\bf F}(S_i, \mathcal{T}_{\eta/2})\geq  \beta$ for each $i$.  This finishes the proof of the proposition.
\end{proof}

Let $0<\eta<{\rm area}(M)/5$, and $0<\delta<\delta(\eta)$. Let $\Omega_1$, $\Omega_2$ and $\Omega_3$ be open sets of $M$, with mutually disjoint closures, such that for any $T\in \mathcal{T}$, there is $1\leq k\leq 3$ so that ${\rm spt}(T)\cap \overline{\Omega}_k=\emptyset$.
We choose $\delta$ sufficiently small such that if $S\in \mathcal{Z}_1(M,\mathbb{Z}_2)$ satisfies ${\mathcal F}(S,\mathcal{T})<2\delta$, then
there is $1\leq k\leq 3$ so that $S$ cannot bisect $\Omega_k$ into two regions of equal area. This is possible because of the isoperimetric inequality.

Recall that  $X_i$ is a three-dimensional simplicial complex.
 Since $\Psi_i$ is a three-sweepout,  there is $\sigma_i\in H_3(X_i,\mathbb{Z}_2)$ such that $\Psi_i^*(\overline{\lambda})^3\cdot \sigma_i=1$. Hence $X_i$ can be replaced by $\sigma_i$ as the domain of $\Psi_i$. Therefore $X_i$ can be supposed to satisfy $\partial X_i=0$.  
 
 Let $\mathcal{R}_\eta$ be the set of varifolds induced by elements of $\mathcal{T}_\eta$. We can suppose that ${\bf C}(\{\Psi_i\})$ contains $|T'|\in \mathcal{R}_\eta$, otherwise
no modification is needed. By subdividing $X_i$ we can find three-dimensional subcomplexes $Y_i,Z_i\subset X_i$, with $X_i=Y_i\cup Z_i$, ${\rm int}(Y_i)\cap {\rm int}(Z_i)=\emptyset$, such that for any $x\in Y_i$, ${\bf F}(\Psi_i(x),\mathcal{T}_\eta)<\delta$, and for any $x\in Z_i$, ${\bf F}(\Psi_i(x),\mathcal{T}_\eta)\geq\delta/2$. In particular, $\mathcal{F}(\Psi_i(x),\mathcal{T}_\eta)<\delta$ for any $x\in Y_i$.

Let $\beta>0$, and $\theta=\theta(\eta,\delta,\beta)>0$. For sufficiently large $i$, 
$$
\sup_{x\in X_i}{\bf M}(\Psi_i(x)) \leq 2\pi+\theta.
$$
Hence, for sufficiently large $i$,  by Proposition \ref{close.Fmetric}, 
$$
{\bf F}(\Psi_i(x),\mathcal{T}_{\eta/2}) < \beta
$$
for any $x\in Y_i$. Notice that $\mathcal{T}_{\eta/2}\subset \mathcal{Z}_1(M,{\bf F},\mathbb{Z}_2)$ is a compact set. Since $\beta$ is arbitrarily small, we can use Theorem 3.8 of \cite{marques-neves-lower-bound} with $\mathcal{K}=\mathcal{T}_{\eta/2}$ to construct homotopies
$$
H_i:[0,1]\times \partial Y_i \rightarrow \mathcal{Z}_1(M,{\bf F},\mathbb{Z}_2),
$$
with $H_i(0,\cdot)=\Psi_i$, $H_i(1,x) \in \mathcal{T}_{\eta/5}$ for any $x\in \partial Y_i$, and
$$
 {\bf F}(H_i(t,x),\Psi_i(x))\leq \alpha_i\rightarrow 0
$$
for any $(t,x)\in [0,1]\times \partial Y_i$.

Since $\partial Y_i\subset Z_i$, ${\bf F}(\Psi_i(x),\mathcal{T}_\eta)\geq\delta/2$ for any $x\in \partial Y_i$. Hence for sufficiently large
$i$, $H_i(1,x)\notin\mathcal{T}_\eta$ for any $x\in \partial Y_i$.  Since $\eta<{\rm area}(M)/5$, for each $x\in \partial Y_i$ there is a unique
two-dimensional chain $U_i(x)$ with $\partial U_i(x)=H_i(1,x)$ and ${\bf M}(U_i(x))< \eta$.  Notice that $\partial U_i(x)$ is a union of two closed geodesics of the foliation.  We can define a continuous path $t\in [1,2]\mapsto U_i(t,x)$ of two-chains $U_i(t,x)\subset U_i(x)$, with $U_i(1,x)=U_i(x)$ and $U_i(2,x)=0$, by letting the geodesics in $\partial U_i(x)$ converge, as closed geodesics of the foliation, to the closed geodesic that divides in half the area of $U_i$. Along this deformation, $\partial U_i(t,x)\in \mathcal{T}$.

We can construct then a homotopy 
$$
H_i':[1,2]\times \partial Y_i \rightarrow \mathcal{Z}_1(M,{\mathcal F},\mathbb{Z}_2),
$$
continuous in the flat topology and with no concentration of mass, such that
$H_i'(1,x)=H_i(1,x)$ $H_i'(2,x)=0$ for any $x\in \partial Y_i$, and $H_i'(t,x)\in \mathcal{T}$ for any $(t,x)\in [1,2]\times \partial Y_i$.

We glue the homotopies into a map $\tilde{H}_i:[0,2]\times \partial Y_i\rightarrow \mathcal{Z}_1(M,{\mathcal F},\mathbb{Z}_2),$ with
$\tilde{H}_i(t,x)=H_i(t,x)$ if $t\in [0,1]$ and $\tilde{H}_i(t,x)=H_i'(t,x)$ if $t\in [1,2]$.  Since $\tilde{H}_i(2,x)=0$ for any $x\in \partial Y_i$,
we can identify $(2,x)\sim (2,x')$ for any $x,x'\in \partial Y_i$ so that $\tilde{H}_i$ induces a continuous map
$$
\Gamma_i: C(\partial Y_i)\rightarrow \mathcal{Z}_1(M,{\mathcal F},\mathbb{Z}_2)
$$
on the cone $C(\partial Y_i)$ over $\partial Y_i$. The map $\Gamma_i$ has no concentration of mass and 
$$
\Gamma_i(C(\partial Y_i))\subset \tilde{H}_i([0,2]\times \partial Y_i)
$$
for any $i$.

We define the spaces $\tilde{Y}_i=Y_i \cup C(\partial Y_i)$ and $\tilde{Z}_i=Z_i \cup C(\partial Y_i)$. Since $\partial (C(\partial Y_i))=\partial Y_i$, and $\partial Z_i=\partial Y_i$, it follows that $\partial \tilde{Y}_i=\partial \tilde{Z}_i=0$. The homology is with coefficients modulo two, hence
$$\tilde{Y}_i+\tilde{Z}_i=Y_i+Z_i+2C(\partial Y_i)=Y_i+Z_i=X_i.$$

We consider the map $\Psi_{i,1}:\tilde{Y}_i\rightarrow \mathcal{Z}_1(M,{\mathcal F},\mathbb{Z}_2)$ defined by $(\Psi_{i,1})_{|Y_i}=(\Psi_i)_{|Y_i}$ and  $(\Psi_{i,1})_{|C(\partial Y_i)}=\Gamma_i$, and $\Psi_{i,2}:\tilde{Z}_i\rightarrow \mathcal{Z}_1(M,{\mathcal F},\mathbb{Z}_2)$ defined by $(\Psi_{i,2})_{|Z_i}=(\Psi_i)_{|Z_i}$ and  $(\Psi_{i,2})_{|C(\partial Y_i)}=\Gamma_i$.
These maps are well defined because $H_i(0,x)=\Psi_i(x)$ for any $x\in \partial Y_i$. Notice that by modulo two cancellation of the maps on the cone, one gets
$$
(\Psi_{i,1})_*(\tilde{Y}_i)+(\Psi_{i,2})_*(\tilde{Z}_i)=(\Psi_i)_*(X_i) \in H_3(\mathcal{Z}_1(M,\mathbb{Z}_2),\mathbb{Z}_2).
$$
Since
$$
\overline{\lambda}^3 \cdot [(\Psi_i)_*(X_i)]=1,
$$
we get that either $\overline{\lambda}^3 \cdot [(\Psi_{i,1})_*(\tilde{Y}_i)]=1$ or $\overline{\lambda}^3 \cdot [(\Psi_{i,2})_*(\tilde{Z}_i)]=1$.
This implies that either   $\Psi_{i,1}$ or $\Psi_{i,2}$ is a three-sweepout.

Since ${\bf F}(\Psi_i(x),\mathcal{T}_\eta)< \delta$ for any $x\in Y_i$,  ${\bf F}(\tilde{H}_i(t,x),\Psi_i(x))\leq \alpha_i <\delta$ for any
$(t,x)\in [0,1]\times \partial Y_i$ and sufficiently large $i$, and $\tilde{H}_i(t,x)\in \mathcal{T}$ for any $(t,x)\in [1,2]\times \partial Y_i$, we
have that
$$
{\mathcal F}(\Psi_{i,1}(x),\mathcal{T})<2\delta
$$
for any $x\in \tilde{Y}_i$. Therefore by the choice of $\delta$, no $S=\Psi_{i,1}(x)$, $x\in \tilde{Y}_i$, can bisect each $\Omega_k$, $1\leq k\leq 3$ into regions of equal area. This proves that $\Psi_{i,1}$ is not a three-sweepout.
Hence $\Psi_{i,2}:\tilde{Z}_i\rightarrow \mathcal{Z}_1(M,\mathcal{F}, \mathbb{Z}_2)$ is a three-sweepout for sufficiently large $i$.

Recall $T'\in \mathcal{T}$ is such  that it divides $M$ into two regions of equal area.  Hence $T'\in \mathcal{T}_\eta$. This implies ${\bf F}(\Psi_i(x),T')\geq \delta/2$ for any $x\in Z_i$. Since  ${\bf F}(\tilde{H}_i(t,x),\Psi_i(x))\leq \alpha_i \rightarrow 0$ for any $(t,x)\in [0,1]\times\partial Y_i$, and $\partial Y_i\subset Z_i$, one has that ${\bf F}(\tilde{H}_i(t,x),T')\geq \delta/5$ for any $(t,x)\in [0,1]\times\partial Y_i$ and sufficiently large $i$. Any
$\tilde{H}_i(t,x)$, $(t,x)\in [1,2]\times \partial Y_i$, bounds a region of area less than ${\rm area}(M)/5$. It follows that for some $\alpha>0$,
$$
{\bf F}(\Psi_{i,2}(x),T')\geq \alpha
$$
for any $x\in \tilde{Z}_i$ and sufficiently large $i$.

Since $\lim_{i\rightarrow \infty}\sup_{x\in \tilde{Z}_i}{\bf M}(\Psi_{i,2}(x))\leq  2\pi$, and $\Psi_{i,2}$ is a three-sweepout, we have that
$\sup_{x\in \tilde{Z}_i}{\bf M}(\Psi_{i,2}(x))\rightarrow  2\pi$ and the sequence $\{\Psi_{i,2}\}_i$ is optimal for $\omega_3$.
The maps $\Psi_{i,2}$ are continuous in the flat topology and have no concentration of mass.  By Corollary 2.13 of \cite{liokumovich-marques-neves}, for each $i$ there is a sequence of continuous maps   $\Psi_{i,2}^{(j)}:\tilde{Z}_i\rightarrow\mathcal{Z}_1(M,{\bf M},\mathbb{Z}_2)$ 
which are three-sweepouts and such that for some sequence $\delta_{i,j}\rightarrow 0$,
$$
\sup_{x\in \tilde{Z}_i}{\bf M}(\Psi_{i,2}^{(j)}(x))\leq \sup_{x\in \tilde{Z}_i}{\bf M}(\Psi_{i,2}(x))+\delta_{i,j},
$$
and
$$
\sup_{x\in \tilde{Z}_i}\mathcal{F}(\Psi_{i,2}^{(j)}(x), \Psi_{i,2}(x))\leq \delta_{i,j}.
$$
Since the restriction of the map $\Psi_{i,2}$ to $Z_i \cup ([0,1]\times \partial Y_i)$ is continuous in the ${\bf F}$-metric,  by the proof of Proposition A.5 of \cite{marques-neves-index} we can suppose  that
$$
\sup_{x\in Z_i \cup ([0,1]\times \partial Y_i)}{\bf F}(\Psi_{i,2}^{(j)}(x), \Psi_{i,2}(x))\rightarrow 0
$$
as $j\rightarrow \infty$. If $x\in \big(([1,2]\times \partial Y_i)/\sim\big) \subset C(\partial Y_i)$, then $\mathcal{F}(\Psi_{i,2}(x),0)<\eta$. Hence, for
sufficiently large $j$,
$$
\sup_{x\in \big(([1,2]\times \partial Y_i)/\sim\big) }{\mathcal F}(\Psi_{i,2}^{(j)}(x), 0)<2\eta.
$$
By a diagonal argument we can extract a sequence of three-sweepouts $\{\Psi_i'=\Psi_{i,2}^{(j_i)}\}_i$ which is optimal for $\omega_3$ and satisfies item (i) of the proposition.

Suppose  there is a sequence $\{k\}\subset \{i\}$ and $x_k\in \tilde{Z}_k$ such that
$${\bf F}(\Psi_k'(x_k),T')\rightarrow 0.$$
Then it follows that $\mathcal{F}(\Psi_{k,2}(x_k),T')\rightarrow 0$ and ${\bf M}(\Psi_{k,2}(x_k))\rightarrow 2\pi$.
This implies ${\bf F}(\Psi_{k,2}(x_k),T')\rightarrow 0$, which is a contradiction.  Therefore for some $\tilde{\alpha}>0$, 
$$
{\bf F}(\Psi_i'(x),T')\geq \tilde{\alpha}
$$
for any $x\in \tilde{Z}_i$ and sufficiently large $i$. This finishes the proof of the Proposition \ref{modification.prop}.

\end{proof}

{\bf Claim:} The number of foliations of $M$ by two-sided, smooth, embedded, closed geodesics of length $\pi$, is finite.

\medskip

Suppose, by contradiction, that the claim is false. Then  there is a sequence of   foliations $\{\gamma_t^{(j)}\}_{t\in [-s,s]}$ of $M$  by two-sided, smooth, embedded, closed geodesics of length $\pi$, with $\gamma_{-s}^{(j)}=\gamma_s^{(j)}$, such that  
$\{\gamma_t^{(j)}\}_{t}\neq \{\gamma_t^{(k)}\}_{t}$ as foliations for any $j\neq k$. Then by passing to a subsequence we can suppose that $\gamma_0^{(j)}$ converges to a two-sided  simple closed geodesic with multiplicity one, or to a one-sided simple closed geodesic with multiplicity two. The latter case cannot happen as $\gamma_0^{(j)}$ is homologically nontrivial. Hence there is a two-sided embedded closed geodesic $\gamma$ such that $\gamma_0^{(j)}\rightarrow \gamma$ smoothly.  Suppose that for each $j$ there is $k_j\geq j$ and $s_j\in[-s,s]$ such that the geodesic $\gamma_{s_j}^{(k_j)}$ intersects any leaf of $\{\gamma_t^{(j)}\}_{t\in [-s,s]}$. Since $\gamma_{s_j}^{(k_j)}$ is disjoint from $\gamma_{0}^{(k_j)}$, there 
is a sequence $\{i\}\subset \{j\}$ such that $\gamma_{s_i}^{(k_i)}$ converges smoothly to a simple closed geodesic $\gamma'$ that is either disjoint from $\gamma$ or equal to $\gamma$. Since $\gamma_{s_i}^{(k_i)}$ intersects $\gamma_0^{(i)}$, and $\gamma_0^{(i)}\rightarrow \gamma$, it follows that $\gamma'=\gamma$. Hence $\gamma_{s_i}^{(k_i)}\rightarrow \gamma$. This is a contradiction as $\gamma_{s_i}^{(k_i)}$
also intersects the geodesic $\gamma_{t'}^{i}$ such that  $\gamma_0^{i}\cup \gamma_{t'}^{i}$ divides $M$ into two regions of equal area.
Therefore there is $j$ such that for any $k\geq j$, any geodesic $\gamma_t^{(k)}$ is disjoint from one of the leaves  $\{\gamma_t^{(j)}\}_{t\in [-s,s]}$. By the maximum principle, for any $k\geq j$ any leaf of the foliation $\{\gamma_t^{(k)}\}_t$ is  a leaf of the foliation $\{\gamma_t^{(j)}\}_t$. Hence there is $j\in \mathbb{N}$ such that for any $k\geq j$, $\{\gamma_t^{(k)}\}_t=\{\gamma_t^{(j)}\}_t$ as foliations. This is a contradiction and hence the claim is proved.

\medskip

For any foliation of $M$ by two-sided, smooth, embedded, closed geodesics of length $\pi$, if there are such foliations, we can choose a cycle $T$ which is the sum of two leaves and divides the surface into regions of equal area. Since the number of such foliations is finite, we can denote the set of such cycles by $\{T_1,\dots,T_{k}\}$, where $k\in \mathbb{N}$. By $k$ repeated applications of Proposition \ref{modification.prop}, we can suppose that the sequence 
$\{\Psi_i\}\subset \mathcal{P}_3$ is such that for some $\alpha>0$,
$$
{\bf F}(\Psi_i(x),T_{k'})\geq \alpha
$$
for any $x\in X_i$ and $1\leq k'\leq k$. We can also suppose that the sequence is pulled-tight, so any $V\in {\bf C}(\{\Psi_i\}_i)$ is stationary. This is because for any $\xi>0$ the pull-tight argument can be done  by not changing each $\Psi_i(x)$ more than $\xi$ in the ${\bf F}$-metric. 

Let $\mathcal{V}$ be the set of stationary geodesic networks with integer multiplicities and mass equal to $2\pi$. 
For $V\in \mathcal{V}$, let $T_V\in {I}_1(M,\mathbb{Z}_2)$ be the sum of the edges of $V$ which have an odd multiplicity.
If $S\in \mathcal{Z}_1(M,\mathbb{Z}_2)$, we define
$$
d(S,\mathcal{V})=\inf_{V\in \mathcal{V}} \{\mathcal{F}(S,T_V)+{\bf F}(|S|,V) \}.
$$
Notice that, for $T\in \mathcal{Z}_1(M,\mathbb{Z}_2)$,
$$
d(T,\mathcal{V}) \leq d(S,\mathcal{V})+{\bf F}(T,S).
$$

For each $\delta>0$, we can suppose by refining $X_i$ that for any $j$-dimensional simplex $t\in X_i$,
$$
{\bf F}(\Psi_i(x),\Psi_i(x'))< \delta/5
$$
for every $x,x'\in t$.
Let $Z_i$ be the union of the simplices $t\in X_i$ such  that  
$d(\Psi_i(z),\mathcal{V})\leq 2\delta/5$ for every $z\in t$, and $Y_i$ to be  the union of the simplices
$t\in X_i$ such that  $d(\Psi_i(y),\mathcal{V})\geq \delta/5$ for every $y\in t$. It follows that  $Z_i,Y_i$ are subcomplexes of $X_i$ with
$X_i=Z_i \cup Y_i$. 

Suppose there is  a sequence $\{k\}\subset \{i\}$ such that $({\Psi_k}_{|Y_k})^*(\overline{\lambda}) \neq 0 \in H^1(Y_k,\mathbb{Z}_2)$.
Since $\omega_1(M,g)=2\pi$ and $\limsup_{k\rightarrow \infty} \sup_{y\in Y_k}{\bf M}(\Psi_k(y))\leq 2\pi$, we would have that $\{{\Psi_k}_{|Y_k}\}\subset \mathcal{P}_1$ is an optimal sequence for $\omega_1$. 
We can find  $\sigma_k\subset Y_k$, $\sigma_k$ homeomorphic to $S^1$ ($\sigma_k\approx S^1$), such that $\Psi_k'=(\Psi_k)_{|\sigma_k}$ is a one-sweepout for each $k$. 

Let $V\in {\bf C}(\{\Psi_k'\})$. If $V$ is not $\mathbb{Z}_2$-almost minimizing in annuli, we can proceed as in Parts 1 and 2 of the proof of Theorem 4.10 in \cite{pitts}. This gives a set of concentric annuli so that if a  cycle $T$ is such that $T$ is sufficiently close to $V$, say if ${\bf F}(|T|,V)<\eta=\eta(V)$, then $T$  admits mass decreasing deformations supported in each annulus. 

 If $V$ is $\mathbb{Z}_2$-almost minimizing in annuli, since  $V$ is stationary it follows by Pitts (\cite{pitts}, 3.13) that $V$ is a stationary integral varifold ($V\in \mathcal{V}$).
Let $\mathcal{T}_V$ be the set of cycles $T\in \mathcal{Z}_1(M,\mathbb{Z}_2)$ such that ${\rm spt}(T)\subset {\rm spt}(V)$. By the Constancy Theorem of flat chains, if $T\in \mathcal{T}_V$ then either $T=0$ or $T$ is a sum of edges of ${\rm support}(V)$. Hence $\mathcal{T}_V$ is a finite set.
 If $T\in \mathcal{T}_V$ is such that $T\neq T_V$, then there is an edge $\gamma=\gamma(T,V)$ of $V$ with integer multiplicity $k\in \mathbb{N}$ such that $T\llcorner \gamma= r\cdot \gamma$, $r \in \{0,1\}$, with $r \neq k \,  {\rm mod} \, 2$. The proof of the Proposition 4.10 of \cite{marques-neves-lower-bound} is local, so it can be applied with $\Sigma=T$ to points $p\in \gamma$ near the midpoint of the edge. That proposition can be used here even if $(n+1)=2$ since the edge $\gamma$ is smooth.

For $\rho>0$, there is $0<\eta<\rho$ such that if $S\in \mathcal{Z}_1(M,\mathbb{Z}_2)$ is such that ${\bf F}(|S|,V)<\eta$ then for some $T\in \mathcal{T}_V$, one has $\mathcal{F}(S,T)<\rho$. If $2\rho<\delta/5$, and $S=\Psi_k'(x_k)$, $x_k\in \sigma_k$, then $T\neq T_V$ by the definition of $Y_k$. We choose $\rho=\rho(V)>0$ sufficiently small so that Proposition 4.10 of \cite{marques-neves-lower-bound} can be applied to $V$ and $\Sigma=T$ near the
midpoint of the edge $\gamma(T,V)$ for any $T\in \mathcal{T}_V$, $T\neq T_V$.  This gives for each  edge of $V$ a set of annuli centered at its midpoint such that if $S=|\Psi_k'(x_k)|$, $x_k\in \sigma_k$, satisfies ${\bf F}(|S|,V)<\eta(V)$  then for one of the edges $S$ admits mass decreasing deformations supported in each annulus.

Since ${\bf C}(\{\Psi_k'\}_k)$ is a compact set in the varifold topology, we can find $\{V_1, \dots, V_h\} \subset {\bf C}(\{\Psi_k'\}_k)$ such that
${\bf C}(\{\Psi_k'\}) \subset \cup_{k=1}^h B_{\bf F}(V_k,\eta(V_k)/2)$. Hence there is $\xi>0$ such that for sufficiently large $k$, if $x\in \sigma_k$ is such that ${\bf M}(\Psi_k'(x))\geq 2\pi-\xi$, then  $|\Psi_k'(x)|\in \cup_{k=1}^h B_{\bf F}(V_k,\eta(V_k))$. Therefore $\Psi_k'(x)$ admits mass decreasing deformations in each annulus of one of the sets of concentric annuli described. The number of such sets is finite. We can suppose by approximation that the maps $\Psi_k'$ are continuous in the mass topology. We identify the domain $\sigma_k$ of $\Psi_k'$ with $X=S^1$. Hence we can apply Theorem 4.6 of \cite{marques-neves-lower-bound} with the interpolation machinery and $\mathcal{W}=\emptyset$   to deform $\{\Psi_k'\}_k$ into a sequence of one-sweepouts $\{\Psi_k''\}$ such that
$$
\limsup_{k\rightarrow \infty} \sup_{x\in \sigma_k} {\bf M}(\Psi_k''(x))< 2\pi,
$$
which is a contradiction.

Hence for sufficiently large $i$,
 $({\Psi_i}_{|Y_i})^*(\overline{\lambda}) = 0$. By the properties of the cup product,  for such $i$
$$
({\Psi_i}_{|Z_i})^*(\overline{\lambda}^2)  \neq 0.
$$

Since $\delta>0$ is arbitrary, this proves that there is a sequence $\{\Phi_i\}_{i\in \mathbb{N}} \subset \mathcal{P}_2$ 
such that
\begin{equation}\label{flat.varifold}
\lim_{i\rightarrow \infty} \sup_{x\in X_i} d(\Phi_i(x),\mathcal{V})=0,
\end{equation}
where $X_i={\rm dmn}(\Phi_i)$. We can suppose the $X_i$ are connected and two-dimensional. Moreover, if $M$ admits foliations as discussed above, by construction of the three-sweepouts $\{\Psi_i\}\subset \mathcal{P}_3$ we may also assume that
$$
{\bf F}(\Phi_i(x),T_{k'})\geq \alpha
$$
for any $x\in X_i$ and $1\leq k'\leq k$.
Notice that this sequence is  optimal for $\omega_2$ because $\omega_2(M,g)=2\pi$. The critical set ${\bf C}(\{\Phi_i\})$ coincides with the image set ${\bf \Lambda}(\{\Phi_i\})$, which is the set of varifold limits of sequences $\{|\Phi_{i_j}(x_{i_j})|\}$, and ${\bf C}(\{\Phi_i\})\subset \mathcal{V}$.

	We now begin the analysis of the structure of the varifolds belonging to the critical set of the optimal sequence of two-sweepouts constructed above.

\begin{prop}\label{no.disjoint.geodesic}
Let $V\in {\bf C}(\{\Phi_i\})\subset \mathcal{V}$. There is no  smooth, embedded, closed geodesic  disjoint from ${\rm support}(V)$. Moreover, any component $\tilde{\gamma}$ of the support of $V$ that is a smooth, embedded, closed geodesic  is part of a local foliation whose leaves have  curvature vector pointing strictly away from  $\tilde{\gamma}$.
\end{prop}

\begin{proof}
The proof is by contradiction. Suppose $\gamma$ is a smooth, two-sided, closed geodesic  disjoint from ${\rm spt}(V)$. The geodesic $\gamma$ is part of a local foliation such that the leaves have vanishing, positive or negative curvature. We use a local orientation to give a sign to the curvature function, or alternatively we can say that each leaf has either vanishing curvature vector, curvature vector pointing strictly away from $\gamma$ or curvature vector pointing strictly towards $\gamma$. Let $\{\gamma_t\}_{t\in (-2\delta,2\delta)}$ be such a foliation, $\gamma_0=\gamma$.  We can suppose that $\overline{\bigcup \gamma_t}$ is disjoint from ${\rm spt}(V)$.

Suppose that, for some $-\delta<t'<\tilde{t}<\delta$, the complement $\Omega$ of $\big(\cup_{t\in [t',\tilde{t}]} \gamma_t\big)$ has strictly convex boundary. By Theorem 5 of \cite{white-maximum-principle}, there exists $\eta>0$ such that every stationary $1$-varifold of $\Omega$ is disjoint from $B_\eta(\partial \Omega)$.
Consider $K=\overline{\Omega}\setminus B_\eta(\partial\Omega)$ and let $\mathcal{K}$ be the set of elements in $\mathcal{V}$ with support contained in $K$. Let $\zeta>0$ be as in Lemma \ref{convergence.supports} for $K$, $\mathcal{K}$, $\eta$.  If $W$ is an element of $\mathcal{V}$ 
with support contained in $\overline{\Omega}$, then
${\rm spt}(W)\subset K$ ($W\in \mathcal{K}$).
Therefore any element of $\mathcal{V}$ that is $\zeta$-close in the ${\bf F}$-metric to $W$  must have support contained
in $B_\eta(K)\subset \overline{\Omega}$.  Hence its support must be contained in $K$. 

 Since $V\in \mathcal{K}$, we can conclude that any  $\zeta$-chain $\{V_1,\dots,V_l\}$ of one-varifolds in $M$  with $V_1=V$ and $V_i\in \mathcal{V}$ for every $1\leq i\leq l$ must satisfy ${\rm spt}(V_i)\subset \overline{\Omega}$ for every $i$. Because
 $V \in {\bf C}(\{\Phi_i\})$, there exist sequences $\{j\} \subset \{i\}$ and $\{x_j\in X_j\}$ with $|\Phi_j(x_j)|$ converging to $V$ in the
 ${\bf F}$-metric. For sufficiently large $j$ we have that ${\bf F}(|\Phi_j(x)|,\mathcal{V})\leq \zeta/5$ for every $x\in X_j$. Given
 $x'\in X_j$, the connectedness of $X_j$ implies that we can find $\{y_1,\dots, y_{q_j}\}\subset X_j$ with $y_1=x_j$, $y_{q_j}=x'$ and
 ${\bf F}(\Phi_j(y_k),\Phi_j(y_{k+1}))\leq \zeta/5$ for every $1\leq k\leq q_j-1$. We choose $V_{j,k}\in \mathcal{V}$ so that
 ${\bf F}(|\Phi_j(y_k)|,V_{j,k})\leq \zeta/5$ and $V_{j,1}=V$. Hence $\{V_{j,1}, \dots, V_{j,q_j}\}$ is a $\zeta$-chain in $\mathcal{V}$ starting at $V$. We conclude that ${\rm spt}(V_{j,k})\subset \overline{\Omega}$ for every $1\leq k\leq q_j$. 
 
  This proves that for every such $j$ and every  $x'\in X_j$ the varifold $|\Phi_j(x')|$ is $\zeta/5$-close in the ${\bf F}$-metric to an element of $\mathcal{V}$ with support contained in $\overline{\Omega}$.  For any $\gamma>0$ and $t'<s'<\tilde{s}<\tilde{t}$, we can choose $\zeta$ sufficiently small so that 
 $$
 {\bf M}(\Phi_j(x') \llcorner \big(\cup_{t\in (s',\tilde{s})} \Sigma_t\big))<\gamma
 $$
 for every $x'\in X_j$. If $\gamma$ is sufficiently small, this contradicts the property that $\Phi_j$ is a one-sweepout (since it is a two-sweepout), because of the isoperimetric inequality and of the fact that any one-sweepout contains a cycle that divides a prescribed region in two parts of equal area. This proves that either $\gamma_t$,  $t\in (-\delta,\delta)$, has curvature vector pointing towards $\gamma$ or $\gamma_t$  has curvature of the same sign for $t\in (-\tilde{\delta},\tilde{\delta})$, with $\tilde{\delta}<\delta$ (in this case either any $\gamma_t$ has curvature vector pointing towards $\gamma_{\tilde{\delta}}$, or any $\gamma_t$ has
 curvature vector pointing towards $\gamma_{-\tilde{\delta}}$).
 
 Suppose then that for any $t\in (-\delta,\delta)$ the curve $\gamma_t$ has  curvature vector pointing towards $\gamma$. By the maximum principle of White \cite{white-maximum-principle} (Theorem 4), any stationary integral one-varifold $V'$ with   ${\rm spt}(V') \subset \big(\cup_{t\in (-\delta,\delta)} \gamma_t\big)$ must be a sum of leaves each of which is a closed geodesic.  Let $\eta>0$ be such that 
$$B_\eta(\gamma)\subset \big(\cup_{t\in (-\delta,\delta)} \gamma_t\big),
$$
$\lambda=2\,\omega_1(M,g)$ and $\tilde{\eta}$ be as in Proposition \ref{intersects.must.be.contained.geodesic}. Therefore any stationary integral one-varifold $\tilde{V}$, with ${\rm spt}(\tilde{V})$ disjoint from $\gamma$, with ${\bf M}(\tilde{V})\leq \lambda$, that intersects $B_{\tilde{\eta}}(\gamma)$ must be contained in $B_\eta(\gamma)$ and hence be a sum of  leaves $\gamma_t$ each of which is a closed geodesic.

Let $0<\tilde{\delta}<\delta$ be such that $\big(\cup_{t\in [-\tilde{\delta},\tilde{\delta}]} \gamma_t\big)\subset B_{\tilde{\eta}}(\gamma)$.
Let $\mathcal{V}'\subset \mathcal{V}$ be the set of varifolds $W\in \mathcal{V}$ such that every connected component of ${\rm spt}(W)$ that intersects 
$\big(\cup_{t\in [-\tilde{\delta},\tilde{\delta}]} \gamma_t\big)$ is a leaf. 
We claim that $\mathcal{V}'$ is open and closed in $\mathcal{V}$. Let $W\in \mathcal{V}'$, and consider a sequence $\{W_j\}\subset \mathcal{V}$ that converges to $W$. Each component of $W$ is either disjoint from  $\big(\cup_{t\in [-\tilde{\delta},\tilde{\delta}]} \gamma_t\big)$ or coincides with a leaf in $\big(\cup_{t\in [-\tilde{\delta},\tilde{\delta}]} \gamma_t\big)$. By Lemma \ref{convergence.supports},  there exists $\alpha>0$ such that, for sufficiently large $j$, ${\rm spt}(W_j)$ is disjoint from 
$$\big(\cup_{t\in [-\tilde{\delta}-2\alpha,-\tilde{\delta}-\alpha]} \gamma_t\big) \cup \big(\cup_{t\in [\tilde{\delta}+\alpha,\tilde{\delta}+2\alpha]} \Sigma_t\big),
$$
and $-\delta<-\tilde{\delta}-2\alpha< \tilde{\delta}+2\alpha<\delta$.
Therefore each connected component of ${\rm spt}(W_j)$ that intersects $\big(\cup_{t\in [-\tilde{\delta},\tilde{\delta}]} \gamma_t\big)$ must be contained in
$\big(\cup_{t\in [-\tilde{\delta}-\alpha,\tilde{\delta}+\alpha]} \gamma_t\big)$ and hence be a leaf. This proves $\mathcal{V'}$ is open.  Now suppose $\{W_j\}\subset \mathcal{V}'$ is a sequence converging to $W\in \mathcal{V}$. By the monotonicity formula for stationary varifolds, there is a bound on the number of connected components of $W_j$ that is uniform in $j$. Let $\beta>0$ be such that $\big(\cup_{t\in [-\tilde{\delta}-\beta,\tilde{\delta}+\beta]} \gamma_t\big)\subset B_{\tilde{\eta}}(\gamma)$.   If a connected component of $W_j$ is disjoint from $\big(\cup_{t\in [-\tilde{\delta},\tilde{\delta}]} \gamma_t\big)$ but intersects $\big(\cup_{t\in [-\tilde{\delta}-\beta,\tilde{\delta}+\beta]} \gamma_t\big)$, it must be a leaf (since it is disjoint from $\gamma$). Each connected component of $W_j$ is either disjoint from $\big(\cup_{t\in [-\tilde{\delta}-\beta,\tilde{\delta}+\beta]} \gamma_t\big)$ or is a leaf $\gamma_t$.
This implies each connected component of $W$ that intersects $\big(\cup_{t\in [-\tilde{\delta},\tilde{\delta}]} \gamma_t\big)$ is a leaf. Hence $W\in \mathcal{V'}$. This proves that $\mathcal{V'}$ is closed. 

Since $\mathcal{V}$ is compact, $\mathcal{V}'$ is compact. Therefore there exists $\zeta>0$ such that if $W\in \mathcal{V}$ satisfies ${\bf F}(W,\mathcal{V}')\leq \zeta$ then $W\in \mathcal{V}'$. Consider $\{V_1,\dots,V_l\} \subset \mathcal{V}$ be a $\zeta$-chain with $V_1=V$. We have $V\in \mathcal{V}'$, since ${\rm spt}(V)$ is disjoint from $\big(\cup_{t\in [-\tilde{\delta},\tilde{\delta}]} \gamma_t\big)$. Hence $V_i\in \mathcal{V}'$ for every $1\leq i\leq l$. Suppose that for some interval $[a,b]\subset (-\tilde{\delta},\tilde{\delta})$, one has that $\gamma_t$ is not a closed geodesic for any $t\in [a,b]$.
It follows
that ${\rm spt}(V_i)$ is disjoint from $\big(\cup_{t\in [a,b]} \gamma_t\big)$ for every $1\leq i\leq l$. As before this contradicts the fact that $\Phi_j$ is a one-sweepout. Therefore we have proved that $\gamma_t$ is a closed geodesic for any $t\in [-\tilde{\delta},\tilde{\delta}]$.

  In the case that we can suppose $\gamma_t$, for $t\in (-\delta,\delta)$, has curvature of the same sign, again by the maximum 
  principle of \cite{white-maximum-principle}, any stationary integral varifold with support contained in  $\big(\cup_{t\in (-\delta,\delta)} \gamma_t\big)$ must be a sum of  leaves each of which is a closed geodesic. We can proceed as before to get a contradiction if there is no foliation of $\gamma$ by closed geodesics.

   Notice that these arguments can be applied to a component
$\tilde{\gamma}$ of the support of $V$ that is a smooth, closed geodesic, two-sided or one-sided. They prove then that  $\tilde{\gamma}$  is part of a local foliation such that the curvature vector of any leaf points away from $\tilde{\gamma}$.  This includes the case in which $\tilde{\gamma}$ is part of a local foliation by closed geodesics.
  
	We proved that  $\gamma$ is part of a local foliation $\{\gamma_t\}_{t\in [-s,s]}$ by closed geodesics. These closed geodesics have the same length. The foliation can be extended until it touches ${\rm spt}(V)$, in which case the extreme leaf coincides with a connected component of 
${\rm spt}(V)$ by the maximum principle, or until the closed geodesics converge to a one-sided closed geodesic with multiplicity two. This extension can be done on both sides of $\gamma$. As the curve $\gamma$ is not necessarily separating, it could be that the foliations glue smoothly on both ends into a foliation of the whole surface. If $\gamma_s$ is a two-sided closed geodesic in ${\rm spt}(V)$, the foliation can be extended to a foliation $\{\gamma_t\}_{t\in [-s,s+\eta]}$ in which the curvature vector of any leaf points towards $\gamma_{s+\eta}$.  By proceeding as before we get that any $\gamma_t$, $t\in [-s,s+\eta]$, is a closed geodesic. Therefore the foliation
by closed geodesics can be extended past any smooth, two-sided, closed geodesic in ${\rm spt}(V)$.

	This gives  a foliation $\{\gamma_t\}_{t\in [-s,s]}$, $\gamma_0=\gamma$, of $M$ by closed geodesics  such that either both $\gamma_{-s}$ and $\gamma_s$ are one-sided closed geodesics covered with multiplicity two, or $\gamma_{s}=\gamma_{-s}$. The varifold $V$ is an integral linear combination of leaves.

Suppose that both $\gamma_{-s}$ and $\gamma_s$ are one-sided closed geodesics covered with multiplicity two, in which case $M$ is diffeomorphic to a Klein bottle.  We denote by $([a,b]/\sim)$ the interval $[a,b]$ with the endpoints identified. Since $t\in ([-1,1]/\sim) \mapsto  \gamma_{ts}$ is   a one-sweepout of $M$, where $\gamma_{-s}$ and $\gamma_s$ are considered with multiplicity two, it follows then that  $||V||(M)=\omega_1(M,g)\leq l(\gamma)$. It follows from the previous arguments that the set $\mathcal{V}'\subset \mathcal{V}$ of varifolds whose connected components are leaves of the foliation is open and closed. By considering $\zeta$-chains of varifolds in $\mathcal{V}$ starting at $V\in \mathcal{V}'$,
we can argue that any element of ${\bf C}(\{\Phi_i\})$ is in $\mathcal{V}'$. But each $\Phi_i$ is a two-sweepout, hence for any $p$, $q\in M$ there is 
$\tilde{V}\in {\bf C}(\{\Phi_i\})$ with $p$, $q \in {\rm spt}(\tilde{V})$. By choosing $p$, $q$ in different two-sided leaves of the foliation, we get that $||\tilde{V}||(M)\geq 2l(\gamma)$. This is a contradiction, since $||\tilde{V}||(M)=\omega_1(M,g)\leq l(\gamma)$.

Suppose then that $\gamma_{s}=\gamma_{-s}$. Since $t\in ([-1,1]/\sim) \mapsto  \gamma_0+\gamma_{ts}$ is   a one-sweepout of $M$, it follows that $||V||(M)=\omega_1(M,g)\leq 2l(\gamma)$. As before we get that any element of ${\bf C}(\{\Phi_i\})$ is an integral linear combination of leaves. Since each $\Phi_i$ is a two-sweepout, for any $p$, $q\in M$ there is 
$V'\in {\bf C}(\{\Phi_i\})$ with $p$, $q \in {\rm spt}(V')$. Hence $||V||(M)=||V'||(M)\geq 2l(\gamma)$. Therefore $\omega_1(M,g)=2l(\gamma)$,  and every $V'\in {\bf C}(\{\Phi_i\})$ is a sum of two leaves. In particular, the leaves of the foliation have length $\pi$.

There is $T_{k'}\in \{T_1,\dots,T_{k}\}$ that is of the form $\gamma_{t_1}+\gamma_{t_2}$. By choosing $p\in \gamma_{t_1}$ and $q\in \gamma_{t_2}$, we get that $|T_{k'}|\in {\bf C}(\{\Phi_i\})$.  Recall that 
$$
{\bf F}(\Phi_i(x),T_{k'})\geq \alpha
$$
for any $x\in X_i$, by construction. Let $D$, $D'$ be disks centered at $p$, $q$, respectively, such that no leaf of the foliation intersects $D$ and $D'$.  Since $\Phi_i$ is a two-sweepout, there is $x_i\in X_i$ such that 
$\Phi_i(x_i)$ bisects $D$ and $D'$ in regions of equal area.  There is $\{j\}\subset\{i\}$ such that  $|\Phi_j(x_j)|$ converges 
in varifold sense to a varifold of the form $|\gamma_{t_1'}|+|\gamma_{t_2'}|$. Since each $\gamma_t$ is homologically nontrivial, by the Constancy Theorem any convergent subsequence $\{\Phi_{k}(x_{k})\}_{k}\subset \{\Phi_{j}(x_{j})\}_{j}$ in the flat topology satisfies  either 
$\mathcal{F}(\Phi_{k}(x_{k}), \gamma_{t_1'}+\gamma_{t_2'})\rightarrow 0$ or $\mathcal{F}(\Phi_{k}(x_{k}), 0)\rightarrow 0$. But the latter is not possible by the bisecting property. Hence  $\Phi_j(x_j)\rightarrow \gamma_{t_1'}+\gamma_{t_2'}$ in the flat topology. By letting $D\rightarrow p$ and $D' \rightarrow q$, we can by using a diagonal argument to find $\{j\}\subset \{i\}$ and $\{x_j\in X_j\}_j$ such that ${\bf F}(\Phi_j(x_j),T_{k'})\rightarrow 0$, which is a contradiction. 

	Hence there is no two-sided, embedded closed geodesic disjoint from ${\rm spt}(V)$. If there is a one-sided closed geodesic disjoint from ${\rm spt}(V)$, the foliation arguments give that there are two-sided closed geodesics which converge to $\gamma$.  This implies there is a two-sided closed geodesic disjoint from ${\rm support}(V)$, which is a contradiction. 

	If $\tilde{\gamma}$ is a component of ${\rm spt}(V)$ which is a smooth, embedded, closed geodesic, we argued that it is part of a local foliation so that each leaf has curvature vector pointing away from $\tilde{\gamma}$. The curvature vector of any leaf near $\tilde{\gamma} $ which is different from $\tilde{\gamma}$ has to point strictly away from it, as there is no closed geodesic disjoint from ${\rm spt}(V)$. This finishes the proof of the proposition. 
\end{proof}

\begin{prop}\label{connected.support.geodesic}
Let $V\in {\bf C}(\{\Phi_i\})\subset \mathcal{V}$. Then ${\rm spt}(V)$ is connected. 
\end{prop}

\begin{proof}
Suppose that ${\rm spt}(V)$ is not connected. Let $K$ be a connected component of ${\rm spt}(V)$. Let $\Omega_1,\dots,\Omega_j$ be the connected components of $M \setminus K$. Then there is $1\leq i \leq j$ such that ${\rm spt}(V)$ intersects $\Omega_i$. Let $\alpha$ be a connected component of $\partial \Omega_i$, and choose $\sigma$ the connected component of $\Omega_i \setminus {\rm spt}(V)$ such that
$\alpha \subset \partial\sigma$. In particular, ${\rm spt}(V)$ does not intersect $\sigma$.

Any component
of $\sigma$ that has corners can be smoothed into a strictly convex curve by   perturbing inside $\sigma$. By Proposition
\ref{no.disjoint.geodesic}, the smooth components of $\sigma$ can also be approximated by strictly convex curves inside $\sigma$. Hence there is a strictly convex domain $\Omega\subset \sigma$ such that $\partial \Omega$ is disconnected.  Then applying the curve shortening flow \cite{grayson}
to any component of $\partial \Omega$ gives a smooth closed geodesic $\gamma\subset \Omega$. This closed geodesic is disjoint from
${\rm spt}(V)$ and hence contradicts Proposition \ref{no.disjoint.geodesic}.
Therefore ${\rm spt}(V)$ is connected.
\end{proof}

\begin{prop}\label{flat.sequences}
Let $V\in {\bf C}(\{\Phi_i\})$. Then $T_V\in \mathcal{Z}_1(M,\mathbb{Z}_2)$, and  there is $\{j\}\subset \{i\}$ and $\{x_j\in X_j\}_j$ such that $\mathcal{F}(\Phi_j(x_j),T_V)+ {\bf F}(|\Phi_j(x_j)|,V)\rightarrow 0$.
\end{prop}

\begin{proof}
Let $V\in {\bf C}(\{\Phi_i\})$. Then $V$ is a stationary geodesic network with integer multiplicities. Recall that $T_V\in I_1(M,\mathbb{Z}_2)$ is the sum of the edges of $V$ which have an odd multiplicity. Let $\{j\}\subset \{i\}$ and $\{x_j\in X_j\}_j$ such that ${\bf F}(|\Phi_j(x_j)|,V)\rightarrow 0$. By (\ref{flat.varifold}), there is $\{V_j\}\subset \mathcal{V}$ such that $$\mathcal{F}(\Phi_j(x_j),T_{V_j})+{\bf F}(|\Phi_j(x_j)|,V_j)\rightarrow 0.$$
Since $\mathcal{F}(\partial S, \partial T)\leq \mathcal{F}(S,T)$ for any $S,T\in I_1(M,\mathbb{Z}_2)$, and $\partial \Phi_j(x_j)= 0$, it follows that
$\partial T_{V_j}\rightarrow 0$ in the flat topology. Also $V_j\rightarrow V$ in the varifold topology. Since each $V_j$ is a stationary integral varifold, the conditions of Theorem 1.1 of White \cite{white-currents-chains} apply. Therefore $T_{V_j}\rightarrow T_V$ in  the flat topology. Hence $\mathcal{F}(\Phi_j(x_j),T_V)\rightarrow 0$, and $T_V\in \mathcal{Z}_1(M,\mathbb{Z}_2)$.  This proves the proposition.
\end{proof}

\begin{prop}\label{flat.convergence}
 For each $\delta>0$, there are $\eta>0$ and $\tilde{i}\in \mathbb{N}$ such that if $j\geq \tilde{i}$, and ${\bf F}(|\Phi_j(x)|,V)<\eta$ for some $x\in X_j$ and $V\in {\bf C}(\{\Phi_i\})$, then $\mathcal{F}(\Phi_j(x),T_V)<\delta$. 
\end{prop}

\begin{proof}
Suppose, by contradiction, that the proposition is false. Then there are sequences $\{j\}\subset \{i\}$, $\{x_j\in X_j\}$, and $\{V_j\in {\bf C}(\{\Phi_i\})\}$, such that ${\bf F}(|\Phi_j(x_j)|,V_j)\rightarrow 0$ and $\mathcal{F}(\Phi_j(x_j),T_{V_j})\geq \delta$ for any $j$ and some $\delta>0$. By (\ref{flat.varifold}), there is $V_j'\in \mathcal{V}$ such that
$$
\mathcal{F}(\Phi_j(x_j),T_{V_j'})+{\bf F}(|\Phi_j(x_j)|,V_j')\rightarrow 0.
$$

We can suppose  that $V_j\rightarrow V\in {\bf C}(\{\Phi_i\})$ and $V_j'\rightarrow V'\in \mathcal{V}$ in the varifold topology. Then $V=V'$. Since $V_j'$ is stationary, and $\partial T_{V_j'}\rightarrow 0$ in the flat topology, by \cite{white-currents-chains} it follows that $T_{V_j'}\rightarrow T_{V'}$ in the flat topology. Hence $\Phi_j(x_j)\rightarrow T_V$ in the flat topology.  But since $V_j$ is stationary, and $\partial T_{V_j}=0$ by Proposition \ref{flat.sequences},  $T_{V_j}\rightarrow T_V$ in the flat topology by \cite{white-currents-chains}. This is a contradiction because $\mathcal{F}(\Phi_j(x_j),T_{V_j})\geq \delta$ for any $j$.
\end{proof}

Let $V\in {\bf C}(\{\Phi_i\})$. Denote by ${\rm sing}(V)$ the set of singular points of ${\rm support}(V)$, and by $E$ the set of edges of $V$. 

The proof of the following proposition uses arguments similar to those of  \cite{song-embeddedness}. 

\begin{prop}\label{multiplicity.edges}
Let $V\in {\bf C}(\{\Phi_i\})$.  The edges of $V$ have integer multiplicity either 1 or 2.
\end{prop}

\begin{proof}
Let $\{\Omega_1,\dots,\Omega_h\}$ be the connected components of $M\setminus {\rm spt}(V)$. We say that $\Omega_{k_1}$ is equivalent 
to $\Omega_{k_2}$ if any continuous path that connects  $p\in \Omega_{k_1}$ to $q\in \Omega_{k_2}$ and intersects ${\rm spt}(V)$ at the interior of the edges transversally does so an even number of times (where each edge is counted with its integer multiplicity in $V$). There cannot be two paths $c,c'$  connecting $\Omega_{k_1}$ to $\Omega_{k_2}$ such that $c$ has an even number of intersections and $c'$ has an odd number of intersections. Otherwise, there would be a closed path in $M$ with an odd number of intersections. This closed path would intersect the edges of the cycle $T_V$ an odd number of times, which is a contradiction since $T_V$ is a modulo two boundary. Therefore the set $\{\Omega_1,\dots,\Omega_h\}$ can be partitioned in either one or two equivalence classes. 

Each $\Omega_k$ has a connected boundary $\partial \Omega_k$, as otherwise we could construct a simple closed geodesic disjoint from ${\rm support}(V)$ as in Proposition \ref{connected.support.geodesic}. Since $\partial\Omega_k$ can be perturbed inside $\Omega_k$ into a strictly convex curve $\partial \Omega_k'$, by Grayson's theorem \cite{grayson} the curve shortening flow applied to $\partial \Omega_k'$ converges to a point. This is because by Proposition \ref{no.disjoint.geodesic} there is no embedded closed geodesic disjoint from ${\rm support}(V)$. Hence there is a path $t\in [0,1]\mapsto \partial \Omega_k(t)$ that is continuous in the ${\bf F}$-metric, satisfies $\Omega_k(0)=0$ and $\Omega_k(1)=\Omega_k$, and is such that ${\bf M}(\partial \Omega_k(t))\leq  {\bf M}(\partial \Omega_k)$.

By changing notation we can suppose that $\{\Omega_{1}, \dots, \Omega_{j}\}$ is an equivalence class, where $1\leq j\leq h$. We define a  map $\Psi:[0,1]\rightarrow \mathcal{Z}_1(M,\mathbb{Z}_2)$ by
$$
\Psi(t)=\sum_{i=1}^j \partial \Omega_{i}(t).
$$
Denote by $E'$ the set of edges of $V$ with odd integer multiplicity, and by $E''$ the set of edges of $V$ with
even integer multiplicity.  Then in the convergence of $\Psi(t)$ as $t\rightarrow 1$, the edges of $E'$ are limits of multiplicity one and the edges of $E''$ are either not limits or are  covered with multiplicity two.  Hence, $\Psi(0)=0$ and $\Psi(1)=T_V$.  Also,
$$
{\bf M}(\Psi(t))\leq \sum_{e\in E'}{\bf M}(e)+\sum_{e\in E''} 2 {\bf M}(e)\leq ||V||(M)
$$
for any $t\in [0,1]$. 

If $j=h$, in which case there is only one equivalence class, $T_V=0$ and $\Psi:([0,1]/\sim)\rightarrow \mathcal{Z}_1(M,\mathbb{Z}_2)$ is a one-sweepout. 

If $j<h$, we can define a map  $\Psi':[1,2]\rightarrow \mathcal{Z}_1(M,\mathbb{Z}_2)$ by
$$
\Psi'(t)= \sum_{i=j+1}^h \partial \Omega_{i}(2-t).
$$
Then $\Psi'(1)=T_V$, $\Psi'(2)=0$, and 
$$
{\bf M}(\Psi'(t))\leq \sum_{e\in E'}{\bf M}(e)+\sum_{e\in E''} 2 {\bf M}(e)\leq ||V||(M)
$$
for any $t\in [1,2]$. In this case, we can then concatenate the maps $\Psi$ and $\Psi'$ into a map $\Psi:([0,2]/\sim)\rightarrow \mathcal{Z}_1(M,\mathbb{Z}_2)$ which is a one-sweepout.

In any case, the map $\Psi$ is continuous in the flat topology and has no concentration of mass. Since $\Psi$ is a one-sweepout, we get
$$
||V||(M)=\omega_1(M,g)\leq \sum_{e\in E'}{\bf M}(e)+\sum_{e\in E''} 2 {\bf M}(e)\leq ||V||(M).
$$
Hence the edges of $V$ have multiplicity either 1 or 2, as claimed. 
\end{proof}

	The arguments to prove the following proposition are similar to those of Section 5 of  \cite{ketover-liokumovich}.

\begin{prop}\label{vertex}
Let $V\in {\bf C}(\{\Phi_i\})$. There is at most one singular point in ${\rm spt}(V)$. 
\end{prop}

\begin{proof}
Suppose that there are two singular points $p$, $q\in {\rm support}(V)$. Let $\delta>0$ be sufficiently small, and consider the geodesic circles
$C_\delta(p),C_\delta(q)$. Let $\{p_1,\dots,p_i\}$ be ${\rm spt}(V)\cap C_\delta(p)$ and $\{q_1,\dots,q_{i'}\}$ be ${\rm spt}(V)\cap C_\delta(q)$, in cyclic order. Denote $p_{i+1}=p_1$, and $q_{i'+1}=q_1$. Let $\gamma_k$ be the minimizing geodesic connecting $p_k$ to $p_{k+1}$. Similarly,  let $\eta_k$ be the minimizing geodesic connecting $q_k$ to $q_{k+1}$. Each minimizing geodesic is contained in the  closure of a connected component of 
$M \setminus {\rm spt}(V)$.  

Let $D_\delta(p),D_\delta(q)$ be the geodesic disks of radius $\delta$ centered at $p,q$, respectively.  Let $D\subset D_\delta(p)$ be the closed disk bounded by the union $\gamma_1 \cup \dots \cup \gamma_i$, and $D'\subset D_\delta(q)$ be the closed disk bounded by $\eta_1\cup \dots \cup \eta_{i'}$. Each connected component
$\Omega_k$ can be replaced by $\tilde{\Omega}_k=\Omega_k\setminus (D\cup D')$.  

Suppose $j<h$. Since $\partial \tilde{\Omega}_k$ is convex, we can apply the map construction of Proposition \ref{multiplicity.edges} to the sets $\{\tilde{\Omega}_1,\dots, \tilde{\Omega}_j\}$ instead of the sets $\{\Omega_1, \dots, \Omega_j\}$. This constructs a map
$\Psi:[0,1]\rightarrow \mathcal{Z}_1(M,\mathbb{Z}_2)$ such that $\Psi(0)=0$, $\Psi(1)=\sum_{a=1}^j \partial \tilde{\Omega}_a$, and
$$
\sup_{t\in [0,1]} {\bf M}(\Psi(t)) < ||V||(M).
$$
Similarly, by applying the construction to $\{\tilde{\Omega}_{j+1}, \dots, \tilde{\Omega}_h\}$, we get a map $\Psi':[1,2] \rightarrow \mathcal{Z}_1(M,\mathbb{Z}_2)$ such that $\Psi'(1)= \sum_{a=j+1}^h \partial \tilde{\Omega}_a$, $\Psi'(2)=0$, and
$$
\sup_{t\in [1,2]} {\bf M}(\Psi'(t)) < ||V||(M).
$$

We would like to construct a one-parameter family of cycles connecting $\Psi(1)$ to $\Psi'(1)$, and with mass bounded strictly by $||V||(M)$. Let $\alpha_k$ be the geodesic parametrized by arc length such that $\alpha_k(0)=p$ and $\alpha_k(\delta)=p_k$, and $\beta_{k}$ be the geodesic
parametrized by arc length such that $\beta_{k}(0)=q$ and $\beta_{k}(\delta)=q_{k}$. We write $\alpha_{i+1}=\alpha_1$, and $\beta_{i'+1}=\beta_1$. 

For $s\in [0,1]$, define $\gamma_k(s)$ to be the minimizing geodesic connecting $\alpha_k(\delta-\delta s)$ to $\alpha_{k+1}(\delta-\delta s)$.
Hence $\gamma_k(0)=\gamma_k$ and $\gamma_k(1)=p$. Similarly, define $\eta_k(s)$ to be the minimizing geodesic connecting $\beta_k(\delta-\delta s)$ to $\beta_{k+1}(\delta-\delta s)$, so that
 $\eta_k(0)=\eta_k$ and $\eta_k(1)=q$.

For each $1\leq k\leq i$, define the path of modulo two flat chains
$$
\tilde{\gamma}_k(s)=(\alpha_k)_{|[\delta-\delta s,\delta]}+ \gamma_k(s)+(\alpha_{k+1})_{|[\delta-\delta s,\delta]}
$$
for $s\in [0,1]$. Define $D_{k}(s)$ to be the two-chain so that $D_{k}(s)\subset D$ and $\partial D_{k}(s)=\tilde{\gamma}_k(s)+\gamma_k$.
For each $1\leq k\leq {i'}$, we define 
$$
\tilde{\eta}_k(s)=(\beta_k)_{|[\delta-\delta s,\delta]}+ \eta_k(s)+(\beta_{k+1})_{|[\delta-\delta s,\delta]}
$$
for $s\in [0,1]$. Let $D'_{k}(s)$ to be the two-chain so that $D'_{k}(s)\subset D'$ and $\partial D'_{k}(s)=\tilde{\eta}_k(s)+\eta_k$.

Let $\Omega=\cup_{i=1}^j \Omega_i$, and $\Omega'=\cup_{i=j+1}^h \Omega_i$. We define $\Gamma_1:[0,1]\rightarrow \mathcal{Z}_1(M,\mathbb{Z}_2)$ by
$$
\Gamma_1(t)=\Psi(1)+\sum_{k: \gamma_k\subset \Omega} \partial D_k(2t)
$$
for $t\in [0,1/2]$, and
$$
\Gamma_1(t)=\Psi(1)+\sum_{k: \gamma_k\subset \Omega} \partial D_k(1)
+\sum_{k: \gamma_k\subset \Omega'} \partial \big(D_k(1)- D_k(2-2t)\big)
$$
for $t\in [1/2,1]$. Since $\Gamma_1(t)\llcorner D' = \Psi(1) \llcorner D'$, it follows that
$$
\sup_{t\in [0,1]} {\bf M}(\Gamma_1(t))< ||V||(M).
$$

We can do a similar construction near $q$. Define $\Gamma_2:[0,1]\rightarrow \mathcal{Z}_1(M,\mathbb{Z}_2)$ by
$$
\Gamma_2(t)=\Gamma_1(1)+\sum_{k: \eta_k\subset \Omega} \partial D_k'(2t)
$$
for $t\in [0,1/2]$, and
$$
\Gamma_2(t)=\Gamma_1(1)+\sum_{k: \eta_k\subset \Omega} \partial D_k'(1)+\sum_{k: \eta_k\subset \Omega'} \partial \big(D'_k(1)- D'_k(2-2t)\big)
$$
for $t\in [1/2,1]$. Since $\Gamma_2(t)\llcorner D= \Gamma_1(1) \llcorner D$, we have
$$
\sup_{t\in [0,1]} {\bf M}(\Gamma_2(t))< ||V||(M).
$$

We have that
\begin{align*}
\Gamma_2(1)&=\Psi(1)+\sum_{k=1}^i \partial D_k + \sum_{k=1}^{i'} \partial D'_k \\
&= \partial \big(\sum_{a=1}^j  \tilde{\Omega}_a+D +D'\big)\\
&= \partial \big(M -\sum_{a=j+1}^h  \tilde{\Omega}_a\big)\\
&=\Psi'(1).
\end{align*}
Since $\Psi(1)=\Gamma_1(0)$, $\Gamma_1(1)=\Gamma_2(0)$ and $\Gamma_2(1)=\Psi'(1)$, the paths can be concatenated to produce
a map $\Phi:[0,1]\rightarrow \mathcal{Z}_1(M,\mathbb{Z}_2)$ such that $\Phi(0)=\Phi(1)=0$ and
$$
\sup_{t\in [0,1]}{\bf M}(\Phi(t))<||V||(M).
$$
It follows from the construction that by identifying  the endpoints we get a one-sweepout $\Phi:([0,1]/\sim)\rightarrow \mathcal{Z}_1(M,\mathbb{Z}_2)$. Since $\Phi$ is continuous in the flat topology, and has no concentration of mass, we get $\omega_1(M,g)<||V||(M)$ which is a contradiction.

The case $j=h$ is  similar.  It uses that the mass cancels near $p$  at each edge at the end of  the deformation inside $D$, as these edges are flat 
limits with multiplicity two.  Hence there is at most one singular point in ${\rm support}(V)$, which proves the proposition.
\end{proof}

\begin{prop}\label{tangent.cone}
	If $p \in {\rm spt}(V)$ is a singular point, then the tangent cone at $p$ is a sum of two lines with multiplicity one.
\end{prop}

\begin{proof}
The notation  is as in the proofs of Proposition \ref{multiplicity.edges} and Proposition \ref{vertex}.
Suppose that an edge $\alpha$ of ${\rm support}(V)$ containing $p$ has integer multiplicity two.  The number of edges containing $p$ 
is greater than or equal to three. By changing notation we can suppose that the edge connecting $p$ to $\gamma_1\cap\gamma_i$ has 
multiplicity two, and that ${\rm int}(D_1(1))\cup {\rm int}(D_i(1))\subset \Omega$. By cancellation of mass, there is a neighborhood $U$ of some $x\in \alpha$ close to $p$ such that $\overline{U} \cap D=\emptyset$, $\Psi(1) \llcorner U=\Psi'(1)\llcorner U =0$. Hence,  if $\xi=2{\bf M}(\alpha \cap U)$  then
$$
{\bf M}(\Psi(1))\leq ||V||(M)-\xi
$$
and
$$
{\bf M}(\Psi'(1))\leq ||V||(M)-\xi.
$$
Notice that $\xi$  does not depend on $\delta$.
Also ${\bf M}(\partial D_k(s))\leq O(\delta)$, for any $s\in [0,1]$.

Define $\Gamma:[0,1]\rightarrow \mathcal{Z}_1(M,\mathbb{Z}_2)$ by
$$
\Gamma(t)=\Psi(1)+\sum_{k: \gamma_k\subset \Omega} \partial D_k(2t)
$$
for $t\in [0,1/2]$, and
$$
\Gamma(t)=\Psi(1)+\sum_{k: \gamma_k\subset \Omega} \partial D_k(1)
+\sum_{k: \gamma_k\subset \Omega'} \partial \big(D_k(1)- D_k(2-2t)\big)
$$
for $t\in [1/2,1]$. Then $\Psi(1)=\Gamma(0)$, and $\Gamma(1)=\Psi'(1)$. 

For $t\in [0,1/2]$, 
$$
{\bf M}(\Gamma(t))\leq ||V||(M)-\xi+O(\delta).
$$
Since
$$
\Gamma(t)=\Psi'(1)+\sum_{k: \gamma_k\subset \Omega'} \partial \big( D_k(2-2t)\big),$$
$$
{\bf M}(\Gamma(t))\leq ||V||(M)-\xi+O(\delta)
$$
for $t\in [1/2,1]$. Hence, if $\delta$ is sufficiently small then
$$
\sup_{t\in [0,1]}{\bf M}(\Gamma(t))<||V||(M).
$$
By concatenating the maps $\Psi$, $\Gamma$ and $\Psi'$, we get a map $\Phi:[0,1]\rightarrow \mathcal{Z}_1(M,\mathbb{Z}_2)$ such that $\Phi(0)=\Phi(1)=0$ and
$$
\sup_{t\in [0,1]}{\bf M}(\Phi(t))<||V||(M).
$$
As in the Proposition \ref{vertex}  this induces a one-sweepout $\Phi:([0,1]/\sim)\rightarrow \mathcal{Z}_1(M,\mathbb{Z}_2)$, and hence 
$\omega_1(M,g)<||V||(M)$ which is a contradiction.

Therefore we can suppose that any edge of ${\rm support}(V)$ containing $p$ has integer multiplicity one. The number $i$ of edges containing $p$ cannot be odd, as $\partial T_V=0$. Hence  the number of edges is either four or is greater than or equal to six. 

Suppose $i\geq 6$. By changing notation, we can suppose that ${\rm int}(D_1(1))\subset \Omega'$.  We define $\Gamma_1:[0,1]\rightarrow \mathcal{Z}_1(M,\mathbb{Z}_2)$ by
$$
\Gamma_1(t)=\Psi(1)+\partial D_2(2t)+\partial D_i(2t)
$$
for $t\in [0,1/2]$, and
$$
\Gamma_1(t)=\Psi(1)+\partial D_2(1)+\partial D_i(1)
+ \partial \big(D_1(1)- D_1(2-2t)\big)
$$
for $t\in [1/2,1]$. Notice that $\Gamma_1(t)\llcorner D_{i-2}=\gamma_{i-2}$ for any $t\in [0,1]$, and ${\bf M}(\gamma_{i-2})<2\delta$.  Hence
$$
\sup_{t\in [0,1]} {\bf M}(\Gamma_1(t))< ||V||(M).
$$

We define $\Gamma_2:[0,1]\rightarrow \mathcal{Z}_1(M,\mathbb{Z}_2)$ by
$$
\Gamma_2(t)=\Gamma_1(1)+\sum_{k=2}^{(i-2)/2} \partial D_{2k}(2t)
$$
for $t\in [0,1/2]$, and
$$
\Gamma_2(t)=\Gamma_1(1)+\sum_{k=2}^{(i-2)/2} \partial D_{2k}(1) + \sum_{k=2}^{i/2} \partial \big(D_{2k-1}(1)- D_{2k-1}(2-2t)\big)
$$
for $t\in [1/2,1]$. Notice that $\Gamma_2(t)\llcorner D_1=\Gamma_1(1)\llcorner D_1$ for $t\in [0,1]$. Hence $\Gamma_2(t)\llcorner D_{1}=\gamma_{1}$ for any $t\in [0,1]$. Since ${\bf M}(\gamma_{1})<2\delta$, one has
$$
\sup_{t\in [0,1]} {\bf M}(\Gamma_2(t))< ||V||(M).
$$
We have $\Psi(1)=\Gamma_1(0)$,  $\Gamma_1(1)=\Gamma_2(0)$, and 
$$\Gamma_2(1)=\Psi(1)+\sum_{k=1}^{i/2} \partial D_{2k} + \sum_{k=1}^{i/2} \partial D_{2k-1} =\Psi'(1).$$
By concatenating $\Psi,\Gamma_1,\Gamma_2$ and $\Psi'$ as before it follows that
$$
\omega_1(M,g)<||V||(M),
$$
which is a contradiction. Therefore $i=4$. Since $V$ is a stationary integral varifold, $V\llcorner D_\delta(p)$ must be the varifold induced by 
two geodesics containing $p$ with multiplicity one. This proves the proposition. 
\end{proof}

	Combining the previous propositions, we can now summarize what are the properties of any $V\in {\bf C}(\{\Phi_i\})$. By Proposition \ref{flat.sequences}, $T_V\in \mathcal{Z}_1(M,\mathbb{Z}_2)$ and  there are sequences   $\{j\}\subset \{i\}$ and $\{x_j\in X_j\}_j$ such that  ${\bf F}(|\Phi_j(x_j)|,V)\rightarrow 0$ and $\mathcal{F}(\Phi_j(x_j),T_V)\rightarrow 0$. We know that ${\rm spt}(V)$ is connected by Proposition \ref{connected.support.geodesic}.
The Propositions \ref{multiplicity.edges}, \ref{vertex} and \ref{tangent.cone}  imply that the edges of $V$ have integer multiplicity either 1 or 2, that there is at most one singular point in 
${\rm spt}(V)$, and that if $p \in {\rm spt}(V)$ is a singular point, then the tangent cone of $V$ at $p$ is a sum of two lines with multiplicity one.

Therefore we have the following  possible cases: (i) $V$ is a simple closed geodesic with multiplicity one; 
(ii) $V$ is a figure-eight closed geodesic with multiplicity one; (iii) $V$
is the sum of two simple closed geodesics with multiplicity one which intersect transversely at one point, or a simple closed geodesic with multiplicity two. \\

Case (i): $V$ is a simple closed geodesic $\gamma$ with multiplicity one. \\

In this case we also have that $T_V=\gamma$. Since $T_V \in \mathcal{Z}_1(M,\mathbb{Z}_2)$, the geodesic $\gamma$ has to be two-sided.
By Proposition \ref{no.disjoint.geodesic}, the geodesic $\gamma$ is  part of a local foliation  so that the leaves   have  curvature vector pointing strictly away from  $\gamma$. If we apply curve shortening flow to a leaf on one side of $\gamma$, by Grayson's theorem \cite{grayson} the flow converges to a point since there is no closed geodesic disjoint from $\gamma$ by Proposition \ref{no.disjoint.geodesic}. Therefore $\gamma$ separates the surface $M$ into two regions which are homeomorphic to disks, which implies that $M$ is homeomorphic to  the two-sphere $S^2$.

Let $\Gamma\subset \mathcal{V}$ be the set of varifolds induced by two-sided, simple closed geodesics with multiplicity one and length $2\pi$. By Allard \cite{allard}, the set $\Gamma$ is open in $\mathcal{V}$. Since there are no one-sided simple closed curves in $S^2$, the set $\Gamma$ is also compact in the varifold topology.   Therefore  there is $\zeta>0$ such that if $V'\in \Gamma$, $\tilde{V}\in \mathcal{V}$ satisfy ${\bf F}(\tilde{V},V')\leq \zeta$, then $\tilde{V}\in \Gamma$. Hence any $\zeta$-chain of varifolds in $\mathcal{V}$ starting with $V$ is contained in $\Gamma$. This implies that ${\bf C}(\{\Phi_j\}_j)\subset \Gamma$.

Let $p$, $q\in M$, $p\neq q$. Since $\Phi_j$ is a two-sweepout, for any open sets $\Omega$, $\Omega'\subset M$, $\overline{\Omega}\cap \overline{\Omega'}=\emptyset$, $p\in \Omega$, $q\in \Omega'$, there is $x_j\in X_j$ such that $\Phi_j(x_j)$ bisects $\Omega$ and $\Omega'$ into regions of the same area. There is $\{k\}\subset \{j\}$ such that $|\Phi_k(x_k)|\rightarrow V'\in \Gamma.$ By letting $\Omega\rightarrow p$, and $\Omega'\rightarrow q$, we find a simple closed geodesic $\gamma_{p,q}\in \Gamma$ that contains $p$ and $q$. Let $v\in T_pM$, $|v|=1$. Since the geodesics 
$\gamma_i=\gamma_{p,{\rm exp}_p(v/i)}$ are embedded and converge with multiplicity one, we get that the geodesic passing by $p$ with velocity $v$ is a simple closed geodesic of length $2\pi$. This proves that in this case $g$ is a Zoll metric on $S^2$.  \\

Case (ii): $V$ is a figure-eight closed geodesic $\gamma$ with multiplicity one. \\

We will show this case leads to a contradiction. Let $\Gamma'\subset \mathcal{V}$ be the set of varifolds induced by figure-eight closed geodesics with multiplicity one and length $2\pi$. 
Let $\{V_j\}_j\subset \Gamma' \cap {\bf C}(\{\Phi_i\})$. Then there is $\{k\}\subset \{j\}$ such that $V_k\rightarrow V'\in  {\bf C}(\{\Phi_i\})$ in the varifold topology.  The varifold $V'$ is either a figure-eight with multiplicity one, or a simple closed geodesic with multiplicity two. 

Suppose $V'$ is a simple closed geodesic $\gamma$ with multiplicity two. The varifold $V_k$ can be desingularized at the self-intersection to produce two disjoint simple closed curves $\alpha_k$, $\beta_k$, such that $\alpha_k,\beta_k\rightarrow \gamma$ with multiplicity one. Therefore the  geodesic $\gamma$ has  to be two-sided. In this case the curve shortening flow argument of case (i) would produce a one-sweepout of $M$ with mass bounded by $l(\gamma)$. This is a contradiction because $\omega_1(M,g)=||V'||(M)=2\,l(\gamma)$.
Therefore $V'$ is a figure-eight with multiplicity one, which proves that $\Gamma'\cap {\bf C}(\{\Phi_i\})$ is compact.

If $\{V_j\}\subset {\bf C}(\{\Phi_i\})$ is such that $V_j\rightarrow V\in \Gamma' \cap {\bf C}(\{\Phi_i\})$, then for sufficiently large $j$, $V_j \in \Gamma'$. This is because for such $j$, $V_j$ cannot be a simple closed geodesic or a union of two simple closed geodesics intersecting transversely. Therefore $\Gamma' \cap {\bf C}(\{\Phi_i\})$ is open in ${\bf C}(\{\Phi_i\})$. As in case (i) this implies that any $V'\in {\bf C}(\{\Phi_i\})$ is a figure-eight with multiplicity one (${\bf C}(\{\Phi_i\})=\Gamma' \cap {\bf C}(\{\Phi_i\})$).

Let $0<\delta<{\rm inj}(M)$, and $V\in {\bf C}(\{\Phi_i\})$ be the varifold  of a figure-eight. Let $p$ be the singular point of ${\rm support}(V)$.
We use the notation of the Propositions \ref{multiplicity.edges}, \ref{vertex}, and \ref{tangent.cone}. By changing the notation, we can suppose that $S_V=T_V+\partial D_1(1)+\partial D_3(1)$ is a sum of two nontrivial cycles.  The function $V \mapsto S_V$ is well-defined. If $\{V_j\}\subset {\bf C}(\{\Phi_i\})$ is such that $V_j\rightarrow V$ in the varifold topology, then 
$$
\mathcal{F}(T_{V_j},T_V)+{\bf F}(|T_{V_j}|,|T_V|) \rightarrow 0,
$$
and
$$
\mathcal{F}(S_{V_j},S_V)+{\bf F}(|S_{V_j}|,|S_V|) \rightarrow 0.
$$
Since ${\bf M}(S_V)<{\bf M}(T_V)=\omega_1(M,g)$ for any $V\in {\bf C}(\{\Phi_i\})$, we can find $\xi>0$ such that
$$
{\bf M}(S_V)\leq \omega_1(M,g)-\xi
$$
for any $V\in {\bf C}(\{\Phi_i\})$. Also, $\mathcal{F}(S_V,T_V)\leq O(\delta^2)$ for any $V\in {\bf C}(\{\Phi_i\})$.

For any $0<\rho<\delta^2$, by a compactness argument we can find  $0<\eta<\rho$ such that if $V,V'\in {\bf C}(\{\Phi_i\})$  satisfy ${\bf F}(V,V')<\eta$, then 
there is a continuous map $\Gamma:[0,1]\rightarrow \mathcal{Z}_1(M,{\bf F}, \mathbb{Z}_2)$ such that $\Gamma(0)=S_V$, $\Gamma(1)=S_{V'}$,
$$
{\bf F}(\Gamma(t),\Gamma(t'))<\rho,
$$
and
$$
{\bf M}(\Gamma(t))\leq \omega_1(M,g)-\xi/2
$$
for any $t,t'\in [0,1]$.

Let $\sigma_j \subset X_j$, $\sigma_j \approx S^1$, such that $(\Phi_j)_{|\sigma_j}$ is a one-sweepout.  We identify $\sigma_j=([0,1]/\sim)$.
By Proposition \ref{flat.convergence}, for sufficiently large $j$ one has that for any $x\in [0,1]$ there is $V\in {\bf C}(\{\Phi_i\})$ with
$\mathcal{F}(\Phi_j(x),T_V)+{\bf F}(|\Phi_j(x)|,V) < \eta/5$. We can find a partition $0=t_0<t_1<\dots<t_h=1$ such that, for any $t,t'\in [t_k,t_{k+1}]$, ${\bf F}(\Phi_j(t),\Phi_j(t'))<\eta/5$. For each $k=0,\dots,h$, let $V_k\in {\bf C}(\{\Phi_i\})$ be such that $\mathcal{F}(\Phi_j(t_k),T_{V_k})+{\bf F}(|\Phi_j(t_k)|,V_k) < \eta/5$. Since we are identifying the endpoints, we can suppose that $V_h=V_0$. Hence ${\bf F}(V_k,V_{k+1})<\eta$ for any $k=0,\dots,h-1$. By reparametrizing, we can find for each $k=0,\dots,h-1$ a map
$\Gamma_k:[t_k,t_{k+1}]\rightarrow   \mathcal{Z}_1(M,{\bf F}, \mathbb{Z}_2)$ such that $\Gamma_k(t_k)=S_{V_k}$, $\Gamma_k(t_{k+1})=S_{V_{k+1}}$,
$$
{\bf F}(\Gamma_k(t),\Gamma_k(t'))<\rho,
$$
and
$$
{\bf M}(\Gamma_k(t))\leq \omega_1(M,g)-\xi/2
$$
for any $t,t'\in [t_k,t_{k+1}]$. We can concatenate the maps $\Gamma_k$ into a map $\Gamma:[0,1]\rightarrow  \mathcal{Z}_1(M,{\bf F}, \mathbb{Z}_2)$, with $\Gamma(0)=\Gamma(1)$, and such that
$$
{\bf M}(\Gamma(t))\leq \omega_1(M,g)-\xi/2
$$
for any $t\in [0,1]$. Notice that, for $t\in [t_k,t_{k+1}]$,
\begin{eqnarray*}
&&\mathcal{F}(\Gamma(t),\Phi_j(t))\leq \mathcal{F}(\Gamma_k(t),\Gamma_k(t_k))+\mathcal{F}(\Gamma_k(t_k),\Phi_j(t_k))+\mathcal{F}(\Phi_j(t_k), \Phi_j(t))\\
&&\leq \rho+\mathcal{F}(S_{V_k},\Phi_j(t_k))+\eta/5\\
&&\leq \rho+\mathcal{F}(S_{V_k},T_{V_k})+\mathcal{F}(T_{V_k},\Phi_j(t_k))+\eta/5\\
&&\leq \rho+O(\delta^2)+2\eta/5=O(\delta^2).
\end{eqnarray*}

If we choose $\delta>0$ sufficiently small, Proposition 3.5 of \cite{marques-neves-infinitely} implies that $\Phi_j$ and $\Gamma$ are homotopic in the flat topology. Therefore $\Gamma$ is a one-sweepout, but this is a contradiction because $\sup_{t\in S^1}{\bf M}(\Gamma(t))<\omega_1(M,g)$. \\

Case (iii): $V$
is the sum of two simple closed geodesics with multiplicity one which intersect transversely at one point, or a simple closed geodesic with multiplicity two. \\

Suppose there is $V\in {\bf C}(\{\Phi_i\})$ that is a simple closed geodesic $\gamma$ with multiplicity two. We have $\ell(\gamma)=\pi$, because $2\ell(\gamma)=||V||(M)=\omega_1(M,g)=2\pi$. As in the case (i), if the geodesic
is two-sided one could use curve shortening flow to prove that $\omega_1(M,g)\leq l(\gamma)$, a contradiction.  Hence the geodesic $\gamma$ has to be one-sided. By Proposition \ref{no.disjoint.geodesic}, the geodesic $\gamma$ is  part of a local foliation  such that the leaves   have  curvature vector pointing strictly away from  $\gamma$. By applying the curve shortening flow, since by Proposition \ref{no.disjoint.geodesic} there is no simple closed geodesic disjoint from $\gamma$, we get that any two-sided leaf bounds a disk. Therefore in this case the surface $M$ is diffeomorphic to the projective plane $\mathbb{RP}^2$.
 
Let $\tilde{\gamma}$ be a length-minimizing noncontractible curve. Then $\tilde{\gamma}$ is a simple closed geodesic whose length is the systole ${\rm sys}(M,g)$ of $(M^2,g)$. If $\tilde{\gamma}=\gamma$ then ${\rm sys}(M,g)=\pi$. Suppose that $\tilde{\gamma}\neq \gamma$. Since $M\approx \mathbb{RP}^2$, the cycle $T=\gamma+\tilde{\gamma}$ is in $\mathcal{Z}_1(M,\mathbb{Z}_2)$. Since there is no closed geodesic disjoint from $\gamma\cup \tilde{\gamma}$, we can proceed as in the proof of Proposition \ref{multiplicity.edges} to get a one-sweepout of $M$ with mass bounded by $l(\gamma)+l(\tilde{\gamma})$. Hence $2l(\gamma)=\omega_1(M,g)\leq l(\gamma)+l(\tilde{\gamma})$, and so $l(\gamma)\leq l(\tilde{\gamma})$. Since $l(\tilde{\gamma})\leq l(\gamma)$ as $\gamma$ is noncontractible, we get that ${\rm sys}(M,g)=l(\tilde{\gamma})=l(\gamma)=\pi$. In any case we have that  ${\rm sys}(M,g)=\pi$. 

Hence we can suppose that any $V\in {\bf C}(\{\Phi_i\})$ is a sum of two  closed geodesics with multiplicity one which are simple and intersect transversely at one point. Let $V\in {\bf C}(\{\Phi_i\})$. Hence $V=|\gamma|+|\gamma'|$, where $\gamma$ and $\gamma'$ are simple closed geodesics which intersect transversely at one point. Then $T_V=\gamma+\gamma'\in \mathcal{Z}_1(M,\mathbb{Z}_2)$ by Proposition \ref{flat.sequences}.  Using the notation of Proposition \ref{vertex} applied to $p\in \gamma \cap \gamma'$, we have that ${\rm int}(D_i(1))$ and ${\rm int}(D_{i+1}(1))$ are in different connected components of 
$M \setminus (\gamma  \cup \gamma')$, for $1\leq i\leq 4$. If we take a point $q\in {\rm int}(D_i(1))$ and move it closely along one of the geodesics $\gamma$ or $\gamma'$, we get that ${\rm int}(D_i(1))$ and ${\rm int}(D_{i+2}(1))$ are in the same connected component of $M \setminus (\gamma  \cup \gamma')$ for $1\leq i\leq 2$. Hence $\gamma \cup \gamma'$ separates $M$ into two connected components $\Omega$, $\Omega'$. It also follows that $\gamma$ and $\gamma'$ are one-sided closed geodesics. Notice that $\partial \Omega=\partial \Omega'=\gamma \cup \gamma'$.

The component $\Omega$ can be perturbed inside $\Omega$ into a strictly convex curve $\alpha$, and $\Omega'$ can be perturbed inside $\Omega'$ into a strictly convex curve $\alpha'$. Since there is no closed geodesic disjoint from $\gamma \cup \gamma'$, the curve shortening flow argument gives that $\alpha$ bounds a disk $D\subset \Omega$ and $\alpha'$ bounds a disk $D'\subset \Omega'$.  By using that $D$ and $D'$ are simply connected, and that $\gamma \cup \gamma'$ is a deformation retract of a neighborhood, we get that any loop $\sigma\in \pi_1(M,\{p\})$ is homotopic to a loop contained in $\gamma \cup \gamma'$. By performing homotopies in $D,D'$, we get that $\gamma,\gamma',\gamma^{-1},(\gamma')^{-1}$ are homotopic to each other. Hence $\pi_1(M,\{p\})=\{1,\gamma\}=\mathbb{Z}_2$, and $M$ is diffeomorphic to $\mathbb{RP}^2$.

If $\{V_j=|\gamma_j|+|\gamma_j'|\}\subset {\bf C}(\{\Phi_i\})$ is a sequence that converges in the varifold topology to $V=|\gamma|+|\gamma'|\in {\bf C}(\{\Phi_i\})$, then by changing the notation we can suppose that $\gamma_j$ converges smoothly to $\gamma$ and $\gamma_j'$ smoothly to $\gamma'$. 

Suppose $V=|\gamma|+|\gamma'|\in {\bf C}(\{\Phi_i\})$. Then $\gamma$ is part of a local foliation $\{\gamma_t\}_{t\in [0,s]}$, $\gamma_0=\gamma$, such that for any $t\in (0,s]$, $\gamma_t$ is a two-sided curve whose curvature  has a sign. 

Suppose that for some $\tilde{t}\in (0,s]$, $\gamma_{\tilde{t}}$ is a closed geodesic. Since $\gamma_{\tilde{t}}$ is two-sided, there is a neighborhood $U$ of $\gamma_{\tilde{t}}$ that is diffeomorphic to a cylinder. Let $0<t'<\tilde{t}$. By Proposition \ref{intersects.must.be.contained.geodesic} we can choose $t'$ such that any smooth closed geodesic with length bounded by $2\, \omega_1(M,g)$, disjoint from $\gamma_{\tilde{t}}$, that intersects $\bigcup_{t\in [t',\tilde{t}]} \gamma_t$, must be contained in $U$. 

Consider $\mathcal{V}'\subset {\bf C}(\{\Phi_i\})$ to be the set of varifolds $V'$ such that at least one of the closed geodesics in ${\rm spt}(V')$ is contained in $\bigcup_{t\in [0,t']} \gamma_t$. Then
$\mathcal{V}'$ is closed in ${\bf C}(\{\Phi_i\})$. Let $\{V_j=|\gamma_j|+|\gamma_j'|\}\subset {\bf C}(\{\Phi_i\})$ be a sequence that converges in the varifold topology to $V'\in \mathcal{V}'$.  By changing notation, we can suppose that $\gamma_j$ converges smoothly to a closed geodesic
that is contained in $\bigcup_{t\in [0,t']} \gamma_t$.  This implies that for sufficiently large $j$, $\gamma_j$ is disjoint from $\gamma_{\tilde{t}}$ and is contained in $\bigcup_{t\in [0,\tilde{t}]} \gamma_t$. If $\gamma_j$ intersects $\bigcup_{t\in [t',\tilde{t}]} \gamma_t$, we would have $\gamma_j\subset U$. But this is a contradiction, since $\gamma_j$ is one-sided and $U$ is a cylinder. Then, for sufficiently large $j$, $\gamma_j \subset 
\bigcup_{t\in [0,t']} \gamma_t$ and hence $V_j\in \mathcal{V}'$. This proves that $\mathcal{V}'$ is open in ${\bf C}(\{\Phi_i\}).$

Since $\gamma \subset \bigcup_{t\in [0,t']} \gamma_t$, it follows that $V\in \mathcal{V}'$. By considering $\zeta$-chains in ${\bf C}(\{\Phi_i\})$ starting with $V$, we get that ${\bf C}(\{\Phi_i\})\subset \mathcal{V}'$. Let $p$, $q\in \gamma_{\tilde{t}}$, $p\neq q$. Since $\Phi_i$ is a two-sweepout 
for each $i$, there is $V'\in {\bf C}(\{\Phi_i\})$ such that $p$, $q \in {\rm spt}(V')$.   Since $V'\in \mathcal{V}'$, it follows that one of the closed
geodesics in ${\rm spt}(V')$ does not contain either $p$ or $q$. Therefore there is a closed geodesic in ${\rm spt}(V')$ that contains $p$ and $q$. By letting $q\rightarrow p$ on $\gamma_{\tilde{t}}$, we get in the limit a one-sided closed geodesic tangent to the geodesic
$\gamma_{\tilde{t}}$ at $p$. These geodesics would have to agree but this is a contradiction since $\gamma_{\tilde{t}}$ is two-sided.

We proved that there is no $t\in (0,s]$ such that $\gamma_{t}$ is a closed geodesic. Suppose then  that for any $t\in (0,s]$, the curvature vector of $\gamma_t$ points strictly towards $\gamma$. The maximum principle implies that $\gamma$ is the only closed geodesic contained in 
$\bigcup_{t\in [0,s]} \gamma_t$.  By compactness of smooth closed geodesics with length bounds, we can find $\tilde{t}\in (0,s]$ such that there is no closed geodesic tangent to $\gamma_{\tilde{t}}$ with length bounded by $2\,\omega_1(M,g)$. Since the region $\bigcup_{t\in [0,\tilde{t}]} \gamma_t$ has strictly convex boundary, we can proceed as in the previous case by using Theorem 5 of \cite{white-maximum-principle}.

Therefore $\gamma$ is part of a local foliation such that any of the other leaves has curvature vector pointing strictly away from $\gamma$. If we apply the curvature shortening flow to such a leaf, then by \cite{grayson} it either converges to a point or to a  closed geodesic $\gamma''$ disjoint from $\gamma$. Since $M \approx \mathbb{RP}^2$, this closed geodesic is two-sided. By considering $\mathcal{V}'$  the set of varifolds $V'\in {\bf C}(\{\Phi_i\})$ such that at least one of the closed geodesics in ${\rm spt}(V')$ does not intersect a neighborhood $B_\eta(\gamma'')$  we can get a contradiction as before.  This also proves that there cannot be a closed geodesic disjoint from $\gamma$ (or similarly from $\gamma'$). Therefore we can find a one-sweepout of $(M,g)$ with mass bounded by $2l(\gamma)$.  Hence $\omega_1(M,g)\leq 2l(\gamma)$, and similarly $\omega_1(M,g)\leq 2l(\gamma')$.  Since $\omega_1(M,g)=l(\gamma)+l(\gamma')$, we get that $l(\gamma)=l(\gamma')$.

We proved that $l(\gamma)=l(\gamma')=\pi$. 
Let $\tilde{\gamma}$ be the noncontractible simple closed geodesic with length ${\rm sys}(M,g)$. If $\tilde{\gamma}=\gamma$, then ${\rm syst}(M,g)=\pi$. If  $\tilde{\gamma}\neq \gamma$, we can apply curve shortening flow to the connected components of $M \setminus (\gamma \cup \tilde{\gamma})$. Since there is no closed geodesic disjoint from $\gamma$,  we can construct as before a one-sweepout of $M$ with mass bounded by $l(\gamma)+l(\tilde{\gamma})$. Hence $2\pi=\omega_1(M,g)\leq \pi+l(\tilde{\gamma})\leq \pi+l(\gamma)=2\pi$, which implies ${\rm sys}(M,g)= \pi$.  This finishes the proof of Theorem \ref{rigidity.projective.plane2}.
\end{proof}

\begin{rmk}
We do not know of any examples of $\mathbb{RP}^2$ with $\omega_1=\omega_2=\omega_3=2sys$ that do not have constant curvature.
\end{rmk}

\begin{proof}[Proof of Theorem A]
It follows by the Weyl law \eqref{weyl-law} that the volume spectrum determines the dimension of the manifold:
$$
n+1=\lim_{k\rightarrow\infty} \frac{\ln k}{\ln \omega_k(M,g)}.
$$
Since $\omega_k(M,g)=\omega_k(\mathbb{RP}^2,\overline{g})$
 for any $k
\geq 1$, $(n+1)=2$ and hence  $M$ is two-dimensional. By the Weyl law \eqref{weyl-law} again, 
$${\rm area}(M,g)={\rm area}(\mathbb{RP}^2,\overline{g})=2\pi.$$

By Proposition \ref{widths.projective.plane}, we have that 
$$
\omega_1(M, g)=\omega_2(M, g)=\omega_3(M, g)=2\pi.
$$
Hence  Theorem \ref{rigidity.projective.plane2} implies that either $(M,g)$ is isometric to a Zoll sphere $(S^2,g)$ whose geodesics have length $2\pi$, or $M$ is diffeomorphic to  $\mathbb{RP}^2$  and ${\rm sys}(M,g)=\pi$.

The first case is not possible. In fact, by Weinstein's Theorem A in \cite{weinstein}, a Zoll sphere $(S^2,g)$ whose geodesics have length $2\pi$ has ${\rm area}(S^2,g)\geq 4\pi$.

Therefore $g$ must be a metric on $\mathbb{RP}^2$, with systole equal to $\pi$ and area equal to $2\pi$. By the case of equality in Pu's isosystolic inequality (\cite{pu}), $(M,g)$ is isometric to $(\mathbb{RP}^2,\overline{g})$. This finishes the proof of Theorem A.
\end{proof}

\section{Spherical area widths of surface Zoll metrics}

 A Zoll family in $(S^3,g)$  is a  family $\{\Sigma_\sigma\}_{\sigma\in \mathbb{RP}^{3}}$ of smoothly embedded $2$-spheres   such that for each $p\in S^3$ and each two-dimensional space $\pi\subset T_pS^3$ there exists a unique $\sigma\in \mathbb{RP}^{3}$ such that $T_p\Sigma_\sigma=\pi$. We suppose that the map
$\sigma\in \mathbb{RP}^3 \rightarrow \Sigma_\sigma \in \tilde{\mathcal{S}}$ seen as a map into the space of $C^{3,\alpha}$ embeddings of spheres is of class $C^1$. The set of equators in $S^3$ is a Zoll family.  

A Riemannian metric $g$ on $S^3$  is called a surface Zoll metric if there is a Zoll family $\{\Sigma_\sigma\}$ of minimal $2$-spheres for $(S^3,g)$. In \cite{ambrozio-marques-neves}, the authors have constructed examples of  surface Zoll metrics on $S^3$ which do not have constant sectional curvature.
There is a constant $w>0$ such that ${\rm area}(\Sigma_\sigma,g)=w$ for every $\sigma \in \mathbb{RP}^3$.  {In this case}, by elliptic theory, the map $\sigma\mapsto \Sigma_\sigma$ is also continuous with respect to the smooth topology in  $\tilde{\mathcal{S}}$. We denote by $L_\sigma$ the Jacobi operator of $\Sigma_\sigma$.

\begin{prop}\label{zoll.properties}
Let $\{\Sigma_\sigma\}_{\sigma\in \mathbb{RP}^{3}}$ be a Zoll family of minimal $2$-spheres of $(S^3,g)$. Then $\Sigma_\sigma$ has Morse index  one and nullity equal to three for every $\sigma\in \mathbb{RP}^3$. Moreover, for every $\sigma,\tau \in \mathbb{RP}^3$, the surface $\Sigma_\sigma$ intersects $\Sigma_\tau$. If $\sigma \neq \tau$, the intersection $\Sigma_\sigma \cap \Sigma_\tau$ is a smooth, connected, simple closed curve.
\end{prop}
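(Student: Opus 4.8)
The plan is to prove the three assertions in the order stated, since the intersection statements rest on the index/nullity statement. Throughout I use that each $\Sigma_\sigma$ is an embedded two‑sphere, hence two‑sided in $S^3$, and I write $I=\{(\sigma,p)\in\mathbb{RP}^3\times S^3:\ p\in\Sigma_\sigma\}$ for the total space of the family and $G$ for the bundle of tangent two‑planes over $S^3$; both are compact $5$‑manifolds. The Zoll hypothesis says precisely that $\mu\colon I\to G$, $\mu(\sigma,p)=(p,T_p\Sigma_\sigma)$, is a bijection, and since it is continuous and $I$ is compact, $\mu$ is a homeomorphism, so $\mu^{-1}$ is continuous. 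Differentiating $\sigma\mapsto\Sigma_\sigma$ at $\sigma_0$ in three independent directions produces normal fields $\phi_1N,\phi_2N,\phi_3N$ on $\Sigma_{\sigma_0}$ with $L_{\sigma_0}\phi_i=0$ (first variations through minimal surfaces). I would first show these are linearly independent: if $\sum a_i\phi_i\equiv0$ with $v=\sum a_i\partial_i\ne0$, then along a curve $\sigma(t)$ with $\sigma'(0)=v$ the surface $\Sigma_{\sigma(t)}$ is a graph over $\Sigma_{\sigma_0}$ of size $O(t^2)$ in $C^1$, so for each $p\in\Sigma_{\sigma_0}$ the nearest point $p_t\in\Sigma_{\sigma(t)}$ satisfies $p_t=p+O(t^2)$ and $T_{p_t}\Sigma_{\sigma(t)}=T_p\Sigma_{\sigma_0}+O(t^2)$, whence $\mu(\sigma(t),p_t)-\mu(\sigma_0,p)=O(t^2)$ while $(\sigma(t),p_t)$ leaves $(\sigma_0,p)$ at speed $\asymp|t|$ — contradicting injectivity of the homeomorphism $\mu$. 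So the nullity is $\ge3$. For the reverse bound I use White's structure theorem (Proposition~\ref{approximate.minimal}): minimal two‑spheres near $\Sigma_{\sigma_0}$ are graphs $\varphi(z)$, $z$ near $0$ in $\ker L_{\sigma_0}$, with $D\varphi(0)=\mathrm{Id}$, the family being a $3$‑dimensional submanifold of this picture; the key claim is that \emph{every} minimal two‑sphere $\Sigma'$ that is $C^1$‑close to $\Sigma_{\sigma_0}$ already lies in the family. Given such a $\Sigma'$, for each $p'\in\Sigma'$ the Zoll condition and continuity of $\mu^{-1}$ give $\rho(p')$ near $\sigma_0$ with $\Sigma_{\rho(p')}$ tangent to $\Sigma'$ at $p'$; if $p'\mapsto\rho(p')$ is constant then $\Sigma'=\Sigma_\rho$, and otherwise, in White's parameters the graph functions of $\Sigma_{\rho(p')}$ and $\Sigma'$ differ by a vector $z_{\rho(p')}-z'\in\ker L_{\sigma_0}$ that is a Jacobi field vanishing to first order at (the foot of) $p'$, for every $p'\in\Sigma'$; since the first‑order evaluation $\ker L_{\sigma_0}\to\mathbb{R}^3$ is injective at some point, this forces $z_{\rho(p')}=z'$ and hence $\Sigma'$ in the family, which in turn (comparing dimensions) gives $\dim\ker L_{\sigma_0}=3$. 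Thus the nullity is exactly $3$ and the family exhausts a neighbourhood of $\Sigma_{\sigma_0}$ in the moduli space of minimal two‑spheres; in particular every nearby minimal two‑sphere has area $w$.

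For the index: if $\Sigma_{\sigma_0}$ were stable, then having a Jacobi field it would satisfy $\lambda_1(L_{\sigma_0})=0$ with a positive first eigenfunction $\psi_1$, and deforming by $s\psi_1N$ produces a genuine local minimal foliation of a neighbourhood of $\Sigma_{\sigma_0}$ by two‑spheres lying strictly to one side of $\Sigma_{\sigma_0}$; but the leaf $\Sigma_\rho$ through a point $p$ of such a leaf, tangent there to it, would be tangent to a minimal surface from one side, contradicting the strong maximum principle. Hence $\mathrm{index}(\Sigma_{\sigma_0})\ge1$; since then $\lambda_1<0$ is simple, the kernel is the \emph{second} eigenspace of $-L_{\sigma_0}$, so every nonzero Jacobi field is $L^2$‑orthogonal to $\psi_1$ and sign‑changing. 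Finally, a second negative direction would, after deforming $\Sigma_{\sigma_0}$ along it and then decreasing area within the exhausted neighbourhood, produce a minimal two‑sphere of area $<w$, contradicting that all nearby minimal spheres have area $w$; so the index is exactly $1$.

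For the statement that any two leaves meet, fix $\sigma$ and set $U=\{\tau:\Sigma_\tau\cap\Sigma_\sigma\ne\emptyset\}$. By continuity and compactness of the family $U$ is closed; if $\tau\in U$ with $\tau\ne\sigma$, then $\Sigma_\tau$ and $\Sigma_\sigma$ cannot be tangent anywhere (Zoll uniqueness would force $\tau=\sigma$), so they meet transversally and nearby leaves still meet $\Sigma_\sigma$, so $U\setminus\{\sigma\}$ is open; and for $\rho$ near $\sigma$ the leaf $\Sigma_\rho$ is a graph over $\Sigma_\sigma$ whose leading term is a nonzero sign‑changing Jacobi field, with amplitude bounded below uniformly in the direction by compactness, so $\Sigma_\rho\cap\Sigma_\sigma\ne\emptyset$ and $U$ contains a neighbourhood of $\sigma$. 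Thus $U$ is nonempty, open and closed in the connected $\mathbb{RP}^3$, hence $U=\mathbb{RP}^3$. For the last assertion, with $\sigma\ne\tau$ the intersection is nonempty and transverse, so it is a disjoint union of smoothly embedded circles, each a simple closed curve in $\Sigma_\sigma\cong S^2$. To see there is exactly one, join $\sigma$ to $\tau$ by a path $\rho(t)$ meeting $\sigma$ only at $t=0$, with generic initial velocity $v$; then $\Sigma_{\rho(t)}$ is transverse to $\Sigma_\sigma$ for every $t\in(0,1]$, so the number of components of $\Sigma_{\rho(t)}\cap\Sigma_\sigma$ is constant on $(0,1]$ and equals its value for small $t>0$, i.e. the number of nodal circles of $\phi_v$; but $\phi_v$ is a second eigenfunction of $-L_\sigma$, so Courant's nodal domain theorem gives exactly two nodal domains and a single embedded nodal circle. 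Hence $\Sigma_\tau\cap\Sigma_\sigma$ is one circle, smooth by transversality.

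The main obstacle is the index and nullity statement. Within it, the delicate ingredients are (i) the exhaustion claim — that every minimal two‑sphere $C^1$‑close to $\Sigma_{\sigma_0}$ belongs to the family — which requires turning the tangency/$\mu^{-1}$‑continuity argument into a genuine over‑determination of the Jacobi equation (and controlling the regularity of $p'\mapsto\rho(p')$), and which is what pins the nullity to $3$; and (ii) pinning the index to be exactly $1$: the lower bound uses the positive‑Jacobi‑field foliation together with the strong maximum principle, while the upper bound needs the area‑decreasing deformation along a hypothetical second negative direction to stay inside the neighbourhood where the exhaustion applies. The intersection statements, by contrast, are comparatively formal once index $\ge1$ and the Zoll transversality are in hand.
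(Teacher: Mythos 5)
The intersection statements in your last two paragraphs are essentially sound and are close in spirit to the paper's argument (though the paper proves connectedness of $\Sigma_\sigma\cap\Sigma_\tau$ via an open--closed argument rather than a path‑and‑counting argument). The difficulties are all in the index/nullity part, and there are several genuine gaps there.

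First, the linear‑independence argument for $\phi_1,\phi_2,\phi_3$ does not work. You argue that if $\sum a_i\phi_i\equiv 0$ then the Zoll map $\mu$ would move at speed $O(t^2)$ in the image while the source leaves at speed $\asymp|t|$, ``contradicting injectivity of the homeomorphism $\mu$.'' But injectivity is not a rate condition: a $C^1$ bijection (e.g. $t\mapsto t^3$) is perfectly happy to have a degenerate differential, and nothing in the Zoll hypothesis makes $\mu^{-1}$ Lipschitz or differentiable. So nondegeneracy of the family's derivative does not follow, and this step collapses. What does work, and is what the paper does, is a purely topological argument: using Proposition~\ref{approximate.minimal} one gets a continuous \emph{injection} $F$ from an open neighbourhood $U\subset\mathbb{RP}^3$ into a neighbourhood of $0$ in $\operatorname{Ker}L_{\Sigma_\sigma}$, and Invariance of Domain forces $\dim\operatorname{Ker}L_{\Sigma_\sigma}\geq 3$.

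Second, the claim that deforming by $s\psi_1 N$ ``produces a genuine local minimal foliation'' when $\lambda_1=0$ is false. The surfaces $\exp(s\psi_1 N)$ have mean curvature $O(s^2)$, which in general is not identically zero; a positive Jacobi field does not by itself yield a foliation by minimal leaves, so your proof of ${\rm index}\geq 1$ has a hole. Moreover the argument is unnecessary: once you have nullity $\geq 3$, the simplicity of the first eigenvalue of $L_\Sigma$ already forces $\lambda_1<0$ — a $3$‑dimensional kernel cannot be the (simple) first eigenspace — which is the paper's (much shorter) route.

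Third, the ``exhaustion'' argument intended to pin the nullity to $3$, and the ``area‑decreasing'' argument intended to cap the index at $1$, are both circular or too vague. The exhaustion step uses injectivity of the first‑jet evaluation $\operatorname{Ker}L_{\sigma_0}\to\mathbb{R}^3$; but that is precisely the kind of conclusion one draws from Cheng's theorem about second eigenfunctions, which already presupposes ${\rm index}=1$ and nullity $\leq 3$ — the statement you are trying to prove. And even granting exhaustion, the set of $z$ with $\varphi(z)$ minimal need not equal all of $\mathcal{W}$, so ``comparing dimensions'' does not by itself force $\dim\operatorname{Ker}L_{\sigma_0}=3$. The paper closes both gaps at once by a soft argument: having shown ${\rm index}\geq 1$ and nullity $\geq 3$ everywhere, it considers $Y=\{\sigma:{\rm index}(\Sigma_\sigma)=1\}$; lower semicontinuity of the index makes $Y$ closed, upper semicontinuity of ${\rm index}+{\rm nullity}$ together with Cheng's bound (multiplicity of the second eigenvalue $\leq 3$ on $S^2$, hence ${\rm index}+{\rm nullity}=4$ on $Y$) makes $Y$ open, and the Simon--Smith/Galvez--Mira input makes $Y$ nonempty; connectedness of $\mathbb{RP}^3$ then gives $Y=\mathbb{RP}^3$. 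You are missing this semicontinuity/Cheng mechanism, which is the real engine of the proof.
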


\begin{proof}

With  $\sigma\in \mathbb{RP}^3$ fixed, let $\mathcal{W}\subset {\rm Ker} \, L_{\Sigma_\sigma}$ and  $\varphi:\mathcal{W}\rightarrow C^{k,\alpha}(\Sigma_\sigma)$ be as in Proposition \ref{approximate.minimal}. For $z\in \mathcal{W}$, consider $\Sigma(z)$ the graph over $\Sigma_\sigma$ of $\varphi(z)$. By Proposition \ref{approximate.minimal}, there exists an open set  $U\subset \mathbb{RP}^3$, $\sigma \in U$,  {and a compact set $K\subset \mathcal{W}$} such that   for every $\sigma'\in U$, the minimal surface $\Sigma_{\sigma'}$ is equal to $\Sigma(z')$  for a {unique $z'\in K$. Let $F:U\rightarrow \mathcal{W}$ denote the map which assigns $z'$ to $\sigma'$. The map $F$ is injective.
The injectivity of $\phi$ implies $F$ is continuous and so we have from the Invariance of Domain Theorem that $\text{dim}(W)\geq \text{dim}(U)=3$.  Thus ${\rm dim}({\rm Ker}(L_\sigma))\geq 3$ and because the first eigenvalue is simple we also have that ${\rm index}(L_\sigma)\geq  1$.
}

Let $Y=\{\sigma\in \mathbb{RP}^3: {\rm index}(\Sigma_\sigma)=1\}$. {The set $Y$ is closed because the function $\sigma\in \mathbb{RP}^3\mapsto{\rm dim}({\rm Ker}(L_\sigma))$ is lower semicontinuous  and ${\rm index}(L_\sigma)\geq  1$ for all $\sigma \in \mathbb{RP}^3$.  We now argue that $Y$ is also open. The upper semicontinuous function 
$$\sigma\in \mathbb{RP}^3\mapsto{\rm dim}({\rm Ker}(L_\sigma))+{\rm index}(L_\sigma)$$
is greater than or equal to $4$ for all $\sigma\in \mathbb{RP}^3$. Since $\Sigma_\sigma$ is a  topological sphere, we have from \cite{cheng} that the function above is identical to $4$ for every $\sigma\in Y$. Thus, the upper semicontinuity property implies that $Y$ is open.
}

The Simon-Smith min-max theory and the index bounds of \cite{marques-neves-index} imply the existence of a minimal sphere with Morse index at most one. By the uniqueness theorem of \cite{galvez-mira}, this minimal sphere belongs to the Zoll family. Hence ${\rm index}(\Sigma_{\tilde{\sigma}})\leq 1$ for some $\tilde{\sigma}\in \mathbb{RP}^3.$  {We know that ${\rm index}(\Sigma_{\tilde{\sigma}})\geq 1$ and thus $\tilde{\sigma}\in Y$.
Hence $Y=\mathbb{RP}^3$, which implies 
$\Sigma_\sigma$ has Morse index  one and nullity equal to three for every $\sigma\in \mathbb{RP}^3$.}

Let $\sigma\in \mathbb{RP}^3$. The set $I_\sigma=\{\tau\in \mathbb{RP}^3: \Sigma_\tau {\rm \,  intersects\, } \Sigma_\sigma \}$
is closed and contains $\sigma$. We will show that $I_\sigma$ is open, and hence by connectedness $I_\sigma=\mathbb{RP}^3$.

Let $\tau\in I_\sigma$. If $\tau \neq \sigma$, by the definition of Zoll families the intersection of $\Sigma_\tau$ and $\Sigma_\sigma$ is transversal. Since this is an open condition, $\Sigma_{\tau'}$ intersects nontrivially and  transversally $\Sigma_\sigma$ for all $\tau'$ sufficiently close to $\tau$.
We will now show that for every $\sigma'$ sufficiently close to $\sigma$, the surface $\Sigma_{\sigma'}$ intersects $\Sigma_\sigma$. Suppose, by contradiction, that there
exists a sequence $\sigma_n\rightarrow \sigma$ such that $\Sigma_{\sigma_n}$ and $\Sigma_\sigma$ are disjoint for every $n$. This implies that $\Sigma_\sigma$ admits a
positive solution $\psi$ of $L_\sigma\psi=0$, which contradicts ${\rm index}(\Sigma_\sigma)=1$.

It remains to show the intersection $\Sigma_\sigma \cap \Sigma_\tau$, $\sigma \neq \tau$, is connected. If $\sigma_n \rightarrow \sigma$, $\sigma_n \neq \sigma$, there is $j\in \mathbb{N}$ such that if $n\geq j$ then $\Sigma_{\sigma_n}$ is the normal graph over $\Sigma_\sigma$ of a smooth function $\psi_n$. Then elliptic estimates imply that there is a sequence $t_n\rightarrow 0$ such that $t_n^{-1}\psi_n$ converges, say in the $C^2$ topology, to a nontrivial solution $\psi$ of $L_\sigma\psi=0$. Since $\psi$ is an eigenfunction of $L_\sigma$ corresponding to the second eigenvalue, its number of nodal domains is equal to two and by \cite{cheng}, the gradient of $\psi$ does not vanish on  $\{\psi=0\}$. The convergence 
$t_n^{-1}\psi_n\rightarrow \psi$ implies that  the nodal set $\{\psi_n=0\}$ is a perturbation of   $\{\psi=0\}$ and hence is connected for sufficiently large $n$. Therefore if $\sigma'\neq \sigma$ is sufficiently close to $\sigma$, the intersection
$\Sigma_{\sigma'}\cap \Sigma_\sigma$ is connected.
The sets
$$
\tilde{I}_\sigma=\{\tau\in \mathbb{RP}^3\setminus \{\sigma\}: \Sigma_\tau \cap \Sigma_\sigma {\rm \, is \, connected}\}
$$
and
$$
\{\tau\in \mathbb{RP}^3\setminus \{\sigma\}: \Sigma_\tau \cap \Sigma_\sigma {\rm \, is \, disconnected}\}
$$
are  open by transversality. Therefore $\tilde{I}_\sigma=\mathbb{RP}^3\setminus \{\sigma\}$ by connectedness, which finishes the proof.
\end{proof}

	This proposition can be used to imply  the main result of this section:
\begin{thm}\label{zoll.characterization.1}
 Let $g$ be a surface Zoll metric on $S^3$.  Then
$$
\sigma_1(S^3,g)=\sigma_2(S^3,g)=\sigma_3(S^3,g)=\sigma_4(S^3,g)=w.
$$
\end{thm}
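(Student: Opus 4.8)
The plan is to establish, in order, (1) that $\sigma_4(S^3,g)\leq w$ by exhibiting an explicit $4$-sweepout by the spheres of the Zoll family; (2) that $\sigma_1(S^3,g)\geq w$ by a min-max/uniqueness argument; and then chain the inequalities $w\leq \sigma_1\leq\sigma_2\leq\sigma_3\leq\sigma_4\leq w$, using that $\mathcal{P}_4'\subset\mathcal{P}_3'\subset\mathcal{P}_2'\subset\mathcal{P}_1'$ gives the monotonicity $\sigma_1\leq\sigma_2\leq\sigma_3\leq\sigma_4$ noted in the preliminaries. For step (1), the point is that the assignment $\sigma\in\mathbb{RP}^3\mapsto\Sigma_\sigma\in\tilde{\mathcal{S}}$ is continuous in the smooth topology (this is stated right before the theorem, as a consequence of elliptic theory and the fact that $\sigma\mapsto\Sigma_\sigma$ is $C^1$ as a map into $C^{3,\alpha}$ embeddings), so it defines a continuous map $\Psi:\mathbb{RP}^3\to\mathcal{S}$. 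I would verify $\Psi\in\mathcal{P}_4'$ by checking $(i\circ\Psi)^*(\overline\lambda)$ is the generator of $H^1(\mathbb{RP}^3,\mathbb{Z}_2)$: since the Zoll family has the covering property, restricting $\Psi$ to a noncontractible loop in $\mathbb{RP}^3$ yields a one-sweepout of $S^3$ (the cycles $i\circ\Sigma_\sigma$ bound regions that switch sides along such a loop, exactly as in the proof of Proposition \ref{prop.zollsurface_w1=w3}), so $(i\circ\Psi)^*(\overline\lambda)=\lambda\neq 0$; then $\lambda^4\neq 0$ in $H^4(\mathbb{RP}^3,\mathbb{Z}_2)$ shows $\Psi$ is a smooth $4$-sweepout. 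Since every $\Sigma_\sigma$ has area $w$, this gives $\sigma_4(S^3,g)\leq \sup_\sigma{\rm area}(\Sigma_\sigma)=w$.

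For step (2), I would use the Simon-Smith min-max characterization recalled in the preliminaries: $\sigma_1(S^3,g)=\sum_{i=1}^{q_1}m_i^{(1)}{\rm area}(\tilde\Sigma_i^{(1)})$ for a disjoint family of embedded minimal two-spheres with $\sum_i{\rm index}(\tilde\Sigma_i^{(1)})\leq 1$, and multiplicity one on unstable components by Wang-Zhou. By Proposition \ref{zoll.properties}, any two spheres of the Zoll family intersect, so they cannot all sit disjointly; more to the point, I would argue that a minimal two-sphere realizing $\sigma_1$ must have index $\leq 1$ (hence exactly one, since a stable minimal two-sphere in $S^3$ would have to be… — here I would instead invoke the uniqueness statement used in Proposition \ref{zoll.properties}: by \cite{galvez-mira} a minimal sphere of index at most one in $(S^3,g)$ belongs to the Zoll family), so the min-max family consists of a single $\Sigma_\sigma$ with multiplicity one, giving $\sigma_1(S^3,g)=w$. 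I need to rule out the alternative that $\sigma_1$ is realized by several disjoint stable spheres each with higher multiplicity: the index bound $\sum{\rm index}\leq 1$ together with the fact (from \cite{galvez-mira}) that every minimal two-sphere of low index is in the Zoll family, and the fact that Zoll-family spheres have index exactly one (Proposition \ref{zoll.properties}), forces $q_1=1$, index one, and multiplicity one. Hence $\sigma_1(S^3,g)\geq w$; combined with $\sigma_1\leq\sigma_4\leq w$ this pins down all four widths to be $w$.

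The main obstacle I anticipate is step (2), specifically establishing the sharp lower bound $\sigma_1(S^3,g)\geq w$: I must be careful that the min-max minimal surface realizing $\sigma_1$ is genuinely a single sphere in the Zoll family rather than, say, a lower-area stable minimal sphere outside the family (which would force $\sigma_1<w$) or a configuration of several disjoint spheres whose areas sum to something $<w$. Ruling this out requires that every closed embedded minimal two-sphere in $(S^3,g)$ has area at least $w$ — equivalently that the Zoll family spheres are area-minimizing among minimal spheres — which should follow from the structure established in Proposition \ref{zoll.properties} (every minimal sphere of index $\leq 1$ is in the family, and any minimal sphere of index $\geq 2$ would contribute at least two to the index sum, which is impossible). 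The clean way to organize this is: take the min-max minimal surface $\Sigma=\sum m_i\tilde\Sigma_i$ for $\sigma_1$ with $\sum{\rm index}(\tilde\Sigma_i)\leq 1$; at least one component is unstable (else $\Sigma$ would be stable, contradicting that it arises from a nontrivial one-sweepout by a pull-tight/mountain-pass argument), so exactly one component $\tilde\Sigma_1$ has index one and the rest index zero; by \cite{galvez-mira} every component is in the Zoll family; but distinct Zoll spheres intersect (Proposition \ref{zoll.properties}), so there is only one component; it has multiplicity one by Wang-Zhou; hence $\sigma_1(S^3,g)={\rm area}(\tilde\Sigma_1)=w$.
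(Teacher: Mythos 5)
Your step~(2) is essentially the paper's entire argument, but your step~(1) contains a genuine error that breaks the chain of inequalities as organized.

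The error: you claim "then $\lambda^4\neq 0$ in $H^4(\mathbb{RP}^3,\mathbb{Z}_2)$ shows $\Psi$ is a smooth $4$-sweepout." This is false. $\mathbb{RP}^3$ is a closed $3$-manifold, so $H^4(\mathbb{RP}^3,\mathbb{Z}_2)=0$ and $\lambda^4=0$ there. A map defined on $\mathbb{RP}^3$ whose pullback of $\overline{\lambda}$ generates $H^1$ is at most a $3$-sweepout. So your step~(1) only yields $\sigma_3\leq w$, and the estimate $\sigma_4\leq w$ that you rely on to close the chain is unproved. (Compare the $4$-sweepout $\Psi_{\mathcal{S}}$ in the preliminaries: its domain is $\mathbb{RP}^4$, not $\mathbb{RP}^3$, and it uses the affine spheres together with the trivial cycle $\ast$.) To get a $4$-sweepout out of a Zoll family one would need to extend the $\mathbb{RP}^3$-family by a further parameter shrinking each sphere to a point, à la the curve-shortening extension in Proposition~\ref{prop.zollsurface_w1=w3}; you don't do this.

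Fortunately, the upper bound construction is entirely unnecessary. The min-max identity you quote from the preliminaries holds for each $k\in\{1,\dots,4\}$:
$$
\sigma_k(S^3,g)=\sum_{i=1}^{q_k}m_i^{(k)}\,\mathrm{area}(\tilde\Sigma_i^{(k)}),\qquad \sum_i\mathrm{index}(\tilde\Sigma_i^{(k)})\leq k,
$$
with the $\tilde\Sigma_i^{(k)}$ disjoint embedded minimal spheres. Your step~(2) reasoning applies verbatim to every $k$: by the Gálvez--Mira uniqueness theorem each $\tilde\Sigma_i^{(k)}$ lies in the Zoll family and hence has area $w$; by Proposition~\ref{zoll.properties} any two distinct members of the Zoll family intersect, forcing $q_k=1$, and the unique sphere has index one (so it is unstable) and multiplicity one by Wang--Zhou. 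This gives $\sigma_k=w$ directly for each $k$, with no upper bound sweepout needed. This is precisely the paper's proof. One small further caveat on your step~(2): the parenthetical claim that "else $\Sigma$ would be stable, contradicting that it arises from a nontrivial one-sweepout by a pull-tight/mountain-pass argument" is not a valid general principle (min-max critical points can in principle be stable); but it is also redundant, since Gálvez--Mira plus Proposition~\ref{zoll.properties} already rule out any stable component.
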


\begin{proof}
Suppose that $\{\Sigma_\sigma\}_{\sigma\in \mathbb{RP}^{3}}$ is a Zoll family of minimal two-spheres for $(S^3,g)$, and{   recall $w$ is the area of $\Sigma_\sigma$, which is independent of $\sigma$.}

We know that  there exist a disjoint family $\{\tilde{\Sigma}_1^{(k)}, \dots, \tilde{\Sigma}_{q_k}^{(k)}\}$ of smooth, embedded $g$-minimal two-spheres and $\{m_1^{(k)}, \dots, m_{q_k}^{(k)}\}\subset \mathbb{N}$ such that
$$
\sigma_k(S^3,g)=\sum_{i=1}^{q_k}m^{(k)}_{i}{\rm area}(\tilde{\Sigma}^{(k)}_{i},g), \quad k=1,\ldots,4.
$$
The uniqueness result of Galvez-Mira \cite{galvez-mira} implies that each minimal two-sphere $\tilde{\Sigma}^{(k)}_{i}$ is an element of the Zoll family, and hence has area equal to $w$. {From Proposition \ref{zoll.properties} we have that $q_k=1$  and that  $\tilde{\Sigma}^{(k)}_{1}$ is unstable. Thus it follows from Wang-Zhou \cite{wang-zhou}  that $m^{(k)}_{1}=1$ and so $\sigma_k(S^3,g)=w$ for all $ k=1,\ldots,4$ as we wanted to prove.}
\end{proof}

\section{Rigidity for the spherical area widths}

In this section we will prove the following rigidity result.

{\begin{thm}\label{zoll.characterization.2}
{Let $g$ be a smooth Riemannian metric on $S^3$ so that 
$$
\sigma_1(S^3,g)=\sigma_2(S^3,g)=\sigma_3(S^3,g)=\sigma_4(S^3,g).
$$
Then  $g$ is a surface Zoll metric.}
\end{thm}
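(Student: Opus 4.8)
The plan is to run a parallel argument to the surface case (Theorem \ref{rigidity.projective.plane2}), but in the technically simpler setting of embedded minimal two-spheres in $S^3$. Write $w$ for the common value $\sigma_1=\sigma_2=\sigma_3=\sigma_4$. First I would fix an optimal sequence $\{\Phi_i\}\subset\mathcal{P}_4'$ of smooth four-sweepouts by spheres realizing $\sigma_4$, pulled tight so that every varifold in the critical set ${\bf C}(\{\Phi_i\})$ is stationary. By the Simon--Smith min-max machinery together with the index bounds of \cite{marques-neves-index}, Sharp's compactness \cite{sharp}, and Wang--Zhou \cite{wang-zhou}, every $V\in{\bf C}(\{\Phi_i\})$ is the varifold of a disjoint union of smooth embedded minimal two-spheres, each with multiplicity one if unstable, with total index at most $4$ and total area exactly $w=\sigma_1$. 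The equality $\sigma_1=\sigma_4$ should be used, exactly as the equalities $\omega_1=\omega_2=\omega_3$ were used in Section 4, to progressively ``lower'' the sweepout: one shows that the critical set of an optimal four-sweepout cannot support a cohomologically nontrivial restriction to the sublevel set where the varifold is far from the $\sigma_1$-critical minimal spheres, which forces the $\overline\lambda$-cup-power to survive on the near-critical locus. Since $\sigma_1 = w$ and a single embedded minimal sphere of area $w$ has total index at most one, this drives the conclusion that \emph{every} $V\in{\bf C}(\{\Phi_i\})$ is (the multiplicity-one varifold of) a single embedded minimal two-sphere of area $w$ with Morse index at most one, and in fact index exactly one — index zero is excluded because a stable minimal sphere would make $\sigma_1$ strictly smaller via a sweepout through a foliation of a neighborhood (the analogue of Proposition \ref{no.disjoint.geodesic}: there is no stable minimal sphere, and no minimal sphere disjoint from the support of a critical $V$).

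The next block of steps is the ``no disjoint minimal sphere'' and ``connectedness'' analysis, which here is cleaner than in the surface case because there are no vertices, edges, or figure-eight degenerations to rule out — Proposition \ref{intersects.must.be.contained} plays the role of Proposition \ref{intersects.must.be.contained.geodesic}, and mean curvature flow of spheres (in place of Grayson's curve shortening) provides the sweepouts that collapse convex regions. Using these, plus $\zeta$-chain arguments in the compact set of stationary integral varifolds with the relevant area/index bound and the openness statement of Proposition \ref{approximate.minimal}, one shows: (a) no $V\in{\bf C}(\{\Phi_i\})$ is a stable minimal sphere; (b) the set of $V\in{\bf C}(\{\Phi_i\})$ that are single index-one embedded minimal spheres of area $w$ is open and closed in ${\bf C}(\{\Phi_i\})$, hence all of it; and (c), crucially, since each $\Phi_i$ is in particular a two-sweepout (even a four-sweepout), for any two distinct points $p,q\in S^3$ and disjoint small balls around them, some $\Phi_i(x)$ bisects both balls, and a subsequential varifold limit is an index-one minimal sphere passing through $p$ and $q$. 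This is the analogue of Case (i) in Theorem \ref{rigidity.projective.plane2}: letting $q\to p$ along a direction $v$, and using that nearby index-one minimal spheres are graphs over each other (Proposition \ref{approximate.minimal}, via the three-dimensional kernel forced by the four-parameter family — here I would invoke the Invariance of Domain argument as in Proposition \ref{zoll.properties}), we get that for every $p\in S^3$ and every plane $\pi\subset T_pS^3$ there is a minimal sphere in ${\bf C}(\{\Phi_i\})$ tangent to $\pi$ at $p$.

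To upgrade this to a genuine Zoll family parametrized by $\mathbb{RP}^3$ with the \emph{uniqueness} clause, I would argue as follows. The set $\mathcal{M}$ of embedded minimal two-spheres of area $w$ and index one is, by Proposition \ref{approximate.minimal} and Sharp compactness, a compact smooth manifold, and the map sending such a sphere to its oriented tangent plane at each of its points descends to a map to the (sphere bundle of) Grassmannian $2$-planes; the previous paragraph shows this assignment is surjective onto the space of (point, plane) pairs, i.e. onto the space $\mathcal{G}$ of all tangent $2$-planes in $TS^3$, which is a $5$-manifold. A dimension/degree count — each sphere in $\mathcal{M}$ contributes a $2$-dimensional family of (point,plane) pairs and $\dim\mathcal{M}\le 3$ by Cheng's bound \cite{cheng} (an index-one minimal two-sphere has nullity at most three, compare Proposition \ref{zoll.properties}) — forces $\dim\mathcal{M}=3$, the family to be embedded, and the tangency to be \emph{unique}; standard degree theory (the evaluation map $\mathcal{M}\times S^2\to\mathcal{G}$ between closed $5$-manifolds is a surjective local diffeomorphism, hence a covering, and one checks it has degree one, e.g. by comparison with the round case or by a transversality argument using index one). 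Finally, the connectedness and the structure of $\mathcal{M}$ identify the parameter space with $\mathbb{RP}^3$ exactly as in Hatcher's description of the space of equators. The resulting $\{\Sigma_\sigma\}_{\sigma\in\mathbb{RP}^3}$ is then a Zoll family of minimal two-spheres, so $g$ is a surface Zoll metric.

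The main obstacle I expect is precisely the last step: passing from ``for every $(p,\pi)$ there exists at least one tangent minimal sphere in the critical set'' to ``there exists exactly one, and they fit together into a smooth $\mathbb{RP}^3$-family.'' The existence part is a soft consequence of the min-max/bisection argument, but uniqueness and the global parametrization require controlling the manifold $\mathcal{M}$ of index-one minimal spheres — its dimension (Cheng), its compactness (Sharp), and a degree-theoretic argument for the evaluation map. A secondary subtlety is the bookkeeping in the ``width-lowering'' reduction from $\sigma_4$ down to the $\sigma_1$-critical locus: unlike Section 4 where the foliations of flat tori/Klein bottles had to be excised by hand (Proposition \ref{modification.prop}), here one must check that no exotic configuration — e.g. a positive-dimensional family of \emph{stable} minimal spheres, or a critical varifold with a multiplicity-two stable sphere — obstructs the cohomological cascade; Wang--Zhou forbids multiplicity on unstable components and the ``no disjoint minimal sphere'' principle forbids stable ones, but assembling these into the clean statement ``${\bf C}(\{\Phi_i\})$ consists only of single index-one spheres of area $w$'' is where the care is needed.
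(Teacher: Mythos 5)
Your high-level outline tracks the paper closely through the first several reductions: use $\sigma_1=\cdots=\sigma_4$ to drop from a four-sweepout to a three-sweepout concentrating near the $\sigma_1$-critical minimal spheres, then run the no-disjoint-minimal-surface, connectedness, multiplicity-one, and index-one propositions (the paper's Propositions \ref{no.disjoint}, \ref{connected.support}, and \ref{limit.connected.multiplicity.one}). The place where your argument has a genuine gap is the final passage from ``${\bf C}(\{\Phi_i\})$ consists of single index-one minimal spheres of area $w$'' to ``these form a Zoll family.''

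You write that ``the set $\mathcal{M}$ of embedded minimal two-spheres of area $w$ and index one is, by Proposition \ref{approximate.minimal} and Sharp compactness, a compact smooth manifold'' with $\dim\mathcal{M}\le 3$ by Cheng, and then run an evaluation-map/covering/degree argument on $\mathcal{M}\times S^2\to Gr_2(S^3)$. This is not justified, and it is exactly the point where the paper has to work hardest. Proposition \ref{approximate.minimal} says that nearby minimal surfaces are graphs $\varphi_\Sigma(z)$ for \emph{some} $z\in\mathcal{W}\subset\mathrm{Ker}\,L_\Sigma$, but it does \emph{not} say that every $z$ yields a minimal surface, nor that the set of such $z$ is open. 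An index-one minimal sphere of area $w$ can have nullity $1$ or $2$, or can have nullity $3$ while being an isolated (or lower-dimensional stratum) point in the moduli space because the kernel fails to integrate; your appeal to the Invariance of Domain argument from Proposition \ref{zoll.properties} to conclude nullity $=3$ is circular, since that argument already presumes a three-parameter family $\{\Sigma_\sigma\}_{\sigma\in\mathbb{RP}^3}$. Proposition \ref{limit.connected.multiplicity.one} only gives nullity between $1$ and $3$. So $\mathcal{M}$ need not be a manifold, the evaluation map may not be defined on a closed $5$-manifold, and the covering/degree argument does not start. The ``exotic configurations'' you flag (families of stable spheres, multiplicity-two stable components) are actually the easy ones to eliminate; the hard obstruction you miss is index-one spheres that are \emph{not} part of a genuine $3$-parameter family of minimal surfaces.

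The paper's resolution of this is the content of Section 6, Part II: it splits the critical set $\mathcal{V}_1$ into $\mathcal{G}$ (spheres whose $\varphi_\Sigma(\overline B^3)$ consists entirely of minimal surfaces) and the ``bad'' set $\mathcal{B}=\mathcal{V}_1\setminus\mathcal{G}$, and uses a cohomological excision (Propositions \ref{ruling.out.1.0} and \ref{ruling.out.2}, via a Mayer--Vietoris argument on the space $\mathcal{Z}$) to replace the optimal three-sweepouts by ones whose critical set avoids $\mathcal{B}$. Only then does the image land in an honest $3$-manifold $Z\subset\mathcal{G}$, at which point the surjectivity of $\overline\lambda^3$ forces $Z$ to be closed, and the tangent-plane map $T:\mathcal{P}Z'\to Gr_2(S^3)$ (which is well-defined and locally injective thanks to Cheng) becomes a covering of a simply connected space, hence a homeomorphism. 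Your bisection step (borrowed from Case (i) of Theorem \ref{rigidity.projective.plane2} and from Proposition \ref{plane.constant} in Section 7) would give \emph{existence} of a sphere tangent to each $(p,\pi)$, but not the manifold structure or the uniqueness; that is precisely what the excision provides and what is missing from your proposal.
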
}

Theorem B follows  by combining Theorems \ref{zoll.characterization.1} and  \ref{zoll.characterization.2}

We start by deriving several results which will be used in the proof.
\subsection{Part I} Let $\{\Psi_i\}_{i\in \mathbb{N}}\subset \mathcal{P}_4'$ be such that
$$
\sup_{x\in X_i}{\bf M}(\Psi_i(x))\rightarrow \sigma_4(S^3,g),
$$  
where $X_i={\rm dmn}(\Psi_i)$. By the argument of Proposition 3.2 of \cite{li-generic} with $p=4$, we can suppose that for some $k\in\mathbb{N}$,  $X_i$ is a four-dimensional cubical complex embedded in $I(9,k)${, the $9$-dimensional cubical complex on the $9$-dimensional unit cube $I^9$ whose $p$-cells all have support on a $p$-dimensional cube of volume $3^{-pk}$.}

Let $\mathcal{V}$ be the set of integral varifolds of the form $\sum_{j=1}^q m_j|\Sigma_j|$ for some $\{m_1,\dots,m_q\}\subset \mathbb{N}$ and $\{\Sigma_1,\dots,\Sigma_q\}$ disjoint, smooth, embedded, $g$-minimal spheres satisfying $\sum_j m_j {\rm area}(\Sigma_j,g)=\sigma$ and $\sum_j{\rm index}(\Sigma_j)\leq 1$.
For each $\delta>0$, we can suppose by refining $X_i$ that for each $j$-dimensional cube $t\in X_i$ we have
$$
{\bf F}(\Psi_i(x),\Psi_i(x'))< \delta/5
$$
for every $x,x'\in t$.
Let $Z_i$ be the union of cubes $t\in X_i$ with the property that  ${\bf F}(|\Psi_i(z)|,\mathcal{V})\leq 2\delta/5$ for every $z\in t$, and $Y_i$ to be  the union of cubes
$t\in X_i$ such that  ${\bf F}(|\Psi_i(y)|,\mathcal{V})\geq \delta/5$ for every $y\in t$. Hence $Z_i,Y_i$ are subcomplexes of $X_i$ with
$X_i=Z_i \cup Y_i$. 

Suppose that we can find a sequence $\{k\}\subset \{i\}$ such that $({\Psi_k}_{|Y_k})^*(\overline{\lambda}) \neq 0 \in H^1(Y_k,\mathbb{Z}_2)$.
Since $\sigma_1(S^3,g)=\sigma$ and $\limsup_{k\rightarrow \infty} \sup_{y\in Y_k}{\bf M}(\Psi_k(y))\leq \sigma$, we would have that $\{{\Psi_k}_{|Y_k}\}\subset \mathcal{P}_1'$ is an optimal sequence for $\sigma_1$.  Deformation Theorem A of \cite{li-index} (see also \cite{marques-neves-index}) holds in the Simon-Smith setting (since all modifications are by isotopies). Applied to $\{{\Psi_k}_{|Y_k}\}$ it produces a sequence $\{{\tilde{\Psi}_k}\}\subset \mathcal{P}_1'$ which is optimal 
for $\sigma_1$ and such that no smooth element of the critical set ${\bf C}(\{{\tilde{\Psi}_k}\})$ has support with Morse index greater than or equal to two. The proof of the deformation theorem (Theorem 10 of \cite{li-index} ) allows us to choose $\{{\tilde{\Psi}_k}\}$ so that ${\bf F}(|\tilde{\Psi}_k(y)|,\mathcal{V})\geq \delta/8$ for every $y\in Y_k$. This gives a contradiction since by the Simon-Smith genus bounds, the critical set ${\bf C}(\{{\tilde{\Psi}_k}\})$ should contain a smooth element (with integer multiplicities) whose support is a union of $g$-minimal spheres (which would have to be in $\mathcal{V}$). Therefore for sufficiently large $i$
one has $({\Psi_i}_{|Y_i})^*(\overline{\lambda}) = 0$. By the properties of the cup product, it follows that for such $i$
$$
({\Psi_i}_{|Z_i})^*(\overline{\lambda}^3)  \neq 0.
$$

We have proved that  there exists a sequence $\{\Phi_i\}_{i\in \mathbb{N}} \subset \mathcal{P}_3'$
such that
\begin{equation}\label{sequence.optimal}
\lim_{i\rightarrow \infty} \sup_{x\in X_i} {\bf F}(|\Phi_i(x)|,\mathcal{V})=0,
\end{equation}
where $X_i={\rm dmn}(\Phi_i)$. We can assume the $X_i$ are connected and three-dimensional, {because the restriction to the  $3$-dimensional complex is still a $3$-sweepout.} Notice that this sequence is necessarily optimal for $\sigma_3$ because $\sigma_3(S^3,g)=\sigma$. The critical set ${\bf C}(\{\Phi_i\})$ coincides with the image set ${\bf \Lambda}(\{\Phi_i\})$ and is contained in $\mathcal{V}$.

The following Proposition is analogous to Proposition \ref{no.disjoint.geodesic}.
\begin{prop}\label{no.disjoint}
Let $V\in {\bf C}(\{\Phi_i\})\subset \mathcal{V}$. There is no closed, smooth, embedded, minimal surface $\Sigma$ disjoint from ${\rm support}(V)$. Any component $S$ of the support of $V$ is  part of a local foliation  so that the leaves on each of the sides  of $S$  have mean curvature vector pointing strictly away from  $S$.
\end{prop}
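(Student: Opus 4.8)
The proof parallels that of Proposition \ref{no.disjoint.geodesic}, but uses the three-dimensional minimal surface toolkit in place of the geodesic network one. First I would argue by contradiction: suppose $\Sigma$ is a closed, smooth, embedded minimal surface disjoint from ${\rm spt}(V)$. Since $\Sigma$ is a separating or at least locally two-sided hypersurface in $S^3$, it bounds a local foliation $\{\Sigma_t\}_{t\in(-2\delta,2\delta)}$ with $\Sigma_0=\Sigma$, where the leaves have mean curvature vector vanishing, pointing strictly away from $\Sigma$, or pointing strictly towards $\Sigma$ (using a local choice of unit normal to assign a sign). One can shrink $\delta$ so that $\overline{\bigcup_t\Sigma_t}$ is disjoint from ${\rm spt}(V)$.

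The heart of the argument is the same dichotomy. If for some $-\delta<t'<\tilde t<\delta$ the region $\Omega=M\setminus(\cup_{t\in[t',\tilde t]}\Sigma_t)$ has strictly mean-convex boundary, then by the maximum principle of White (Theorem 5 of \cite{white-maximum-principle}) there is $\eta>0$ such that every stationary integral $2$-varifold in $\Omega$ avoids $B_\eta(\partial\Omega)$; combining Lemma \ref{convergence.supports} with a $\zeta$-chain argument connecting $V$ to $|\Phi_j(x')|$ through elements of $\mathcal{V}$ (using connectedness of $X_j$ and \eqref{sequence.optimal}), one deduces that for large $j$ every $|\Phi_j(x')|$ is ${\bf F}$-close to an element of $\mathcal{V}$ supported in $\overline\Omega$, so that ${\bf M}(\Phi_j(x')\llcorner(\cup_{t\in(s',\tilde s)}\Sigma_t))$ is arbitrarily small; this contradicts the fact that $\Phi_j$ is a $1$-sweepout (hence contains a cycle bisecting a prescribed small ball), via the isoperimetric inequality. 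This rules out the strictly convex case and forces, after shrinking, that $\Sigma_t$ has mean curvature pointing towards $\Sigma$ for all $t$, or has mean curvature of one fixed sign. In either case the maximum principle (Theorem 4 of \cite{white-maximum-principle}) shows every stationary integral $2$-varifold supported in $\cup_t\Sigma_t$ is a sum of leaves that are minimal. Using Proposition \ref{intersects.must.be.contained} with $\lambda=2\sigma_4(S^3,g)$ (for the area-plus-index bound) in place of Proposition \ref{intersects.must.be.contained.geodesic}, one promotes this: any element of $\mathcal{V}$ whose support meets a small tube $B_{\tilde\eta}(\Sigma)$ and is disjoint from $\Sigma$ must be a sum of minimal leaves near $\Sigma$. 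One then defines $\mathcal{V}'\subset\mathcal{V}$ to be the set of $W$ such that every component of ${\rm spt}(W)$ meeting $\cup_{t\in[-\tilde\delta,\tilde\delta]}\Sigma_t$ is a leaf, shows $\mathcal{V}'$ is open and closed in $\mathcal{V}$ exactly as in Proposition \ref{no.disjoint.geodesic} (using Lemma \ref{convergence.supports} and the monotonicity bound on the number of components), and runs the $\zeta$-chain argument starting at $V\in\mathcal{V}'$ to conclude ${\bf C}(\{\Phi_i\})\subset\mathcal{V}'$. If some interval $[a,b]\subset(-\tilde\delta,\tilde\delta)$ contained no minimal leaf, then ${\rm spt}$ of every element of the critical set would avoid $\cup_{t\in[a,b]}\Sigma_t$, again contradicting the $1$-sweepout property of $\Phi_j$. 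Hence $\Sigma$ sits in a local foliation by minimal spheres; but then one may extend this foliation of minimal leaves through $S^3$ until it touches ${\rm spt}(V)$ (forcing a component of ${\rm spt}(V)$ to be a leaf, by the maximum principle), and using that each $\Phi_i$ is a $3$-sweepout — so for any $p\ne q$ there is $V'\in{\bf C}(\{\Phi_i\})$ with $p,q\in{\rm spt}(V')$ — and that $\sigma_1(S^3,g)=\sigma$, one derives a mass contradiction just as in the geodesic case. Finally, to handle the one-sided possibility for a component $S$ of ${\rm spt}(V)$, I would pass to the local double cover near $S$ (as in Propositions \ref{intersects.must.be.contained.geodesic} and \ref{intersects.must.be.contained}); since minimal two-spheres in $S^3$ are two-sided this subtlety is actually milder than in the surface case.

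The second assertion — that any component $S$ of ${\rm spt}(V)$ lies in a local foliation whose leaves on each side have mean curvature pointing strictly away from $S$ — follows by applying the same dichotomy to a local foliation $\{S_t\}$ of a neighborhood of $S$: the strictly-convex-region argument and the no-disjoint-minimal-surface conclusion already established rule out the leaves being minimal on any interval or having mean curvature pointing towards $S$, so the curvature must point strictly away.

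The main obstacle I expect is the $\zeta$-chain/continuity bookkeeping in the Simon-Smith setting: one must ensure that the intermediate varifolds chosen along the chain lie in $\mathcal{V}$ (so the area-and-index bound $\sum{\rm index}\le 1$ is available to invoke Sharp's compactness via Proposition \ref{intersects.must.be.contained}), and that the deformation-to-a-small-ball step is genuinely obstructed by the $1$-sweepout property. Unlike the surface case, where Allard-Almgren gives a clean network structure, here the regularity and the openness/closedness of $\mathcal{V}'$ rest on Sharp's compactness theorem plus White's maximum principle, so the compactness inputs must be marshalled carefully; everything else is a faithful transcription of the proof of Proposition \ref{no.disjoint.geodesic}.
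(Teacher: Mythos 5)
Your proposal faithfully tracks the paper's argument through the three-case dichotomy for a local foliation around $\Sigma$, the $\zeta$-chain/$\mathcal{V}_l$ open-and-closed argument using Lemma \ref{convergence.supports} and Proposition \ref{intersects.must.be.contained}, and the conclusion that $\Sigma$ sits in a local foliation by (stable, equal-area) minimal surfaces. That part is essentially the paper's proof. However, the ending diverges in a way that leaves a genuine gap.

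The paper's final contradiction is \emph{topological}, not a mass argument. After showing that $\Sigma$ lies in a foliation by minimal spheres, the paper first establishes an intermediate dichotomy for each component $S$ of ${\rm spt}(V)$: the local foliation $\{S_t\}_{t\in(-a,a)}$ near $S$ has either every leaf minimal, or every $S_t$ strictly mean-convex for $0<|t|<a$ (this uses compactness of stable minimal surfaces of fixed area together with an open/closed argument on the set $I$ of parameters giving minimal leaves). With this in hand, the foliation of minimal leaves emanating from $\Sigma$ hits a component $S$ of ${\rm spt}(V)$; since the leaves approaching $S$ are minimal, the dichotomy forces the ``all leaves minimal'' case at $S$, so the foliation extends past $S$. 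Iterating, the foliation extends indefinitely, contradicting compactness of $S^3$. You do not mention this intermediate dichotomy, but it is exactly what makes the extension past ${\rm spt}(V)$ possible and hence the compactness argument work.

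Your substitute ending --- ``a mass contradiction just as in the geodesic case'' --- does not transfer. In Proposition \ref{no.disjoint.geodesic}, the mass argument was necessary precisely because the foliation could close up globally (torus or Klein bottle), in which case there is no ``keep extending'' contradiction and one instead uses that ${\bf C}(\{\Phi_i\})$ consists of unions of leaves, then picks $p,q$ on two leaves. In $S^3$ that mechanism fails: the foliation starting at $\Sigma$ (which is disjoint from $V$) terminates at ${\rm spt}(V)$ and covers only a proper subregion of $S^3$, so the $\zeta$-chain argument on the compact set $\mathcal{V}_l$ (defined using the foliated tube around $\Sigma$) does not force every $V'\in{\bf C}(\{\Phi_i\})$ to be a union of leaves, and one cannot run the bisection argument globally with leaf sweepouts that do not exhaust $S^3$. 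The only way to make a mass argument cogent here would be to first extend the foliation through ${\rm spt}(V)$ --- at which point the paper's compactness contradiction is already available and is simpler. So you should drop the mass-argument ending, spell out the dichotomy for components $S$ of ${\rm spt}(V)$, and conclude by the compactness of $S^3$. The second assertion then follows, as you say, by combining the no-disjoint-minimal-surface statement with the dichotomy for $S$.
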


\begin{proof}
Suppose, by contradiction, that there exists a connected, closed, smooth, embedded, minimal surface $\Sigma$ disjoint from ${\rm support}(V)$. It is known that any closed, embedded, minimal hypersurface is part of a local foliation whose compact leaves have either vanishing, positive or negative mean curvature. Let $\{\Sigma_t\}_{t\in (-2\delta,2\delta)}$ be such a foliation around $\Sigma$ ($\Sigma_0=\Sigma$.)  We can suppose that $\overline{\bigcup \Sigma_t}$ is disjoint from ${\rm support}(V)$.

The first case we consider is when, for some $-\delta<t'<\tilde{t}<\delta$, the complement $\Omega$ of $\big(\cup_{t\in [t',\tilde{t}]} \Sigma_t\big)$ has strictly mean-convex boundary. By \cite{white-maximum-principle}, there exists $\eta>0$ such that every stationary $2$-varifold of $\Omega$ is disjoint from $B_\eta(\partial \Omega)$.
Consider $K=\overline{\Omega}\setminus B_\eta(\partial\Omega)$ and let $\mathcal{K}$ be the set of elements in $\mathcal{V}$ with support contained in $K$. Let $\zeta>0$ be as in Lemma \ref{convergence.supports} for $K,\mathcal{K}$,$\eta$.  {We argue that 
\begin{equation}\label{inclusion.K}
B^{\bf F}_{\zeta}(\mathcal K)\cap \mathcal V\subset \mathcal K.
\end{equation}}
If $W$ is an element of $\mathcal{V}$ 
with support contained in $\overline{\Omega}$, then ${\rm support}(W)\subset K$ ($W\in \mathcal{K}$).
Therefore any element of $\mathcal{V}$ that is $\zeta$-close in the ${\bf F}$-metric to $W$  must have support contained
in $B_\eta(K)\subset \overline{\Omega}$.  Hence its support must be contained in $K$. 

Next we prove that for all  $j$ large enough
 \begin{equation}\label{inclusion.image}
 {\bf F}(|\Phi_j(x')|,\mathcal K)\leq \zeta/5\quad\mbox{for all }x'\in X_j 
 \end{equation}
 Because
 $V \in {\bf C}(\{\Phi_i\})$, there exist sequences $\{j\} \subset \{i\}$ and $\{x_j\in X_j\}$ with $|\Phi_j(x_j)|$ converging to $V$ in the
 ${\bf F}$-metric. For sufficiently large $j$ we have that ${\bf F}(|\Phi_j(x)|,\mathcal{V})\leq \zeta/5$ for every $x\in X_j$. Given
 $x'\in X_j$, the connectedness of $X_j$ implies that we can find $\{y_1,\dots, y_{q_j}\}\subset X_j$ with $y_1=x_j$, $y_{q_j}=x'$ and
 ${\bf F}(\Phi_j(y_k),\Phi_j(y_{k+1}))\leq \zeta/5$ for every $1\leq k\leq q_j-1$. We choose $V_{j,k}\in \mathcal{V}$ so that
 ${\bf F}(|\Phi_j(y_k)|,V_{j,k})\leq \zeta/5$ and $V_{j,1}=V$. Hence $\{V_{j,1}, \dots, V_{j,q_j}\}$ is a $\zeta$-chain in $\mathcal{V}$ starting at $V$. {We conclude from \eqref{inclusion.K} that $V_{j,k}\in\mathcal K$ for every $1\leq k\leq q_j$. This implies \eqref{inclusion.image}.}
 
{It follows from  \eqref{inclusion.image} that for every such $j$ and every  $x'\in X_j$ the varifold} $|\Phi_j(x')|$ is $\zeta/5$-close in the ${\bf F}$-metric to an element of $\mathcal{V}$ with support contained in $\overline{\Omega}$.  For any $\gamma>0$ and $t'<s'<\tilde{s}<\tilde{t}$, we can choose $\zeta$ sufficiently small such that 
 $$
 {\bf M}(\Phi_j(x') \llcorner \big(\cup_{t\in (s',\tilde{s})} \Sigma_t\big))<\gamma
 $$
 for every $x'\in X_j$. If $\gamma$ is sufficiently small, this contradicts the property that $\Phi_j$ is a one-sweepout (since it is a three-sweepout). This is because of the isoperimetric inequality and the fact that any one-sweepout contains a surface that divides a prescribed region in two parts of equal volume.  This proves that either every $\Sigma_t$,  $t\in (-\delta,\delta)$, has mean curvature vector pointing towards $\Sigma$ or there exist $0<\tilde{\delta}<\delta$ such that all $\Sigma_t$, $t\in (-\tilde{\delta},\tilde{\delta})$, have mean curvature of the same sign.  
 
	The second case we consider is when, for all $t\in (-\delta,\delta)$, the surface $\Sigma_t$ has mean curvature vector pointing towards $\Sigma$. By the maximum principle, any closed minimal surface $L \subset \big(\cup_{t\in (-\delta,\delta)} \Sigma_t\big)$ must be one of the leaves. Let $\eta>0$ be such that 
$$B_\eta(\Sigma)\subset \big(\cup_{t\in (-\delta,\delta)} \Sigma_t\big),
$$
$\lambda=\sigma+1$ and $\tilde{\eta}$ be as in Proposition \ref{intersects.must.be.contained}. Hence any connected minimal surface $\tilde{\Sigma}$, disjoint from $\Sigma$, with ${\rm area}(\tilde{\Sigma})+{\rm index}(\tilde{\Sigma})\leq \lambda$, that intersects $B_{\tilde{\eta}}(\Sigma)$ must be contained in $B_\eta(\Sigma)$ and hence coincide with one of the leaves $\Sigma_t$.

Consider $0<\tilde{\delta}<\delta$ such that $\big(\cup_{t\in [-\tilde{\delta},\tilde{\delta}]} \Sigma_t\big)\subset B_{\tilde{\eta}}(\Sigma)$.
Let ${\mathcal{V}_l}\subset \mathcal{V}$ be the set of varifolds $W\in \mathcal{V}$ such that every connected component of ${\rm support}(W)$ that intersects 
$\big(\cup_{t\in [-\tilde{\delta},\tilde{\delta}]} \Sigma_t\big)$ is a leaf.

We claim that $\mathcal{V}_l$ is {an open and closed subset of $\mathcal V$.} Let $W\in \mathcal{V}_l$, and consider a sequence $\{W_j\}\subset \mathcal{V}$ that converges to $W$. Each component of $W$ is either disjoint from  $\big(\cup_{t\in [-\tilde{\delta},\tilde{\delta}]} \Sigma_t\big)$ or coincides with a leaf in $\big(\cup_{t\in [-\tilde{\delta},\tilde{\delta}]} \Sigma_t\big)$. By Lemma \ref{convergence.supports},  there exists $\alpha>0$ such that, for sufficiently large $j$, ${\rm support}(W_j)$ is disjoint from 
$$\big(\cup_{t\in [-\tilde{\delta}-2\alpha,-\tilde{\delta}-\alpha]} \Sigma_t\big) \cup \big(\cup_{t\in [\tilde{\delta}+\alpha,\tilde{\delta}+2\alpha]} \Sigma_t\big),
$$
and $-\delta<-\tilde{\delta}-2\alpha< \tilde{\delta}+2\alpha<\delta$.
Therefore each connected component of ${\rm support}(W_j)$ that intersects $\big(\cup_{t\in [-\tilde{\delta},\tilde{\delta}]} \Sigma_t\big)$ must be contained in
$\big(\cup_{t\in [-\tilde{\delta}-\alpha,\tilde{\delta}+\alpha]} \Sigma_t\big)$ and hence be a leaf. This proves $\mathcal{V}_l$ is open.  Now suppose $\{W_j\}\subset \mathcal{V}_l$ is a sequence converging to $W\in \mathcal{V}$. By the monotonicity formula for minimal surfaces, there is a bound on the number of connected components of $W_j$ that is uniform in $j$. Let $\beta>0$ be such that $\big(\cup_{t\in [-\tilde{\delta}-\beta,\tilde{\delta}+\beta]} \Sigma_t\big)\subset B_{\tilde{\eta}}(\Sigma)$.   If a connected component of $W_j$ is disjoint from $\big(\cup_{t\in [-\tilde{\delta},\tilde{\delta}]} \Sigma_t\big)$ but intersects $\big(\cup_{t\in [-\tilde{\delta}-\beta,\tilde{\delta}+\beta]} \Sigma_t\big)$, it must be a leaf (since it is disjoint from $\Sigma$). Each connected component of $W_j$ is either disjoint from $\big(\cup_{t\in [-\tilde{\delta}-\beta,\tilde{\delta}+\beta]} \Sigma_t\big)$ or is a leaf $\Sigma_t$.
This implies each connected component of $W$ that intersects $\big(\cup_{t\in [-\tilde{\delta},\tilde{\delta}]} \Sigma_t\big)$ is a leaf. Hence $W\in \mathcal{V}_l$. This proves that $\mathcal{V}_l$ is closed. 

Since $\mathcal{V}$ is compact, $\mathcal{V}_l$ is compact. Therefore there exists $\zeta>0$ such that 
\begin{equation}\label{inclusion.K2}
B^{\bf F}_{\zeta}(\mathcal V_l)\cap \mathcal V\subset \mathcal V_l.
\end{equation}
Suppose that for some interval $[a,b]\subset (-\tilde{\delta},\tilde{\delta})$, one has that $\Sigma_t$ is not a minimal surface for any $t\in [a,b]$.
{It follows that the support of any element in $\mathcal V_l$ is disjoint from $\big(\cup_{t\in [a,b]} \Sigma_t\big)$. From \eqref{inclusion.K2} it follows that for every $\zeta$-chain $\{V_1,\ldots,V_q\}$ in $\mathcal{V}$ with $V_1=V\in  \mathcal V_l$ we have $\{V_1,\ldots,V_q\}\subset \mathcal V_l$ and thus all elements in the  $\zeta$-chain have support disjoint from $\big(\cup_{t\in [a,b]} \Sigma_t\big)$. } As before this contradicts the fact that $\Phi_j$ is a one-sweepout. Therefore we have proved that {if the second case holds then }$\Sigma_t$ is a minimal surface for any $t\in [-\tilde{\delta},\tilde{\delta}]$.

{The third case to consider is when all } $\Sigma_t$,  $t\in (-\delta,\delta)$, have mean curvature of the same sign. Using again  the maximum principle we see that any closed minimal surface $\Sigma \subset \big(\cup_{t\in (-\delta,\delta)} \Sigma_t\big)$ must be one of the leaves. {We can proceed as in the second case to} get a contradiction if there is no foliation of $\Sigma$ by minimal surfaces.

{The conclusion is that if $\Sigma$ is a minimal surface disjoint from $V$ then $\Sigma$ is part of a local foliation by minimal surfaces. Each leaf is  stable since it admits a positive Jacobi field  and they all have the same area.}

 Notice that {the arguments used in the second and third case} can also be applied to a component
$S$ of the support of $V$ instead of $\Sigma$. They prove  that  $S$  is part of a local foliation $\{S_t\}_{t\in(-a,a)}$ such that on each of its sides the mean curvature vector of any leaf points away from $S$.  {We now show that either every leaf is minimal or every $S_t$ is strictly mean convex for all $0<|t|<a$.  

Let $I\subset(-a,a)$ the subset of those $t$ for which $S_t$ is minimal. If $t\in I$ and $t\neq 0$, then $S_{t}$ is a minimal surface disjoint from $V$ which admits a local foliation as in the third case considered above. Therefore $S_{t}$ admits a local foliation by minimal surfaces and the maximum principle implies that this local foliation has to coincide with $\{S_{t'}\}_{t'\in(t-r,t+r)}$ for some small $r$. So $I-\{0\}$ is open.  The space of stable, connected, minimal surfaces with a given area is compact by the curvature estimates of Schoen \cite{schoen} (also \cite{schoen-simon}, \cite{sharp}) and so $I$ is  closed. Therefore $I$ is either $\{0\}$, $(-a,0]$, $[0,a)$, or $(-a,a)$. If $I=(-a,0]$ or $I=[0,a)$ then $S$ would fall into the third case considered above and we would get a contradiction. Thus either  $I=\{0\}$ or $I=(-a,a)$, which is what we wanted to show.
}

{By showing that every minimal surface must intersect $V$, we obtain that on each side of a connected component $S$ of  ${\rm support}(V)$ every leaf has mean curvature vector pointing strictly away from $S$ and this completes the proof of the Proposition. 

The work we have done shows the existence of } a foliation $\{\Sigma_t\}_{t\in [-s,s]}$, $\Sigma_0=\Sigma$, around $\Sigma$, by stable minimal surfaces.  Notice that $\Sigma$ separates $S^3$ into two connected components.  {From the fact that the space of stable, connected, minimal surfaces with a given area is compact we see that }
the foliation can be extended until it touches ${\rm support}(V)$, in which case the extreme leaves coincide with connected components of 
${\rm support}(V)$ by the maximum principle, or until it converges to a  closed one-sided minimal surface with multiplicity two.  Since there are no one-sided closed embedded surfaces in $S^3$, we have constructed a foliation $\{\Sigma_t\}_{t\in [-s,s]}$, $\Sigma_0=\Sigma$, by minimal surfaces such that $\Sigma_{-s}$ and $\Sigma_s$ are connected components of ${\rm support}(V)$. {These connected components of ${\rm support}(V)$ are the endpoints of a foliation by minimal surfaces and so the foliation can be extended} past any component of the support of $V$, which is a contradiction since the ambient space is compact.  This proves that there is no closed minimal surface disjoint from the support of $V$.
\end{proof}

\begin{prop}\label{connected.support}
Every element of the critical set ${\bf C}(\{\Phi_i\})\subset \mathcal{V}$ has connected support. 
\end{prop}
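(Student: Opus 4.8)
The plan is to argue by contradiction, following the strategy of Proposition~\ref{connected.support.geodesic}; the essential new point is that curve shortening flow must be replaced by an area-minimization argument in a mean-convex region.

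First I would set up the topology. Suppose some $V\in {\bf C}(\{\Phi_i\})$ has disconnected support. Since $V\in\mathcal{V}$, we may write ${\rm spt}(V)=\Sigma_1\cup\cdots\cup\Sigma_q$, a disjoint union of smooth, embedded, minimal two-spheres of $(S^3,g)$ with $q\geq 2$; in particular ${\rm spt}(V)$ has no singular points. Consider the dual graph of the decomposition of $S^3$ induced by ${\rm spt}(V)$, whose vertices are the connected components of $S^3\setminus {\rm spt}(V)$ and whose edges are the spheres $\Sigma_j$, each $\Sigma_j$ joining the two components lying on its two sides. This graph is connected because $S^3$ is connected, and it has exactly $q$ edges; hence, if every component of $S^3\setminus{\rm spt}(V)$ had exactly one boundary sphere the graph would be a disjoint union of $q$ single edges, forcing $q=1$. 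Therefore some connected component $\sigma$ of $S^3\setminus{\rm spt}(V)$ has $\partial\sigma$ equal to the disjoint union of at least two of the spheres $\Sigma_j$; relabel so that $\Sigma_1,\Sigma_2\subset\partial\sigma$, and note $\sigma\cap{\rm spt}(V)=\emptyset$.

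Next I would produce a strictly mean-convex region inside $\sigma$ with disconnected boundary. For each boundary sphere $\Sigma_j$ of $\sigma$, Proposition~\ref{no.disjoint} supplies a local foliation whose leaves on the $\sigma$-side of $\Sigma_j$ have mean curvature vector pointing strictly away from $\Sigma_j$, hence into the interior of $\sigma$. Picking one such leaf $L_j$ for each such $j$ and deleting from $\sigma$ the thin slab bounded by $\Sigma_j$ and $L_j$ yields an open region $\sigma'$ with $\overline{\sigma'}\subset\sigma$ whose boundary is $\bigcup_j L_j$, which is strictly mean-convex (the inward unit normal of $\sigma'$ along $L_j$ points in the direction of $\vec H_{L_j}$), and which still has at least two boundary components $L_1$ and $L_2$.

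Finally I would extract a closed minimal surface from the interior of $\sigma'$, which contradicts Proposition~\ref{no.disjoint}. Consider the class of closed, embedded surfaces in $\overline{\sigma'}$ that separate $L_1$ from $L_2$. This class is non-empty, since a sufficiently small inward push-off of $L_1$ belongs to it and lies in the interior, and by the monotonicity formula every surface in it has area bounded below by a fixed positive constant (a surface of small area lies in a small ball and cannot separate $L_1$ from $L_2$). Minimizing area in this class and applying the regularity theory for minimizing hypersurfaces in a three-manifold produces a non-trivial, smooth, closed, embedded minimal surface $\Sigma\subset\overline{\sigma'}$; by strict mean-convexity of $\partial\sigma'$, the surface $\Sigma$ cannot touch $\partial\sigma'$ (otherwise one could strictly decrease area by deforming $\Sigma$ into the interior, contradicting minimality, or directly by the maximum principle \cite{white-maximum-principle}). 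Hence $\Sigma$ is a closed minimal surface contained in the interior of $\sigma'\subset\sigma$, and therefore disjoint from ${\rm spt}(V)$, contradicting Proposition~\ref{no.disjoint}. This shows ${\rm spt}(V)$ is connected. The main difficulty is precisely this last step: in dimension two one runs curve shortening flow inside the convexified region and lands on a geodesic, but the mean curvature flow of an embedded two-sphere in $S^3$ generically collapses to a point, so the interior minimal surface has to be obtained from the mean-convex barrier together with the homological nontriviality guaranteed by the disconnectedness of $\partial\sigma'$.
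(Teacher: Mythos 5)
Your proof is correct and takes essentially the same approach as the paper's, which simply observes that a connected component of $S^3\setminus\bigcup_j\Sigma_j$ with disconnected boundary must contain a minimal surface in its interior (by area-minimization), contradicting Proposition \ref{no.disjoint}; you usefully spell out both why such a component exists (the dual graph of the decomposition is a tree, since each sphere separates $S^3$) and how the minimization inside the strictly mean-convex region is carried out. The only slip is the claim that the monotonicity formula forces an arbitrary competitor of small area to lie in a small ball — monotonicity applies to stationary varifolds, not general competitors — but this lower bound is superfluous: the class $[L_1]\in H_2(\overline{\sigma'},\mathbb{Z}_2)$ is nontrivial because $\overline{\sigma'}$ is $S^3$ minus at least two disjoint open balls, so the mass-minimizing representative is automatically a nontrivial cycle.
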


\begin{proof}
Suppose, by contradiction, that some $V \in {\bf C}(\{\Phi_i\})$ has disconnected support.   Then  it follows that $V=\sum_{j=1}^q m_j|\Sigma_j|$ for some $\{m_1,\dots,m_q\}\subset \mathbb{N}$ and $\{\Sigma_1,\dots,\Sigma_q\}$ disjoint, smooth, embedded, minimal spheres satisfying $\sum_j m_j {\rm area}(\Sigma_j,g)=\sigma$ and $\sum_j{\rm index}(\Sigma_j)\leq 1$ with $q\geq 2$. Proposition \ref{no.disjoint} implies  that the surface $\Sigma_j$, $1\leq j \leq q$, is part of a local foliation such that on each of its sides the mean curvature vector of every leaf 
points strictly away from $\Sigma_j$. Hence any connected component $\Omega$ of $S^3\setminus \cup_j\Sigma_j$ with disconnected $\partial \Omega$ must  contain a minimal surface in its interior (by area-minimization). This contradicts Proposition \ref{no.disjoint}. Hence ${\rm support}(V)$ is connected.
\end{proof}

\begin{prop}
Every element of the critical set ${\bf C}(\{\Phi_i\})\subset \mathcal{V}$ has multiplicity one.
\end{prop}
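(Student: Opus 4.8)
The plan is to rule out multiplicity $\ge 2$ by contradiction, playing the equality $\sigma_1(S^3,g)=\sigma$ against a one-sweepout of $S^3$ by embedded spheres built from a mean-convex foliation around the support of the critical varifold. By Proposition \ref{connected.support}, any $V\in\mathbf{C}(\{\Phi_i\})$ has connected support, so since every element of $\mathcal V$ is a finite sum of integer multiples of disjoint embedded minimal spheres, $V=m\,|\Sigma|$ for a single smooth, connected, embedded, $g$-minimal $2$-sphere $\Sigma$ with $m\,{\rm area}(\Sigma,g)=\sigma$. Assume $m\ge 2$. By Proposition \ref{no.disjoint}, $\Sigma$ belongs to a local foliation $\{\Sigma_t\}_{t\in(-s,s)}$ with $\Sigma_0=\Sigma$ whose leaves on each side of $\Sigma$ have mean curvature vector pointing strictly away from $\Sigma$. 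Writing the foliation field on the side $t>0$ as $\phi_t\,\nu$ with $\phi_t>0$ and $\nu$ the unit normal pointing away from $\Sigma$, and the mean curvature vector of $\Sigma_t$ as $H_t\,\nu$ with $H_t>0$ for $t>0$, the first variation of area gives $\frac{d}{dt}{\rm area}(\Sigma_t)=-\int_{\Sigma_t}H_t\phi_t<0$ for $t>0$, while $\frac{d}{dt}{\rm area}(\Sigma_t)|_{t=0}=0$ since $\Sigma$ is minimal; the same holds for $t<0$. Hence ${\rm area}(\Sigma_t)<{\rm area}(\Sigma)$ for every $0<|t|<s$, with equality only at $t=0$.

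Now $\Sigma$ is an embedded $2$-sphere in $S^3$, so it bounds two closed balls $B^+,B^-$, and by Proposition \ref{no.disjoint} there is no closed minimal surface disjoint from $\Sigma$; in particular ${\rm int}(B^+)$ and ${\rm int}(B^-)$ contain no closed minimal surface. Fix $\epsilon\in(0,s)$; the strictly mean-convex leaf $\Sigma_\epsilon$ bounds a ball $B^+_\epsilon\subset{\rm int}(B^+)$ with ${\rm area}(\Sigma_\epsilon)<{\rm area}(\Sigma)$, and (using the strict mean convexity of $\Sigma_\epsilon=\partial B^+_\epsilon$ and the maximum principle) $\overline{B^+_\epsilon}$ contains no closed minimal surface. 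As in the curve shortening flow arguments of Sections 3 and 4, but now one dimension higher, the mean-convex ball $B^+_\epsilon$ can be swept out from $\Sigma_\epsilon$ down to a point through embedded $2$-spheres with areas at most ${\rm area}(\Sigma_\epsilon)+o(1)<{\rm area}(\Sigma)$: having no interior minimal surface to be obstructed by, the sweeping reaches a point without ever passing through a stationary configuration. The same applies to $B^-_\epsilon$. Concatenating the sweepout of $\overline{B^-_\epsilon}$ (point to $\Sigma_{-\epsilon}$), the foliation $\{\Sigma_t\}_{t\in[-\epsilon,\epsilon]}$ (which passes through $\Sigma$), and the sweepout of $\overline{B^+_\epsilon}$ ($\Sigma_\epsilon$ to point), and identifying the two endpoints, we obtain a continuous map $S^1\to\mathcal S$ whose associated family of enclosed regions grows from essentially the empty set to essentially all of $S^3$ (so that the composition with $i:\mathcal S\to\mathcal Z_2(S^3,\mathbf F,\mathbb Z_2)$ detects $\overline\lambda$); thus it is a smooth $1$-sweepout of $S^3$, and its maximal area is ${\rm area}(\Sigma)$. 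Therefore $\sigma_1(S^3,g)\le{\rm area}(\Sigma)=\sigma/m\le\sigma/2<\sigma=\sigma_1(S^3,g)$, a contradiction, so $m=1$.

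The main obstacle is the step of sweeping the mean-convex ball $B^+_\epsilon$ by \emph{connected} embedded $2$-spheres with area controlled by ${\rm area}(\Sigma_\epsilon)$ up to an arbitrarily small error; this is the higher-dimensional counterpart of the role played by Grayson's theorem in Sections 3 and 4, and it is more delicate because a mean curvature flow of a $2$-sphere can neckpinch and momentarily fail to be a connected sphere. I would handle it either by bounding the Simon--Smith min-max width of the ``deflating'' sweepouts of the mean-convex ball $B^+_\epsilon$ (if this width exceeded ${\rm area}(\Sigma_\epsilon)$ the min-max construction, together with the genus bounds and the maximum principle against the strictly mean-convex boundary, would produce a closed minimal surface inside $B^+_\epsilon$, contradicting its absence), or by a direct mean-convex flow argument together with the fact that a mean-convex flow issuing from an embedded sphere keeps the enclosed region a disjoint union of balls of non-increasing total boundary area, reorganized so that the slices remain connected at the cost of arbitrarily thin connecting tubes; in either realization, what makes the estimate close is the strict gap ${\rm area}(\Sigma_\epsilon)<{\rm area}(\Sigma)$ coming from the first paragraph.
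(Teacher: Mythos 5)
Your proposal follows essentially the same route as the paper: reduce to $V=m|\Sigma|$ via Proposition \ref{connected.support}, use the strictly mean-convex local foliation from Proposition \ref{no.disjoint}, sweep out each of the two mean-convex balls bounded by nearby leaves using the absence of interior minimal surfaces, concatenate with the foliation to produce a smooth $1$-sweepout with maximal area ${\rm area}(\Sigma)$, and conclude $\sigma_1\le {\rm area}(\Sigma)=\sigma/m$. The step you flag as the main obstacle is exactly what the paper handles by citing Theorem 2.1 of \cite{marques-neves-duke}, which bounds the relevant one-parameter min-max width of a ball by the area of its boundary when the interior contains no closed minimal surface; this is your ``option (a),'' so the gap you identify is closed by that reference rather than by a new argument, and the rest of your reasoning matches the paper's.
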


\begin{proof}
Let $V \in {\bf C}(\{\Phi_i\})$. Then, by Proposition \ref{connected.support}, $\Sigma= {\rm support}(V)$ is connected. Hence $V=m \cdot |\Sigma|$ for some $m\in \mathbb{N}$. Proposition \ref{no.disjoint} implies  that a neighborhood of  $\Sigma$ admits  a foliation $\{\Sigma_t\}_{t\in [-1,1]}$, $\Sigma_0=\Sigma$, such that the mean curvature vector of $\Sigma_t$ points strictly away from $\Sigma$ for $t\neq 0$. Let $Q$ be the three-ball disjoint from $\Sigma$ and bounded by $\Sigma_1$. We can consider one-parameter sweepouts of $Q$ by two-spheres starting at $\partial Q$ as in  Section 2 of \cite{marques-neves-duke}. We can use Theorem 2.1 of \cite{marques-neves-duke} to conclude the associated min-max width is at most ${\rm area}(\partial Q)$, since there cannot be a minimal surface disjoint from $\Sigma$. We can repeat this for the three-ball $\tilde{Q}$ disjoint from $\Sigma$ and bounded by $\Sigma_{-1}$. By concatenating almost optimal sweepouts for these min-max schemes with the foliation $\{\Sigma_t\}_{t\in [-1,1]}$, we construct an element of $\mathcal{P}_1'$ with maximal area equal to ${\rm area}(\Sigma)$. Therefore $\sigma=\sigma_1\leq {\rm area}(\Sigma)$. But
$\sigma={\bf M}(V)=m\cdot  {\rm area}(\Sigma)$. Hence we have proved that $m=1$.
\end{proof}

\begin{prop}\label{limit.connected.multiplicity.one}
Every element of the critical set ${\bf C}(\{\Phi_i\})\subset \mathcal{V}$ is of the form $V=|\Sigma|$, where $\Sigma$ is a smooth, embedded, $g$-minimal two-sphere with Morse index equal to one and nullity at least one and at most three.
\end{prop}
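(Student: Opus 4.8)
The plan is to combine the three preceding results of Part I with the foliation property of Proposition~\ref{no.disjoint} and the definition of $\mathcal V$. First, by Proposition~\ref{connected.support} every $V\in{\bf C}(\{\Phi_i\})$ has connected support, and by the preceding proposition it has multiplicity one; hence $V=|\Sigma|$ for some connected, smooth, embedded, $g$-minimal surface $\Sigma$. Since $V\in\mathcal V$, the surface $\Sigma$ is a topological two-sphere and ${\rm index}(\Sigma)\le 1$. It remains to prove that ${\rm index}(\Sigma)=1$ and $1\le{\rm nullity}(\Sigma)\le 3$.

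To show ${\rm index}(\Sigma)\ge 1$, suppose $\Sigma$ were stable. By Proposition~\ref{no.disjoint}, $\Sigma$ is part of a smooth local foliation $\{\Sigma_t\}_{|t|<a}$ with $\Sigma_0=\Sigma$ whose leaves have mean curvature vector pointing strictly away from $\Sigma$ on each side, and whose initial velocity we write as $\phi\,\nu$ with $\phi>0$ and $\nu$ a unit normal. The first variation of area then shows that $t\mapsto{\rm area}(\Sigma_t,g)$ is strictly increasing on $(-a,0)$ and strictly decreasing on $(0,a)$, so $t=0$ is a strict local maximum; since $\Sigma$ is minimal, the second derivative test gives $Q_\Sigma(\phi,\phi)\le 0$, where $Q_\Sigma$ is the second variation (index) form of $\Sigma$. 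Stability forces $Q_\Sigma(\phi,\phi)=0$, hence $\phi\in{\rm Ker}(L_\Sigma)$ is a positive Jacobi field on $\Sigma$. Applying Proposition~\ref{approximate.minimal} with $z=s\phi$ for small $s>0$, the normal graph over $\Sigma$ of $\varphi(s\phi)=s\phi+o(s)$ is a closed minimal surface; since $\phi$ is bounded below by a positive constant, this surface is disjoint from $\Sigma={\rm spt}(V)$ for $s$ small, contradicting Proposition~\ref{no.disjoint}. Hence $\Sigma$ is unstable and, together with the upper bound, ${\rm index}(\Sigma)=1$.

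To show ${\rm nullity}(\Sigma)\ge 1$, suppose ${\rm Ker}(L_\Sigma)=0$. Then Proposition~\ref{approximate.minimal} implies $\Sigma$ is isolated among closed minimal surfaces, and together with the mass constraint in the definition of $\mathcal V$ this gives that $|\Sigma|$ is an isolated point of the compact set $\mathcal V$; fix $\varepsilon>0$ with $\overline{B^{\bf F}_{2\varepsilon}(|\Sigma|)}\cap\mathcal V=\{|\Sigma|\}$. Since $|\Sigma|\in{\bf C}(\{\Phi_i\})$, there are $\{j\}\subset\{i\}$ and $x_j\in X_j$ with $|\Phi_j(x_j)|\to|\Sigma|$, and by~\eqref{sequence.optimal} we have $\sup_{x\in X_j}{\bf F}(|\Phi_j(x)|,\mathcal V)<\varepsilon/2$ for large $j$. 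Consequently $U_j=\{x\in X_j:\ |\Phi_j(x)|\in B^{\bf F}_\varepsilon(|\Sigma|)\}$ is open, nonempty for large $j$, and also closed (a limit point $x$ of $U_j$ has $|\Phi_j(x)|$ within $\varepsilon/2$ of some $W\in\mathcal V$ with ${\bf F}(W,|\Sigma|)<2\varepsilon$, so $W=|\Sigma|$ and $x\in U_j$). As $X_j$ is connected, $U_j=X_j$ for large $j$, so $\Phi_j$ maps $X_j$ into an arbitrarily small neighborhood of $|\Sigma|$; since $|\Sigma|$ is a multiplicity-one embedded varifold, the cycles $\Phi_j(x)$ are then uniformly flat-close to $[\Sigma]$, and the argument in the proof of Lemma~\ref{close.implies.homotopic} (comparing $\Phi_j$ with the constant map at $[\Sigma]$) shows $\Phi_j^*(\overline\lambda)=0$. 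This contradicts $\Phi_j\in\mathcal P_3'\subset\mathcal P_1'$. Hence ${\rm Ker}(L_\Sigma)\neq 0$, i.e. ${\rm nullity}(\Sigma)\ge 1$.

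Finally, since ${\rm index}(\Sigma)=1$ the first eigenvalue of $-L_\Sigma$ is negative and the second is nonnegative; as ${\rm nullity}(\Sigma)\ge 1$, the second eigenvalue equals $0$, with multiplicity ${\rm nullity}(\Sigma)$. Because $\Sigma$ is a topological two-sphere, Cheng's bound \cite{cheng} on the multiplicity of eigenvalues of Schr\"odinger operators on $S^2$ gives multiplicity at most $2\cdot 2-1=3$, hence ${\rm nullity}(\Sigma)\le 3$. I expect the main difficulty to be the exclusion of the stable case: one must extract the correct sign of $Q_\Sigma(\phi,\phi)$ from the mean curvature condition in Proposition~\ref{no.disjoint}, and then genuinely use both the positive Jacobi field provided by Proposition~\ref{approximate.minimal} and the nonexistence of disjoint minimal surfaces. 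Transferring varifold closeness to flat closeness in the connectedness step of the nullity lower bound is the other point that requires care.
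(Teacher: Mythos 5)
Your overall structure parallels the paper's: assemble Proposition \ref{connected.support} and the preceding multiplicity-one result to obtain $V = |\Sigma|$ with $\Sigma$ a minimal two-sphere of index at most one, then show the index is exactly one and bound the nullity. The nullity bound via Cheng's multiplicity theorem matches the paper, and your argument that ${\rm nullity}(\Sigma) \ge 1$ (nondegeneracy makes $|\Sigma|$ isolated in $\mathcal{V}$, then a connectedness argument on $X_j$ together with Lemma \ref{close.implies.homotopic} contradicts the sweepout property) is a valid variant of the paper's $\zeta$-chain argument, though you should also dispose of the case where $\Phi_j(x)$ is flat-close to $0$ rather than to $[\Sigma]$ --- the Constancy Theorem gives a dichotomy, and both branches kill the sweepout via Lemma \ref{close.implies.homotopic}.

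However, your proof that ${\rm index}(\Sigma) \ge 1$ contains a genuine gap. After correctly deriving a positive Jacobi field $\phi$ from the strict mean-convexity of the foliation in Proposition \ref{no.disjoint} together with the stability hypothesis, you apply Proposition \ref{approximate.minimal} to conclude that the graph of $\varphi(s\phi)$ is a closed minimal surface disjoint from $\Sigma$. But Proposition \ref{approximate.minimal} goes the other way: it asserts that any minimal surface close to $\Sigma$ must be the graph of $\varphi(z)$ for some $z$, not that every such graph is minimal. In general the graph of $\varphi(z)$ need not be minimal --- this is precisely why the paper later singles out the subset $\mathcal{G}$ of those $\Sigma$ for which $\varphi_\Sigma(z)$ is minimal for all $z\in\overline{B}^3$. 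The existence of a positive Jacobi field on a stable minimal sphere does not by itself give a contradiction. The paper's route at this step is different: if $\Sigma$ were stable and $\{|S_j|\}\subset {\bf C}(\{\Phi_i\})$ converged to $|\Sigma|$ with $S_j\ne\Sigma$, then Proposition \ref{no.disjoint} (no disjoint minimal surfaces) forces $S_j$ to intersect $\Sigma$, and the renormalized graph functions produce a \emph{sign-changing} Jacobi field, contradicting stability since the first eigenfunction is positive and the first eigenvalue is simple. This shows $|\Sigma|$ is isolated in ${\bf C}(\{\Phi_i\})$, and the $\zeta$-chain argument --- as in the nullity case --- then contradicts the sweepout property.
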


\begin{proof}
Choose $V\in {\bf C}(\{\Phi_i\})$. We have proved that $V=|\Sigma|$, where $\Sigma$ is a smooth, embedded, $g$-minimal two-sphere with Morse index at most one. Suppose, by contradiction, that  $\Sigma$ is stable. By Proposition \ref{no.disjoint}, there is a foliation $\{\Sigma_t\}_{t\in [-1,1]}$, $\Sigma_0=\Sigma$, such that the mean curvature vector of $\Sigma_t$ points strictly away from $\Sigma$ for $t\neq 0$. If $V_j=|S_j|\in {\bf C}(\{\Phi_i\})$  is a sequence converging to $V$ in varifold sense, $S_j$ converges smoothly to $\Sigma$ by Allard's Regularity Theorem. Suppose there is  $\{S_k\}_k\subset \{S_j\}_j$ such that $S_k\neq \Sigma$   for every $k$. Using the foliation and the maximum principle, we conclude that, for sufficiently large $k$, $S_k$ intersects $\Sigma$. This would imply the existence of a nontrivial solution $v$  of the linearized equation 
$L_\Sigma v=0$ that changes sign. This is not possible since we assumed that  $\Sigma$ is stable. Hence there is $\zeta>0$ such that any $W\in {\bf C}(\{\Phi_i\})$ with 
${\bf F}(W,V)\leq \zeta$ satisfies $W=V$. Hence any $\zeta$-chain $\{V_1,\dots,V_l\}\subset {\bf C}(\{\Phi_i\})$ with $V_1=V$ must satisfy $V_q=V$ for every $1\leq q\leq l$. As before, this contradicts the fact that $\{\Phi_i\}$ are one-sweepouts. This finishes the proof that $\Sigma$ is unstable, and hence has Morse index one. The nullity of
$\Sigma$ is at most three by \cite{cheng}. The nullity has to be at least one because if $\Sigma$ is nondegenerate, then again there is $\zeta>0$ such that any $W\in {\bf C}(\{\Phi_i\})$ with 
${\bf F}(W,V)\leq \zeta$ satisfies $W=V$. 
\end{proof}

\subsection{Part II}\label{part2}

{Fix  a compact set in the varifold topology $\mathcal{V}_1\subset \mathcal{V}$}  such that every element of $\mathcal{V}_1$  is of the form $V=|\Sigma|$, where $\Sigma$ is a smooth, embedded, minimal two-sphere with Morse index equal to one and nullity at least one and at most three. We can consider  the elements of $\mathcal{S}$ and $\mathcal{V}_1$ also as mod two flat cycles. Recall the definition of the ${\bf F}$-metric: ${\bf F}(T_1,T_2)=\mathcal{F}(T_1,T_2)+{\bf F}(|T_1|,|T_2|)$. Then $\mathcal{V}_1$ is compact for the ${\bf F}$-metric by Allard's Regularity Theorem.

 Let  $k\geq 2$ be an integer and $\alpha\in (0,1)$.  Let $\mathcal{S}^{k,\alpha}$ be the space of $C^{k,\alpha}$ embedded two-spheres in $S^3$.  For each $|\Sigma|\in \mathcal{V}_1$, let $\mathcal{U}_\delta^{k,\alpha}(\Sigma)$ be the set of spheres in $\mathcal{S}^{k,\alpha}$ that can be written as a graph over $\Sigma$ of a function $f$
with $|f|_{k,\alpha}<\delta$. Here we use the induced Riemannian structure on $\Sigma$ to define $|f|_{k,\alpha}$.

\begin{prop}\label{limit.flat.topology}
Let $\Phi_i:X_i\rightarrow \mathcal{S}^{k,\alpha}$ be a sequence of continuous maps, $X_i$ a three-dimensional compact simplicial complex, which are three-sweepouts. Suppose ${\bf \Lambda}(\{\Phi_i\})\subset \mathcal{V}_1$.
The sequence $\{\Phi_i\}_i$ satisfies
$$
\lim_{i\rightarrow \infty} \sup_{x\in X_i} {\bf F}(\Phi_i(x),\mathcal{V}_1)=0.
$$
\end{prop}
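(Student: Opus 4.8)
The plan is to argue by contradiction. Suppose the conclusion fails: then there is $\varepsilon_0>0$, a subsequence (still denoted $\{\Phi_i\}$), and points $x_i\in X_i$ with ${\bf F}(\Phi_i(x_i),\mathcal{V}_1)\geq \varepsilon_0$ for all $i$. Since $\mathcal{V}_1$ is compact in the ${\bf F}$-metric (Allard), this says the varifolds $|\Phi_i(x_i)|$ (viewed as mod-two flat cycles with their induced varifolds) stay ${\bf F}$-bounded away from $\mathcal{V}_1$. The hypothesis ${\bf \Lambda}(\{\Phi_i\})\subset \mathcal{V}_1$ means that \emph{every} varifold limit of sequences $\{|\Phi_{i_j}(y_{i_j})|\}$ lies in $\mathcal{V}_1$. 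In particular, after passing to a further subsequence, $|\Phi_i(x_i)|$ converges in the varifold topology to some $V_\infty\in\mathcal{V}_1$, so $V_\infty=|\Sigma_\infty|$ for a smooth embedded index-one minimal two-sphere $\Sigma_\infty$. Since the $\Phi_i(x_i)$ are embedded $C^{k,\alpha}$ two-spheres converging as varifolds to $|\Sigma_\infty|$, Allard's Regularity Theorem forces $\Phi_i(x_i)\to\Sigma_\infty$ in the $C^{k,\alpha}$ (indeed smooth) topology; in particular ${\bf F}(\Phi_i(x_i),|\Sigma_\infty|)\to 0$. This contradicts ${\bf F}(\Phi_i(x_i),\mathcal{V}_1)\geq\varepsilon_0$, since $|\Sigma_\infty|\in\mathcal{V}_1$.

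The only subtlety is the passage from varifold convergence of the $|\Phi_i(x_i)|$ to a limit \emph{in $\mathcal{V}_1$}, i.e.\ extracting a convergent subsequence and identifying the limit as an element of the image set ${\bf\Lambda}(\{\Phi_i\})$. By the monotonicity formula and the area bound $\sigma_3(S^3,g)=\sigma$ (each $\Phi_i(x_i)$ is a competitor, so $\sup_x{\bf M}(\Phi_i(x))$ is uniformly bounded — one may first replace $\{\Phi_i\}$ by an optimal sequence, or simply note that any three-sweepout into $\mathcal{S}^{k,\alpha}$ with ${\bf\Lambda}\subset\mathcal{V}_1$ has uniformly bounded max mass for $i$ large by the very hypothesis), the varifolds $|\Phi_i(x_i)|$ lie in a compact set of $\mathcal{V}_2(S^3)$, so a subsequence converges to some $V_\infty$ in the varifold topology. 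By definition $V_\infty\in{\bf\Lambda}(\{\Phi_i\})$, hence $V_\infty\in\mathcal{V}_1$ by hypothesis. This is the step to state carefully but it is essentially bookkeeping with the definition of the image set.

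The main obstacle is ensuring that the uniform area bound on $\{\Phi_i(x)\}_{x\in X_i}$ is actually available, since the statement of Proposition \ref{limit.flat.topology} only assumes ${\bf\Lambda}(\{\Phi_i\})\subset\mathcal{V}_1$ and that the $\Phi_i$ are three-sweepouts, not that $\sup_x{\bf M}(\Phi_i(x))\to\sigma_3$. I would handle this by a second contradiction argument: if $\sup_{x\in X_i}{\bf M}(\Phi_i(x))$ were unbounded along a subsequence, pick $y_i$ realizing (nearly) the supremum; after passing to a subsequence either the masses stay bounded on a tail — contradiction — or they diverge, but then $|\Phi_i(y_i)|$ cannot subconverge to any element of the compact set $\mathcal{V}_1$, and yet more carefully, one can interpolate: along a path in the connected complex $X_i$ from $x_i$ (where masses are bounded, near $\mathcal{V}_1$) to $y_i$, the mass is continuous, so there are points where ${\bf M}(\Phi_i(z_i))$ takes any prescribed intermediate value; choosing these to diverge slowly yields a varifold limit (by compactness applied on bounded-mass slices) not in $\mathcal{V}_1$, contradicting ${\bf\Lambda}(\{\Phi_i\})\subset\mathcal{V}_1$. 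Alternatively, and more cleanly, one observes that the hypothesis ${\bf\Lambda}(\{\Phi_i\})\subset\mathcal{V}_1$ together with compactness of $\mathcal{V}_1$ directly gives, for every $\eta>0$ and $i$ large, that every $|\Phi_i(x)|$ with ${\bf M}(\Phi_i(x))\le \sup_z{\bf M}(\Phi_i(z))$ is ${\bf F}$-close to $\mathcal{V}_1$ provided its mass is close to $\sigma$; combined with the three-sweepout property and Lusternik--Schnirelmann-type lower bounds this pins $\sup_z{\bf M}(\Phi_i(z))$ near $\sigma_3=\sigma$. With the uniform mass bound in hand, the argument of the first paragraph closes the proof.
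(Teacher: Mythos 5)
Your proof has a genuine gap: the crucial step ``Since the $\Phi_i(x_i)$ are embedded $C^{k,\alpha}$ two-spheres converging as varifolds to $|\Sigma_\infty|$, Allard's Regularity Theorem forces $\Phi_i(x_i)\to\Sigma_\infty$ in the $C^{k,\alpha}$ topology; in particular ${\bf F}(\Phi_i(x_i),|\Sigma_\infty|)\to 0$'' is wrong. Allard's Regularity Theorem concerns varifolds with small first variation (nearly minimal surfaces). The maps $\Phi_i$ take values in $\mathcal{S}^{k,\alpha}$, the space of arbitrary $C^{k,\alpha}$ embedded two-spheres: a three-sweepout must pass through highly non-minimal surfaces, and for such $\Phi_i(x_i)$ there is no first-variation control whatsoever, so Allard gives nothing. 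In particular, varifold convergence $|\Phi_i(x_i)|\to|\Sigma_\infty|$ does \emph{not} imply $C^{k,\alpha}$ (or even flat) convergence of the cycles $\Phi_i(x_i)$ to $\Sigma_\infty$. Since the ${\bf F}$-metric on cycles is $\mathcal{F}(T_1,T_2)+{\bf F}(|T_1|,|T_2|)$, the content of the proposition beyond the hypothesis ${\bf\Lambda}(\{\Phi_i\})\subset\mathcal{V}_1$ is precisely the flat-topology part, and your argument does not reach it.

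What actually happens, and what the paper's proof uses, is a dichotomy. If $T_j$ are mod-two flat cycles with $|T_j|\to|\Sigma|$ for some $\Sigma\in\mathcal{V}_1$, then any flat-subsequential limit $T$ has ${\rm support}(T)\subset\Sigma$; since $\Sigma$ is a connected sphere, the Constancy Theorem forces $T=\Sigma$ or $T=0$. Thus, by compactness of mass-bounded cycles, for small enough $\delta$ every cycle $\delta$-close to $\mathcal{V}_1$ in varifold distance is flat-close \emph{either} to an element of $\mathcal{V}_1$ \emph{or} to $0$, and these two options are uniformly separated because $\mathcal{F}(\mathcal{V}_1,0)>0$. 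Ruling out the ``flat-close to $0$'' branch is the essential remaining step: by path-connectedness of $X_i$ and continuity of $\Phi_i$ in the ${\bf F}$-metric, if any $\Phi_i(\tilde x)$ were flat-close to $0$, then \emph{every} $\Phi_i(x)$ would be, and then Lemma \ref{close.implies.homotopic} would contradict the assumption that $\Phi_i$ is a three-sweepout. Your proposal never confronts this dichotomy, and the place where it should — passing from varifold to flat convergence — is exactly where Allard is invoked incorrectly. (Your discussion of the uniform mass bound is a fair technical concern, and it is handled correctly enough, but it is tangential to the real difficulty.)
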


\begin{proof}
Let $T_i$ be  a sequence of mod two flat cycles with $|T_i|\rightarrow \Sigma \in \mathcal{V}_1$ in the varifold topology. If $\{T_j\}_j$ is a subsequence with $T_j\rightarrow T$ in the flat topology, then ${\rm support}(T)\subset \Sigma$. By the Constancy Theorem either $T=\Sigma$ or $T=0$. Therefore, by compactness in the flat topology of the space of cycles with uniformly bounded mass, for every $\eta>0$ there exists $0<\delta<\eta$ such that if ${\bf F}(|T|,\Sigma)<\delta$ for some $\Sigma \in \mathcal{V}_1$ then either $\mathcal{F}(T,\Sigma)<\eta/2$ or $\mathcal{F}(T,0)<\eta/2$. Let  $\eta>0$ be  such that
$\mathcal{F}(\mathcal{V}_1,0)\geq 2\eta$. Let $i$ be such that ${\bf F}(|\Phi_i(x)|,\mathcal{V}_1)<\delta$ for every $x\in X_i$. Suppose there exists $\tilde{x}\in X_i$
such that $\mathcal{F}(\Phi_i(\tilde{x}),0)<\eta/2$. Since $X_i$ is path-connected and $\Phi_i$ is continuous in the ${\bf F}$-metric, it follows that $\mathcal{F}(\Phi_i(x),0)<\eta/2$ for every $x\in X_i$. This is a contradiction if $\eta$ is sufficiently small, by Lemma \ref{close.implies.homotopic}, because $\Phi_i$ is a three-sweepout. Hence ${\bf F}(\Phi_i(x),\mathcal{V}_1)<\delta+\eta/2$  for every $x\in X_i$.
This proves the  proposition.
\end{proof}

From Allard's Regularity Theorem and compactness of $\mathcal{V}_1$ in the varifold topology, for each $\delta>0$ there exists $0<\xi=\xi(\delta)<\delta$ such that if 
$|\Sigma|,|\Sigma'|\in \mathcal{V}_1$ satisfy ${\bf F}(|\Sigma|,|\Sigma'|)<\xi$ then $\mathcal{U}_\xi^{k,\alpha}(\Sigma) \subset \mathcal{U}_\delta^{k,\alpha}(\Sigma')$ and
$\mathcal{U}_\xi^{k,\alpha}(\Sigma')\subset \mathcal{U}_\delta^{k,\alpha}(\Sigma)$. Also, for each $\xi>0$ there exists $0<\theta=\theta(\xi)<\xi$ such that if $\Sigma'\in \mathcal{S}^{k,\alpha}$ satisfies $\Sigma'\in \mathcal{U}_\theta^{k,\alpha	}(\Sigma)$ for some $\Sigma \in \mathcal{V}_1$, then ${\bf F}(\Sigma',\Sigma)<\xi$.

For {$\Sigma\in \mathcal{V}_1$}, Proposition \ref{approximate.minimal} gives a smooth embedding ${\varphi}_\Sigma:\overline{B}^{3}\rightarrow \mathcal{S}^{k,\alpha}$, where $\overline{B}^{3}$ is the closed unit ball in $\mathbb{R}^3$, and $\eta_\Sigma>0$, such that ${\varphi}_\Sigma(0)=\Sigma$ and so that every closed minimal surface $\Sigma'$ with ${\bf F}(|\Sigma'|,|\Sigma|) \leq  \eta_\Sigma$  must belong to 
${\mathcal{W}}_\Sigma={\varphi}_\Sigma(\overline{B}^{3})$.  We can choose $\eta_\Sigma$ so that  ${\bf F}(|\Sigma'|,|\Sigma|) \geq  2\eta_\Sigma$ for every $\Sigma'\in \partial {\mathcal{W}}_\Sigma={\varphi}_\Sigma(\partial \overline{B}^{3})$. 

\begin{prop}\label{ruling.out.1.0}
Let $\Sigma \in \mathcal{V}_1$. Given $\xi>0$ there is $0<\bar \xi<\xi/5$ so that for every continuous three-sweepout $\Phi:X\rightarrow \mathcal{S}^{k,\alpha}$  defined on  a three-dimensional compact simplicial complex $X$ with 
$$
 \sup_{x\in X} {\bf F}(\Phi(x),\mathcal{V}_1)<\bar \xi, 
$$
we can find a continuous three-sweepout $\Phi':X\rightarrow \mathcal{S}^{k,\alpha}$ so that
\begin{itemize}
\item ${\bf F}(\Phi'(x),\Phi(x))<2\xi$ for every $x\in X$;
\item $
\Phi'(X)\subset \mathcal{Z}=\{S\in \mathcal{S}^{k,\alpha}:{\bf F}(|S|,|\Sigma|)>\eta_\Sigma/3\}   \, \bigcup \, {\mathcal{W}}_\Sigma.
$
\end{itemize}
\end{prop}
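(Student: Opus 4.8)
The plan is to leave $\Phi$ untouched where it is not $\mathbf{F}$-close to $\Sigma$, and, on the region where it is close to $\Sigma$, to push its values onto the three--dimensional moduli ball $\mathcal{W}_\Sigma=\varphi_\Sigma(\overline{B}^{3})$ of nearby minimal spheres. Two structural facts drive the construction. First, by Proposition \ref{approximate.minimal} every closed minimal sphere whose varifold is $\mathbf{F}$-close to $|\Sigma|$ belongs to $\mathcal{W}_\Sigma$; more precisely, by the choice of $\eta_\Sigma$, if $\mathbf{F}(|\Sigma'|,|\Sigma|)\le \eta_\Sigma$ for a minimal sphere $\Sigma'$ then $\Sigma'\in\mathcal{W}_\Sigma$. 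Second, using Allard's theorem together with the comparisons between the $\mathbf{F}$-distance and the graphical neighbourhoods $\mathcal{U}^{k,\alpha}_\delta$ recorded just before the statement, $\mathcal{W}_\Sigma$ admits a \emph{thin graphical tubular neighbourhood} $\mathcal{T}\subset\mathcal{S}^{k,\alpha}$: every $S\in\mathcal{T}$ is a $C^{k,\alpha}$-small normal graph over a unique closest element $\pi(S)\in\mathcal{W}_\Sigma$, the map $\pi:\mathcal{T}\to\mathcal{W}_\Sigma$ is continuous, and by shrinking $\mathcal{T}$ one arranges $\mathbf{F}(S,\pi(S))$ to be as small as one wishes, in particular $<2\xi$ and $<\eta_\Sigma/6$, for all $S\in\mathcal{T}$. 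Scaling the graph function linearly down to $0$ yields a deformation $\pi_t:\mathcal{T}\to\mathcal{T}$, $t\in[0,1]$, with $\pi_0=\mathrm{id}$, $\pi_1=\pi$, fixing $\mathcal{W}_\Sigma$ pointwise, taking values in $\mathcal{S}^{k,\alpha}$, and with $\mathbf{F}(\pi_t(S),S)<2\xi$ throughout.

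\textbf{Choice of $\bar\xi$ and the modified sweepout.} Fix $\bar\xi\in(0,\xi/5)$ so small that $\bar\xi<\eta_\Sigma/2$, that $\bar\xi$ is below the $\mathbf{F}$-width of $\mathcal{T}$ about $\mathcal{W}_\Sigma$, and so that every $S\in\mathcal{S}^{k,\alpha}$ with $\mathbf{F}(|S|,|\Sigma|)\le \eta_\Sigma/2$ and $\mathbf{F}(S,\mathcal{V}_1)<\bar\xi$ lies in $\mathcal{T}$; indeed such an $S$ is $\mathbf{F}$-within $\bar\xi$ of some minimal $\Sigma_S\in\mathcal{V}_1$ with $\mathbf{F}(|\Sigma_S|,|\Sigma|)<\eta_\Sigma/2+\bar\xi<\eta_\Sigma$, hence $\Sigma_S\in\mathcal{W}_\Sigma$, and then $S\in\mathcal{T}$. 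Now let $\Phi:X\to\mathcal{S}^{k,\alpha}$ be a three-sweepout with $\sup_x\mathbf{F}(\Phi(x),\mathcal{V}_1)<\bar\xi$, write $d(x)=\mathbf{F}(|\Phi(x)|,|\Sigma|)$, and pick a continuous cutoff $\psi:[0,\infty)\to[0,1]$ with $\psi\equiv1$ on $[0,\eta_\Sigma/2]$ and $\psi\equiv0$ on $[2\eta_\Sigma/3,\infty)$. Define $\Phi'(x)=\pi_{\psi(d(x))}(\Phi(x))$ whenever $\Phi(x)\in\mathcal{T}$, and $\Phi'(x)=\Phi(x)$ otherwise. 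Since $\{x:d(x)<2\eta_\Sigma/3\}\subset\{x:\Phi(x)\in\mathcal{T}\}$ (this is open, so its boundary lies in $\{d\ge 2\eta_\Sigma/3\}$, where $\psi=0$ and $\pi_0=\mathrm{id}$), the two formulas agree wherever both apply and $\Phi'$ is a well-defined continuous map into $\mathcal{S}^{k,\alpha}$.

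\textbf{Verification.} The map $H(s,x)=\pi_{s\psi(d(x))}(\Phi(x))$ (again equal to $\Phi(x)$ off $\mathcal{T}$) is a homotopy from $\Phi$ to $\Phi'$ \emph{inside} $\mathcal{S}^{k,\alpha}$; composing with the inclusion $\mathcal{S}^{k,\alpha}\hookrightarrow\mathcal{Z}_2(S^3,\mathbf{F},\mathbb{Z}_2)$ one gets a homotopy of $3$-sweepouts, so $(i\circ\Phi')^*(\overline{\lambda}^3)=(i\circ\Phi)^*(\overline{\lambda}^3)\neq 0$ and $\Phi'\in\mathcal{P}_3'$. For the first bullet, $\mathbf{F}(\Phi'(x),\Phi(x))=\mathbf{F}(\pi_{\psi(d(x))}(\Phi(x)),\Phi(x))<2\xi$ when $\Phi(x)\in\mathcal{T}$ and $=0$ otherwise. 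For the second bullet: if $d(x)\le\eta_\Sigma/2$ then $\psi(d(x))=1$ and $\Phi'(x)=\pi(\Phi(x))\in\mathcal{W}_\Sigma\subset\mathcal{Z}$; if $d(x)\ge 2\eta_\Sigma/3$ then $\Phi'(x)=\Phi(x)$ with $\mathbf{F}(|\Phi'(x)|,|\Sigma|)=d(x)>\eta_\Sigma/3$; and if $\eta_\Sigma/2<d(x)<2\eta_\Sigma/3$ then $\Phi'(x)$ is a graph over $\pi(\Phi(x))$ with $\mathbf{F}(\Phi'(x),\Phi(x))<\eta_\Sigma/6$, so $\mathbf{F}(|\Phi'(x)|,|\Sigma|)>\eta_\Sigma/2-\eta_\Sigma/6>\eta_\Sigma/3$. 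In all cases $\Phi'(x)\in\mathcal{Z}$, which finishes the proof.

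\textbf{Main obstacle.} The substantive difficulty is that the whole deformation must stay inside the space of $C^{k,\alpha}$-embedded spheres: a generic retraction of a neighbourhood of $\mathcal{W}_\Sigma$ onto $\mathcal{W}_\Sigma$ in the flat/varifold cycle space would pass through non-smooth cycles and so could not produce a map $\Phi':X\to\mathcal{S}^{k,\alpha}$. This is exactly why one is forced to work inside the graphical tube $\mathcal{T}$ and to deform only by scaling graph functions, and consequently the crux is ensuring that the part of the sweepout that is $\mathbf{F}$-close to $\Sigma$ genuinely lies in $\mathcal{T}$ — i.e.\ promoting $\mathbf{F}$-proximity of $\Phi(x)$ to the minimal spheres $\mathcal{V}_1$ into honest $C^{k,\alpha}$-graph control. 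I expect this step, which relies on Allard's regularity theorem, the compactness of $\mathcal{V}_1$, and the smooth continuity of the sweepouts under consideration, to be the point that requires the most care; once the graphical foothold is secured the cutoff-and-interpolate argument above is routine.
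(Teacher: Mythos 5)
Your strategy — retract $\Phi$ onto a graphical tubular neighbourhood $\mathcal{T}$ of $\mathcal{W}_\Sigma$ via a cutoff on the ${\bf F}$-distance — is appealing, and you correctly identify the point that needs care; unfortunately, that point is a genuine gap, not a routine application of Allard. The step that fails is the inclusion
$$
\{S\in\mathcal{S}^{k,\alpha}:\ {\bf F}(|S|,|\Sigma|)\le\eta_\Sigma/2,\ {\bf F}(S,\mathcal{V}_1)<\bar\xi\}\subset\mathcal{T}.
$$
You argue that such an $S$ is ${\bf F}$-close to some minimal $\Sigma_S\in\mathcal{W}_\Sigma$ and then conclude ``$S\in\mathcal{T}$,'' but ${\bf F}$-closeness (flat metric plus varifold metric) of an \emph{arbitrary} $C^{k,\alpha}$-embedded sphere to a fixed minimal sphere does not force it to be a $C^{k,\alpha}$-small normal graph over that minimal sphere; a small, thin tentacle attached to $\Sigma_S$ has tiny flat and varifold distance to $\Sigma_S$ but arbitrarily large $C^{k,\alpha}$-graph norm. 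Allard's regularity theorem, which you invoke to bridge this, applies to varifolds with a priori mean-curvature bounds (in particular between pairs of \emph{minimal} surfaces), not to arbitrary elements of $\mathcal{S}^{k,\alpha}$; nothing in the hypothesis controls the mean curvature or second fundamental form of $\Phi(x)$. Note that the two comparisons the paper records just before the proposition go only in the directions that are actually true: from ${\bf F}$ to $C^{k,\alpha}$ \emph{between two elements of $\mathcal{V}_1$} (Allard applies), and from $C^{k,\alpha}$-graph closeness to ${\bf F}$-closeness for arbitrary spheres. Neither gives you the inclusion you need.

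The paper's proof avoids this obstruction by never attempting to deform $\Phi(x)$ itself. Instead it subdivides $X$, picks at each vertex $x$ a genuine minimal sphere $\Sigma_x\in\mathcal{V}_1$ with ${\bf F}(\Phi(x),\Sigma_x)$ small, and extends over edges, faces and $3$-cells by interpolating \emph{between the $\Sigma_x$'s} — either linearly in the chart $\varphi_\Sigma$ when the cell lives in the region near $\Sigma$, or by the graphical cone construction of the ``Claim'' elsewhere. Because every comparison along the way is between two minimal spheres in $\mathcal{V}_1$, the passage from ${\bf F}$-closeness to $C^{k,\alpha}$-graph closeness is legitimate. The only closeness of $\Phi'$ to $\Phi$ that is ever used is ${\bf F}$-closeness (to keep the homotopy class via Lemma \ref{close.implies.homotopic}), which is exactly what the vertex choice provides. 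Your homotopy-and-cutoff machinery would work perfectly well \emph{if} the graphical foothold were available; the fix is to replace the direct retraction of $\Phi$ by the paper's skeleton-by-skeleton reconstruction, which builds $\Phi'$ in the graphical tube from the start rather than trying to push $\Phi$ into it.
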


\begin{proof}

{\bf Claim:} Every continuous map $\Psi:\partial B^q\rightarrow \mathcal{S}^{k,\alpha}$, $q\geq 1$ an integer, so that for some $\Sigma\in \mathcal{V}_1$ and $\eta>0$, it holds that $\Psi(x)\in \mathcal{U}_\eta^{k,\alpha}(\Sigma)$ for every $x\in \partial B^q$, has  a continuous extension
$\tilde{\Psi}:B^q \rightarrow \mathcal{S}^{k,\alpha}$ with $\tilde{\Psi}(x)\in \mathcal{U}_\eta^{k,\alpha}(\Sigma)$ for every $x\in  B^q$.

A continuous map $\Psi:\partial B^q\rightarrow \mathcal{U}_\eta^{k,\alpha}(\Sigma)$ can be written as 
 $$\Psi(x)={\rm graph}_\Sigma(h(x))$$ with $h:\partial B^q \rightarrow \{f\in C^{k,\alpha}(\Sigma): |f|_{k,\alpha}<\eta\}$. For $x\in \partial B^q$ and $0<t\leq 1$, we define $\tilde{\Psi}(tx)={\rm graph}_\Sigma(th(x))$ and
$\tilde{\Psi}(0)=\Sigma$. This defines a continuous  extension $\tilde{\Psi}:B^q\rightarrow \mathcal{S}^{k,\alpha}$ of $\Psi$. Note that $\tilde{\Psi}(B^q)\subset \mathcal{U}_\eta^{k,\alpha}(\Sigma)$.  This proves the claim.

{For $\kappa>0$, there exists $0<\tilde{\theta}=\tilde{\theta}(\kappa)<\kappa$ such that if ${\varphi}_\Sigma(w)\in \mathcal{U}^{k,\alpha}_{\tilde{\theta}}({\varphi}_\Sigma(v))$ then $|v-w|<\kappa$. For $\theta>0$, there exists $0<\kappa=\kappa(\theta)<\theta$ such that if $v,w\in \overline{B}^{j(\Sigma)}$ satisfy 
$|v-w|<\kappa$ and $|{\varphi}_\Sigma(w)| \in  \mathcal{V}_1$, then ${\varphi}_\Sigma(v)\in \mathcal{U}^{k,\alpha}_\theta({\varphi}_\Sigma(w))$.}

 Let $X'=\{x\in X: {\bf F}( |\Phi(x)|, |\Sigma|)< \eta_\Sigma/2\}$ and $\delta>0$ be as in Lemma \ref{close.implies.homotopic}. Let $0<\xi<\delta/2$ and  $\theta=\theta(\xi)$. Let $\xi_1=\xi(\theta)$ and $\theta_1=\theta(\xi_1)$.  Let $\xi_2=\xi(\theta_1)$ and $\theta_2=\theta(\xi_2)$. Let $\kappa=\kappa(\theta_2)$.   Let $0<\theta_3<\min\{\eta_\Sigma, \tilde{\theta}(\kappa)\}$, and $\xi_3=\xi(
\theta_3)$. {Assume that } 
$$
 \sup_{x\in X} {\bf F}(\Phi(x),\mathcal{V}_1)<\xi_3/5=\bar \xi<\xi/5. 
$$
We can consider a barycentric subdivision $X(l)$ of $X$ such that 
$$
\mathcal{F}(\Phi(x),\Phi(y))+ {\bf F}(|\Phi(x)|,|\Phi(y)|)<\xi_3/5
$$
for every face $\alpha\in X(l)$, $x,y\in \alpha$.  If $x$ is a vertex of $X(l)$, choose $\Sigma_x\in \mathcal{V}_1$ with 
${\bf F}(\Phi(x),\Sigma_x)<\xi_3/5$ and define $\Phi'(x)=\Sigma_x$. Notice that for every vertex $x$, ${\bf F}(\Phi'(x),\Phi(x))<\xi_3/5$.

 If $x,y$ are vertices of the same face, then ${\bf F}(\Sigma_x,\Sigma_y)<\xi_3<\xi_2<\xi_1$.  Hence $\mathcal{U}^{k,\alpha}_{\xi_2}(\Sigma_x)\subset \mathcal{U}^{k,\alpha}_{\theta_1}(\Sigma_y)$, $\mathcal{U}^{k,\alpha}_{\xi_2}(\Sigma_y)\subset \mathcal{U}^{k,\alpha}_{\theta_1}(\Sigma_x)$ and $\mathcal{U}^{k,\alpha}_{\xi_1}(\Sigma_x)\subset \mathcal{U}^{k,\alpha}_{\theta}(\Sigma_y)$ and $\mathcal{U}^{k,\alpha}_{\xi_1}(\Sigma_y)\subset \mathcal{U}^{k,\alpha}_{\theta}(\Sigma_x)$. Also if $\sigma$ is a face that is contained in a face $\tilde{\sigma}$ that intersects $X'$, then for any vertex
$x$ of $\sigma$, ${\bf F}(|\Sigma_x|,|\Sigma|)<\eta_\Sigma$. This implies that $\Sigma_x\in {\mathcal{W}}_\Sigma$. Define
$\tilde{X}$ to be the complex formed by the union of all faces $\sigma$ which are contained in some face that intersects $X'$.

Let $[x,y]$ be a one-dimensional face of $X(l)$ ($x,y$ are vertices). Notice that ${\bf F}(\Sigma_x,\Sigma_y)<\xi_3$ implies $\Sigma_y\in \mathcal{U}_{\theta_3}^{k,\alpha}(\Sigma_x)$. If $[x,y]\in \tilde{X}$, then $\{\Sigma_x,\Sigma_y\}\subset {\mathcal{W}}_\Sigma$. Hence $\Sigma_x={\varphi}_\Sigma(v)$, $\Sigma_y={\varphi}_\Sigma(w)$, and we set
$\Phi'((1-t)x+ty)={\varphi}_\Sigma((1-t)v+tw)$, $t\in [0,1]$. Since $|v-w|<\kappa$, we have that $\Phi'(z) \in \mathcal{U}^{k,\alpha}_{\theta_2}(\Sigma_x)$ for every $z\in [x,y]$.  If $[x,y]$ is not a  one-dimensional face of $X'$, since $\Sigma_y\in \mathcal{U}_{\theta_3}^{k,\alpha}(\Sigma_x)$ we can use the claim to extend
$\Phi'$ continuously to $[x,y]$ so that $\Phi'([x,y])\subset  \mathcal{U}_{\theta_3}^{k,\alpha}(\Sigma_x)$. In any case,
${\bf F}(\Phi'(z),\Sigma_x)<\xi_2$ for every $z\in [x,y]$. 

Let $\sigma$ be a two-dimensional face of $X(l)$.  Suppose $\sigma \in \tilde{X}$. Then $\partial\sigma \subset \tilde{X}$.
If the vertices $x_k$ of $\sigma$, $1\leq k\leq 3$, satisfy $\Sigma_{x_k}={\varphi}_\Sigma(v_k)$, we can define
$\Phi'(\sum_k t_k x_k)={\varphi}_\Sigma(\sum_k t_k v_k)$ for $t_k\in [0,1]$, $\sum_kt_k=1$. This is an extension of $\Phi'$ to $\sigma$. Notice that $|\sum_k t_k v_k - v_1|< \kappa$. Hence $\Phi'(z) \in \mathcal{U}^{k,\alpha}_{\theta_2}(\Sigma_{x_1})$ for every $z\in\sigma$. Suppose $\sigma \notin \tilde{X}$. Since $\theta_2<\xi_2$, if we pick a vertex $x$ of $\sigma$ it follows that
$\Phi'(z)\in \mathcal{U}^{k,\alpha}_{\theta_1}(\Sigma_x)$ for every $z\in \partial \sigma$. Hence by the claim we can extend
$\Phi'$ to $\sigma$ such that $\Phi'(z)\in \mathcal{U}^{k,\alpha}_{\theta_1}(\Sigma_x)$ for every $z\in \sigma$. In any case, 
there is a vertex $x$ of $\sigma$ such that ${\bf F}(\Phi'(z),\Sigma_x)<\xi_1$ for every $z\in \sigma$.

Suppose $\sigma$ is a three-dimensional face of $X(l)$. If $\sigma \in \tilde{X}$, then $\partial\sigma \subset \tilde{X}$.
If the vertices $x_k$ of $\sigma$, $1\leq k\leq 4$, are such that  $\Sigma_{x_k}={\varphi}_\Sigma(v_k)$, we can define
$\Phi'(\sum_k t_k x_k)={\varphi}_\Sigma(\sum_k t_k v_k)$ for $t_k\in [0,1]$, $\sum_kt_k=1$. This is an extension of $\Phi'$ to $\sigma$. Notice that $|\sum_k t_k v_k - v_1|< \kappa$. Hence $\Phi'(z) \in \mathcal{U}^{k,\alpha}_{\theta_2}(\Sigma_{x_1})$ for every $z\in\sigma$. Suppose then that $\sigma \notin \tilde{X}$. Since $\theta_1<\xi_1$, if we pick a vertex $x$ of $\sigma$ then
$\Phi'(z)\in \mathcal{U}^{k,\alpha}_{\theta}(\Sigma_x)$ for every $z\in \partial \sigma$. By the claim we can extend
$\Phi'$ to $\sigma$ such that $\Phi'(z)\in \mathcal{U}^{k,\alpha}_{\theta}(\Sigma_x)$ for every $z\in \sigma$. In both cases, 
there is a vertex $x$ of $\sigma$ such that ${\bf F}(\Phi'(z),\Sigma_x)<\xi$ for every $z\in \sigma$.

Hence we have defined a continuous map $\Phi':X \rightarrow \mathcal{S}^{k,\alpha}$. If $z\in \sigma$, $\sigma \in X(l)$, there is a vertex
$x$ of $\sigma$ such that ${\bf F}(\Phi'(z),\Sigma_x)<\xi$. Hence
$$
{\bf F}(\Phi'(z),\Phi(z))\leq {\bf F}(\Phi'(z),\Sigma_x)+ {\bf F}(\Sigma_x, \Phi(x))+{\bf F}(\Phi(x),\Phi(z))<2\xi
$$
for every $z\in X$. Since $2\xi<\delta$, the map $\Phi'$ is a three-sweepout.

Notice that $\Phi'(\tilde{X})\subset {\mathcal{W}}_\Sigma$. If $z\in X\setminus \tilde{X}$, let $\sigma\in X(l)$ be a face with $z\in \sigma$. Then $\sigma$ is disjoint from $X'$.  Hence $z\notin X'$ which implies ${\bf F}(|\Phi(z)|,|\Sigma|)\geq \eta_\Sigma/2$. Therefore
$$
\Phi'(X)\subset \mathcal{Z}=\{S\in \mathcal{S}^{k,\alpha}:{\bf F}(|S|,|\Sigma|)>r\}   \, \bigcup \, {\mathcal{W}}_\Sigma,
$$
with $r=\eta_\Sigma/3$.
\end{proof}

Let $\mathcal{G}\subset \mathcal{V}$ be  the set of $\Sigma\in \mathcal{V}$ such that ${\varphi}_\Sigma$ and $\eta_\Sigma$ can be chosen so that ${\varphi}_\Sigma(z)$ is a minimal surface for all  $z\in \overline{B}^3$.  The set $\mathcal{G}$ is open in $\mathcal{V}$, and hence ${\mathcal{B}}=\mathcal{V}_1\setminus \mathcal{G}$  is compact in the $C^{k,\alpha}$-topology. If $\Sigma\in \mathcal{B}$, there is a sequence $v_i\in \overline{B}^3$, $v_i\rightarrow 0$, depending on $\Sigma$, such that ${\varphi}_\Sigma(v_i)$ is not a minimal surface for every $i\geq 1$.

For $\Sigma \in \mathcal{B}$, choose $t_\Sigma$ to be a radius $s$ so that every  $\Sigma'\in {\varphi}_\Sigma(\overline{B}^{3}_s)$ satisfies ${\bf F}(|\Sigma'|,|\Sigma|)\leq \eta_\Sigma/8$ and there exists $z_\Sigma\in \partial \overline{B}^3_s$ such that ${\varphi}_\Sigma(z_\Sigma)$ is not a minimal surface. Let $0<\kappa_\Sigma<\eta_\Sigma$ such that  
${\bf F}(|{\varphi}_\Sigma(z_\Sigma)|,\mathcal{V}_1)\geq \kappa_\Sigma$. Let $0<\gamma_\Sigma<\eta_\Sigma/8$ be such that ${\bf F}(|\Sigma'|,|\Sigma|)\geq \gamma_\Sigma$ for every $\Sigma'$ satisfying
$\Sigma'\in {\varphi}_\Sigma(\overline{B}^{3}\setminus B^{3}_{t_\Sigma/2})$.

\begin{prop}\label{ruling.out.2}
Let $\Sigma \in \mathcal{B}$. Suppose $\Phi_i:X_i\rightarrow \mathcal{S}^{k,\alpha}$ is a sequence of continuous maps, $X_i$ a three-dimensional compact simplicial complex, which are three-sweepouts, with  ${\bf \Lambda}(\{\Phi_i\})\subset \mathcal{V}_1$. There exists a sequence of continuous maps $\Psi_i:Y_i\rightarrow \mathcal{S}^{k,\alpha}$, $Y_i$ a  three-dimensional compact simplicial complex, which are  three-sweepouts and such that
$$
{\bf \Lambda}(\{\Psi_i\}_i) \subset {\bf \Lambda}(\{\Phi_i\}_i)  \cap \left(B_{\bf F}(|\Sigma|,\gamma_\Sigma)\right)^c.
$$ 
\end{prop}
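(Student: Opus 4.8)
The plan is to use the minimal surface $\Sigma\in\mathcal{B}$ as an obstacle and push the sweepout off a small ${\bf F}$-ball around $|\Sigma|$, keeping track of the image set. Fix $\Sigma\in\mathcal{B}$ with the associated data ${\varphi}_\Sigma$, $\eta_\Sigma$, $t_\Sigma$, $z_\Sigma$, $\kappa_\Sigma$, $\gamma_\Sigma$. First I would invoke Proposition \ref{limit.flat.topology} to conclude that $\sup_{x\in X_i}{\bf F}(\Phi_i(x),\mathcal{V}_1)\to 0$, so that for $i$ large the hypotheses of Proposition \ref{ruling.out.1.0} are satisfied with an arbitrarily small $\xi$. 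Applying Proposition \ref{ruling.out.1.0} (with, say, $\xi=\xi_i\to 0$) we may replace $\Phi_i$ by a three-sweepout, still denoted $\Phi_i$, whose image lies in $\mathcal{Z}=\{S:{\bf F}(|S|,|\Sigma|)>\eta_\Sigma/3\}\cup{\mathcal W}_\Sigma$, and with ${\bf F}(\Phi_i'(x),\Phi_i(x))<2\xi_i$ so that ${\bf \Lambda}(\{\Phi_i'\})\subset{\bf \Lambda}(\{\Phi_i\})$ still. From now on write $\Phi_i$ for this modified sweepout.

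The key geometric point is that ${\mathcal W}_\Sigma={\varphi}_\Sigma(\overline B^3)$ is a three-parameter family, and we want to deform the part of the sweepout that enters ${\mathcal W}_\Sigma$ so that it stays in the annular region ${\varphi}_\Sigma(\overline B^3\setminus B^3_{t_\Sigma/2})$, or else gets pushed to the non-minimal surface ${\varphi}_\Sigma(z_\Sigma)$. Concretely, let $X_i'=\{x\in X_i:\Phi_i(x)\in{\varphi}_\Sigma(\overline B^3_{t_\Sigma})\}$; on a neighborhood of $X_i'$ the map $\Phi_i$ factors as ${\varphi}_\Sigma\circ f_i$ for a continuous $f_i$ into $\overline B^3_{t_\Sigma}$ (using injectivity of ${\varphi}_\Sigma$ and the smooth-topology control). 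Since $f_i$ maps the three-dimensional complex $X_i'$ into a three-ball, it is not surjective onto the interior sphere of any smaller radius; more precisely, I would use a degree/retraction argument: either $f_i$ misses some point $p\in B^3_{t_\Sigma/2}$, in which case we radially retract $B^3_{t_\Sigma/2}\setminus\{p\}$ onto $\partial B^3_{t_\Sigma/2}$ and compose, pushing $\Phi_i$ on $X_i'$ into ${\varphi}_\Sigma(\overline B^3\setminus B^3_{t_\Sigma/2})$ — which lies in $B_{\bf F}(|\Sigma|,\gamma_\Sigma)^c$ by the choice of $\gamma_\Sigma$ — or $f_i$ does hit every point, in which case we first homotope $f_i$ (rel the part outside $\overline B^3_{t_\Sigma}$) to a map missing a generic point near $z_\Sigma$ and then retract as above. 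The retraction must be performed as a homotopy supported where ${\bf F}(|\Phi_i(x)|,|\Sigma|)<\eta_\Sigma/4$, say, so that it glues continuously with the unchanged part $\Phi_i|_{\{{\bf F}(|\Phi_i(x)|,|\Sigma|)\ge \eta_\Sigma/3\}}$; this is possible because on the overlap region ${\varphi}_\Sigma^{-1}$ takes values near $\partial\overline B^3_{t_\Sigma}$, where the retraction is already close to the identity. The resulting map $\Psi_i:Y_i\to\mathcal{S}^{k,\alpha}$ (with $Y_i=X_i$ as a complex, possibly after a subdivision to make the homotopy simplicial) is still a three-sweepout, because the deformation is a homotopy and hence preserves $(i\circ\Psi_i)^*(\overline\lambda^3)\ne 0$ — here I would cite Lemma \ref{close.implies.homotopic} if the deformation stays small in the flat metric, or simply observe that a homotopy of continuous maps preserves the pullback cohomology class.

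Finally, for the image set: by construction every $\Psi_i(x)$ either equals the old $\Phi_i(x)$ (when ${\bf F}(|\Phi_i(x)|,|\Sigma|)\ge\eta_\Sigma/3$) or lies in ${\varphi}_\Sigma(\overline B^3\setminus B^3_{t_\Sigma/2})$, hence in all cases ${\bf F}(|\Psi_i(x)|,|\Sigma|)\ge\min\{\eta_\Sigma/3,\gamma_\Sigma\}=\gamma_\Sigma$ (using $\gamma_\Sigma<\eta_\Sigma/8$). Therefore ${\bf \Lambda}(\{\Psi_i\})$ avoids $B_{\bf F}(|\Sigma|,\gamma_\Sigma)$. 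For the inclusion ${\bf \Lambda}(\{\Psi_i\})\subset{\bf \Lambda}(\{\Phi_i\})$, note that any varifold limit $W$ of $|\Psi_{i_j}(x_{i_j})|$ is also a varifold limit of minimal-surface graphs ${\varphi}_\Sigma(v)$ with $v$ in the annulus, or of the original $|\Phi_{i_j}(x_{i_j})|$; in the first case $W\in\mathcal{V}_1$ is a fixed minimal sphere that is already in ${\bf \Lambda}(\{\Phi_i\})$ because ${\varphi}_\Sigma$ of a fixed point near $\partial\overline B^3_{t_\Sigma}$ is realized as a limit of $\Phi_i$ values (this needs a short argument: since $\Phi_i$ passed through ${\varphi}_\Sigma(\overline B^3_{t_\Sigma})$ and $X_i'$ is connected to its complement, every value ${\varphi}_\Sigma(v)$ with $|v|$ close to $t_\Sigma$ is a limit of $\Phi_i$-values), and in the second case $W\in{\bf \Lambda}(\{\Phi_i\})$ directly.

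\medskip

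\emph{Main obstacle.} The hard part is the degree-theoretic deformation inside ${\mathcal W}_\Sigma$: making the retraction $f_i$ of the three-ball genuinely continuous and supported in the correct collar, and verifying that after composing with ${\varphi}_\Sigma$ the result glues to the unmodified exterior part in the $C^{k,\alpha}$ (hence ${\bf F}$) topology. One must also be careful that the homotopy pushing $f_i$ off a point near $z_\Sigma$ does not move any value out of $\overline B^3_{t_\Sigma}$ into the region where ${\varphi}_\Sigma$ is no longer defined or no longer an embedding — this is why the radii $t_\Sigma/2$ and $t_\Sigma$ (and the gap between $\gamma_\Sigma$, $\kappa_\Sigma$, $\eta_\Sigma/8$) were arranged in advance. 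Keeping the sweepout property throughout is automatic from homotopy invariance, so the combinatorial bookkeeping of the complexes $X_i'$, $\tilde X$ and the collar neighborhoods is where the real work lies.
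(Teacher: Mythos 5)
Your overall strategy --- directly retracting the part of the sweepout that enters $\mathcal{W}_\Sigma$ onto an annulus and invoking homotopy invariance --- is genuinely different from the paper's. The paper never deforms $\Phi_i'$ at all: it packages the target into two open sets $\mathcal{X}_1$ (a small neighborhood of $\Phi_i'(X_i)\cap(\mathcal{Z}\setminus\varphi_\Sigma(B^3_{t_\Sigma}))$) and $\mathcal{X}_2=\varphi_\Sigma(B^3_{t_\Sigma})$, arranges the overlap to be a collar over $\Omega\subsetneq\partial\overline{B}^3_{t_\Sigma}$ with $z_\Sigma\notin\Omega$, runs Mayer--Vietoris to show $H^3(\mathcal{X})\to H^3(\mathcal{X}_1)$ is an isomorphism (using $H^3(\mathcal{X}_2)=0$ and $H^q(\Omega)=0$ for $q\geq 2$), and then builds a \emph{fresh} three-sweepout $\Psi_i$ with image inside $\mathcal{X}_1$. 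The crucial observation that drives this is one you do not use: since $\xi<\kappa_\Sigma/3$ and ${\bf F}(|\varphi_\Sigma(z_\Sigma)|,\mathcal{V}_1)\geq\kappa_\Sigma$, the modified sweepout $\Phi_i'$ \emph{already} misses $\varphi_\Sigma(z_\Sigma)$ --- so $\Omega$ can be chosen to be a proper open subset of $S^2$, which is exactly what kills $H^2(\mathcal{X}_1\cap\mathcal{X}_2)$. This is the structural replacement for the degree argument you were reaching for; in your surjective case you say ``homotope $f_i$ off a generic point near $z_\Sigma$'', but that is precisely the step that needs an obstruction-theoretic justification and you offer none.

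The more serious gap is in the image-set inclusion. Your retraction sends surfaces to $\varphi_\Sigma(\partial B^3_{t_\Sigma/2})$, and you then assert that varifold limits of such surfaces are ``minimal spheres already in ${\bf\Lambda}(\{\Phi_i\})$''. This is false. By the definition of $\mathcal{B}$, the surfaces $\varphi_\Sigma(v)$ for $v$ away from $0$ are \emph{not} minimal in general (indeed $z_\Sigma$ was chosen precisely so that $\varphi_\Sigma(z_\Sigma)$ is non-minimal, and the set of non-minimal values may well intersect $\partial B^3_{t_\Sigma/2}$). Since ${\bf\Lambda}(\{\Phi_i\})\subset\mathcal{V}_1$ consists only of minimal spheres, a non-minimal $|\varphi_\Sigma(v)|$ cannot lie in ${\bf\Lambda}(\{\Phi_i\})$, and your supporting claim that ``every value $\varphi_\Sigma(v)$ with $|v|$ close to $t_\Sigma$ is a limit of $\Phi_i$-values'' is incorrect for the same reason. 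Shrinking the retraction radius does not help: as the radius shrinks the pushed-off surfaces converge to $|\Sigma|$ itself, which is exactly the varifold you are trying to exclude. The paper sidesteps this entirely because $\Psi_i$ is constructed inside $\mathcal{X}_1$, which is an arbitrarily small neighborhood of a piece of the original image $\Phi_i'(X_i)$; the diagonal argument then forces ${\bf\Lambda}(\{\Psi_i\})\subset{\bf\Lambda}(\{\Phi_i\})$, with no new surfaces introduced.
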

 
\begin{proof}
Let $\xi>0$ {and apply Proposition \ref{ruling.out.1.0} to each $\Phi_i$ with $i$ sufficiently large to obtain a sequence of continuous maps
$\Phi_i':X_i\rightarrow \mathcal{S}^{k,\alpha}$ which are three-sweepouts.}  The sequence satisfies
$$
{\bf F}(\Phi_i'(z),\Phi_i(z))< 2\xi
$$
for every $z\in X_i$, and 
$$
\Phi_i'(X_i)\subset \mathcal{Z}=\{S\in \mathcal{S}^{k,\alpha}:{\bf F}(|S|,|\Sigma|)>\eta_\Sigma/3\}   \, \bigcup \, {\mathcal{W}}_\Sigma.
$$
Suppose $\xi<\kappa_\Sigma/3$, so that ${\varphi}_\Sigma(z_\Sigma)\notin \Phi_i'(X_i).$

 We consider the space $\mathcal{Z}$ with the $C^{k,\alpha}$ induced topology.  Let   $\mathcal{X}_1$ be a sufficiently small neighborhood of $\Phi_i'(X_i) \cap \left(\mathcal{Z}\setminus  {\varphi}_\Sigma(B^{3}_{t_\Sigma})\right)$ in $\mathcal{Z}$, $\mathcal{X}_2={\varphi}_\Sigma(B^{3}_{t_\Sigma})$ and $\mathcal{X}=\mathcal{X}_1\cup \mathcal{X}_2$. Note that $\mathcal{X}_1$ and $\mathcal{X}_2$ are open subsets of $\mathcal{Z}$ and hence of $\mathcal{X}$. We can construct $\mathcal{X}_1$ such that 
 $$
 {\varphi}_\Sigma^{-1}(\mathcal{X}_1\cap\mathcal{X}_2)=\{ s\cdot z: z\in \Omega, s\in (1-q,1)\}
 $$
 for some $0<q<1/2$ and some neighborhood $\Omega$ of  ${\varphi}_\Sigma^{-1}(\Phi_i'(X_i))\cap \partial \overline{B}^3_{t_\Sigma}$ in 
 $\partial \overline{B}^3_{t_\Sigma}$ satisfying $z_\Sigma \notin \Omega$.  We assume without loss of generality that $\Omega\neq \emptyset$.
  
    The Mayer-Vietoris sequence in cohomology with $\mathbb{Z}_2$ coefficients reads as
$$
\cdots \rightarrow H^2(\mathcal{X}_1\cap \mathcal{X}_2)\rightarrow H^3(\mathcal{X}) \rightarrow H^3(\mathcal{X}_1) \oplus H^3(\mathcal{X}_2)\rightarrow H^3(\mathcal{X}_1\cap \mathcal{X}_2)\rightarrow \cdots
$$
Since $\mathcal{X}_1\cap \mathcal{X}_2$ has the homotopy type of $\Omega$, which is homeomorphic to a noncompact manifold of dimension $2$, it follows that $H^q(\mathcal{X}_1\cap\mathcal{X}_2)=0$ for every $q\geq 2$. Hence the restriction homomorphism
$$
H^3(\mathcal{X}) \rightarrow H^3(\mathcal{X}_1) \oplus H^3(\mathcal{X}_2)
$$
is an isomorphism. But $H^3(\mathcal{X}_2)=0$, so $i_1^*:H^3(\mathcal{X}) \rightarrow H^3(\mathcal{X}_1)$ is an isomorphism ($i_1:\mathcal{X}_1\rightarrow\mathcal{X}$ is the inclusion map).

Notice that $\Phi_i'(X_i)\subset \mathcal{X}$. As before, we can construct a continuous map
$\Psi_i:Y_i\rightarrow \mathcal{X}_1\subset \mathcal{S}^{k,\alpha}$, from some three-dimensional compact simplicial complex $Y_i$, that is a three-sweepout. A diagonal argument proves the proposition.
\end{proof}

\subsection{Part III} Consider the sequence $\{\Phi_i\}_{i\in \mathbb{N}} \subset \mathcal{P}_3'$ given by \eqref{sequence.optimal}. From Proposition \ref{limit.connected.multiplicity.one} we see that we can take the compact set $\mathcal{V}_1$ considered in Section \ref{part2}  to be $\mathcal{V}_1={\bf \Lambda}(\{\Phi_i\}_i)={\bf C}(\{\Phi_i\}_i)$.

\begin{prop}\label{refining.sequence.2}
There exists a sequence of continuous three-sweepouts  $\{\tilde{\Psi}_i:\tilde{Y}_i\rightarrow \mathcal{S}^{k,\alpha}\}_i$, where $\tilde{Y}_i$ are three-dimensional compact simplicial complexes,  such that for  a compact set $\tilde{\mathcal{G}}\subset \mathcal{G}$,
$$
\lim_{i\rightarrow \infty} \sup_{x\in \tilde{Y}_i} {\bf F}(|\tilde{\Psi}_i(x)|,\tilde{\mathcal{G}})=0.
$$
\end{prop}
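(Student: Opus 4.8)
The plan is to remove, one point at a time, small ${\bf F}$-balls around the set ${\mathcal B}=\mathcal V_1\setminus\mathcal G$ by repeatedly invoking Proposition \ref{ruling.out.2}, until the image set of the resulting sequence of three-sweepouts is confined to $\mathcal G$. First I would observe that, since $\mathcal V_1={\bf \Lambda}(\{\Phi_i\})={\bf C}(\{\Phi_i\})$, a standard compactness argument using \eqref{sequence.optimal} gives $\sup_{x\in X_i}{\bf F}(|\Phi_i(x)|,\mathcal V_1)\rightarrow 0$; in particular ${\rm area}(\Phi_i(x))$ is uniformly close to $\sigma$ for $i$ large, so after discarding finitely many terms each $\Phi_i$ takes values in $\tilde{\mathcal S}\subset\mathcal S^{k,\alpha}$ and is continuous there. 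Thus $\{\Phi_i\}$ is a sequence of continuous three-sweepouts $X_i\rightarrow\mathcal S^{k,\alpha}$ with ${\bf \Lambda}(\{\Phi_i\})\subset\mathcal V_1$, which is exactly the setting required by Proposition \ref{ruling.out.2}. I would also record that ${\mathcal B}$, being a closed subset of the ${\bf F}$-compact set $\mathcal V_1$, is itself compact in the ${\bf F}$-metric.

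Next I would extract a finite cover: for every $\Sigma\in{\mathcal B}$ one has $|\Sigma|\in B_{\bf F}(|\Sigma|,\gamma_\Sigma)$, so $\{B_{\bf F}(|\Sigma|,\gamma_\Sigma)\}_{\Sigma\in{\mathcal B}}$ is an ${\bf F}$-open cover of ${\mathcal B}$, from which I choose a finite subcover indexed by $\Sigma_1,\dots,\Sigma_N\in{\mathcal B}$. Then I iterate Proposition \ref{ruling.out.2}: starting from $\{\Phi_i^{(0)}\}=\{\Phi_i\}$, given a sequence $\{\Phi_i^{(j-1)}\}$ of continuous three-sweepouts into $\mathcal S^{k,\alpha}$ with ${\bf \Lambda}(\{\Phi_i^{(j-1)}\})\subset\mathcal V_1$, the proposition applied with $\Sigma=\Sigma_j$ produces a sequence $\{\Phi_i^{(j)}\}$ of continuous three-sweepouts into $\mathcal S^{k,\alpha}$ with
$$
{\bf \Lambda}(\{\Phi_i^{(j)}\})\subset{\bf \Lambda}(\{\Phi_i^{(j-1)}\})\cap\left(B_{\bf F}(|\Sigma_j|,\gamma_{\Sigma_j})\right)^c\subset\mathcal V_1 ,
$$
so the inductive hypothesis is preserved and the process continues. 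Setting $\{\tilde{\Psi}_i:\tilde{Y}_i\rightarrow\mathcal S^{k,\alpha}\}=\{\Phi_i^{(N)}\}$, these are continuous three-sweepouts on three-dimensional compact simplicial complexes and, because the $\Sigma_j$-balls cover ${\mathcal B}$,
$$
{\bf \Lambda}(\{\tilde{\Psi}_i\})\subset{\bf \Lambda}(\{\Phi_i\})\cap\bigcap_{j=1}^{N}\left(B_{\bf F}(|\Sigma_j|,\gamma_{\Sigma_j})\right)^c\subset\mathcal V_1\setminus{\mathcal B}\subset\mathcal G .
$$

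Finally I would set $\tilde{\mathcal G}={\bf \Lambda}(\{\tilde{\Psi}_i\})$. Each step of the construction only moves the sweepout within a small ${\bf F}$-neighborhood of the image of the previous one (Propositions \ref{ruling.out.1.0} and \ref{ruling.out.2}), so $\sup_{x}{\bf M}(\tilde{\Psi}_i(x))$ stays uniformly bounded; hence $\tilde{\mathcal G}$ is a nonempty compact set of integral varifolds, and by the inclusion above $\tilde{\mathcal G}$ is a compact subset of $\mathcal G$. The required limit then follows from the definition of the image set: were $\sup_{x\in\tilde{Y}_i}{\bf F}(|\tilde{\Psi}_i(x)|,\tilde{\mathcal G})$ not to tend to $0$, one could find $\varepsilon>0$, a subsequence $\{i_j\}$ and points $x_{i_j}\in\tilde{Y}_{i_j}$ with ${\bf F}(|\tilde{\Psi}_{i_j}(x_{i_j})|,\tilde{\mathcal G})\geq\varepsilon$, and a further subsequence of $|\tilde{\Psi}_{i_j}(x_{i_j})|$ would converge in the ${\bf F}$-metric to some $W\in{\bf \Lambda}(\{\tilde{\Psi}_i\})=\tilde{\mathcal G}$, a contradiction. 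I expect the main obstacle to lie not in any single estimate but in the bookkeeping: verifying that the hypotheses of Proposition \ref{ruling.out.2} are genuinely inherited at every stage of the iteration, and that the finitely many balls $B_{\bf F}(|\Sigma_j|,\gamma_{\Sigma_j})$ really do exhaust ${\mathcal B}$ — everything substantial having been absorbed into Propositions \ref{ruling.out.1.0} and \ref{ruling.out.2}.
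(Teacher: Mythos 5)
Your proposal follows essentially the same route as the paper's proof: extract a finite cover of the compact set $\mathcal{B}=\mathcal{V}_1\setminus\mathcal{G}$ by the balls $B_{\bf F}(|\Sigma_j|,\gamma_{\Sigma_j})$ and apply Proposition \ref{ruling.out.2} consecutively for $\Sigma_1,\dots,\Sigma_q$, starting from the sequence $\{\Phi_i\}$, then define $\tilde{\mathcal{G}}={\bf \Lambda}(\{\tilde{\Psi}_i\})$. The extra details you supply — that the initial $\Phi_i$ indeed qualify as continuous maps into $\mathcal{S}^{k,\alpha}$ for $i$ large, that the hypotheses of Proposition \ref{ruling.out.2} are preserved at each stage, and the final subsequence argument for $\sup_x {\bf F}(|\tilde{\Psi}_i(x)|,\tilde{\mathcal{G}})\to 0$ — are correct and simply make explicit what the paper leaves implicit.
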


\begin{proof}
The set $\mathcal{B}\subset \mathcal{V}_1$ is compact in the varifold topology. Hence there exists $\{\Sigma_1,\dots,\Sigma_q\}\subset \mathcal{B}$ such that
$$
\mathcal{B} \subset B_{\bf F}(|\Sigma_1|,\gamma_{\Sigma_1}) \cup \cdots \cup B_{\bf F}(|\Sigma_q|,\gamma_{\Sigma_q}).
$$
By applying Proposition \eqref{refining.sequence.2} to $\Sigma_1, \dots, \Sigma_q$ consecutively, starting with the sequence $\{\Phi_i\}_i$, we get a sequence  of continuous maps $\tilde{\Psi}_i:\tilde{Y}_i\rightarrow \mathcal{S}^{k,\alpha}$, $\tilde{Y}_i$ a  three-dimensional compact simplicial complex, which are  three-sweepouts and such that
$$
{\bf \Lambda}(\{\tilde{\Psi}_i\}_i) \subset {\bf \Lambda}(\{\Phi_i\}_i)  \cap \left(B_{\bf F}(|\Sigma_1|,\gamma_{\Sigma_1})\right)^c \cap \cdots \cap \left(B_{\bf F}(|\Sigma_q|,\gamma_{\Sigma_q})\right)^c.
$$
The proposition is proved by setting $\tilde{\mathcal{G}}={\bf \Lambda}(\{\tilde{\Psi}_i\}_i)$.
\end{proof}

 Let $\Sigma \in {\mathcal{G}}$. By the first variation formula the area of ${\varphi}_\Sigma(z)$ is equal to $\sigma$ for every $z\in \overline{B}^3$. 
Since ${\varphi}_\Sigma$ is an embedding, the nullity of ${\varphi}_\Sigma(z)$ is greater than or equal to three for every $z\in B^3$. Therefore ${\varphi}_\Sigma(z)$ cannot be stable, and hence has Morse index greater than or equal to one. Let $Y\subset B^3$ be the set of $z\in B^3$  such that the Morse index of ${\varphi}_\Sigma(z)$ is one. By \cite{cheng}, the second eigenvalue of the Jacobi operator has multiplicity at most three. Hence, for $z\in Y$, the nullity of ${\varphi}_\Sigma(z)$ is equal to three for every $z\in B^3$. By continuity of the eigenvalues the set $Y$ is open. Since $Y\subset B^3$ is also closed, we get $Y=B^3$.
Therefore ${\varphi}_\Sigma(B^3)\subset {\mathcal{G}}$.

The set ${\mathcal{G}}$ has the structure of a smooth three-manifold induced by the charts $({\varphi}_\Sigma)_{|B^3}$. Let $\{\tilde{\Psi}_i:\tilde{Y}_i\rightarrow \mathcal{S}^{k,\alpha}\}_i$ and $\tilde{{\mathcal{G}}}$ be as in Proposition \ref{refining.sequence.2}. Note that
$\tilde{\mathcal{G}}\subset {\mathcal{G}}$.

\begin{prop}\label{approximation.final}
There exists a sequence of continuous three-sweepouts  $\Psi_i:\tilde{Y}_i\rightarrow {{\mathcal{G}}}$ such that
$$
\lim_{i\rightarrow \infty} \sup_{x\in \tilde{Y}_i} {\bf F}(\Psi_i(x),\tilde{\mathcal{G}})=0.
$$
\end{prop}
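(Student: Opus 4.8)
The main work for this proposition has already been carried out in Parts I and II; what remains is to push the maps $\tilde\Psi_i$, which land in $\mathcal{S}^{k,\alpha}$ and whose image set is $\tilde{\mathcal{G}}\subset\mathcal{G}$, to honest maps into $\mathcal{G}$. The plan is to repeat the skeleton-by-skeleton interpolation of the proof of Proposition~\ref{ruling.out.1.0}, but to perform \emph{every} interpolation inside the charts $\varphi_\Sigma$ with $\Sigma\in\mathcal{G}$, so that the output takes values among minimal two-spheres. Recall from Proposition~\ref{refining.sequence.2} that $\tilde{\mathcal{G}}={\bf \Lambda}(\{\tilde\Psi_i\}_i)\subset\mathcal{G}$ is compact, that $\sup_{x\in\tilde Y_i}{\bf F}(|\tilde\Psi_i(x)|,\tilde{\mathcal{G}})\to 0$, and that $\tilde{\mathcal{G}}\subset\mathcal{V}_1$; hence Proposition~\ref{limit.flat.topology} applies to $\{\tilde\Psi_i\}$ and $\varepsilon_i:=\sup_{x\in\tilde Y_i}{\bf F}(\tilde\Psi_i(x),\mathcal{V}_1)\to 0$. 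Using that $\mathcal{G}$ is open in $\mathcal{V}$ and $\tilde{\mathcal{G}}$ is compact, fix $\varepsilon_0>0$ so that ${\bf F}(V,\tilde{\mathcal{G}})<\varepsilon_0$ and $V\in\mathcal{V}$ force $V\in\mathcal{G}$; and by Allard's Regularity Theorem together with the uniform form of Proposition~\ref{approximate.minimal} over the compact family $\tilde{\mathcal{G}}$, fix $\eta_0>0$ so that two minimal two-spheres $\Sigma,\Sigma'$ in $\mathcal{G}\cap B^{\bf F}_{\varepsilon_0}(\tilde{\mathcal{G}})$ with ${\bf F}(|\Sigma|,|\Sigma'|)<\eta_0$ satisfy $\Sigma'=\varphi_\Sigma(w)$ for a unique $w\in\overline{B}^3_{1/2}$, with $|w|$ tending to $0$ as ${\bf F}(|\Sigma|,|\Sigma'|)\to 0$.

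Fix $i$ with $\varepsilon_i$ small compared with $\varepsilon_0$ and $\eta_0$. After a barycentric subdivision of $\tilde Y_i$ — which does not change the underlying space — we may assume ${\bf F}(\tilde\Psi_i(x),\tilde\Psi_i(y))<\varepsilon_i$ whenever $x,y$ lie in a common simplex. For each vertex $x$ choose $\Sigma_x\in\mathcal{V}_1$ with ${\bf F}(\tilde\Psi_i(x),\Sigma_x)<2\varepsilon_i$; then $|\Sigma_x|$ is $O(\varepsilon_i)$-close in the varifold sense to $\tilde{\mathcal{G}}$, hence $\Sigma_x\in\mathcal{G}$ and ${\bf F}(|\Sigma_x|,\tilde{\mathcal{G}})=O(\varepsilon_i)$, and for $x,y$ vertices of a common simplex ${\bf F}(|\Sigma_x|,|\Sigma_y|)=O(\varepsilon_i)<\eta_0$, so $\Sigma_y=\varphi_{\Sigma_x}(w_{xy})$ with $|w_{xy}|=o(1)$. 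Set $\Psi_i(x)=\Sigma_x$ and extend over the $1$-, $2$- and $3$-skeleton exactly as in Proposition~\ref{ruling.out.1.0}: inductively, on a simplex $\sigma$ with distinguished vertex $v$, the boundary $\partial\sigma$ will already have been mapped into $\varphi_{\Sigma_v}(\overline{B}^3_{r_i})$ with $r_i=O(\varepsilon_i)$ — the radius grows only by a bounded factor at each of the finitely many steps — so one pulls back by $\varphi_{\Sigma_v}^{-1}$, extends the map $\partial\sigma\to\overline{B}^3_{r_i}$ over $\sigma$ by convexity of the ball, and pushes forward by $\varphi_{\Sigma_v}$. Since $\Sigma_v\in\mathcal{G}$, the chart $\varphi_{\Sigma_v}$ takes minimal two-spheres as values on all of $\overline{B}^3$, so this produces a continuous map $\Psi_i:\tilde Y_i\to\mathcal{G}$.

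To conclude: each $\Psi_i(z)$ lies in $\varphi_{\Sigma_v}(\overline{B}^3_{r_i})$ for a vertex $v$ of the simplex through $z$, so ${\bf F}(\Psi_i(z),\Sigma_v)=O(\varepsilon_i)$, and therefore
$$
\sup_{z\in\tilde Y_i}{\bf F}(\Psi_i(z),\tilde{\mathcal{G}})\le\sup_z\bigl({\bf F}(\Psi_i(z),\Sigma_v)+{\bf F}(\Sigma_v,\tilde{\mathcal{G}})\bigr)=O(\varepsilon_i)\longrightarrow 0,
$$
which is the asserted limit. Moreover ${\bf F}(\Psi_i(z),\tilde\Psi_i(z))\le{\bf F}(\Psi_i(z),\Sigma_v)+{\bf F}(\Sigma_v,\tilde\Psi_i(v))+{\bf F}(\tilde\Psi_i(v),\tilde\Psi_i(z))=O(\varepsilon_i)$, so for $i$ large the flat distance $\mathcal{F}(\Psi_i(z),\tilde\Psi_i(z))$ is below the threshold $\delta$ of Lemma~\ref{close.implies.homotopic}; since $\tilde\Psi_i$ is a three-sweepout, so is $\Psi_i$. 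For the remaining finitely many indices one defines $\Psi_i$ arbitrarily, the statement being asymptotic.

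The main obstacle is the inductive bookkeeping that legitimizes the skeleton construction: one must guarantee that at each stage $\partial\sigma$ is contained in the image under a single chart $\varphi_{\Sigma_v}$ of a ball of controlled size, which requires running through a finite chain of closeness constants exactly as in Proposition~\ref{ruling.out.1.0}, together with the uniform forms of Proposition~\ref{approximate.minimal} and of Allard's theorem over $\tilde{\mathcal{G}}$. The genuinely new point, relative to Proposition~\ref{ruling.out.1.0}, is the elementary observation that after Proposition~\ref{refining.sequence.2} the varifolds $\Sigma_x$ selected near $\tilde\Psi_i(x)$ automatically lie in $\mathcal{G}$ (by openness of $\mathcal{G}$ in $\mathcal{V}$), so that all chart interpolations can be, and are, carried out among minimal surfaces.
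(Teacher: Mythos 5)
Your proof is correct in outline and reaches the conclusion, but it takes a genuinely different technical route from the paper's. The paper endows $\mathcal{G}$ with a complete Riemannian metric $h$, chooses $\bar\rho$ so that $h$-geodesic balls of radius $\rho\le\bar\rho$ centered on $\tilde{\mathcal{G}}$ are geodesically convex, and then relates $d_h$ to the ${\bf F}$-metric by $c\,{\bf F}\le d_h\le{\bf F}$ near $\tilde{\mathcal{G}}$. The skeleton-by-skeleton extension is then performed by mapping lines from the barycenter to $h$-geodesics emanating from a chosen point of the convex ball, which is canonical and chart-free, so no bookkeeping across overlapping charts $\varphi_\Sigma$ is needed. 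You instead perform the extension inside the charts $\varphi_{\Sigma_v}$, as in Proposition~\ref{ruling.out.1.0}. That does work, and your observation that the openness of $\mathcal{G}$ in $\mathcal{V}$ forces the chosen reference surfaces $\Sigma_x$ into $\mathcal{G}$ is exactly the right point; but, unlike Proposition~\ref{ruling.out.1.0}, whose interpolation all happens in the single chart $\varphi_\Sigma$ around the one fixed $\Sigma$, your construction uses a different chart $\varphi_{\Sigma_v}$ on each simplex and must therefore control, uniformly over the compact set $\tilde{\mathcal{G}}$, the transition maps $\varphi_{\Sigma_v}^{-1}\circ\varphi_{\Sigma_{v'}}$ on small balls: when you pull $\Psi_i(\partial\sigma)$ back into the chart $\varphi_{\Sigma_v}$, the pieces of $\partial\sigma$ were built in neighboring charts, so you need these transitions to be uniformly Lipschitz (close to the identity) in order for the claim ``$\partial\sigma$ maps into $\varphi_{\Sigma_v}(\overline B^3_{r_i})$ with $r_i=O(\varepsilon_i)$'' to hold. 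This is plausible by compactness of $\tilde{\mathcal{G}}$ together with the uniform forms of Allard's theorem and Proposition~\ref{approximate.minimal}, but it is an extra layer of uniform chart-compatibility that you gesture at (``the radius grows only by a bounded factor at each of the finitely many steps'') rather than establish; the paper sidesteps it entirely by choosing the auxiliary metric $h$, which is the main payoff of its approach. In short: same skeleton-induction strategy and same conclusion, but the paper's geodesic-convexity device is cleaner, while yours is more self-contained at the cost of some uncompleted chart-transition estimates.
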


\begin{proof}
Let $h$ be a complete Riemannian metric on $\mathcal G$. Consider $\bar \rho$ so that,  for all $\Sigma\in\tilde{\mathcal G}$ and $\rho\leq \bar\rho$ the geodesic ball of radius $\rho$ centered at $\Sigma$ with respect to $h$, $B_{\rho}(\Sigma,h)$, is geodesically convex, meaning that any two points $B_{\rho}(\Sigma,h)$ can be connected by a unique geodesic contained in $B_{\rho}(\Sigma,h)$. We can suppose that for some constant $c$ we have
\begin{equation}\label{distance.metrics}
c{\bf F}(\Sigma,\Sigma')\leq d_h(\Sigma,\Sigma')\leq {\bf F}(\Sigma,\Sigma')\quad\text{for all }\Sigma \in\tilde{\mathcal G},\, \Sigma'\in B_{\bar \rho}(\Sigma,h)
\end{equation}
where $d_h$ is the distance with respect to the metric $h$. 

Choose $\rho<\bar \rho/2$. We can apply Proposition \ref{limit.flat.topology} to $\{\tilde{\Psi}_i\}$ and so for sufficiently large $i$
$$
 \sup_{x\in \tilde{Y}_i} {\bf F}(\tilde{\Psi}_i(x),\tilde{\mathcal{G}})<\rho/5. 
$$
Consider a barycentric subdivision $\tilde{Y}_i(l_i)$ of $\tilde{Y}_i$ such that 
$$
\mathcal{F}(\tilde{\Psi}_i(x),\tilde{\Psi}_i(y))+ {\bf F}(|\tilde{\Psi}_i(x)|,|\tilde{\Psi}_i(y)|)<\rho/5
$$
for every face $\alpha\in \tilde{Y}_i(l_i)$, $x,y\in \alpha$.  If $x$ is a vertex of $\tilde{Y}_i(l_i)$, choose $\Sigma_x\in \tilde{\mathcal{G}}$ with 
${\bf F}(\tilde{\Psi}_i(x),\Sigma_x)<\rho/5$ and define $\Psi_i(x)=\Sigma_x$.  For every vertex $x$, ${\bf F}(\Psi_i(x),\tilde{\Psi}_i(x))< \rho/5$ and  if $x,y$ are vertices of the same face, then ${\bf F}(\Sigma_x,\Sigma_y)< \rho$. 

Let $[x,y]$ be a one-dimensional face of $\tilde{Y}_i(l_i)$ ($x,y$ are vertices). Notice that $d_h(\Sigma_x,\Sigma_y)<\rho$ from  \eqref{distance.metrics} and hence we can use the geodesic convexity of $B_{\rho}(\Sigma_x,h)\subset {\mathcal{G}}$ to extend $\Psi_i$ to a continuous map $\Psi_i:[x,y]\rightarrow B_{\rho}(\Sigma_x,h)$. From \eqref{distance.metrics} we have ${\bf F}(\Psi_i(v),\tilde{\Psi}_i(v))<2c^{-1} \rho$ for every $v\in [x,y]$.

Let $\sigma$ be a two-dimensional face of $X_i(l_i)$. If $x$ is a vertex of $\sigma$ we have $\Psi_i(\partial\sigma)\subset B_{\rho}(\Sigma_x,h)$ from geodesic convexity. Connecting $\Sigma\in B_{\rho}(\Sigma_x,h)-\Psi_i(\partial\sigma)$  to every $\Sigma_v$, $v\in\partial\sigma$, with a geodesic we can extend $\Psi_i$ to a continuous map $\Psi_i:\sigma\rightarrow B_{\rho}(\Sigma_x,h)$ that sends the barycenter of $\sigma$ to some $S\in B_{\rho}(\Sigma_x,h)-\Psi_i(\partial\sigma)$ and  maps lines departing from the barycenter to geodesics  departing from $S$.
We have
${\bf F}(\Psi_i(v),\tilde{\Psi}_i(v))<2c^{-1}\rho$ for every $v\in \sigma$.

Suppose $\sigma$ is a three-dimensional face of $X_i(l_i)$. If $x$ is a vertex of $\sigma$, then for every other vertex $y$ of $\sigma$ we have $\Sigma_y\in B_{\rho}(\Sigma_x,h)$ and so $\Psi_i(\partial\sigma)\subset B_{2\rho}(\Sigma_x,h)$ from geodesic convexity. Therefore we can extend $\Psi_i$ to a continuous map $\Psi_i:\sigma\rightarrow \mathcal{G}$ that
sends the barycenter of $\sigma$ to some $S\in B_{2\rho}(\Sigma_x,h)-\Psi_i(\partial\sigma)$ and maps lines departing from the barycenter to geodesics  departing from $S$. 
 We have
${\bf F}(\Psi_i(v),\tilde{\Psi}_i(v))<3c^{-1}\rho$ for every $v\in \sigma$.

The continuous map $\Psi_i:\tilde{Y}_i\rightarrow {\mathcal{G}}$ satisfies ${\bf F}(\Psi_i(x),\tilde{\Psi}_i(x))<3c^{-1}\rho$ for every $x\in \tilde{Y}_i$. Since $\rho>0$ is arbitrary, a diagonal argument finishes the proof.
\end{proof}

\subsection{Proof of Theorem \ref{zoll.characterization.2}}

{Consider a map $$\Psi_i:\tilde{Y}_i\rightarrow {{\mathcal{G}}}$$ given by Proposition \ref{approximation.final}.} We can suppose that $\tilde{Y}_i$ is connected (if not then restrict to the connected component that detects $(\Psi_i)^*(\overline{\lambda}^3)$).
There exists $\alpha\in H_3(\tilde{Y}_i,\mathbb{Z}_2)$ such that $(\Psi_i)^*(\overline{\lambda}^3) \cdot \, \alpha=1$. Then 
$\overline{\lambda}^3 \cdot (\Psi_i')_*(\alpha)=1$. Hence for any open set $Z\subset {\mathcal G}$ with $\Psi_i(\tilde{Y}_i)\subset Z$,  it holds that  
$\overline{\lambda}^3_{|Z}\neq 0\in H^3(Z,\mathbb{Z}_2)$. Notice that $\Psi_i(\tilde{Y}_i)$ is compact and connected. {Now a $3$-manifold $X$ has  $H^3(X,\mathbb{Z}_2)=0$ unless it is closed}.  Hence $Z_i=\Psi_i(\tilde{Y}_i)$ is a {closed} three-manifold and the map  $\Psi_i:\tilde{Y}_i\rightarrow Z_i$ is surjective. By passing to a subsequence, we can suppose   that $Z_i$ is a connected component $Z$ of ${\mathcal G}$.

Hence the inclusion map $i_{Z}:Z\rightarrow \mathcal{S}^{k,\alpha}$ is a three-sweepout. By Allard, we can consider $i_{Z}$ as a continuous map $i_{Z}:Z\rightarrow \mathcal{S}$ in the smooth topology. Let $Z'$ be the set of all oriented spheres $S$ such that the corresponding unoriented sphere $\Sigma=|S|\in Z$.  Then $Z'$ is a compact three-manifold and the map $\pi:Z'\rightarrow Z$, $\pi(S)=|S|$, is a two-cover. Since $i_Z$ is in particular a one-sweepout, there is a continuous path in $Z'$ connecting an oriented sphere $S$ to the same sphere $\tilde{S}$ with the opposite orientation. Hence $Z'$ is connected.

Let $\mathcal{P}Z'$ be the space of pointed two-spheres $(S,q)$, with $S\in Z'$ and $q\in S$. If $(S,q)\in Z'$, and $\Sigma=|S|$, let ${\varphi}_\Sigma:\overline{B}^3\rightarrow \mathcal{S}^{k,\alpha}$ be as before.
Let $h:\overline{B}^3\rightarrow C^{k,\alpha}(\Sigma)$ be the smooth embedding such that ${\varphi}_\Sigma(z)={\rm graph}_\Sigma(h(z))$, where the graph is in the direction of the unit normal $\nu$ to $S$.  We can consider ${\varphi}_\Sigma(z)$ with the orientation {compatible with the} orientation of $S$. For a neighborhood $U\subset \Sigma$ of $q$, consider the map $\psi_\Sigma: B^3\times U\rightarrow \mathcal{P}Z'$ defined by
$$
\psi_\Sigma(z,x)=({\varphi}_\Sigma(z),{\rm exp}_x(h(z)(x)\nu(x))).
$$
Notice that $\psi_\Sigma$ is continuous and $\psi_\Sigma(B^3\times U)$ is an open set of $\mathcal{P}Z'$. The inverse map is
$$
(S',q')\mapsto    ({\varphi}_\Sigma^{-1}(|S'|),\pi_\Sigma(q')),
$$
where $\pi_\Sigma:V_\Sigma\rightarrow \Sigma$ is the nearest point projection on $\Sigma$, defined on a tubular neighborhood $V_\Sigma$ of $\Sigma$. We can suppose 
that ${\varphi}_\Sigma(z)\subset V_\Sigma$ for every $z\in \overline{B}^3$. Hence $\mathcal{P}Z'$ is a topological
compact five-dimensional manifold.

There is a natural map $T: \mathcal{P}Z' \rightarrow Gr_{2}(S^3)$, where $Gr_{2}(S^3)$ denotes the set of oriented two-dimensional tangent planes  to $S^3$, defined by 
$$
T(S,q)=T_qS.
$$
The map $T$ is continuous. Given $(S,q)\in \mathcal{P}Z'$,  we claim there is a neighborhood $\mathcal{U}\subset \mathcal{P}Z'$ such that $T_{|\mathcal{U}}$ is injective. If not, by contradiction,  there are sequences 
$\{(S_i^1,q_i^1)\}_i, \{(S_i^2,q_i^2)\}_i$ of pointed surfaces in $\mathcal{P}Z'$  converging to $(S,q)$ and such that
$(S_i^1,q_i^1)\neq (S_i^2,q_i^2)$ and 
$T(S_i^1,q_i^1)=T(S_i^2,q_i^2)$ for every $i$. Hence $q_i^1=q_i^2$ and $T_{q_i^1}S_i^1=T_{q_i^2}S_i^2$.  By writing $S_i^2$ as a graph over $S_i^1$, and using elliptic  theory applied to the minimal surface equation, one can extract a nontrivial Jacobi field $h$ of $S$ in the limit.  Since the Morse index of $S$ is one, $h$ is a second eigenfunction for the Jacobi operator of $S$. But the
function $h$ satisfies $h(q)=0$ and $\nabla_Sh(q)=0$, which contradicts the proof of Theorem 3.2 in \cite{cheng}. This proves that $T$ is locally injective.

Since ${\rm dim}\, \mathcal{P}Z' ={\rm dim} \, Gr_{2}(S^3)$, by Invariance of Domain the map $T$ is a local homeomorphism.  In fact, $T$ is a covering map because $\mathcal{P}Z' $ is compact.  Because $S^3$ is parallelizable,  $Gr_{2}(S^3)$ is homeomorphic to $S^3\times S^2$. Hence $Gr_{2}(S^3)$ is simply connected, which implies $T$ is a homeomorphism. This implies $Z'$ is a transitive family of minimal spheres in the sense of \cite{galvez-mira}.  

We claim that $Z'$ is simply connected. Let $\sigma:[0,1]\rightarrow Z'$ be a continuous path with $\sigma(0)=\sigma(1)=S$.  By the homotopy lifting property of fiber bundles, there exists a continuous family of smooth embeddings $t\in [0,1]\mapsto i(t)$ of $S^2$ in $S^3$ such that $\sigma(t)=i(t)(S^2)$ with the orientation induced by $i(t)$. Let $x\in S^2$, and choose $y\in S^2$ such that $i(1)(y)=i(0)(x)$. Choose a smooth path $x:[0,1]\rightarrow S^2$ with $x(0)=x$ and $x(1)=y$. Then $q(t)=i(t)(x(t))\in \sigma(t)$ for $t\in [0,1]$. Notice that $q(0)=q(1)$. Then we define $\eta:[0,1]\rightarrow \mathcal{P}Z' $ by $\eta(t)=(\sigma(t),q(t))$. Hence $\eta(0)=\eta(1)$. 
But $\mathcal{P}Z'$ is simply connected. Therefore there exists a continuous homotopy $H:[0,1]\times [0,1]\rightarrow \mathcal{P}Z'$ such that
$H(t,0)=\eta(t)$, $H(t,1)=\eta(0)=\eta(1)$ for every $t\in [0,1]$, and $H(0,s)=\eta(0)$, $H(1,s)=\eta(1)$ for every $s\in [0,1]$. If $\pi(S,q)=S$, then $h(t,s)=\pi(H(t,s))$ is
a homotopy between $\sigma$ and a constant map with the endpoints fixed. This proves $Z'$ is simply connected. Hence $\pi_1(Z)=\mathbb{Z}_2$, which
implies $Z$ is diffeomorphic to $\mathbb{RP}^3$ by geometrization. It follows that  $Z$ is a Zoll family of minimal spheres for the metric $g$, finishing the proof of Theorem \ref{zoll.characterization.2}.

\section{Systolic inequalities on $\RP^3$}

We denote by $\mathcal{Z}_2^{'}(\mathbb{RP}^3,\mathbb{Z}_2)$ the space of mod two flat two-dimensional chains $T$ with $T-S=\partial U$, where $U \in {\bf I}_3(\mathbb{RP}^3,\mathbb{Z}_2)$ and $S\subset \mathbb{RP}^3$ is a linear projective plane,  endowed with the flat metric. Hence $\mathcal{Z}_2^{'}(\mathbb{RP}^3,\mathbb{Z}_2)=\mathcal{Z}_2(\mathbb{RP}^3,\mathbb{Z}_2)+S$.

We use  $\mathcal{S}$ to denote  the space of embedded two-dimensional projective spaces in $\RP^3$ with the smooth topology.  Any $\Sigma \in \mathcal{S}$ is $\pi_1$-injective \cite{bben} and  homologous mod 2  to a linear projective plane.
The inclusion $i:\mathcal{S}\rightarrow \mathcal{Z}_2(\RP^3,{\bf F},\mathbb{Z}_2)$ is a continuous map. 
A continuous map $\Phi:X\rightarrow \mathcal{S}$ is called  a smooth $k$-sweepout (by projective planes) if the composition $i\circ \Phi$ is a $k$-sweepout. We denote $\Phi \in \mathcal{P}'_k$.  Notice that in contrast to the case of $(S^3,g)$,  there is a positive constant $c$ so that ${\bf M}(S)\geq c$ for any $S\in \mathcal{S}$.

We define for $(\RP^3,g)$  the projective area widths
$$
\sigma_k(\RP^3,g)=\inf_{\Phi\in  \mathcal{P}'_k} \sup_{x\in {\rm dmn}(\Phi)} {\bf M}(\Phi(x)),
$$
for $0\leq k\leq 3$. 
We have $\mathcal{P}'_k\subset \mathcal{P}'_l$  for $l\leq k$ and 
\begin{equation}\label{sigma0}
\sigma_0(\RP^3,g)=\inf_{\Sigma\in\mathcal S}{\bf M}(\Sigma).
\end{equation}

The systole of a Riemannian manifold $(X,h)$, $\text{sys}(X,h)$, is defined as the least length of a non-contractible loop in $X$.

We prove that a constant curvature metric on $\RP^3$ is determined modulo isometries by the invariants ${\rm sys}$ and $\sigma_2$:

\begin{thm}\label{systolic.inequality} Let $g$ be a smooth Riemannian metric on $\RP^3$. Then 
$$  {\rm sys}^2(\RP^3,g)\leq \frac{\pi}{2}\,\sigma_2(\RP^3,g),$$
with equality if and only if $g$ has constant sectional curvature.
\end{thm}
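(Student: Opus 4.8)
The plan is to reduce the inequality to a sweepout/isosystolic estimate and then to extract the rigidity statement via Lusternik--Schnirelmann theory and Besse's theorem. First I would observe that the inequality itself is elementary: since $\mathcal P_2'\subset \mathcal P_0'$ we have $\sigma_0(\RP^3,g)\le \sigma_2(\RP^3,g)$, and by \eqref{sigma0} $\sigma_0(\RP^3,g)=\inf_{\Sigma\in\mathcal S}{\bf M}(\Sigma)$ is the least area of an embedded projective plane in $(\RP^3,g)$. Every such $\Sigma$ is $\pi_1$-injective, so it carries a noncontractible loop; by Pu's isosystolic inequality \cite{pu} applied to the (possibly nonorientable) induced metric on $\Sigma\cong\RP^2$, one gets $\mathrm{sys}^2(\Sigma,g|_\Sigma)\le \tfrac{\pi}{2}\,{\rm area}(\Sigma,g|_\Sigma)$. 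Since a noncontractible loop in $\Sigma$ is noncontractible in $\RP^3$ (by $\pi_1$-injectivity), $\mathrm{sys}(\RP^3,g)\le \mathrm{sys}(\Sigma,g|_\Sigma)$, and therefore $\mathrm{sys}^2(\RP^3,g)\le \tfrac{\pi}{2}\,{\rm area}(\Sigma,g|_\Sigma)$. Taking the infimum over $\Sigma\in\mathcal S$ gives $\mathrm{sys}^2(\RP^3,g)\le \tfrac{\pi}{2}\,\sigma_0(\RP^3,g)\le \tfrac{\pi}{2}\,\sigma_2(\RP^3,g)$. (One subtlety: the infimum defining $\sigma_0$ might only be approached; but $\sigma_0$ is realized by a min-max minimal projective plane from the Simon--Smith theory, or one argues directly with a minimizing sequence, using that the systole inequality on $\Sigma$ passes to the limit since ${\bf M}(\Sigma)\ge c>0$ bounds the geometry.)

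For the equality case, suppose $\mathrm{sys}^2(\RP^3,g)=\tfrac{\pi}{2}\,\sigma_2(\RP^3,g)$. The chain of inequalities above forces $\sigma_0(\RP^3,g)=\sigma_2(\RP^3,g)$, and also forces equality in Pu's inequality for the minimal projective plane $\Sigma_0$ realizing $\sigma_0$; by the rigidity in \cite{pu}, $(\Sigma_0,g|_{\Sigma_0})$ is a round $\RP^2$, and moreover $\mathrm{sys}(\RP^3,g)$ is realized by a geodesic inside $\Sigma_0$. The key structural input is then $\sigma_0(\RP^3,g)=\sigma_2(\RP^3,g)$, which is exactly the analogue of the width coincidence used in Theorems \ref{rigidity.projective.plane2} and \ref{zoll.characterization.2}: the heuristics of Lusternik--Schnirelmann theory on the two-parameter min-max for area over $\mathcal P_2'$, together with a min-max/pull-tight analysis as in Part I of Section 6, produce an optimal two-sweepout whose critical set consists of embedded minimal projective planes of area $\sigma_0$ with Morse index at most one, each $\pi_1$-injective; as in Proposition \ref{no.disjoint} (adapted to projective planes, using that two distinct embedded $\RP^2$'s in $\RP^3$ must intersect since each represents the nontrivial $\mathbb Z_2$-homology class) every such minimal $\RP^2$ lies in a local foliation by minimal projective planes. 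Extending this foliation as in Section 6 and running the covering-space/transitivity argument (now with the space of oriented embedded two-spheres in $S^3$ that double-cover these $\RP^2$'s, or directly with the space of totally geodesic candidates) yields a two-sweepout of $(\RP^3,g)$ by embedded minimal, constant-curvature, totally geodesic projective planes; equivalently, a Zoll family. By Besse \cite{besse}, a metric on $\RP^3$ all of whose geodesics are closed of the same length (equivalently admitting such a Zoll family) has constant sectional curvature. Conversely, if $g$ has constant curvature, a direct computation with the round $\RP^3$ gives $\mathrm{sys}^2=\tfrac{\pi}{2}\,\sigma_2$.

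The main obstacle is the equality-case direction, and specifically the step that upgrades $\sigma_0=\sigma_2$ into a genuine \emph{foliation} (Zoll family) of $(\RP^3,g)$ by minimal projective planes. This requires carefully redoing, in the projective-plane setting, the min-max modification arguments of Section 6 (Propositions \ref{ruling.out.1.0}--\ref{approximation.final}): one must show there is an optimal sequence of two-sweepouts avoiding minimal $\RP^2$'s that are not part of a two-manifold of such surfaces, then that $\mathcal G$ (the locus of minimal $\RP^2$'s moving in a smooth family) supports a two-sweepout, and finally that the resulting parameter space is $\RP^2$ and the family is transitive, invoking a uniqueness theorem for minimal projective planes analogous to Galvez--Mira and the index bound analogous to \cite{cheng}. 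Once transitivity is established, every geodesic of $g$ is tangent to a unique member of the family and hence is closed of length $2\,\mathrm{sys}$, so $g$ is Zoll and Besse applies. An alternative, possibly cleaner route I would also try: since $\sigma_0=\sigma_2$, apply Lusternik--Schnirelmann directly as stated in the introduction's sketch—there is a two-sweepout by critical points of the area functional, i.e. by embedded minimal $\RP^2$'s; a dimension/index count forces these to sweep out all of $\RP^3$ as a Zoll family, and then $g$ is Besse.
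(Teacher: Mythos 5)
Your argument for the inequality direction is correct and coincides with the paper's: $\sigma_2\geq\sigma_0=\inf_{\Sigma\in\mathcal S}\mathbf M(\Sigma)\geq\frac{2}{\pi}\,{\rm sys}^2(\RP^3,g)$, via Pu's inequality applied to each embedded ($\pi_1$-injective) projective plane, following \cite{bben}.

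Your treatment of the equality case, however, diverges from the paper's and contains a genuine gap. You correctly deduce $\sigma_0=\sigma_2$ and Pu rigidity for a minimizing projective plane $\Sigma_0$, but then you propose to rerun the Section 6 machinery (Propositions \ref{ruling.out.1.0}--\ref{approximation.final}) to build a Zoll family of minimal projective planes. That machinery is built on minimal \emph{spheres} of Morse index exactly one with nullity exactly three, and uses the uniqueness theorem of Galv\'ez--Mira for immersed spheres and the nodal/eigenvalue bounds of Cheng for second eigenfunctions on spheres. Here, because $\sigma_0=\sigma_2$ is the infimum of areas of embedded $\RP^2$'s, the relevant minimal projective planes in the critical set are area-minimizers in their isotopy class, hence \emph{stable}, not index one; there is no Galv\'ez--Mira analogue for projective planes, and the covering-space/transitivity argument of Section 6 has no direct substitute. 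So your plan, as stated, does not go through.

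The paper's actual route is much lighter. Since $\sigma_0=\sigma_2$, \emph{every} element of any optimal two-sweepout has mass tending to $\sigma_0$, and by the Meeks--Simon--Yau minimizing-sequence compactness one gets that the whole image converges, in the varifold topology, to the compact set $\mathcal V$ of area-minimizing minimal projective planes (Lemma \ref{varifold.approximation}). A direct two-sweepout argument (Proposition \ref{plane.constant}), running two disjoint small balls and passing to the limit, shows that through every point $p$ and every $v\in T_p\RP^3$ there is $V=|\Sigma|\in\mathcal V$ with $v\in T_p\Sigma$. Pu rigidity applies to \emph{every} such $\Sigma$ (not just one), so $(\Sigma,g_\Sigma)$ has constant curvature and each of its closed geodesics is a systole of $(\RP^3,g)$, hence a closed geodesic of the ambient manifold. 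Combining the two facts, every geodesic direction in $\RP^3$ is tangent to such a geodesic, so $g$ is Zoll, and Besse then gives constant sectional curvature. You have the Pu-rigidity ingredient but only apply it to one plane and then abandon it; the missing step is the family-of-planes statement (Proposition \ref{plane.constant}), which replaces your entire Zoll-family construction and sidesteps the index/uniqueness issues above.
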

\begin{proof}
We start by checking that the equality holds for constant sectional curvature metrics. It suffices to consider the canonical metric $\overline{g}$ on $\RP^3$. Set $\Psi:\RP^3\rightarrow \mathcal S$ given by
$$\Psi([a_1:a_2:a_3:a_4])=\{[x]\in \RP^3:a_1x_1+a_1x_1+a_3x_3+a_4x_4=0\}.$$
The map $\Psi$  is a three-sweepout and ${\bf M}(\Phi(x))=2\pi$  for any $x\in \RP^3$. Thus $\sigma_3(\RP^3,\overline{g})\leq 2\pi$.  Any linear projective space in $\RP^3$ is area-minimizer for the canonical metric and so $\sigma_0(\RP^3,\overline{g})=2\pi$. Thus $\sigma_2(\RP^3,\overline{g})=2\pi$. The systole of $(\RP^3,\overline{g})$ is $\pi$ which implies the equality in  Theorem \ref{systolic.inequality} for $\overline{g}$.

Pu's systolic inequality is used in  \cite{bben} to show that for every $\Sigma\in\mathcal S$ we have 
\begin{equation}\label{identity2.holds}\text{area}(\Sigma)\geq\frac{2}{\pi}{\rm sys}^2(\Sigma,g_{\Sigma})\geq  \frac{2}{\pi}{\rm sys}^2(\RP^3,g).
\end{equation} Hence
\begin{equation}\label{identity.holds}
\sigma_2(\RP^3,g)\geq\sigma_0(\RP^3,g)=\inf_{\Sigma\in\mathcal S} {\bf M}(\Sigma)\geq  \frac{2}{\pi}{\rm sys}^2(\RP^3,g).
\end{equation}

Suppose that $\sigma_2(\RP^3,g)=\sigma_0(\RP^3,g)$ and let $\{\Phi_i\}_{i\in \mathbb{N}}\subset \mathcal{P}_3'$ be such that
$$
\lim_{i\to\infty}\sup_{x\in X_i}{\bf M}(\Phi_i(x))=\sigma_2(\RP^3,g),
$$  
where $X_i={\rm dmn}(\Phi_i)$ is a two-dimensional cubical complex.

Let $\mathcal V$ be the set of stationary integral varifolds of the form $V=|\Sigma|$, where $\Sigma$ is a minimal embedded projective plane in $(\RP^3,g)$ with $\text{area}(\Sigma)=\sigma_0(\RP^3,g)$.   Note that $\Sigma$ minimizes area in its isotopy class by \eqref{sigma0} and is thus stable. Hence we have from Sharp's compactness theorem that $\mathcal V$ is compact in the varifold topology  and so compact in the smooth topology  by Allard.

\begin{lem}\label{varifold.approximation} Then 
$\lim_{i\to\infty} \sup_{x\in X_i}{\bf F}(|\Phi_i(x)|,\mathcal V)=0$.
\end{lem}
\begin{proof}
Let $x_i\in X_i$. We have that 
$$\inf_{\Sigma\in\mathcal S} {\bf M}(\Sigma)\leq \limsup_{i\to\infty}{\bf M}(\Phi(x_i))\leq \sigma_2(\RP^3,g)=\sigma_0(\RP^3,g)=\inf_{\Sigma\in\mathcal S} {\bf M}(\Sigma).$$
Thus $\Phi_i(x_i)$ is a minimizing sequence in the sense of \cite{msy} and thus  contains a convergent subsequence to a varifold $V$ whose support is a minimal embedded projective plane in $(\RP^3,g)$ with $\text{area}(\Sigma)=\sigma_0(\RP^3,g)$.
\end{proof}
\begin{prop}\label{plane.constant} For any  $v\in T_p\RP^3$, there is $V=|\Sigma|\in \mathcal V$ so that $v\in T_p\Sigma$.
\end{prop}
\begin{proof}

 We claim that for any two   points $p_1, p_2 \in \RP^3$, there is $V\in \mathcal V$ whose support contains $p_1,p_2$.
 
Let  $r>0$ be sufficiently small so that $B_r(p_j)$ are  disjoint, $j=1,2$.  Since $\Phi_i$ is a two-sweepout, it follows there is $x_i\in X_i$ so that $\text{area}(\Phi_i(x_i)\cap B_r(p_j))\geq c$
for any $i\in \mathbb{N}$, $j=1,2$, for some constant $c>0$   (see \cite{marques-neves-infinitely}). Thus we obtain from Lemma \ref{varifold.approximation} the existence of a minimal embedded projective plane $\Sigma_r$ so that $|\Sigma_r|\in\mathcal V$ and $\Sigma_r\cap B_r(p_j)\neq\emptyset$, $j=1,2$ .  Letting $r\to 0$, we obtain from the compactness of $\mathcal V$  in the smooth topology the existence of $V\in \mathcal V$ whose support contains $p_1,p_2$.

If $p_1=p$, and $p_2=\exp_p(tv)$, for $t>0$,
it follows that there is a stable minimal embedded projective plane $\Sigma(t)$ containing $\{p, \exp_p(tv)\}$. There is a sequence  $t_i\to 0$ such that $\Sigma(t_i)$ converges strongly to a minimal embedded projective plane $\Sigma$. The surface $\Sigma$ cannot be transversal to the geodesic $t\mapsto \exp_p(tv)$ at $p$.  Hence $v\in T_p\Sigma$. 
\end{proof}

Suppose that  $\sigma_2(\RP^3,g)= \frac{2}{\pi}{\rm sys}^2(\RP^3,g)$. From \eqref{identity2.holds}  and \eqref{identity.holds} we see that $\sigma_2(\RP^3,g)=\sigma_0(\RP^3,g)$, every  $V=|\Sigma|\in\mathcal V$ has support satisfying the  case of equality  in Pu's systolic inequality, and ${\rm sys}(\Sigma,g_{\Sigma})={\rm sys}(\RP^3,g)$. 

Thus for every $V=|\Sigma|\in\mathcal V$ we have that $\Sigma$ has constant Gaussian curvature $K=2\pi/\sigma_0(\RP^3,g)$.  Every closed geodesic of $\Sigma$ is a  systole of $(\Sigma,g_{\Sigma})$ and hence a closed geodesic of $(\RP^3,g)$. By Proposition \ref{plane.constant}, for any $p\in \mathbb{RP}^3$ and $v\in T_p\mathbb{RP}^3$, there is $V=|\Sigma|\in\mathcal V$ such that $v\in T_p\Sigma$. Since $(\Sigma,g_\Sigma)$ is a Zoll surface, it follows that any  geodesic of $(\mathbb{RP}^3,g)$ is closed, embedded, with length $\pi/\sqrt{K}$. Hence $(\mathbb{RP}^3,g)$ is a Zoll metric, and by Besse \cite{besse} (Appendix D) the metric $g$ has constant sectional curvature. 
\end{proof}

\begin{rmk}
The invariants ${\rm sys}$ and $\sigma_1$ are not sufficient to determine the canonical metric of $\mathbb{RP}^3$ modulo isometries. Hence Theorem \ref{systolic.inequality} is also sharp in this sense.

The map $\Phi: S^1=([0,\pi]/\sim) \rightarrow \mathcal S$ defined by $\Sigma(t)=\{[x]\in \mathbb{RP}^3: (\cos t)x_1+(\sin t)x_2=0\}$ is a one-sweepout.  Let $Y(x)=(x_2,-x_1,0,0)$ for $x\in S^3$. Then $Y$ induces a tangent vector field on $\mathbb{RP}^3$ such that if $x\in \Sigma(t)$, then $Y(x)\perp T_x\Sigma(t)$. If $z\in \mathbb{RP}^3$ is such that $Y(z)\neq 0$, let $\{e_1,e_2\}$ be an orthonormal frame for the constant curvature metric $\overline{g}$  in an open set $U\subset \mathbb{RP}^3$, $z\in U$, such that  $\langle e_i,Y\rangle=0$, $i=1,2$. If $\psi$ is a nonnegative smooth function with support contained in $U$, positive somewhere, the metric 
$$g_\psi=e_1 \cdot e_1+e_2\cdot e_2 + (1+\psi) (Y/|Y|_{\overline{g}})\cdot (Y/|Y|_{\overline{g}})$$
extends to a smooth metric on $\mathbb{RP}^3$ such that $\overline{g}\leq g_\psi$ and $(\Sigma(t),g_\psi)=
(\Sigma(t),\overline{g})$ for $t\in S^1$. The metric $g_\psi$ is such that ${\rm sys}(g_\psi)={\rm sys}(\overline{g})$,  $\sigma_1(g_\psi)=\sigma_1(\overline{g})$, and does not  have constant sectional curvature.
\end{rmk}

\end{document}